\documentclass[12pt]{amsart}

\usepackage{amssymb,amsmath,amscd,enumerate,verbatim,xcolor,mathtools,fullpage}
\usepackage{thmtools}
\usepackage[colorlinks=true,linkcolor=blue,citecolor=blue, pagebackref=false]{hyperref}

\usepackage{float}
\usepackage[all]{xy}
\usepackage[capitalise]{cleveref}
\usepackage{graphicx}
\usepackage{tikz-cd}
\usepackage{multirow,tabularx}
\usepackage{graphicx}

\usepackage{caption}
\usepackage{subcaption}
\usepackage[numbers,sort&compress]{natbib}

\usepackage{tikz}
\usetikzlibrary{calc}

\usepackage{mathptmx}
\usepackage{tipa}

\numberwithin{equation}{section}

\newtheorem{thm}{Theorem}[section]
\newtheorem{lem}[thm]{Lemma}

\newtheorem{cor}[thm]{Corollary}
\newtheorem{prop}[thm]{Proposition}
\newtheorem{prob}[thm]{Problem}

\newtheorem{claim}{Claim}[thm]

\crefname{thm}{theorem}{theorems}
\crefname{lem}{lemma}{lemmas}
\crefname{prop}{proposition}{propositions}
\crefname{ex}{example}{examples}
\crefname{defn}{definition}{definitions}

\theoremstyle{definition}
\newtheorem{assm}[thm]{Assumption}
\newtheorem{construction}[thm]{Construction}
\newtheorem{defn}[thm]{Definition}
\newtheorem{ex}[thm]{Example}

\newtheorem{rem}[thm]{Remark}

\newtheorem*{thm*}{Theorem}

\DeclareMathOperator{\Cl}{Cl}

\DeclareMathOperator{\me}{me}

\DeclareMathOperator{\gin}{gin}

\DeclareMathOperator{\IN}{IN}
\DeclareMathOperator{\Inc}{Inc}
\DeclareMathOperator{\ind}{ind}

\DeclareMathOperator{\ini}{in}

\DeclareMathOperator{\HS}{HS}

\DeclareMathOperator{\spi}{sp}

\newcommand{\Z}{{\mathbb Z}}
\newcommand{\N}{{\mathbb N}}
\newcommand{\Q}{{\mathbb Q}}

\newcommand{\Fc}{{\mathcal F}}

\def\Ep{\emph{\Large\textepsilon}}
\def\Gc{{\mathcal G}}
\def\Hc{{\mathcal H}}

\def\FF{{\mathfrak F}}

\newcommand{\kk}{\Bbbk}

\newcommand\defas{\coloneqq}

\makeatletter
\@namedef{subjclassname@2020}{%
	\textup{2020} Mathematics Subject Classification}
\makeatother

\title{Invariant chains of graphs}

\author[D.T. Hoang]{Do Trong Hoang}
\address{Faculty of Mathematics and Informatics, Hanoi University of Science and Technology, 1 Dai Co Viet, Bach Mai, Hanoi, Vietnam}
\email{hoang.dotrong@hust.edu.vn}

\author[M. Koley]{Mitra Koley}
\address{School of Mathematics, Indian Institute of Science Education and Research, Thiruvananthapuram, Vithura, 695551, India}
\email{mitra@iisertvm.ac.in} 

\author[D.V. Le]{Dinh Van Le}
\address{Department of Mathematics, FPT University, Hanoi, Vietnam}
\email{dinhlv2@fe.edu.vn}

\subjclass[2020]{Primary 05E18; Secondary 05C15, 05C38, 05C69, 05C70, 05E45}

\keywords{$\Inc$-invariant chain of graphs, equivariant Noetherianity, independence complex, clique complex, chromatic number, matching number}

\begin{document}
	
	\begin{abstract}
		We initiate a systematic study of $\Inc$-invariant chains of graphs, the combinatorial counterparts of $\Inc$-invariant chains of edge ideals arising in the theory of equivariant Noetherianity. Such a chain consists of graphs on growing vertex sets whose edge sets are compatible with the action of the monoid of strictly increasing maps on the positive integers. We show that several associated combinatorial invariants exhibit rigid asymptotic behavior. The independence number eventually stabilizes, and every fixed entry of the $f$-vector and the $h$-vector of the independence complex is eventually linear. For clique complexes, every fixed entry of the $f$-vector is eventually polynomial, whereas the entries of the $h$-vector are eventually quasi-polynomial. Moreover, the clique and chromatic numbers are eventually quasi-linear, and their difference is eventually at most one. We also prove that the matching number eventually attains the maximal value $\lfloor n/2 \rfloor$.
		Finally, admissible and minimal paths eventually have lengths at most $3$ and $5$, respectively, and their maximal lengths stabilize. These results reveal strong asymptotic regularity in graph families governed by increasing symmetry.
	\end{abstract}

	\maketitle

	\section{Introduction}
	
	For $n\ge1$, let $R_n=\kk[x_1,\dots,x_n]$ be the polynomial ring in $n$ variables over a field $\kk$. A rapidly developing area of research concerns chains of ideals $I_n\subseteq R_n$ that are invariant under the action of the infinite general linear group, the infinite symmetric group, or the monoid $\Inc$ of strictly increasing maps on the positive integers; see, e.g.,
	\cite{AH07,Co67,Dr14,DEF,HS12,JLR20,LR20,LR24,NR17,NR19,SS16,SS17}.
	Related finiteness questions up to symmetry also arise in discrete geometry, convex optimization, and machine learning; see
	\cite{KLR22,L24,LR21,LRV26,LRV27,LC,LD}.
	The basic finiteness mechanism in this setting is equivariant Noetherianity. Although the polynomial ring
	\[
	R=\bigcup_{n\ge1}R_n=\kk[x_i\mid i\in\N]
	\]
	is not Noetherian, a celebrated result of Cohen \cite{Co67}
	implies that it is Noetherian up to the action of $\Inc$; see also
	\cite{AH07,HS12}. Consequently, every $\Inc$-invariant chain of ideals
	stabilizes: beyond a finite stage, the entire chain is determined by a single member up to increasing relabelings.
	
	This stabilization naturally leads to questions about the asymptotic behavior of numerical invariants. Among the invariants studied in this context are codimension, projective dimension,
	Castelnuovo--Mumford regularity, and Hilbert series; see, e.g.,
	\cite{Ga24,JLR20,LNNR1,LNNR2,LN22,Na,NR17,Mu,MR20,Ra}. In particular, it is conjectured that the regularity and projective dimension of $I_n$ are eventually linear functions of $n$ for every $\Inc$-invariant chain. Despite substantial progress, these conjectures remain open in general.
	
	Chains of edge ideals form an important testing ground for this theory. For such chains, algebraic questions about the ideals can be translated into combinatorial questions about the corresponding graphs. The regularity conjecture for $\Inc$-invariant chains of edge ideals was settled in \cite{HNT2024}: the regularity is eventually constant, with eventual value either $2$ or $3$. The asymptotic depth, and hence the asymptotic projective dimension, was subsequently determined in \cite{HHLNN}. The proofs of these results reveal unexpectedly rigid combinatorial and topological properties of the associated graphs. For example, their induced matching numbers are eventually at most $2$, and the reduced homology of their independence complexes can be described explicitly.
	
	These developments motivate a direct study of the corresponding graph chains. Informally, an $\Inc$-invariant chain of graphs is a sequence
	$\Gc=(G_n)_{n\ge1}$, with $V(G_n)=[n]$, such that every increasing
	relabeling of $G_n$ into $[m]$ is a subgraph of $G_m$. The precise definition is given in \Cref{subsec.InvChain}. Our guiding problem is the following.
	
	\begin{prob}
		\label{prob:main}
		Describe the asymptotic behavior of combinatorial invariants along an $\Inc$-invariant chain of graphs.
	\end{prob}
	
	We address this problem for independence and clique complexes, chromatic and matching numbers, and certain paths arising in the theory of binomial edge ideals. For simplicity, all the results summarized below are stated for chains in the reduced form of \Cref{assm:main}. A parameter that plays a central role throughout is the \emph{sparsity index} $\spi(\Gc)$, which records the smallest distance between the endpoints of an edge at the stability stage.
	
	Our first main result concerns independence complexes. We establish eventual linearity for their $f$-vectors, $h$-vectors, and numbers of facets.
	
	\begin{thm}[\Cref{thm:f-vector-IN,conj:IN-facet}]
		Let $\Gc=(G_n)_{n\ge1}$ be an $\Inc$-invariant chain of graphs. Denote by $\IN(G_n)$ and $\alpha(G_n)$ the independence
		complex and the independence number of $G_n$, respectively, and let
		$\nu(\IN(G_n))$ denote the number of facets of $\IN(G_n)$. Then:
		\begin{enumerate}
			\item $\alpha(G_n)$ is eventually constant.
			\item For each $i\ge0$, $f_i(\IN(G_n))$ is eventually a linear
			function of $n$.
			\item For each $i\ge0$, $h_i(\IN(G_n))$ is eventually a linear
			function of $n$.
			\item There exists a constant $C\in\Z$ such that
			\[
			\nu(\IN(G_n))=n+C
			\]
			for all sufficiently large $n$.
		\end{enumerate}
	\end{thm}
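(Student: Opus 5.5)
The plan is to make the stabilization of $\Gc$ completely explicit and then reduce every count to a finite list of ``patterns'' that either occur a bounded number of times or slide along the middle of $[n]$. Let $r$ be the stability index. For $n\ge r$, $E(G_n)$ is the union of the $\Inc$-orbits of the edges of $G_r$. For an edge $\{i,j\}$ of $G_r$ with $i<j$, its orbit in $G_n$ consists of all pairs $\{a,b\}$ with $a<b$ satisfying
\[
a\ge i,\qquad b-a\ge j-i,\qquad n-b\ge r-j .
\]
Let $s=\spi(\Gc)$ be the minimal value of $j-i$. Fix a constant $M$ larger than $r$. Call $v\in[n]$ \emph{anchored} if $v\le M$ or $v>n-M$, and \emph{central} otherwise. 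By the orbit description, two central vertices are adjacent if and only if their distance is at least $s$, because every generator has $j-i\ge s$. In particular, the central part of an independent set lies in a window of $s$ consecutive integers. For $M$ large, whether a central vertex $v$ is adjacent to an anchored vertex $u$ does not depend on $v$: the conditions involving $v$ are automatically satisfied once $v$ is far from both ends. Hence an anchored vertex is either adjacent to all central vertices or to none.

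With this in hand, I would encode an independent set $\sigma$ of $G_n$ by a triple. The first entry is its left anchored part, recorded by positions from $1$. The second is its right anchored part, recorded by offsets from $n$. The third is its central part, recorded as a shape $Q\subseteq\{0,\dots,s-1\}$ containing $0$ together with a translation $p$. Independence of $\sigma$ depends only on the two anchored parts and on $Q$, not on $p$ or $n$. Consequently, for each $k$, the independent $k$-sets split into two kinds. Those with empty central part are in bijection with a finite set independent of $n$ for $n\ge 2M$. Those with nonempty central part contribute, for each admissible pattern, $n-2M-\max Q$ sets. This gives eventual linearity of $f_i(\IN(G_n))$, which is part (2). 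Since there are finitely many patterns of bounded size and pattern admissibility is independent of $n$, the largest $k$ with $f_{k-1}>0$ is eventually constant. Here boundedness follows because anchored parts have size at most $2M$ and central parts have size at most $s$. This is part (1). Part (3) then follows formally: $h_i$ is a fixed $\Z$-linear combination of $f_{-1},\dots,f_{i-1}$ whose coefficients depend only on $\dim\IN(G_n)=\alpha(G_n)-1$, which is eventually constant.

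For part (4) I would run the same pattern argument on facets. Maximality of $\sigma$ depends only on the pattern once the central part is far from both ends, so $\nu(\IN(G_n))$ is eventually $cn+C$. The slope $c$ counts maximal patterns with nonempty central part. Such a central part must be a full window $\{p,\dots,p+s-1\}$, since any set of central vertices of diameter $<s$ extends to such a window. Its anchored part must be a maximal independent subset of the set $A$ of anchored vertices that are adjacent to no central vertex.

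I expect the main obstacle to be showing $c=1$, that is, that $A$ has exactly one maximal independent subset; equivalently, that $A$ spans no edges (possibly $A=\emptyset$). Here I would use the reduced form of \Cref{assm:main}. The first thing I would try is to show that a vertex $u\le M$ nonadjacent to all central vertices forces $u<i$ for every generator $\{i,j\}$ of $G_r$. Then any edge inside $A$ would come from a generator violating this, either through the left-end inequality $a\ge i$ or, symmetrically, the right-end inequality. An edge between a left and a right anchored vertex would likewise place one of them in a position adjacent to central vertices. If the reduction instead only guarantees that $A$ is independent after discarding isolated vertices, the conclusion is unaffected, since isolated vertices lie in every facet.
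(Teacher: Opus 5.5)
The step that fails is the claim that, for $M$ large, adjacency between an anchored vertex $u$ and a central vertex $v$ does not depend on $v$. With a single cutoff $M$ this is false as soon as $s\ge 2$. The orbit condition $b-a\ge j-i$ couples $u$ and $v$, and the anchored vertices in $[r,M]$ sit right next to the central ones. There adjacency is governed purely by distance (\Cref{lem:independent-interval}). For example, if $E(G_3)=\{(1,3)\}$, so $r=3$ and $s=2$, then the anchored vertex $M$ is not adjacent to the central vertex $M+1$ but is adjacent to $M+2$.

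This has three consequences. First, ``independence of $\sigma$ depends only on the anchored parts and on $Q$, not on $p$'' is wrong: compare $\{M,M+1\}$ with $\{M,M+2\}$. Second, the per-pattern count $n-2M-\max Q$ is wrong. Third, the same defect affects your maximality analysis in part (4). You also need, and do not supply, an argument that configurations whose central part lies close to an anchored block contribute a number independent of $n$.

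The repair is to require the all-or-none property only on a deep middle $D_n=[M+r,\,n-M-r]$. This interval is separated from the anchored blocks by buffers of $r-1$ vertices. In your orbit description, take $a=u\le M$ and $b=v\in D_n$. Then the conditions $b-a\ge j-i$ and $n-b\ge r-j$ hold automatically, so $u$ is adjacent to $v$ as soon as $u\ge i$ for some generator. The generator $(1,i)$ from \Cref{assm:main} makes this always true. A generator with second coordinate $r$ does the same for right-anchored vertices. Hence every anchored vertex is adjacent to all of $D_n$, your set $A$ is empty, and $c=1$ is immediate. This is exactly the argument of \Cref{lem:I_t}, so the step you flagged as the main obstacle is not one.

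With this in place, the independent sets meeting $D_n$ are exactly the subsets of $[M+1,n-M]$ of diameter less than $s$ that meet $D_n$. The independent sets avoiding $D_n$ are those of $G_n\setminus D_n$; by the extension argument in \Cref{lem:IN-facet-dec}, the same holds for facets. For large $n$ their number does not depend on $n$, by \Cref{lem:del-graph-iso}. This gives (2) with increment $\binom{s-1}{i}$ and (4) with slope $1$.

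Repaired this way, your route is sound, and for (1)--(3) it is genuinely different from the paper's. The paper proves $\HS_{G_{n+1}}(t)=\HS_{G_n}(t)+t/(1-t)^s$ for $n\ge 2r+\me(\Gc)$ by induction on the number of missing edges. It uses the deletion formula \Cref{lem:Hilbert-series-graph} and a reduction to eventually saturated chains. That yields the $f$- and $h$-increments in closed form with an explicit threshold. Your counting argument is more elementary and recovers the same $f$-increments. For (4) it essentially coincides with the paper's window-plus-boundary decomposition.
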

	
	The behavior of clique complexes is different: their dimensions do not
	stabilize but instead grow quasi-linearly. Nevertheless, their $f$-vectors and $h$-vectors satisfy precise recurrence relations.
	
	\begin{thm}[\Cref{thm:clique}]
		Let $\Gc=(G_n)_{n\ge1}$ be an $\Inc$-invariant chain of graphs with
		$\spi(\Gc)=s$. Denote by $\Cl(G_n)$ and $\omega(G_n)$ the clique complex and clique number of $G_n$, respectively. Then there exists $n_0\in\N$ such that, for every $n\ge n_0$, the following statements hold:
		\begin{enumerate}
			\item We have
			\[
			\omega(G_n)
			=\omega(G_{n_0})+\left\lfloor\frac{n-n_0}{s}\right\rfloor.
			\]
			\item For every $i\ge0$,
			\[
			f_i(\Cl(G_n))
			=f_i(\Cl(G_{n-1}))+f_{i-1}(\Cl(G_{n-s})).
			\]
			\item For every $i\ge0$,
			\[
			h_i(\Cl(G_n))=
			\begin{cases}
				h_i(\Cl(G_{n-1}))+h_{i-1}(\Cl(G_{n-s}))
				&\text{if }s\nmid(n-n_0),\\
				h_i(\Cl(G_{n-1}))-h_{i-1}(\Cl(G_{n-1}))
				+h_{i-1}(\Cl(G_{n-s}))
				& \text{if }s\mid(n-n_0).
			\end{cases}
			\]
		\end{enumerate}
	\end{thm}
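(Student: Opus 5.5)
The plan is to reduce all three statements to a single structural fact about $G_n$ for large $n$: a vertex $v$ whose deletion gives back $G_{n-1}$ and whose link gives $G_{n-s}$. Precisely, I would aim for a lemma of the following form. There is $n_0$ such that for every $n\ge n_0$ some vertex $v_n$ of $G_n$ (I expect $v_n=1$ or $v_n=n$ under the reduced form of \Cref{assm:main}) satisfies two conditions:
\begin{enumerate}
\item[(a)] $G_n-v_n\cong G_{n-1}$;
\item[(b)] the induced subgraph $G_n[N(v_n)]$ is isomorphic to $G_{n-s}$, via a shift by $s$ of the neighbourhood $N(v_n)$.
\end{enumerate}
To prove this, I would use the description of the stable stage. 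For $n\ge r$, an edge $\{i,j\}$ with $i<j$ lies in $G_n$ exactly when the gap vector $(i-1,\,j-i,\,n-j)$ dominates the gap vector of some generator edge of $G_r$. The sparsity index $s$ is the minimal middle gap. Taking $n$ large, a vertex at the end is adjacent to exactly those vertices at distance at least $s$, and the domination description is translation compatible, which should give (a) and (b). \textbf{This is the main obstacle.} One has to check carefully, perhaps enlarging $n_0$ and using the reduced form, that the boundary conditions on $i-1$ and $n-j$ do not destroy the isomorphisms. I would first try $v_n=n$. If the boundary gaps misbehave there, I would switch to $v_n=1$ and use the reversal symmetry.

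Given the lemma, part (2) is immediate. Each $i$-face of $\Cl(G_n)$ either avoids $v_n$, in which case it is an $i$-face of $\Cl(G_n-v_n)\cong\Cl(G_{n-1})$. Or it contains $v_n$, in which case it corresponds to an $(i-1)$-face of $\Cl(G_n[N(v_n)])\cong\Cl(G_{n-s})$. This gives $f_i(\Cl(G_n))=f_i(\Cl(G_{n-1}))+f_{i-1}(\Cl(G_{n-s}))$.

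For part (1), the same decomposition gives
\[
\omega(G_n)=\max\bigl(\omega(G_{n-1}),\,\omega(G_{n-s})+1\bigr).
\]
Since $\omega$ is nondecreasing in $n$, I would argue by induction on $n\ge n_0+s$ that $\omega(G_n)=\omega(G_{n-s})+1$. I would also check the base range: for $n_0\le n<n_0+s$, $\omega(G_n)=\omega(G_{n_0})$. This may again require enlarging $n_0$ and using that $\omega$ grows by at most one per step, which follows from (a). These two facts together yield $\omega(G_n)=\omega(G_{n_0})+\lfloor (n-n_0)/s\rfloor$.

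Part (3) is a generating-function computation. Write $d_n=\omega(G_n)$ for the number of vertices in a maximal clique, and $h_n(t)=\sum_i f_{i-1}(\Cl(G_n))\,t^i(1-t)^{d_n-i}$. Substituting the recurrence from (2) gives
\[
h_n(t)=(1-t)^{d_n-d_{n-1}}h_{n-1}(t)+t\,(1-t)^{d_n-1-d_{n-s}}h_{n-s}(t).
\]
By (1), $d_{n-s}=d_n-1$. Moreover, $d_n-d_{n-1}$ equals $1$ exactly when $s\mid(n-n_0)$ and equals $0$ otherwise. Comparing coefficients of $t^i$ then yields the two cases of (3).
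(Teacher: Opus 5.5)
Your derivation of (1) and (3) from (2) is sound and matches the paper. The $f$-recurrence gives $\omega(G_n)=\max\{\omega(G_{n-1}),\omega(G_{n-s})+1\}$, which \Cref{lem:quasi-linear} turns into (1), and your $h$-polynomial identity is \eqref{eq:clique-h}.

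The gap is the structural lemma behind (2). It is false in general, not merely delicate at the boundary. Take the chain of \Cref{ex:clique}, with $E(G_5)=\{(1,4),(2,4),(3,5)\}$ and $s=2$. By \Cref{lem:E(Gn)}, for $n\ge5$ the non-edges of $G_n$ are exactly the pairs $\{k,k+1\}$ together with $\{1,3\}$, $\{1,n\}$, $\{2,n\}$. So the vertices $1,2,3,n$ have degree $n-4$, all other vertices have degree $n-3$, and $|E(G_n)|=(n-4)(n+1)/2$. This agrees with $f_1=3,7,12,18,25,33$ in \Cref{tab:clique}. Hence $|E(G_n)|-|E(G_{n-1})|=n-2$ exceeds the maximum degree. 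So for no $n\ge6$ is there a vertex $v$ with $G_n-v\cong G_{n-1}$, whether $v=n$, $v=1$, or an interior vertex. Moreover $N_{G_n}(n)=[3,n-2]$ has only $n-4$ vertices, so (b) fails at $v=n$ as well. The identity $f_1(\Cl(G_{10}))=33=25+8$ holds numerically, but no single-vertex split realizes it.

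What is missing is a way to obtain the recurrence without such isomorphisms. Your lemma does hold, with $v_n=n$, for eventually saturated chains: there $G_n\setminus n=G_{n-1}$ and $N_{G_n}(n)=[n-s]$ (\Cref{lem:clique-saturarated}). The paper reduces to this case by induction on $\me(\Gc)$. It still splits $G_n^c$ at $n$, but recognizes $G_n\setminus n=H_{n-1}$ and $G_n[N_{G_n}(n)]=F_{n-\beta}$. Here $N_{G_n}(n)=[b(\Gc),n-r+B(\Gc)]$, $\beta=r-B(\Gc)\ge1$, and $\Hc,\Fc$ are the chains of \Cref{lem:induced-chain}. These chains have sparsity index $s$ and, when $\Gc$ is not eventually saturated, strictly smaller $\me$. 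One applies the inductive recurrence to $\Hc$ and $\Fc$, and the same split to $G_{n-1}$ and $G_{n-s}$. This gives, for $n\gg0$,
\begin{align*}
\HS_{G_n^c}(t)&=\HS_{H_{n-1}^c}(t)+\tfrac{t}{1-t}\HS_{F_{n-\beta}^c}(t)\\
&=\Bigl(\HS_{H_{n-2}^c}(t)+\tfrac{t}{1-t}\HS_{F_{n-\beta-1}^c}(t)\Bigr)+\tfrac{t}{1-t}\Bigl(\HS_{H_{n-s-1}^c}(t)+\tfrac{t}{1-t}\HS_{F_{n-\beta-s}^c}(t)\Bigr)\\
&=\HS_{G_{n-1}^c}(t)+\tfrac{t}{1-t}\HS_{G_{n-s}^c}(t).
\end{align*}
Thus the recurrence propagates by linearity at the level of Hilbert series (equivalently, $f$-vectors), even though no vertex of $G_n$ splits it into $G_{n-1}$ and $G_{n-s}$. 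Without this induction, or some other numerical argument, your proof of (2), and hence of (1) and (3), does not go through.
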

	
	As a consequence, we show that for each $i\ge0$, the number $f_i(\Cl(G_n))$ is eventually a polynomial in $n$ of degree $i+1$ with leading coefficient $1/(i+1)!$, whereas $h_i(\Cl(G_n))$ is eventually a quasi-polynomial of degree at most $i$; see \Cref{cor:f-h-clique}.
	
	We next turn to graph coloring. For arbitrary graphs, the difference between the chromatic and clique numbers can be arbitrarily large \cite{My55}. Along an $\Inc$-invariant chain, however, the two invariants have the same asymptotic growth and eventually differ by at most one.
	
	\begin{thm}[\Cref{thm:chromatic,prop:gamma-omega}]
		Let $\Gc=(G_n)_{n\ge1}$ be an $\Inc$-invariant chain of graphs with
		$\spi(\Gc)=s$. Then there exists $n_0$ such that
		\[
		\gamma(G_n)
		=\gamma(G_{n_0})+\left\lfloor\frac{n-n_0}{s}\right\rfloor
		\quad\text{for all }n\ge n_0.
		\]
		Furthermore,
		\[
		\omega(G_n)\le\gamma(G_n)\le\omega(G_n)+1
		\quad\text{for all }n\gg0.
		\]
	\end{thm}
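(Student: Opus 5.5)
The plan is to work with the explicit local description of $G_n$ at the stability stage, prove a few coarse inequalities, and then extract the exact quasi-linear formula by a monotonicity/boundedness argument along residue classes mod $s$. Let $r$ be the stability index from \Cref{assm:main}. For $n\ge r$, the pair $\{i,j\}$ with $i<j$ is an edge of $G_n$ iff there is an edge $\{a,b\}$ ($a<b$) of $G_r$ with
\[
a\le i,\qquad b-a\le j-i,\qquad r-b\le n-j .
\]
In particular, on the ``middle'' vertices $M_n=\{r,\dots,n-r\}$ we have $\{i,j\}\in E(G_n)$ iff $|i-j|\ge s$. So $G_n[M_n]$ is the complement of a unit interval graph, and the two end pieces $[1,r)$ and $(n-r,n]$ have bounded size.

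\textbf{Step 1 (coarse bounds).} Invariance gives $G_{n-1}\subseteq G_n[[n-1]]$, so $\gamma(G_n)$ is nondecreasing. The clique number bounds $\gamma$ from below, and \Cref{thm:clique} gives $\omega(G_n)=n/s+O(1)$. For an upper bound, colour $M_n$ by consecutive blocks of $s$ vertices, one colour per block. Vertices within a block are at distance $<s$ and hence nonadjacent. Give the at most $2r$ end vertices fresh colours. This shows that
\[
d_n\defas\gamma(G_n)-\lfloor n/s\rfloor
\]
is bounded.

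\textbf{Step 2 (key inequality $\gamma(G_{n+s})\le\gamma(G_n)+1$).} Start from an optimal colouring $c$ of $G_n$ and first normalize it on the middle. Colour classes restricted to $M_n$ are cliques of the unit interval graph $\overline{G_n[M_n]}$. I would use the standard exchange argument for clique covers of interval graphs to replace $c|_{M_n}$ by a colouring whose classes are consecutive blocks of length $s$, without increasing the number of colours and without creating conflicts with end vertices. This requires checking that the end-vertex adjacency is monotone in the distance, which it is by the displayed criterion.

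Next choose a block boundary $p$ deep inside $M_n$. Define a colouring of $G_{n+s}$ by
\begin{itemize}
\item keeping $c(i)$ for $i<p$,
\item giving the inserted block $\{p,\dots,p+s-1\}$ one new colour,
\item giving $j\ge p+s$ the colour $c(j-s)$.
\end{itemize}
Pairs on the same side of the block keep their distances and their end conditions (the right-hand condition $n-j$ is shift invariant), so the criterion shows they are edges of $G_{n+s}$ exactly when their images are edges of $G_n$. For a straddling pair $i<p\le p+s\le j$ with $c(i)=c(j-s)$, the block normalization forces the two vertices to be end vertices or to lie in a common block. The latter is impossible across the boundary $p$. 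In the former case they are far apart and nonadjacent in $G_{n+s}$ only if they were nonadjacent in $G_n$, and I would argue this again via the criterion. I expect this step to be the main obstacle: making the normalization compatible with the end pieces, and controlling straddling pairs whose colours coincide.

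\textbf{Step 3 (conclusion).} Step 2 says $d_{n+s}\le d_n$, so $d$ is nonincreasing along each residue class mod $s$. Since $d$ is bounded by Step 1, it is eventually constant on each residue class, i.e.
\[
\gamma(G_n)=\lfloor n/s\rfloor+c_{n\bmod s}\qquad\text{for }n\gg0.
\]
Monotonicity of $\gamma$, together with $\gamma(G_{n+1})\le\gamma(G_n)+1$ (colour the new vertex with a new colour), forces the sequence $c_0,\dots,c_{s-1},c_0+1$ to increase by at most one in total across a period. Choosing $n_0$ just after the unique jump then yields
\[
\gamma(G_n)=\gamma(G_{n_0})+\lfloor (n-n_0)/s\rfloor .
\]
Comparing this with \Cref{thm:clique}(1), which has the same form for $\omega$ with a possibly different $n_0$, gives $\gamma(G_n)-\omega(G_n)$ eventually constant and nonnegative. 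To see that this constant is at most $1$, I would exhibit an explicit near-optimal colouring. On the middle, the block colouring uses exactly as many colours as the clique $\{r,r+s,r+2s,\dots\}$ has vertices. I would then check that the end pieces can be absorbed with at most one extra colour by matching them against a maximum clique that extends into the ends.
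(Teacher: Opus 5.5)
Your argument for the quasi-linearity of $\gamma(G_n)$ is sound in outline and close to the paper's. Step~2 is precisely \Cref{lem:chromatic-upper}. Step~3 is the argument behind (iv)$\Rightarrow$(ii)$\Rightarrow$(i) of \Cref{lem:quasi-linear}, and the elementary clique $\{r,r+s,r+2s,\dots\}$ already supplies the lower bound, so \Cref{thm:clique} is not needed there. The normalization you worry about does go through. A left end vertex's non-neighbours in $[r,n-r]$ form an initial segment of $[r,2r-2]$ (north move), and symmetrically on the right, so you can refill greedily from both ends and place length-$s$ blocks of the end-free colour classes in between. The paper sidesteps this by taking colour classes to be facets of $\IN(G_m)$: by \Cref{lem:IN-facet-dec}, every class meeting the central region is then already an interval $I_t$. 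One small slip: for left--right end pairs you need ``nonadjacent in $G_n$ implies nonadjacent in $G_{n+s}$'', the converse of what you wrote. The criterion gives equivalence there, so this is harmless.

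The genuine gap is the bound $\gamma(G_n)\le\omega(G_n)+1$. First, $\gamma(G_n)-\omega(G_n)$ need not be eventually constant, because the two quasi-linear formulas can have different phases mod $s$. In \Cref{ex:clique}, $G_n^c$ is the induced cycle $(2,3,\dots,n,2)$ plus the edges $\{1,2\},\{1,3\},\{1,n\}$. This gives $\gamma(G_n)=\lfloor n/2\rfloor$ and $\omega(G_n)=\lfloor (n-1)/2\rfloor$ for all $n\ge5$, so the difference alternates between $1$ and $0$ forever. The bound therefore has to be proved for every large $n$, and your plan for it (absorbing the end pieces with one extra colour by matching them against a maximum clique) supplies no mechanism. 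The same example shows why bookkeeping at the two ends cannot suffice. For even $n$, the extra colour comes from the odd antihole induced on $[2,n]$, which runs from one end through the whole middle to the other. Any valid argument must therefore control odd holes and antiholes globally, which amounts to a perfection statement; that is the missing idea.

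The paper deletes one interval $I_t\subseteq[2r,n-2r]$, which is independent and so costs one colour, and shows that $G_n\setminus I_t$ is weakly chordal, hence perfect. It uses that $G_n$ has no induced $C_k$ with $k\ge6$ ($C_5$ being self-complementary), and that an induced cycle of $G_n^c\setminus I_t$ meets $[r,n-r]$ only in monotone paths. Such a path cannot cross from the left end to the right end without hitting $I_t$. So the cycle lies in $[2r-2]\cup[n-2r+2,n]$, has length at most $4r-3$, and is excluded by \Cref{lem:no-induced-cycle}(ii) once $n\ge r(4r-3)$. This yields $\gamma(G_n)\le\gamma(G_n\setminus I_t)+1=\omega(G_n\setminus I_t)+1\le\omega(G_n)+1$.
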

	
	Every matching in a graph on $n$ vertices has at most$\lfloor n/2\rfloor$ edges. The next result shows that along an $\Inc$-invariant chain, the matching number eventually attains this upper bound.
	
	\begin{thm}[\Cref{thm:matching-number}]
		Let $\Gc=(G_n)_{n\ge1}$ be an $\Inc$-invariant chain of graphs. Then
		\[
		\mu(G_n)=\left\lfloor\frac{n}{2}\right\rfloor
		\quad\text{for all }n\gg0.
		\]
	\end{thm}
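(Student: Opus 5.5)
The plan is to build an explicit matching in $G_n$ that misses at most one vertex, using the stability of the chain. I will work under \Cref{assm:main}. Fix a stability index $r$, so that for $m\ge r$ the edge set $E(G_m)$ is the union of the images of $E(G_r)$ under the increasing maps $[r]\to[m]$.

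\textbf{Step 1: edges away from the boundary.} Take an edge $\{a,b\}$ of $G_r$ with $a<b$, and set $d=b-a$. An increasing map $\pi\colon[r]\to[m]$ with $\pi(a)=p$ and $\pi(b)=q$ exists exactly when
\[
p\ge a,\qquad q-p\ge b-a,\qquad m-q\ge r-b .
\]
Hence $G_m$ contains every edge $\{p,q\}$ with $p<q$, $q-p\ge d$, and $p,q$ in the ``middle'' interval $M_m=[a,\,m-(r-b)]$.

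\textbf{Step 2: the boundary vertices.} There are only $c=(a-1)+(r-b)$ vertices outside $M_m$, a number independent of $m$. For each boundary vertex $v$, I will use the reduced form of \Cref{assm:main} to find a generating edge of $G_r$ that contains the position of $v$. I expect this to guarantee that $v$ is not isolated, since otherwise one could relabel and drop it. Applying a suitable increasing map then gives, for all large $m$, an edge from $v$ to a whole range of vertices deep inside $M_m$. Concretely, $v$ should be adjacent to every vertex of some interval of length growing linearly in $m$; the argument is the same as in Step 1, with one endpoint pinned.

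\textbf{Step 3: assembling the matching.} Greedily match the $c$ boundary vertices to distinct partners in $M_m$. I will choose these partners near the two ends of $M_m$, so that they remain pairwise distinct and stay in the allowed ranges. After removing them, what is left is a consecutive interval $[u,u+L-1]$ with $L\ge m-2c-O(1)$.

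Write $L=2t$ or $L=2t+1$. Match $i$ with $i+t$ for $u\le i<u+t$. Each such edge has gap $t\ge d$ for $m\gg0$, and both endpoints lie in $M_m$, so by Step 1 these are edges of $G_m$. This leaves at most one vertex unmatched, namely only when $m$ is odd. The result is a matching of size $\lfloor m/2\rfloor$, and the reverse inequality $\mu(G_m)\le\lfloor m/2\rfloor$ is trivial.

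\textbf{Main obstacle.} The crux is Step 2. Without the reduction in \Cref{assm:main} the statement is false. For instance, if every generating edge avoids position $1$, then vertex $1$ is isolated in every $G_n$, and $\mu(G_n)=\lfloor (n-1)/2\rfloor$, which is smaller than $\lfloor n/2\rfloor$ for even $n$. So the key point is to extract precisely from \Cref{assm:main} that, at the stability stage, the first positions and the last positions are each covered by some generating edge. Once that is in hand, the shifting argument of Step 1, with one endpoint fixed, gives each such vertex a long interval of neighbours inside $M_m$. The only remaining bookkeeping is to choose the partners in Step 3 so that they are disjoint and sit at the ends of $M_m$, leaving a clean interval to halve.
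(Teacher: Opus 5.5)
Your argument is correct in substance, and it takes a different route from the paper. The paper never writes down a matching. It proves an augmentation step: for $n\ge 4r$, if $2\mu(G_n)<n$ then $\mu(G_{n+1})>\mu(G_n)$. This uses two facts: $N_{G_{n+1}}(n+1)=[b(\Gc),\,n-r+B(\Gc)+1]$ misses fewer than $r$ vertices of $[n]$ (\Cref{lem.GWeaklyChordal}), and one can exchange $\{v,w\}\in M$ for $\{u,v\},\{w,n+1\}$, with $u$ uncovered and $\{u,v\}\in E(G_n)$ supplied by \Cref{lem:gap}. Induction then gives $\mu(G_n)\ge\min\{n-4r,\lfloor n/2\rfloor\}$, hence the theorem for $n\ge 8r$.

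You instead apply the triangle description of \Cref{lem:E(Gn)} to a single edge $(a,b)$ of $G_r$. This shows that $G_m[M_m]$ contains every pair at distance at least $d=b-a$, and leaves only the $c=r-1-d$ boundary vertices to handle separately. Your route is more elementary and fully explicit, since only \Cref{lem:E(Gn)} is needed. With concrete partners it even gives a slightly better threshold: matching a left boundary vertex $v$ with $v+2r$ and a right boundary vertex $v$ with $v-2r$ works for all $m\ge 5r$. The paper's incremental argument, in exchange, reuses structural lemmas it has already proved and avoids choosing partners at all.

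Two steps need to be stated more carefully.

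\emph{Step 2.} \Cref{assm:main} does not rule out isolated vertices of $G_r$; only positions $1$ and $r$ are guaranteed to lie on edges. So do not look for an edge of $G_r$ at the position of $v$. Instead, pin position $1$ to $v$ along the edge $(1,i)$. By \Cref{lem:E(Gn)}, each $v\le a-1$ is then adjacent to every $w\in[v+i-1,\,m-r+i]$. Likewise, pin position $r$ to $v$ along $(j,r)$, so that each $v\ge m-r+b+1$ is adjacent to every $w\in[j,\,v-r+j]$.

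\emph{Step 3.} The unused part of $M_m$ need not be a consecutive interval. For example, take $E(G_6)=\{(1,4),(2,6)\}$. With either choice of $(a,b)$, the boundary vertices are adjacent to neither endpoint of $M_m$, so their partners lie strictly inside $M_m$. Moreover, $L\ge m-2c-O(1)$ is not enough for the count, because every unused vertex of $M_m$ must be matched.

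Both issues disappear if you halve an arbitrary set rather than an interval. Let $s_1<\dots<s_L$ be the unused vertices of $M_m$, so $L=m-2c$, and set $t=\lfloor L/2\rfloor$. Pair $s_k$ with $s_{k+t}$ for $1\le k\le t$. Then $s_{k+t}-s_k\ge t\ge d$, since $L=m-2r+2+2d\ge 2d$. So these pairs are edges by Step 1, and the matching has $c+t=\lfloor m/2\rfloor$ edges.
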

	
	Finally, we consider admissible and minimal paths. These paths can be used to describe Gr\"obner bases and generic initial ideals of binomial edge ideals \cite{CDG,HHHKR,O11}. Our result gives uniform bounds on their lengths and shows that the maximal lengths eventually stabilize.
	
	\begin{thm}[\Cref{prop:ad-path-length}]
		Let $\Gc=(G_n)_{n\ge1}$ be an $\Inc$-invariant chain of graphs. Then for $n\gg0$, the following statements hold:
		\begin{enumerate}
			\item 
			Every admissible path in $G_n$ has length at most $3$.
			\item 
			Every minimal path in $G_n$ has length at most $5$.
			\item 
			The maximal lengths of admissible and minimal paths in $G_n$ are constant.
		\end{enumerate}
	\end{thm}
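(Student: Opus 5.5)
The plan is to reduce everything to the explicit description of $G_n$ for large $n$ supplied by the stability of the chain. Fix the stability index $r$ and write $G=G_r$. For $n\ge r$, the edges of $G_n$ are exactly the sets $\{\pi(a),\pi(b)\}$ with $\{a,b\}\in E(G)$ and $\pi\colon[r]\to[n]$ increasing. Concretely, $\{u,v\}$ with $u<v$ is an edge of $G_n$ if and only if some edge $\{a,b\}$ of $G$, $a<b$, satisfies
\[
u\ge a,\qquad v-u\ge b-a,\qquad n-v\ge r-b.
\]
So adjacency in $G_n$ depends only on the triple $(\min(u,r),\ \min(v-u,r),\ \min(n-v,r))$, i.e.\ on positions truncated at $r$.

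First consequence: a \emph{bulk} vertex (one at distance $\ge r$ from both $1$ and $n$) is adjacent to every other bulk vertex at distance $\ge s=\spi(\Gc)$. Indeed, an edge $\{a,b\}$ of $G$ realizing $b-a=s$ can be placed with $\pi(a)=u$ and $\pi(b)=v$.

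\textbf{Parts (1) and (2).} I will prove the bounds by a counting and chord argument.

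For an admissible path $i=i_0,i_1,\dots,i_\ell=j$ with $i<j$, every interior vertex lies outside $[i,j]$, either below $i$ or above $j$. The minimality condition forbids shortcuts: no proper subsequence of the interior vertices, together with $i$ and $j$, forms a path. In particular the path is induced and has no chords of a "skipping" type.

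I will use the threshold structure above to show:
\begin{itemize}
\item[(a)] At most two interior vertices can lie on each side of $[i,j]$. If three interior vertices lay, say, below $i$, then the one farthest from $i$ together with $i$ or with another path vertex would form an edge by the inequality description. This uses the monotonicity that moving the left endpoint further left, or enlarging the gap, preserves edges when the third coordinate is unchanged, and that creates a forbidden shortcut.
\item[(b)] A more careful case analysis then caps the total number of interior vertices at $2$, giving length at most $3$.
\end{itemize}

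For minimal paths there is no ordering constraint on interior vertices, but the path must be induced. I will partition its vertices into three groups: those within $r$ of $1$, those within $r$ of $n$, and bulk vertices. Two bulk vertices at distance $\ge s$ are adjacent, and the same holds for edge-end vertices in the corresponding truncated sense. Hence each group can contain only a bounded number of pairwise non-consecutive vertices. Chasing the precise constants, using the reduced form of \Cref{assm:main} so that the relevant thresholds are $s$, should yield at most $6$ vertices, i.e.\ length $\le5$.

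\textbf{Part (3).} I will use a "type" argument. By parts (1) and (2), any admissible or minimal path in $G_n$ has at most $6$ vertices. Sort the vertices of such a path and record the type: their relative order, the truncated distances $\min(\cdot,r)$ to $1$, to $n$, and between consecutive sorted vertices, together with the order in which the path visits them. By the inequality description, adjacency among the vertices, and hence whether the sequence is a path, is induced, and satisfies the admissibility or minimality conditions, is determined by the type alone.

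The set of possible types is finite and independent of $n$. Moreover, for $n\ge 7r$ (say) every type is realizable in $G_n$ if and only if it is realizable in $G_{7r}$, since one can stretch gaps that are already $\ge r$. Therefore the set of lengths attained by admissible (respectively minimal) paths is constant for $n\gg0$, and in particular so is its maximum.

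\textbf{Main obstacle.} I expect the hardest step to be the sharp constants $3$ and $5$ in (1) and (2). The threshold argument readily gives \emph{some} uniform bound, and that already suffices for (3). Getting exactly $3$ and $5$ requires a careful case analysis on where the vertices sit relative to the two ends. My first attempt will be to handle bulk vertices via the distance-$s$ adjacency, and to treat the at most boundedly many end vertices using the fact that an end vertex adjacent to some bulk vertex is adjacent to all bulk vertices farther away in the same direction.
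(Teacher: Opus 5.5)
\textbf{The gap: parts (1) and (2) are not proved.} These carry the real content of the statement, and you never establish the constants $3$ and $5$. The mechanism you sketch also cannot produce them.

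Step (a) rests on the claim that moving the left endpoint of an edge further left, with $n-v$ unchanged, preserves adjacency. This is false. Moving $u$ left lowers the first coordinate $\min(u,r)$ and can destroy the condition $u\ge a$. A leftward move is valid only while the new left endpoint stays $\ge r$ (the west move, \Cref{lem.shortmoves}(ii)). Likewise, a rightward move of the right endpoint is valid only while it stays $\le n-r$ (the north move, \Cref{lem.shortmoves}(iii)).

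The end regions $[1,r-1]$ and $[n-r+1,n]$ are where these moves break down. There adjacency is governed by the arbitrary finite graph $G_r$, not by the single threshold $s$. These regions are exactly where long minimal paths live; compare the minimal path $(3,1,4,n-3,n,n-2)$ of length $5$ in the paper's example. Your grouping argument for minimal paths has the same blind spot. It controls the path vertices near the two ends only by the sizes of those regions, which gives a bound depending on $r$, not the constant $5$.

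\textbf{What is missing.} You need the reductions that tame the end regions.

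The first is \Cref{lem:gap}: if $(k,l)\in E(G_n)$, $k\le u<v\le l$ and $v-u\ge r$, then $(u,v)\in E(G_n)$. In particular, an admissible path between vertices at distance $\ge r$ is a single edge (\Cref{lem:single-edge}).

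The second is a case split on the larger endpoint $v=u_q$ of a minimal path $(u_0,\dots,u_q)$.
\begin{enumerate}
\item If $v\le n-r$, north moves show that the only path edge with both ends below $v$ can be $\{u_{q-2},u_{q-1}\}$ with $u_{q-1}<u_{q-2}$ (\Cref{lem:min-max}). A case analysis with north and west moves and \Cref{lem:gap} then gives $q\le3$ (\Cref{lem:r<j<n-r}).
\item If $v>n-r$ and $q\ge4$, one shows that $u_0,\dots,u_{q-3}$ all lie in $[1,r-1]$ (\Cref{cl:u<r}). The subpath $(u_0,\dots,u_{q-3})$ then falls under the first case, so $q\le6$. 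The case $q=6$ is excluded by applying \Cref{lem:gap} to the edge $(u_3,u_4)$, after checking that $u_4>n-r$ and $u_3<\max\{u_0,u_1,u_2\}<r$.
\item For an admissible path with $v>n-r$ and $q\ge4$, \Cref{lem:single-edge} forces $u>n-2r$. A few west moves combined with \Cref{lem:gap} then produce a forbidden chord.
\end{enumerate}
Without arguments of this kind, (1) and (2) remain unproved.

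\textbf{Part (3) is sound and takes a different route from the paper.} Adjacency of $u<v$ in $G_n$ depends only on $(\min(u,r),\min(v-u,r),\min(n-v,r))$. Hence being a minimal or admissible path on at most $L$ vertices is a property of the truncated-gap type. Once $n$ exceeds roughly $(L+1)r$, the realizable types are exactly those with some gap $\ge r$.

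The paper instead uses the structure established for (1) and (2): when $q\ge4$, at most three path vertices lie in $[r,n]$. This lets it find an interval $[\alpha,\beta]$ with $\beta-\alpha\ge r$ avoiding the path. It then transports paths in both directions through the isomorphism $\sigma_k\colon G_n\setminus[\alpha,\beta]\to G_{n+1}\setminus[\alpha,\beta+1]$ of \Cref{lem:del-graph-iso}, which yields the explicit threshold $n\ge5r+9$.

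Your argument is more conceptual and needs only some uniform bound $L$, but you merely assert one. A crude bound is within reach along your lines, and you should write it out. On $[r,n-r]$, adjacency is exactly distance $\ge s$. So each maximal run of such vertices along an induced path has at most four vertices, because complements of unit interval graphs have no induced $P_5$. The runs are separated by the at most $2r-1$ remaining vertices.
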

	
	Taken together, these results reveal several distinct asymptotic regimes: stabilization for independence numbers and maximal path lengths, polynomial and quasi-polynomial growth for face data, quasi-linear growth for clique and chromatic numbers, and extremal growth for matching numbers. In each case, Inc-invariance turns an a priori unbounded sequence of graphs into a family governed by finitely many parameters. This work thus provides a combinatorial counterpart to the theory of $\Inc$-invariant ideals and further illustrates the interplay between infinite symmetry and stabilization.
	
	The paper is organized as follows. In \Cref{subsec.pre}, we collect the necessary background on graphs and simplicial complexes. In
	\Cref{subsec.InvChain}, we develop the basic properties of $\Inc$-invariant chains and introduce the numerical parameters used throughout the paper. Independence complexes, clique complexes, chromatic numbers, and matching numbers are studied in \Cref{subsec.indep,subsec.cliq,subsec.chro,subsec.mat},
	respectively. In \Cref{subsec.path}, we study admissible and minimal paths and their applications to binomial edge ideals. The Appendix contains proofs of several technical results used earlier in the paper.

	\section{Preliminaries}\label{subsec.pre}
	
	In this section, we collect the notation and terminology used throughout the paper. We refer to \cite{BM} for standard notions from graph theory and to \cite{BH98} for background on simplicial complexes and Stanley--Reisner rings.
	
	Let $\N$ denote the set of positive integers. For integers $m$ and $n$ with $m\ge n$, set
	\[
	[n,m]=\{n,n+1,\dots,m\}.
	\]
	In particular, we write $[m]=[1,m]=\{1,\dots,m\}$, and we use the convention that $[0]=\emptyset$. 
	
	%-----------------------------------------------------
	\subsection{Graphs and their numerical invariants}
	
	All graphs in this paper are simple. Let $G$ be such a graph. We denote its vertex set by $V(G)$ and its edge set by $E(G)\subseteq\binom{V(G)}{2}$. An edge with endpoints $u$ and $v$ is denoted by $\{u,v\}$. When $V(G)\subseteq\N$ and $u<v$, we often identify $\{u,v\}$ with the ordered pair $(u,v)$.
	
	For two graphs $G$ and $H$, we write $G\subseteq H$ if
	\[
	V(G)\subseteq V(H)
	\quad\text{and}\quad
	E(G)\subseteq E(H).
	\]
	Their \emph{intersection} and \emph{union} are the graphs defined by
	\begin{align*}
		V(G\cap H)&=V(G)\cap V(H),
		& E(G\cap H)&=E(G)\cap E(H),\\
		V(G\cup H)&=V(G)\cup V(H),
		& E(G\cup H)&=E(G)\cup E(H).
	\end{align*}
	For a set $U$, let $K_U$ denote the complete graph on $U$; when $U=[n]$, we simply write $K_n$. If $U\subseteq V(G)$, the subgraph of $G$ \emph{induced} by $U$ is
	\[
	G[U]=G\cap K_U.
	\]
	For $W\subseteq V(G)$, the graph obtained by deleting the vertices in $W$ is
	\[
	G\setminus W=G[V(G)\setminus W].
	\]

	The \emph{complement} of $G$, denoted by $G^c$, is the graph on $V(G)$ with edge set
	\[
	E(G^c)=\binom{V(G)}{2}\setminus E(G).
	\]
	Thus, for every $U\subseteq V(G)$, we have
	\[
	(G\setminus U)^c=G^c\setminus U.
	\]
	
	For a vertex $u\in V(G)$, its \emph{open neighborhood} and \emph{closed neighborhood} are, respectively,
	\[
	N_G(u)=\{v\in V(G)\mid\{u,v\}\in E(G)\}
	\quad\text{and}\quad
	N_G[u]=N_G(u)\cup\{u\}.
	\]

	A \emph{path} of length $q\ge 1$ in $G$ is a sequence of vertices
	\[
	P=(u_0,u_1,\dots,u_q)
	\]
	such that $\{u_{i-1},u_i\}\in E(G)$ for $1\le i\le q$. A \emph{cycle} of length $q\ge3$ is a path
	\[
	C_q=(u_1,u_2,\dots,u_q,u_1)
	\]
	such that $u_1,\dots,u_q$ are distinct. A path or cycle is \emph{induced} if the subgraph induced by its vertices contains no edges other than those belonging to the path or cycle. A graph $G$ is \emph{weakly chordal} if neither $G$ nor $G^c$ contains an induced cycle of length at least $5$.
	
	A subset $U\subseteq V(G)$ is a \emph{clique} if every two distinct vertices in $U$ are adjacent, and it is an \emph{independent set} if no two distinct vertices in $U$ are adjacent. The maximum cardinalities of a clique and an independent set are called the \emph{clique number} and the \emph{independence number} of $G$, denoted by $\omega(G)$ and $\alpha(G)$, respectively. By definition,
	$\alpha(G)=\omega(G^c).$
	
	For $k\in\N$, a \emph{$k$-coloring} of $G$ is a map $c\colon V(G)\to[k]$. It is \emph{proper} if $c(u)\ne c(v)$ for every $\{u,v\}\in E(G)$. The \emph{chromatic number} of $G$, denoted by $\gamma(G)$, is the least $k$ for which $G$ admits a proper $k$-coloring. Equivalently, $\gamma(G)$ is the minimum number of independent sets whose union is $V(G)$. Since the vertices of a clique must receive distinct colors, one always has
	\begin{equation}
		\label{eq:clique-chromatic-bound}
		\omega(G)\le\gamma(G).
	\end{equation}
	
	A graph $G$ is \emph{perfect} if
	%\[
	$\gamma(H)=\omega(H)$
	%\]
	for every induced subgraph $H$ of $G$. The Strong Perfect Graph Theorem, proved by Chudnovsky et al. \cite{CRST} (see also \cite[Theorem 14.18]{BM}), states that a graph is perfect if and only if neither the graph nor its complement contains an induced odd cycle of length at least $5$. Consequently, every weakly chordal graph is perfect.
	
	A \emph{matching} in $G$ is a set of pairwise disjoint edges. The maximum cardinality of a matching in $G$ is the \emph{matching number} of $G$, denoted by $\mu(G)$. If $|V(G)|=n$, then every matching covers twice as many vertices as it has edges, and hence
	\begin{equation}
		\label{eq:matching-upper-bound}
		\mu(G)\le\left\lfloor\frac{n}{2}\right\rfloor.
	\end{equation}
	Finally, if $H\subseteq G$, then
	\[
	\omega(H)\le\omega(G),
	\qquad
	\gamma(H)\le\gamma(G),
	\qquad
	\mu(H)\le\mu(G).
	\]
	
	%-----------------------------------------------------
	\subsection{Simplicial complexes}
	
	A \emph{simplicial complex} $\Delta$ on a finite vertex set $V(\Delta)$ is a collection of subsets of $V(\Delta)$ such that, whenever $F'\in\Delta$ and $F\subseteq F'$, one has $F\in\Delta$. The elements of $\Delta$ are called \emph{faces}, and its maximal faces under inclusion are called \emph{facets}. We denote the set of facets of $\Delta$ by $\FF(\Delta)$.
	The \emph{dimension} of a face $F\in\Delta$ is $\dim F=|F|-1$, and the dimension of $\Delta$ is
	\[
	\dim\Delta=\max\{\dim F\mid F\in\Delta\}.
	\]
	Suppose that $\dim\Delta=d-1$. The \emph{$f$-vector} of $\Delta$ is
	$(f_{-1},f_0,\dots,f_{d-1}),$
	where $f_i=f_i(\Delta)$ is the number of $i$-dimensional faces of $\Delta$; in particular, $f_{-1}=1$ counts the empty face. The \emph{Euler characteristic} and \emph{reduced Euler characteristic} of $\Delta$ are defined by
	\[
	\chi(\Delta)=\sum_{i=0}^{d-1}(-1)^if_i
	\quad\text{and}\quad
	\widetilde{\chi}(\Delta)=\sum_{i=-1}^{d-1}(-1)^if_i=\chi(\Delta)-1.
	\]
	
	Every graph gives rise to two simplicial complexes that will be central to this paper. The \emph{clique complex} of $G$, denoted by $\Cl(G)$, is the simplicial complex whose faces are the cliques of $G$. The \emph{independence complex} of $G$, denoted by $\IN(G)$, is the simplicial complex whose faces are the independent sets of $G$. Both complexes have vertex set $V(G)$, and
	\[
	\IN(G)=\Cl(G^c).
	\]
	Moreover,
	\begin{equation}
		\label{eq:dimensions-graph-complexes}
		\dim\Cl(G)=\omega(G)-1
		\quad\text{and}\quad
		\dim\IN(G)=\alpha(G)-1.
	\end{equation}
	
	%-----------------------------------------------------
	\subsection{Stanley--Reisner rings and edge ideals}
	
	Let $\Delta$ be a simplicial complex of dimension $d-1$ on vertex set $V(\Delta)$, and let $\kk$ be a field. Set
	\[
	R=\kk[x_v\mid v\in V(\Delta)].
	\]
	For $F\subseteq V(\Delta)$, write $x_F=\prod_{v\in F}x_v$. The \emph{Stanley--Reisner ideal} of $\Delta$ is the squarefree monomial ideal
	\[
	I_\Delta=\langle x_F \mid F \notin \Delta\rangle\subseteq R,
	\]
	and the \emph{Stanley--Reisner ring} of $\Delta$ is the quotient ring
	$\kk[\Delta]=R/I_\Delta.$
	Its Krull dimension is
	\[
	\dim\kk[\Delta]=\dim\Delta+1=d.
	\]
	The standard grading of $R$ induces a grading
	$\kk[\Delta]=\bigoplus_{q\ge0}\kk[\Delta]_q.$
	The \emph{Hilbert function} and \emph{Hilbert series} of $\kk[\Delta]$ are
	\[
	H(\kk[\Delta],q)=\dim_\kk\kk[\Delta]_q
	\quad\text{and}\quad
	\HS_{\kk[\Delta]}(t)
	=\sum_{q\ge0}H(\kk[\Delta],q)t^q,
	\]
	respectively. The Hilbert series is determined by the $f$-vector through the identity
	\begin{equation}
		\label{eq:f-vector}
		\HS_{\kk[\Delta]}(t)
		=\sum_{i=-1}^{d-1}
		\frac{f_i t^{i+1}}{(1-t)^{i+1}};
	\end{equation}
	see, e.g., \cite[Theorem~5.1.7]{BH98}.
	Writing the Hilbert series in the form
	\begin{equation}
		\label{eq:h-vector}
		\HS_{\kk[\Delta]}(t)
		=\frac{h_0+h_1t+\cdots+h_dt^d}{(1-t)^d},
	\end{equation}
	we call
	\[
	h(\Delta;t)=h_0+h_1t+\cdots+h_dt^d
	\]
	the \emph{$h$-polynomial} of $\Delta$ and $(h_0,h_1,\dots,h_d)$ its \emph{$h$-vector}. Comparing \eqref{eq:f-vector} and \eqref{eq:h-vector} gives
	\begin{equation}
		\label{eq:f-and-h-vector}
		h_j =\sum_{i=0}^j (-1)^{j-i} \binom{d-i}{j-i}f_{i-1} \quad\text{and}\quad
		f_{j-1} =\sum_{i=0}^j \binom{d-i}{j-i}h_{i} 
	\end{equation}
	for $j=0,\dots,d.$ In particular,
	\begin{equation}
		\label{eq:top-h-euler}
		h_d
		=\sum_{i=0}^d(-1)^{d-i}f_{i-1}
		=(-1)^{d-1}\widetilde{\chi}(\Delta).
	\end{equation}
	
	Let $G$ be a graph on $[n]$, and let $R=\kk[x_1,\dots,x_n]$. The \emph{edge ideal} of $G$ is the squarefree quadratic monomial ideal
	\[
	I(G)=\langle x_ix_j\mid\{i,j\}\in E(G)\rangle\subseteq R.
	\]
	Since the minimal nonfaces of $\IN(G)$ are precisely the edges of $G$, while those of $\Cl(G)$ are the edges of $G^c$, we have
	\[
	I_{\IN(G)}=I(G)
	\quad\text{and}\quad
	I_{\Cl(G)}=I(G^c).
	\]
	Consequently,
	\[
	R/I(G)=\kk[\IN(G)],
	\qquad
	\dim R/I(G)=\alpha(G),
	\]
	and similarly $R/I(G^c)=\kk[\Cl(G)]$ has dimension $\omega(G)$. We use the abbreviation
	\[
	\HS_G(t)\defas\HS_{R/I(G)}(t).
	\]
	
	We conclude with the standard deletion formula for this Hilbert series; see \cite[Theorem~5.1]{Wa92} and \cite[Proposition~1.20]{Va98}.
	
	\begin{lem}
		\label{lem:Hilbert-series-graph}
		Let $G$ be a graph and let $u\in V(G)$. Then
		\[
		\HS_G(t)
		=\HS_{G\setminus u}(t)
		+\frac{t}{1-t}\HS_{G\setminus N_G[u]}(t).
		\]
	\end{lem}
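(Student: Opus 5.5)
The plan is to derive the deletion formula from the Stanley--Reisner description $R/I(G)=\kk[\IN(G)]$ and the $f$-vector form of the Hilbert series in \eqref{eq:f-vector}. For a simplicial complex $\Delta$, \eqref{eq:f-vector} can be rewritten as
\[
\HS_{\kk[\Delta]}(t)=\sum_{F\in\Delta}\Bigl(\frac{t}{1-t}\Bigr)^{|F|}.
\]
This holds because each face $F$ contributes exactly the monomials whose support is $F$, and the generating function of these is $(t/(1-t))^{|F|}$. Applied to $\Delta=\IN(G)$, it reduces the lemma to a bijective decomposition of the independent sets of $G$.

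First, I will partition the independent sets $F$ of $G$ according to whether $u\in F$.

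\begin{enumerate}
\item The independent sets not containing $u$ are exactly the independent sets of $G\setminus u$. Their contribution is therefore $\HS_{G\setminus u}(t)$.
\item An independent set containing $u$ has the form $F=\{u\}\sqcup F'$, where $F'$ contains no neighbor of $u$. Thus $F'$ is an independent set of $G\setminus N_G[u]$. Conversely, every such $F'$ together with $u$ gives an independent set of $G$, since no vertex of $F'$ is adjacent to $u$. This is a bijection, and it satisfies $|F|=|F'|+1$.
\end{enumerate}

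The second class therefore contributes
\[
\sum_{F'\in\IN(G\setminus N_G[u])}\Bigl(\frac{t}{1-t}\Bigr)^{|F'|+1}=\frac{t}{1-t}\,\HS_{G\setminus N_G[u]}(t).
\]
Adding the two contributions gives the identity in the lemma.

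One convention must be checked: if $G\setminus N_G[u]$ has no vertices, its Hilbert series is $1$, coming from the empty face. The formula then still holds, because the only independent set containing $u$ is $\{u\}$, which contributes $t/(1-t)$.

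There is no real obstacle. The only point requiring care is justifying the face-sum formula for the Hilbert series, either by grouping monomials by their support or directly from \eqref{eq:f-vector}.
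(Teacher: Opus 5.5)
Your argument is correct and complete. The face-sum form of \eqref{eq:f-vector}, $\HS_{\kk[\Delta]}(t)=\sum_{F\in\Delta}(t/(1-t))^{|F|}$, combined with splitting the independent sets according to whether they contain $u$, gives the identity directly, and your check of the empty-face case is right. You implicitly compute $\HS_{G\setminus u}$ and $\HS_{G\setminus N_G[u]}$ in polynomial rings on the vertex sets of those graphs. That is the convention under which the formula holds, and it is the one the paper uses (e.g.\ the edgeless graph on $s-1$ vertices in the proof of \Cref{saturated}).

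The paper gives no proof of its own here; it quotes the lemma from \cite{Wa92,Va98}. The standard algebraic proof of deletion formulas of this kind uses the exact sequence
\[
0\longrightarrow \bigl(R/(I(G):x_u)\bigr)(-1)\xrightarrow{\ \cdot x_u\ } R/I(G)\longrightarrow R/\bigl(I(G)+(x_u)\bigr)\longrightarrow 0,
\]
together with $I(G)+(x_u)=I(G\setminus u)+(x_u)$ and $I(G):x_u=I(G\setminus N_G[u])+(x_v\mid v\in N_G(u))$. In that version the factor $1/(1-t)$ appears because $x_u$ is a free variable modulo the colon ideal.

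Your route is more elementary and self-contained: it is just the independence-polynomial recursion rewritten in the variable $t/(1-t)$. The exact-sequence route needs the colon-ideal computation, but in return it gives $\HS_{R/I}(t)=\HS_{R/(I+(x))}(t)+t\,\HS_{R/(I:x)}(t)$ for any homogeneous ideal $I$ and any variable $x$, with no use of squarefreeness or of a simplicial complex.
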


	%-----------------------------------------------------	 	
	\section{Invariant chains of graphs}
	\label{subsec.InvChain}
	
	In this section, we introduce invariant chains of graphs and establish their basic properties. 
	
	%-----------------------------------------------------
	\subsection{The monoid Inc}
	The \emph{monoid of strictly increasing maps on $\N$} is defined by
	\[
	\Inc=\{\pi\colon\N\longrightarrow\N\mid \pi(n)<\pi(n+1)\text{ for all }n\ge1\}.
	\]
	For positive integers $n\le m$, consider the subset
	\[
	\Inc_{n,m}=\{\pi\in\Inc\mid \pi(n)\le m\}.
	\]
	For every $k\in\Z_{\ge0}$, the map $\sigma_k\in\Inc$ given by
	\begin{equation}
		\label{eq.sigma}
		\sigma_k(i)=
		\begin{cases}
			i,   & \text{if }1\le i\le k,\\
			i+1, & \text{if }i\ge k+1
		\end{cases}
	\end{equation}
	belongs to $\Inc_{n,n+1}$ for every $n\ge1$. The next factorization slightly generalizes \cite[Lemma~3.2]{KLR22}; see also \cite[Proposition~4.6]{NR17}.
	
	\begin{lem}
		\label{lem:Inc-decomposition}
		For all positive integers $r\le n\le m$, one has
		\[
		\Inc_{r,m}=\Inc_{n,m}\circ\Inc_{r,n}.
		\]
	\end{lem}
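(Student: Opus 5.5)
We prove the two inclusions separately, starting with the easy one. For $\Inc_{n,m}\circ\Inc_{r,n}\subseteq\Inc_{r,m}$, take $\pi\in\Inc_{n,m}$ and $\tau\in\Inc_{r,n}$. The composite $\pi\circ\tau$ is again strictly increasing, so it lies in $\Inc$. Since $\tau(r)\le n$ and $\pi$ is increasing, we get $\pi(\tau(r))\le\pi(n)\le m$. Hence $\pi\circ\tau\in\Inc_{r,m}$.

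For the reverse inclusion, let $\rho\in\Inc_{r,m}$, so that $\rho(r)\le m$. We must write $\rho=\pi\circ\tau$ with $\pi(n)\le m$ and $\tau(r)\le n$. We build $\tau$ by fixing $[r]$ and then tracking $\rho$ beyond $r$ with a shift:
\[
\tau(i)=\begin{cases} i, & 1\le i\le r,\\ \max\{r,\ \rho(i)-(m-n)\}+\ (\text{adjustment}), & i>r.\end{cases}
\]
Rather than pursue this closed formula, we use a cleaner route: induction on $m-n$ via the maps $\sigma_k$ from \eqref{eq.sigma}.

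First we treat the case $m=n+1$, showing $\Inc_{r,n+1}\subseteq\sigma_k\circ\Inc_{r,n}$ for a suitable $k$. Here $\sigma_k\in\Inc_{n,n+1}$ for every $k$. Given $\rho\in\Inc_{r,n+1}$, there are two cases.
\begin{enumerate}
\item If $\rho(r)\le n$, choose $k\ge \rho(r)$ large, say $k=n$. Define $\tau(i)=\rho(i)$ for $i\le r$, and $\tau(i)=\rho(i)-1$ when $\rho(i)>n+1$. Between these, $\tau$ is continued so that $\sigma_n\circ\tau=\rho$. This works because $\rho$ skips at most nothing problematic: $\sigma_n$ misses only the value $n+1$.
\item If $\rho(r)=n+1$, the value $n+1$ is hit at index $r$. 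Since $\rho$ is strictly increasing with $\rho(r)=n+1$, it misses some value $j\le n+1$ in $[n+1]$ whenever $r\le n$; if $r=n+1$ then $n=m$, which is not the present case. Take $k=j-1$, so that $\sigma_k$ misses exactly $j$. Set $\tau=\sigma_k^{-1}\circ\rho$, which is well defined because $\rho$ avoids $j$. It is strictly increasing, and $\tau(r)=n$.
\end{enumerate}
Case (1) is handled the same way: take $j$ to be any value not in the image of $\rho$, for instance $j=n+1$ if $\rho$ misses it, or else a missed value $\le n+1$. Such a $j$ exists because $\rho(r)\le n$ and $\rho$ is injective on an infinite set with infinitely many gaps permitted. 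More carefully, if $\rho$ hits every value of $\N$ then $\rho=\mathrm{id}$, and $\mathrm{id}=\sigma_k\circ\tau$ is impossible. So in that case we use $\pi=\mathrm{id}\in\Inc_{n,m}$ directly. This shows the general principle that $\pi=\mathrm{id}$ suffices whenever $\rho(r)\le n$.

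For the general step, write $\Inc_{r,m}\subseteq\Inc_{m-1,m}\circ\Inc_{r,m-1}$ using the one-step case. Iterating gives $\Inc_{r,m}\subseteq\Inc_{m-1,m}\circ\cdots\circ\Inc_{n,n+1}\circ\Inc_{r,n}$. The composite of the first $m-n$ factors lies in $\Inc_{n,m}$ by the easy inclusion already proved, which finishes the argument.

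The main obstacle is the bookkeeping in the one-step case. There we must choose the omitted value $j$ correctly, and in particular check that $\tau(r)\le n$. This follows because $\sigma_k^{-1}$ lowers values above $j$ by one, and $\rho(r)\le n+1$ with $j\le\rho(r)$ whenever $\rho(r)=n+1$.
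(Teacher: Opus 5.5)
Your proof is correct. It has the same skeleton as the paper's, a one-step factorization that is then iterated, but it uses a different one-step identity and proves it directly instead of citing it.

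The paper quotes \cite[Lemma~3.2]{KLR22}, namely $\Inc_{a,m}=\Inc_{a+1,m}\circ\Inc_{a,a+1}$. That identity keeps the target $m$ fixed and raises the domain index, and the paper chains it for $a=r,\dots,n-1$. You instead keep $r$ fixed and lower the target. You prove $\Inc_{r,p+1}\subseteq\Inc_{p,p+1}\circ\Inc_{r,p}$ for $p\ge r$, which is the case $m=n+1$ of the lemma itself. You then iterate for $p=m-1,\dots,n$ and absorb the left factors into $\Inc_{n,m}$ using the easy inclusion.

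Your key case is sound. If $\rho(r)=p+1$, then $\rho([r-1])$ consists of $r-1<p$ values in $[p]$, so some $j\in[p]$ is omitted. This $j$ is omitted by all of $\rho$, since $\rho(i)>p+1$ for $i>r$. Hence $\tau=\sigma_{j-1}^{-1}\circ\rho$ is strictly increasing with $\tau(r)=p$.

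Your route is self-contained and elementary. It also shows that the left factor at each step can be taken to be $\mathrm{id}$ or a single $\sigma_k$. The paper's route is shorter because the combinatorial work is delegated to the citation.

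Two cleanups are needed:
\begin{enumerate}
\item Delete the abandoned closed formula for $\tau$.
\item Delete the first version of case (1). Arranging $\sigma_n\circ\tau=\rho$ is impossible whenever $n+1$ lies in the image of $\rho$ (e.g.\ $\rho=\mathrm{id}$), because $\sigma_n$ omits $n+1$.
\end{enumerate}
Case (1) needs no construction at all. If $\rho(r)\le n$, then $\rho\in\Inc_{r,n}$ already, so $\rho=\mathrm{id}\circ\rho$ with $\mathrm{id}\in\Inc_{n,n+1}$. This is where you eventually land.
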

	
	\begin{proof}
		The cases $n=r$ and $n=m$ follow immediately from the fact that the identity map belongs to $\Inc_{n,n}$. Assume that $r<n<m$. By \cite[Lemma~3.2]{KLR22},
		\[
		\Inc_{a,m}=\Inc_{a+1,m}\circ\Inc_{a,a+1}
		\quad\text{for }r\le a<m.
		\]
		Applying this identity successively for $a=r,r+1,\dots,n-1$ gives
		\[
		\Inc_{r,m}
		=\Inc_{n,m}\circ\Inc_{n-1,n}\circ\cdots\circ\Inc_{r,r+1}
		=\Inc_{n,m}\circ\Inc_{r,n},
		\]
		as desired.
	\end{proof}
	
	%-----------------------------------------------------
	\subsection{Invariant chains of graphs}
	
	Each $\pi\in\Inc$ induces a map $\pi\colon\N^2\longrightarrow\N^2$ defined by
	\[
	\pi((i,j))=(\pi(i),\pi(j))
	\quad\text{for }(i,j)\in\N^2.
	\]
	Let $G=(V,E)$ be a graph with $V\subseteq\N$. We identify an edge $\{i,j\}\in E$, where $i<j$, with the ordered pair $(i,j)\in\N^2$ and, by abuse of notation, write $(i,j)\in E$. We denote by $\pi(G)$ the graph with vertex set $\pi(V)$ and edge set $\pi(E)$. More generally, for $\Gamma\subseteq\Inc$, let $\Gamma(G)$ be the graph with vertex and edge sets
	\[
	\Gamma(V)=\bigcup_{\pi\in\Gamma}\pi(V)
	\quad\text{and}\quad
	\Gamma(E)=\bigcup_{\pi\in\Gamma}\pi(E),
	\]
	respectively.
	
	\begin{defn}
		Let $\mathcal{G}=(G_n)_{n\ge1}$ be a sequence of graphs such that $V(G_n)\subseteq[n]$ for all $n\ge1$. We say that $\mathcal{G}$ is an \emph{$\Inc$-invariant chain of graphs} if
		\[
		\Inc_{n,m}(G_n)\subseteq G_m
		\quad\text{for all }m\ge n\ge1.
		\]
		The chain $\mathcal{G}$ \emph{stabilizes} if there exists $r\in\N$ such that
		\[
		\Inc_{n,m}(G_n)=G_m
		\quad\text{for all }m\ge n\ge r.
		\]
		The smallest such integer $r$ is called the \emph{stability index} of $\mathcal{G}$ and is denoted by $\ind(\mathcal{G})$.
	\end{defn}
	
	\begin{rem}
		A chain of graphs $\mathcal{G}=(G_n)_{n\ge1}$ is $\Inc$-invariant if and only if the corresponding chain of edge ideals $\mathcal{I}=(I(G_n))_{n\ge1}$ is $\Inc$-invariant. For background on $\Inc$-invariant chains of ideals, we refer the reader to \cite{HS12,JLR20,LNNR1,LNNR2,LN22,NR17}; see \cite{HNT2024,HHLNN} for further results on $\Inc$-invariant chains of edge ideals. A classical result of Cohen \cite{Co67} on the equivariant Noetherianity of the polynomial ring $\kk[x_i\mid i\in\N]$ (see also \cite{AH07,HS12}) implies that every $\Inc$-invariant chain of ideals stabilizes. Consequently, every $\Inc$-invariant chain of graphs stabilizes as well.
	\end{rem}
	
	\begin{ex}
		\label{ex:complete-chain}
		Let $K_n$ denote the complete graph on $[n]$. Then $\mathcal{K}=(K_n)_{n\ge1}$ is an $\Inc$-invariant chain with $\ind(\mathcal{K})=2$.
	\end{ex}
	
	The following characterization of the stability index is an immediate consequence of \Cref{lem:Inc-decomposition}; a more general result appears in \cite[Lemma~5.2]{NR17}.
	
	\begin{lem}
		\label{lem:ind}
		Let $\mathcal{G}=(G_n)_{n\ge1}$ be an $\Inc$-invariant chain of graphs. The following conditions are equivalent:
		\begin{enumerate}
			\item $\ind(\mathcal{G})\le r$;
			\item $\Inc_{r,m}(G_r)=G_m$ for all $m\ge r$;
			\item $\Inc_{m,m+1}(G_m)=G_{m+1}$ for all $m\ge r$.
		\end{enumerate}
	\end{lem}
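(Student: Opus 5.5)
The plan is to prove the cycle of implications (1)$\Rightarrow$(2)$\Rightarrow$(3)$\Rightarrow$(1). Throughout we use that $\Gamma(G)$ is monotone in $\Gamma$, and that for $\Gamma_1,\Gamma_2\subseteq\Inc$ one has $(\Gamma_1\circ\Gamma_2)(G)=\Gamma_1(\Gamma_2(G))$. Both facts follow directly from the definitions, since $\Gamma(G)$ is a union of images of single maps.

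For (1)$\Rightarrow$(2), suppose $\ind(\mathcal{G})\le r$. By definition of the stability index, $\Inc_{n,m}(G_n)=G_m$ for all $m\ge n\ge \ind(\mathcal{G})$. Taking $n=r$ gives (2) immediately. The implication (2)$\Rightarrow$(3) needs a little more. The inclusion $\Inc_{m,m+1}(G_m)\subseteq G_{m+1}$ is just $\Inc$-invariance. For the reverse inclusion, apply \Cref{lem:Inc-decomposition} with $r\le m\le m+1$, which gives $\Inc_{r,m+1}=\Inc_{m,m+1}\circ\Inc_{r,m}$. Hence
\[
G_{m+1}=\Inc_{r,m+1}(G_r)=\Inc_{m,m+1}\bigl(\Inc_{r,m}(G_r)\bigr)=\Inc_{m,m+1}(G_m).
\]

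For (3)$\Rightarrow$(1), we must show $\Inc_{n,m}(G_n)=G_m$ for all $m\ge n\ge r$. Again "$\subseteq$" is invariance. For "$\supseteq$", induct on $m-n$. The case $m=n$ holds because the identity lies in $\Inc_{n,n}$ and $\Inc_{n,n}(G_n)\subseteq G_n$. For $m>n$, use $G_m=\Inc_{m-1,m}(G_{m-1})$ from (3) together with the induction hypothesis $G_{m-1}=\Inc_{n,m-1}(G_n)$. Then \Cref{lem:Inc-decomposition} gives $\Inc_{m-1,m}\circ\Inc_{n,m-1}=\Inc_{n,m}$, so $G_m=\Inc_{n,m}(G_n)$. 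Thus the chain stabilizes from $r$ on, and minimality of $\ind(\mathcal{G})$ gives $\ind(\mathcal{G})\le r$.

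The only point needing care is the composition identity $(\Gamma_1\circ\Gamma_2)(G)=\Gamma_1(\Gamma_2(G))$, including on vertex sets. It follows because $\pi\circ\sigma$ applied to $V$ or $E$ equals $\pi(\sigma(V))$ or $\pi(\sigma(E))$, and images commute with unions. Beyond this there is no real obstacle; the argument is essentially bookkeeping around \Cref{lem:Inc-decomposition}.
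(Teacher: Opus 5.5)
Your proof is correct and follows the paper's approach: the paper gives no written proof and calls the lemma an immediate consequence of \Cref{lem:Inc-decomposition}. Your cycle (1)$\Rightarrow$(2)$\Rightarrow$(3)$\Rightarrow$(1) spells out exactly that consequence, using the factorizations $\Inc_{r,m+1}=\Inc_{m,m+1}\circ\Inc_{r,m}$ and $\Inc_{n,m}=\Inc_{m-1,m}\circ\Inc_{n,m-1}$ together with the identity $(\Gamma_1\circ\Gamma_2)(G)=\Gamma_1(\Gamma_2(G))$.
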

	
	This characterization yields a simple construction that will occasionally be used to produce a chain with a prescribed stability index.
	
	\begin{construction}
		\label{cons:stab-index}
		Let $\mathcal{G}=(G_n)_{n\ge1}$ be an $\Inc$-invariant chain of graphs with $\ind(\mathcal{G})\le r$, and assume that $G_r\ne\emptyset$. Define $\widetilde{\Gc}=(\widetilde{G}_n)_{n\ge1}$ by
		\[
		\widetilde{G}_n=
		\begin{cases}
			\emptyset & \text{if }1\le n<r,\\
			G_n       & \text{if }n\ge r.
		\end{cases}
		\]
		Then $\widetilde{\Gc}$ is an $\Inc$-invariant chain with $\ind(\widetilde{\Gc})=r$.
	\end{construction}
	
	To avoid unnecessary technicalities, we impose the following standing assumption throughout the remainder of the paper, except in \Cref{def:me,def:si} below.
	
	\begin{assm}
		\label{assm:main}
		Let $\mathcal{G}=(G_n)_{n\ge1}$ be an $\Inc$-invariant chain of graphs with $\ind(\mathcal{G})=r$. We assume that $V(G_r)=[r]$ and that $(1,i),(j,r)\in E(G_r)$ for some $i,j\in[r]$. In particular, $V(G_n)=[n]$ for all $n\ge r$.
	\end{assm}
	
	Any $\Inc$-invariant chain satisfying $E(G_n)\ne\emptyset$ for $n\gg0$ can be reduced to this setting by deleting isolated vertices and relabeling the remaining vertices, as the following example illustrates.
	
	\begin{ex}
		Let $\mathcal{G}=(G_n)_{n\ge1}$ be an $\Inc$-invariant chain of graphs with $\ind(\mathcal{G})=7$ such that
		\[
		V(G_7)=[7]\setminus\{3\}
		\quad\text{and}\quad
		E(G_7)=\{(2,5),(4,6)\}.
		\]
		Then $V(G_n)=[n]$, and $1$ and $n$ are isolated vertices of $G_n$ for all $n\ge8$.
		
		Consider the $\Inc$-invariant chain $\mathcal{G}'=(G_n')_{n\ge1}$ with $\ind(\mathcal{G}')=5$, where
		\[
		V(G_5')=[5]
		\quad\text{and}\quad
		E(G_5')=\{(1,4),(3,5)\}.
		\]
		Although $G_5'\not\cong G_7\setminus\{1,7\}$, since the latter has only four vertices, it is clear that $G_6'\cong G_{8}\setminus\{1,8\}$, and hence
		\[
		G_{n-2}'\cong G_n\setminus\{1,n\}
		\quad\text{for all }n\ge8.
		\]
		Thus, instead of studying $\Gc$, we may study the chain $\Gc'$, which satisfies \Cref{assm:main}.
	\end{ex}
	
	We next record several basic properties of $\Inc$-invariant chains of graphs that will be used throughout the paper. The following observation was proved in \cite[Lemma~3.3]{HNT2024}.
	
	\begin{lem}
		\label{lem:E(Gn)}
		Let $r\in\N$ and let $G_r$ be a graph on $[r]$. For $n\ge r$, an ordered pair $(k,l)$ belongs to $\Inc_{r,n}(E(G_r))$ if and only if there exists $(i,j)\in E(G_r)$ such that
		\begin{equation}
			\label{eq:inequalities}
			0\le k-i\le l-j\le n-r.
		\end{equation}
		In particular, the following statements hold:
		\begin{enumerate}
			\item $\Inc_{r,n}(G_r)=\Inc_{r+a,n+a}(G_r)$ for all $a\in\Z_{\ge0}$;
			\item if $\mathcal{G}=(G_n)_{n\ge1}$ is an $\Inc$-invariant chain of graphs with $\ind(\mathcal{G})=r$, then $(k,l)\in E(G_n)$ for $n\ge r$ if and only if \eqref{eq:inequalities} holds for some $(i,j)\in E(G_r)$.
		\end{enumerate}
	\end{lem}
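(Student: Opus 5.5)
We must prove \Cref{lem:E(Gn)}. The plan is to characterize the images of a single edge $(i,j)$ under maps in $\Inc_{r,n}$, and then deduce (1) and (2) from that description.

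\emph{Necessity.} Let $\pi\in\Inc_{r,n}$ and $(i,j)\in E(G_r)$ with $i<j\le r$. Set $(k,l)=(\pi(i),\pi(j))$.
\begin{itemize}
\item Since $\pi$ is strictly increasing, $\pi(a)-a$ is nondecreasing in $a$.
\item Because $\pi(1)\ge 1$, we get $0\le \pi(i)-i$.
\item Monotonicity then gives $\pi(i)-i\le\pi(j)-j$.
\item Monotonicity also gives $\pi(j)-j\le\pi(r)-r\le n-r$.
\end{itemize}
Together these give exactly \eqref{eq:inequalities}.

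\emph{Sufficiency.} Suppose $0\le k-i\le l-j\le n-r$. Set $a=k-i$ and $b=l-j$, so $0\le a\le b\le n-r$. Define
\[
\pi(t)=\begin{cases} t+a & \text{if } t\le i,\\ t+b & \text{if } t> i.\end{cases}
\]
This map is strictly increasing, since at the jump $t=i$ we have $i+a<i+1+b$ because $a\le b$. It satisfies $\pi(i)=k$, and $\pi(j)=j+b=l$ because $j>i$. Finally $\pi(r)=r+b\le n$, so $\pi\in\Inc_{r,n}$. Hence $(k,l)\in\Inc_{r,n}(E(G_r))$.

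\emph{Statement (1).} The criterion \eqref{eq:inequalities} depends only on $E(G_r)$ and on the difference $n-r$. Viewing $G_r$ as a graph with vertices in $[r+a]$, the same argument with $r+a$ and $n+a$ in place of $r$ and $n$ produces the same condition, since $(n+a)-(r+a)=n-r$. The vertex sets also agree: each vertex $v$ of $G_r$ maps to $[v,v+n-r]$ in both settings, by the same two-sided argument applied to single vertices.

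\emph{Statement (2).} Since $\ind(\mathcal G)=r$, \Cref{lem:ind} gives $G_n=\Inc_{r,n}(G_r)$ for $n\ge r$, and the characterization above applies directly.

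The only delicate step is checking that the two-step map $\pi$ stays strictly increasing and satisfies $\pi(r)\le n$; both follow from $a\le b\le n-r$. No real obstacle is expected.
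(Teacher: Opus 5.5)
Your proof is correct and complete. Monotonicity of $t\mapsto\pi(t)-t$ gives necessity. The two-step shift ($t\mapsto t+a$ for $t\le i$, $t\mapsto t+b$ for $t>i$) gives sufficiency; it lies in $\Inc_{r,n}$ exactly because $0\le a\le b\le n-r$ and $i<j\le r$. Your treatment of vertex sets in (1) and your reduction of (2) to $G_n=\Inc_{r,n}(G_r)$ are also sound.

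The paper does not prove this lemma itself; it cites \cite[Lemma~3.3]{HNT2024}, so your argument is a self-contained substitute rather than a different route. For (2) you do not even need \Cref{lem:ind}: the identity $\Inc_{r,n}(G_r)=G_n$ for $n\ge r$ is the definition of $\ind(\mathcal{G})=r$, applied with the smaller index equal to $r$.
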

	
	Geometrically, the inequalities in \eqref{eq:inequalities} mean that $(k,l)\in\mathbb{R}^2$ lies in the isosceles right triangle with vertices $(i,j)$, $(i,j+n-r)$, and $(i+n-r,j+n-r)$. Using this interpretation, it was shown in \cite[Lemma~3.7]{HHLNN} that certain edges of $G_n$, viewed as points in $\mathbb{R}^2$, can be moved by small integral steps in any of the four cardinal directions while remaining edges of $G_n$ (see \Cref{fig:moves}):
	
	\begin{lem}
		\label{lem.shortmoves}
		Let $\mathcal{G}=(G_n)_{n\ge1}$ be an $\Inc$-invariant chain of graphs with $\ind(\mathcal{G})=r$, and let $n\ge r$. Then the following statements hold:
		\begin{enumerate}
			\item \textup{(East and south moves)} Assume that $n\ge3r$. Let $k\le k'\le r$ and $n-r\le l'\le l$ be integers. If $(k,l)\in E(G_n)$, then $(k,l'),(k',l)\in E(G_n)$.
			\item \textup{(West move)} Let $r\le k''\le k<l$ be integers. If $(k,l)\in E(G_n)$, then $(k'',l)\in E(G_n)$.
			\item \textup{(North move)} Let $k<l\le l''\le n-r$ be integers. If $(k,l)\in E(G_n)$, then $(k,l'')\in E(G_n)$.
		\end{enumerate}
	\end{lem}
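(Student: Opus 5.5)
The plan is to reduce all three moves to the explicit description of $E(G_n)$ in \Cref{lem:E(Gn)}(2). For $n\ge r$, a pair $(k,l)$ is an edge of $G_n$ exactly when some $(i,j)\in E(G_r)$ satisfies $0\le k-i\le l-j\le n-r$. In each case I start from an edge $(k,l)\in E(G_n)$ and fix a witness $(i,j)\in E(G_r)$ with these inequalities. I then check that the \emph{same} witness certifies the moved pair. The only extra facts needed are $1\le i<j\le r$, which holds because $V(G_r)=[r]$, and the hypotheses on the new coordinates. Each moved pair also has to have first coordinate strictly smaller than the second; this will fall out of the inequalities.

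\emph{West move.} Let $r\le k''\le k<l$. Since $i\le r\le k''$, we get $k''-i\ge 0$. Moreover $k''-i\le k-i\le l-j\le n-r$. Hence $(i,j)$ witnesses $(k'',l)\in E(G_n)$, and $k''<l$ is clear.

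\emph{North move.} Let $k<l\le l''\le n-r$. The inequality $0\le k-i\le l-j\le l''-j$ is immediate. For the upper bound, $j\ge 1$ gives $l''-j\le n-r-1<n-r$. So $(i,j)$ witnesses $(k,l'')\in E(G_n)$.

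\emph{East and south moves.} Here the hypothesis $n\ge 3r$ is what is needed; this is the only place requiring any care. Let $k\le k'\le r$ and $n-r\le l'\le l$.
For the east move $(k',l)$, we have $k'-i\ge k-i\ge0$ and $k'-i\le r-1$. On the other side, $l\ge n-r\ge 2r$ and $j\le r$ give $l-j\ge r>k'-i$. The bound $l-j\le n-r$ holds by assumption. In addition, $k'\le r<l$.
For the south move $(k,l')$, we have $k-i\ge 0$ and $k-i\le r-1$, using $k\le r$. Also $l'-j\ge n-r-r\ge r$ and $l'-j\le l-j\le n-r$. Finally $k\le r<n-r\le l'$.
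In both cases the original witness works, which completes the argument. I expect no real obstacle beyond this bookkeeping of the bounds, where $n\ge 3r$ guarantees $l-j,\,l'-j\ge r$.
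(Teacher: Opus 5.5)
Your proof is correct and is essentially the argument the paper has in mind: the paper does not reprove this lemma but cites \cite[Lemma~3.7]{HHLNN} via the triangle interpretation of \Cref{lem:E(Gn)}, and reusing the same witness $(i,j)\in E(G_r)$ for the moved pair is exactly the statement that the moved point stays in the same triangle. Your bound checks are all right, including the use of $n\ge 3r$ to get $l-j,\ l'-j\ge r>r-1\ge k'-i,\ k-i$ in the east and south moves.
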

	
	\begin{figure}[H]
		\centering
		\begin{tikzpicture}[>=stealth,scale=0.9]
			% Triangle
			\coordinate (A) at (0,0);
			\coordinate (B) at (0,6);
			\coordinate (C) at (6,6);
			
			\draw (A)--(B)--(C);
			\draw (A)--(C);
			
			\fill (A) circle (1.5pt);
			\fill (B) circle (1.5pt);
			\fill (C) circle (1.5pt);
			
			\node[below left] at (A) {$(i,j)$};
			\node[above] at (B) {$(i,j+n-r)$};
			\node[above] at (C) {$(i+n-r,j+n-r)$};
			
			% Center
			\coordinate (P) at (1.55,4.0);
			\fill (P) circle (1.3pt);
			
			% Arrows
			\draw[-stealth] (P)--++(1.15,0);
			\draw[-stealth] (P)--++(-1.15,0);
			\draw[-stealth] (P)--++(0,1.15);
			\draw[-stealth] (P)--++(0,-1.15);
			
			% Labels
			\node[right=-0.1cm] at ($(P)+(0,-0.4)$) {$(k,l)$};
			\node[right=-1.6cm] at ($(P)+(0.1,-0.4)$) {$(k'',l)$};
			\node[right=1.1cm] at ($(P)+(-0.3,0.3)$) {$(k',l)$};
			\node[right=-0.95cm] at ($(P)+(0.35,1.55)$) {$(k,l'')$};
			\node[right=-0.95cm] at ($(P)+(0.35,-1.45)$) {$(k,l')$};
		\end{tikzpicture}
		\caption{Moving an edge in the four cardinal directions}
		\label{fig:moves}
	\end{figure}

	We will frequently use the following refinement of \Cref{lem.shortmoves}(i).
	
	\begin{lem}
		\label{lem:gap}
		Let $\mathcal{G}=(G_n)_{n\ge1}$ be an $\Inc$-invariant chain of graphs with $\ind(\mathcal{G})=r$. Suppose that $n\ge r$, and let $u,v\in V(G_n)$ satisfy $v\ge u+r$. If there exists $(k,l)\in E(G_n)$ such that $k\le u<v\le l$, then $(u,v)\in E(G_n)$.
	\end{lem}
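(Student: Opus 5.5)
We prove \Cref{lem:gap} directly from the characterization in \Cref{lem:E(Gn)}(ii). Since $(k,l)\in E(G_n)$ and $n\ge r$, there is an edge $(i,j)\in E(G_r)$ with
\[
0\le k-i\le l-j\le n-r.
\]
It suffices to find some edge $(i',j')\in E(G_r)$ with
\[
0\le u-i'\le v-j'\le n-r.
\]
We will keep $i'=i$ and choose $j'$ suitably. The difficulty is that $(i,j)$ itself need not work: we have $u-i\ge k-i\ge0$, but the difference $v-j$ may be smaller than $u-i$. So we want to replace $j$ by a smaller endpoint.

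The key point is that $G_r$ contains edges ending anywhere to the right of $i$. We use the Inc-invariance of the chain on $G_r$ itself, namely \Cref{lem:E(Gn)} applied with $n=r$ to $\Inc_{r,r}(G_r)\subseteq G_r$. Taking $\pi$ to be the identity gives nothing new, so instead we argue with the triangle description on $G_n$.

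Set $a=u-i\ge0$ and look for the target edge in the form $(i+a,j+a+b)$ with $0\le b\le n-r-a$. This would give $u=i+a$ and $v=j+a+b$, which forces
\[
b=v-u-(j-i).
\]
The condition $b\ge 0$ holds because $v-u\ge r>j-i$, using $1\le i<j\le r$. The condition $a+b\le n-r$ reads $v-j\le n-r$. This follows from $v\le l$ and $l-j\le n-r$. Finally, $a\le a+b$ is automatic. Hence $0\le u-i\le v-j\le n-r$, and \Cref{lem:E(Gn)}(ii) yields $(u,v)\in E(G_n)$.

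On reflection, the obstacle I anticipated disappears: $v-j\ge u-i$ is equivalent to $v-u\ge j-i$, which the gap hypothesis $v\ge u+r$ guarantees since $j-i\le r-1$. So we keep $(i,j)$ and only verify the three inequalities:
\begin{itemize}
\item $u-i\ge k-i\ge 0$, from $k\le u$;
\item $v-j\ge u-i$, from the gap;
\item $v-j\le l-j\le n-r$, from $v\le l$.
\end{itemize}
The only point needing care is that vertices of $G_r$ lie in $[r]$, which bounds $j-i$ by $r-1$.
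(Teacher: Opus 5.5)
Your final argument is correct and is the same as the paper's proof: take the edge $(i,j)\in E(G_r)$ supplied by \Cref{lem:E(Gn)} for $(k,l)$, and check $0\le u-i$, $u-i\le v-j$ (from $v-u\ge r\ge j-i$), and $v-j\le l-j\le n-r$. The exploratory middle paragraphs should be deleted. In particular, the claim that $G_r$ contains edges ending anywhere to the right of $i$ is false in general, although nothing in your final argument depends on it.
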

	
	\begin{proof}
		Since $(k,l)\in E(G_n)$, \Cref{lem:E(Gn)} yields an edge $(i,j)\in E(G_r)$ such that
		\[
		0\le k-i\le l-j\le n-r.
		\]
		As $k\le u<v\le l$, we obtain $0\le u-i$ and $v-j\le n-r$. Moreover,
		\[
		v-u\ge r\ge j-i,
		\]
		so $u-i\le v-j$. Therefore,
		\[
		0\le u-i\le v-j\le n-r,
		\]
		and \Cref{lem:E(Gn)} implies that $(u,v)\in E(G_n)$.
	\end{proof}

	%-----------------------------------------------------
	
	\subsection{Number of missing edges and sparsity index}
	
	We introduce two numerical invariants of $\Inc$-invariant chains of graphs that will play an important role throughout the paper. In \Cref{def:me,def:si}, we do \emph{not} impose \Cref{assm:main}, because certain inductive arguments, such as the proof of \Cref{lem:induced-chain}, require the slightly more general case $V(G_r)=[a,r]$ for some $a\in[r]$.
	
	The first invariant measures the “distance" from a chain to the chain of complete graphs by counting the edges missing at the stability index.

	\begin{defn}
		\label{def:me}
		Let $\Gc=(G_n)_{n\ge1}$ be an $\Inc$-invariant chain of graphs with $\ind(\Gc)=r$. The \emph{number of missing edges} of $\Gc$ is
		\[
		\me(\Gc)
		=|E(G_r^c)|
		=|E(K_{V(G_r)})|-|E(G_r)|
		=\binom{|V(G_r)|}{2}-|E(G_r)|.
		\]
	\end{defn}
	
	The second invariant, introduced in \cite{HHLNN}, measures the combinatorial complexity of the chain.
	
	\begin{defn}
		\label{def:si}
		Let $\Gc=(G_n)_{n\ge1}$ be an $\Inc$-invariant chain of graphs with $\ind(\Gc)=r$, and assume that $E(G_r)\ne\emptyset$. The \emph{sparsity index} of $\Gc$ is
		\[
		\spi(\Gc)=\min\{j-i\mid(i,j)\in E(G_r)\}.
		\]
	\end{defn}

	\begin{ex}
		Let $\mathcal{G}=(G_n)_{n\ge1}$ be an $\Inc$-invariant chain of graphs with $\ind(\mathcal{G})=5$ such that
		\[
		V(G_5)=[2,5]
		\quad\text{and}\quad
		E(G_5)=\{(2,4),(2,5)\}.
		\]
		Then 
		\[
		\me(\Gc)=\binom{4}{2}-|E(G_5)|=4
		\quad\text{and}\quad
		\spi(\Gc)=4-2=2.
		\]
	\end{ex}
	
	The sparsity index gives a simple criterion for determining whether certain pairs of vertices form edges.
	
	\begin{lem}
		\label{lem:independent-interval}
		Let $\Gc=(G_n)_{n\ge1}$ be an $\Inc$-invariant chain of graphs with $\ind(\Gc)=r$ and $\spi(\Gc)=s$. Then, for every $n\ge r$, the following statements hold:
		\begin{enumerate}
			\item if $k,l\in[r,n-r]$ and $l-k\ge s$, then $(k,l)\in E(G_n)$;
			\item if $k,l\in[n]$ and $0<l-k<s$, then $(k,l)\notin E(G_n)$. In particular, for every $t\in[n-s+1]$, the interval $I_t=[t,t+s-1]$ is an independent set of $G_n$.
		\end{enumerate}
	\end{lem}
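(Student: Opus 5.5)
Both parts reduce to the edge criterion of \Cref{lem:E(Gn)}: for $n\ge r$, $(k,l)\in E(G_n)$ if and only if there is $(i,j)\in E(G_r)$ with $0\le k-i\le l-j\le n-r$. We also use that $\spi(\Gc)=s$ means some edge $(i_0,j_0)\in E(G_r)$ has $j_0-i_0=s$, while every edge $(i,j)\in E(G_r)$ has $j-i\ge s$.

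For part (i), let $k,l\in[r,n-r]$ with $l-k\ge s$, and test the criterion with the edge $(i_0,j_0)$. Since $i_0\ge1$ and $j_0\le r$, we have $k-i_0\ge r-r=0$ and $l-j_0\le n-r-i_0<n-r$, so $0\le k-i_0$ and $l-j_0\le n-r$. The middle inequality $k-i_0\le l-j_0$ is equivalent to $l-k\ge j_0-i_0=s$, which is our hypothesis. Hence \Cref{lem:E(Gn)}(ii) gives $(k,l)\in E(G_n)$. The only check needed is the boundary bookkeeping: $i_0\le r$ and $j_0\ge 1$ make both endpoint inequalities hold on all of $[r,n-r]$.

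For part (ii), suppose $(k,l)\in E(G_n)$ with $0<l-k<s$. By \Cref{lem:E(Gn)}(ii) there is $(i,j)\in E(G_r)$ with $k-i\le l-j$, that is, $l-k\ge j-i\ge s$, which is a contradiction. This argument uses only that $n\ge r$, so it applies to every $k,l\in[n]$.

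For the ``in particular'' statement, any two distinct elements of $I_t=[t,t+s-1]$ differ by a positive amount at most $s-1<s$, so by part (ii) they are not adjacent. Thus $I_t$ is an independent set of $G_n$, and $I_t\subseteq[n]$ precisely when $t\le n-s+1$.
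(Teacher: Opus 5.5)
Your proposal is correct and matches the paper's proof: for part (i) you apply the criterion of \Cref{lem:E(Gn)} to an edge $(i_0,i_0+s)\in E(G_r)$ that realizes the sparsity index, and for part (ii) you read $l-k\ge j-i\ge s$ directly off the middle inequality in \eqref{eq:inequalities}. The independence of $I_t$ then follows, as in the paper, because any two of its elements are at distance at most $s-1$.
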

	
	\begin{proof}
		(i) Since $\spi(\Gc)=s$, there exists $i\in[r]$ such that $(i,i+s)\in E(G_r)$. As $k,l\in[r,n-r]$ and $l-k\ge s$, we have
		\[
		0\le k-i\le l-(i+s)\le n-r.
		\]
		Hence, $(k,l)\in E(G_n)$ by \Cref{lem:E(Gn)}.
		
		(ii) The first assertion follows immediately from \eqref{eq:inequalities}. The second follows because the distance between any two vertices of $I_t$ is at most $s-1$.
	\end{proof}
	
	%-----------------------------------------------------	
	
	\subsection{Saturated chains}	
	
	Since $\Inc_{n,n+1}$ contains the identity map for every $n\ge1$, any $\Inc$-invariant chain of graphs $\mathcal{G}=(G_n)_{n\ge1}$ is increasing:
	\[
	G_1\subseteq G_2\subseteq\cdots\subseteq G_n\subseteq\cdots.
	\]
	The \emph{limit} of $\Gc$ is the graph $G_\infty$ with vertex and edge sets
	\[
	V(G_\infty)=\bigcup_{n\ge1}V(G_n)
	\quad\text{and}\quad
	E(G_\infty)=\bigcup_{n\ge1}E(G_n),
	\]
	respectively. Thus, $G_\infty$ is an infinite simple graph containing each $G_n$ as a subgraph. Moreover, the $\Inc$-invariance of $\Gc$ implies that $G_\infty$ is itself $\Inc$-invariant, that is, $\Inc(G_\infty)\subseteq G_\infty$. Since each $\Inc_{n,m}$ contains the identity map, it follows that
	\begin{equation}
		\label{eq:Inc-G-infty}
		\Inc_{n,m}(G_\infty)=\Inc(G_\infty)=G_\infty
		\quad\text{for all }m\ge n\ge1.
	\end{equation}
	The following characterization of $G_\infty$ is an immediate consequence of \Cref{lem:E(Gn)}.
	
	\begin{cor}
		\label{cor:G_infty}
		Let $\mathcal{G}=(G_n)_{n\ge1}$ be an $\Inc$-invariant chain of graphs with $\ind(\mathcal{G})=r$, and let $G_\infty$ be its limit. Then $(k,l)\in E(G_\infty)$ if and only if there exists $(i,j)\in E(G_r)$ such that
		\[
		0\le k-i\le l-j.
		\]
		In particular,
		\begin{equation*}
			\spi(\Gc)=\min\{j-i\mid(i,j)\in E(G_\infty)\}.
		\end{equation*}
	\end{cor}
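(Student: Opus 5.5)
The plan is to compare the limit graph with the stable stage directly. Both directions of the equivalence in \Cref{cor:G_infty} come from \Cref{lem:E(Gn)}(ii), applied to $G_n$ for a suitable $n\ge r$. The formula for $\spi(\Gc)$ then follows by a short extremal argument.

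First suppose $(k,l)\in E(G_\infty)$. Then $(k,l)\in E(G_m)$ for some $m$. Since the chain is increasing, we may replace $m$ by $n=\max(m,r)$ and assume $n\ge r$. By \Cref{lem:E(Gn)}(ii) there is an edge $(i,j)\in E(G_r)$ with $0\le k-i\le l-j\le n-r$. In particular $0\le k-i\le l-j$.

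Conversely, suppose $(i,j)\in E(G_r)$ and $0\le k-i\le l-j$. Choose $n\ge r$ large enough that $l-j\le n-r$ and $l\le n$; for instance $n=r+l$ works. Then \eqref{eq:inequalities} holds, so \Cref{lem:E(Gn)}(ii) gives $(k,l)\in E(G_n)\subseteq E(G_\infty)$. This proves the equivalence.

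For the sparsity index, first note $E(G_r)\subseteq E(G_\infty)$, so the minimum of $j-i$ over $E(G_\infty)$ is at most $\spi(\Gc)$. For the reverse inequality, take any $(k,l)\in E(G_\infty)$ and pick $(i,j)\in E(G_r)$ as above. The condition $k-i\le l-j$ rearranges to $l-k\ge j-i\ge\spi(\Gc)$. Hence the two minima coincide.

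No step is a genuine obstacle. The only care needed is choosing $n$ large enough that the upper bound $n-r$ in \eqref{eq:inequalities} is harmless and that both $k,l\in[n]$, so that $(k,l)$ is a pair of vertices of $G_n$.
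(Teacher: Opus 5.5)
Your proof is correct and takes the same route as the paper, which records \Cref{cor:G_infty} as an immediate consequence of \Cref{lem:E(Gn)}(ii) without writing out details. You supply those details correctly: you pass to a stage $n\ge r$ large enough (e.g.\ $n=r+l$) that the bound $l-j\le n-r$ holds automatically, and you deduce the sparsity formula from $l-k\ge j-i$.
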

	
	Among all $\Inc$-invariant chains with a fixed limit, there is a largest one.
	
	\begin{defn}
		Let $\mathcal{G}=(G_n)_{n\ge1}$ be an $\Inc$-invariant chain of graphs with limit $G_\infty$. The \emph{saturation} of $\Gc$ is the chain $\widehat{\Gc}=(\widehat{G}_n)_{n\ge1}$ defined by
		\[
		\widehat{G}_n\defas G_\infty[V(G_n)]=G_\infty\cap K_{V(G_n)}.
		\]
		We say that $\Gc$ is \emph{saturated} if $\Gc=\widehat{\Gc}$.
		More generally, $\Gc$ is \emph{eventually saturated} if
		\[
		G_n=\widehat{G}_n
		\quad\text{for all }n\ge\ind(\Gc).
		\]
	\end{defn}

	Evidently, $\widehat{\Gc}$ is an $\Inc$-invariant chain with $\widehat{G}_n\supseteq G_n$ for all $n\ge1$. Moreover, $\Gc$ and $\widehat{\Gc}$ have the same limit and hence the same
	sparsity index. In practice, saturated and eventually saturated chains are often easier to analyze and will serve as base cases in several inductive arguments. The following lemma characterizes these chains.
	
	\begin{lem}
		\label{lem:saturation-characterization}
		Let $\mathcal{G}=(G_n)_{n\ge1}$ be an $\Inc$-invariant chain of graphs with limit $G_\infty$. The following conditions are equivalent:
		\begin{enumerate}
			\item $\mathcal{G}$ is saturated \textup{(respectively, eventually saturated)};
			\item $G_{n+1}\cap K_{V(G_n)}=G_n$ for all $n\ge1$ \textup{(respectively, for all $n\ge\ind(\mathcal{G})$)}.
		\end{enumerate}
	\end{lem}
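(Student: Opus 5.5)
We prove both equivalences (the saturated one and the eventually saturated one) in parallel. The saturated case uses all $n\ge1$, and the eventual version uses all $n\ge\ind(\Gc)$. Throughout, $\Gc$ is increasing, so $V(G_n)\subseteq V(G_{n+1})$ and $G_n\subseteq G_{n+1}\subseteq G_\infty$.

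For (i)$\Rightarrow$(ii), assume $G_m=G_\infty[V(G_m)]$ for the relevant $m$ (namely $m=n$ and $m=n+1$). Since $V(G_n)\subseteq V(G_{n+1})$, intersecting with $K_{V(G_n)}$ gives
\[
G_{n+1}\cap K_{V(G_n)}=G_\infty\cap K_{V(G_{n+1})}\cap K_{V(G_n)}=G_\infty\cap K_{V(G_n)}=G_n .
\]
This settles the direction, and it is purely formal.

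For (ii)$\Rightarrow$(i), fix $n$ in the relevant range. The inclusion $G_n\subseteq G_\infty[V(G_n)]$ is automatic, and both graphs have vertex set $V(G_n)$. For the reverse inclusion, take an edge $e\in E(G_\infty)$ with both endpoints in $V(G_n)$. Then $e\in E(G_m)$ for some $m$, and we may take $m>n$ since the chain is increasing. We induct downward from $m$ to $n$. If $e\in E(G_{k+1})$ with $n\le k<m$, then both endpoints of $e$ lie in $V(G_n)\subseteq V(G_k)$, so
\[
e\in E(G_{k+1}\cap K_{V(G_k)})=E(G_k)
\]
by (ii). Each step uses (ii) only at an index $k\ge n$, so in the eventual case it is only needed for $k\ge\ind(\Gc)$. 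After finitely many steps we reach $e\in E(G_n)$.

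The one point to check is that the vertex sets are compatible, so that $e\subseteq V(G_k)$ at every stage. This follows from the monotonicity $V(G_n)\subseteq V(G_k)$ for $k\ge n$, which holds because the identity map lies in $\Inc_{n,k}$. Beyond this I expect no real obstacle: the argument is a telescoping use of (ii) together with the fact that the chain is increasing.
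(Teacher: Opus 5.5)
Your proof is correct and follows essentially the same route as the paper. The forward direction is the same formal intersection computation; the paper uses saturation only at $n$ together with $G_{n+1}\subseteq G_\infty$, while you use it at both $n$ and $n+1$. Your edgewise downward induction is the elementwise form of the paper's argument, which first shows $G_m\cap K_{V(G_n)}=G_n$ for all $m\ge n$ and then takes the union over $m$.
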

	
	\begin{proof}
		We prove the assertion for saturated chains; the eventually saturated case is identical after restricting to $n\ge\ind(\mathcal{G})$.
		
		(i)$\Rightarrow$(ii): If $\mathcal{G}$ is saturated, then
		\[
		G_n\subseteq G_{n+1}\cap K_{V(G_n)}
		\subseteq G_\infty\cap K_{V(G_n)}=G_n
		\]
		for every $n\ge1$. Hence, equality holds throughout.
		
		(ii)$\Rightarrow$(i): Assume that $G_{n+1}\cap K_{V(G_n)}=G_n$ for every $n\ge1$. Then
		\[
		G_{n+2}\cap K_{V(G_n)}
		=\bigl(G_{n+2}\cap K_{V(G_{n+1})}\bigr)\cap K_{V(G_n)}
		=G_{n+1}\cap K_{V(G_n)}
		=G_n.
		\]
		By induction, this yields
		\[
		G_m\cap K_{V(G_n)}=G_n
		\quad\text{for all }m\ge n\ge1.
		\]
		Consequently,
		\begin{align*}
			\widehat{G}_n
			&=G_\infty\cap K_{V(G_n)}
			=\biggl(\bigcup_{m\ge n}G_m\biggr)\cap K_{V(G_n)}
			=\bigcup_{m\ge n}\bigl(G_m\cap K_{V(G_n)}\bigr)
			=G_n
		\end{align*}
		for every $n\ge1$. Thus, $\mathcal{G}$ is saturated.
	\end{proof}

	In the remainder of this section, we state two technical results whose proofs are postponed to the Appendix. The first provides a way to decrease the missing-edge number $\me(\Gc)$, which will be used in several induction arguments.

	\begin{prop}
		\label{lem:induced-chain}
		Let $\Gc=(G_n)_{n\ge1}$ be an $\Inc$-invariant chain of nonempty graphs with $\ind(\Gc)=r$. Let $\alpha,\beta$ be integers with $1\le\alpha\le r$ and $0\le\beta\le r$. For $n\ge\alpha+\beta$, set $U_n=[\alpha,n-\beta]$, and define $\Hc=(H_n)_{n\ge1}$ by
		\[
		H_n=
		\begin{cases}
			\emptyset, & \text{if }1\le n\le r+\alpha-2,\\
			G_{n+\beta}\cap K_{U_{n+\beta}}, & \text{if }n\ge r+\alpha-1.
		\end{cases}
		\]
		Then $\Hc$ is an $\Inc$-invariant chain satisfying the following properties:
		\begin{enumerate}
			\item $\ind(\Hc)=r+\alpha-1$;
			\item $\spi(\Hc)=\spi(\Gc)$;
			\item $\me(\Hc)\le\me(\Gc)$. Moreover, if $\beta\ge1$ and equality holds, then $\Gc$ is eventually saturated. 
		\end{enumerate}
	\end{prop}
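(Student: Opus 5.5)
The plan is to unwind the definition first. Since $U_{n+\beta}=[\alpha,n]$, we have $H_n=G_{n+\beta}\cap K_{[\alpha,n]}$ for $n\ge r+\alpha-1$. By \Cref{lem:E(Gn)}, a pair $(k,l)$ is an edge of $H_n$ if and only if $\alpha\le k<l\le n$ and there is $(i,j)\in E(G_r)$ with
\[
0\le k-i\le l-j\le n+\beta-r.
\]
Every step below works with this explicit description.

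\emph{$\Inc$-invariance.} Take $\pi\in\Inc_{n,m}$ and an edge $(k,l)$ of $H_n$. Extend $\pi$ to $\pi'\in\Inc_{n+\beta,m+\beta}$ by setting $\pi'(i)=\pi(i)$ for $i\le n$ and $\pi'(n+t)=m+t$. Then $\pi(k,l)=\pi'(k,l)\in E(G_{m+\beta})$, because $\Gc$ is $\Inc$-invariant. Moreover $\pi(k)\ge k\ge\alpha$ and $\pi(l)\le m$, so $\pi(k,l)\in E(H_m)$. When $n<r+\alpha-1$ there is nothing to check, since $H_n=\emptyset$.

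\emph{Part (i).} I would use \Cref{lem:ind}(iii) and show $H_{m+1}=\Inc_{m,m+1}(H_m)$ for every $m\ge r+\alpha-1$. Let $(k,l)\in E(H_{m+1})$ have witness $(i,j)$, and split into three cases.
\begin{itemize}
\item If $k-i\ge1$ and $k>\alpha$, then $(k-1,l-1)$ satisfies the inequalities for $H_m$, and $\sigma_{k-2}$ maps it to $(k,l)$.
\item If $k=i$ and $l>j$, then $(k,l-1)\in E(H_m)$, and $\sigma_k$ maps it to $(k,l)$. If $k=i$ and $l=j$, then $l\le r\le m$, so $(k,l)$ is already an edge of $H_m$.
\item If $k=\alpha>i$, use $\sigma_k$ when $k-i<l-j$. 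Otherwise $l-j=\alpha-i\le\alpha-1$, so $l\le r+\alpha-1\le m$ and $(k,l)\in E(H_m)$ directly.
\end{itemize}
In all cases $(k,l)\in\Inc_{m,m+1}(E(H_m))$, and the reverse inclusion is invariance. Hence $\ind(\Hc)\le r+\alpha-1$. For equality, shift any edge $(i,j)$ of $G_r$ to $(i+\alpha-1,j+\alpha-1)$. This is an edge of $H_{r+\alpha-1}$, so $H_{r+\alpha-1}\neq\emptyset$, while $\Inc(H_{r+\alpha-2})=\Inc(\emptyset)=\emptyset$.

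\emph{Part (ii).} By \Cref{cor:G_infty}, the limit $H_\infty$ is the restriction of $G_\infty$ to $[\alpha,\infty)$. No edge of $G_\infty$ has length less than $s$. An edge of length exactly $s$ is obtained by shifting $(i,i+s)\in E(G_r)$ by $\alpha-1$.

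\emph{Part (iii).} We have $V(H_{r+\alpha-1})=[\alpha,r+\alpha-1]$, and the shift $x\mapsto x-\alpha+1$ identifies it with $[r]$. If the shifted pair is an edge of $G_r$, then by \Cref{lem:E(Gn)} the original pair is an edge of $G_{r+\alpha-1+\beta}$, hence of $H_{r+\alpha-1}$. So nonedges of $H$ at its stability stage inject into nonedges of $G_r$, which gives $\me(\Hc)\le\me(\Gc)$. I will need some care with vertices of $[r]$ outside $V(G_r)$ if \Cref{assm:main} fails.

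\emph{Equality case.} This is the hardest step and I would argue by contrapositive. Suppose $\beta\ge1$ and $\Gc$ is not eventually saturated. By \Cref{lem:saturation-characterization} there are $n\ge r$ and an edge $(k,l)$ of $G_{n+1}$ with $l\le n$ but $(k,l)\notin E(G_n)$. The main work is to push this back to $n=r$, using the explicit inequalities and the shift property \Cref{lem:E(Gn)}(i). The goal is a pair $(a,b)\in E(G_{r+1})\setminus E(G_r)$ with $a,b\in[r]$. Such a pair is a nonedge of $G_r$, yet its shift $(a+\alpha-1,b+\alpha-1)$ lies in $G_{r+\alpha-1+\beta}$ because $\beta\ge1$. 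So the injection above misses it, giving strict inequality.
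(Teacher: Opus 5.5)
\textbf{Where the gap is.} Everything up to the equality case of (iii) is sound, but the step you yourself call ``the main work'' is missing, and it is the whole content of that case. You never show how an edge $(k,l)\in E(G_{n+1})\setminus E(G_n)$ with $l\le n$, for some $n\ge r$, yields a pair in $E(G_{r+1})\setminus E(G_r)$ inside $[r]$. The pointer to \Cref{lem:E(Gn)}(i) does not supply this reduction, so as written the contrapositive does not close.

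\textbf{How the paper closes it.} The paper applies part (i) itself with $\alpha=\beta=1$. The chain $F_n=G_{n+1}\cap K_n$ ($n\ge r$) then has stability index $r$, so $F_n=\Inc_{r,n}(F_r)$ for all $n\ge r$. Hence $F_r=G_r$ forces $F_n=\Inc_{r,n}(G_r)=G_n$ for every $n\ge r$. This is eventual saturation by \Cref{lem:saturation-characterization}. Your own proof of (i) covers the case $\alpha=\beta=1$, so you already have this tool and only need to invoke it.

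\textbf{A direct alternative.} Let $(i,j)$ be a witness for $(k,l)$. Since $(k,l)\notin E(G_n)$, you must have $l-j=n+1-r$, and hence $j\le r-1$.
\begin{itemize}
\item If $k-i\le n-r$, then $(i,j+1)\in E(G_{r+1})\setminus E(G_r)$.
\item If $k-i=n+1-r$, then $(i+1,j+1)\in E(G_{r+1})\setminus E(G_r)$.
\end{itemize}
In each case, membership in $E(G_r)$ would make that pair a witness for $(k,l)\in E(G_n)$. With either fix, the rest of your argument is correct: the shift of the pair lies in $H_{r+\alpha-1}$ because $\beta\ge1$, so the nonedge injection misses it. This is the contrapositive of the paper's chain of inclusions $\tau(E(G_r))\subseteq\tau(E(F_r))\subseteq E(H_{r+\alpha-1})$.

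\textbf{Part (i): a different but valid route.} The paper proves $E(H_n)=\Inc_{r+\alpha-1,n}(E(H_{r+\alpha-1}))$ in one stroke. It lifts each witness $(i,j)\in E(G_r)$ to an explicit edge $(i+p,j+q)$ of $H_{r+\alpha-1}$. You instead prove $\Inc$-invariance directly, by modifying $\pi$ beyond $n$. You then verify the one-step criterion of \Cref{lem:ind}(iii) using $\sigma_{k-2}$, $\sigma_k$, or the identity. Your three cases are exhaustive and correct, and your argument is more local and easier to check. Part (ii) and the inequality in (iii) match the paper.
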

	
	The second result identifies the graphs obtained by deleting a sufficiently long interval from two consecutive members of the chain. It generalizes \cite[Lemma~3.10(ii)]{HHLNN}. 
	
	\begin{prop}
		\label{lem:del-graph-iso}
		Let $\Gc=(G_n)_{n\ge1}$ be an $\Inc$-invariant chain of graphs with $\ind(\Gc)=r$. Let $n\ge r+1$, and let $\alpha,\beta\in[n]$ satisfy $\beta-\alpha\ge r$. Then, for every $k\in[\alpha,\beta]$, the map $\sigma_k$ induces an isomorphism
		\[
		G_n\setminus[\alpha,\beta]
		\xrightarrow{\ \cong\ }
		G_{n+1}\setminus[\alpha,\beta+1].
		\]
	\end{prop}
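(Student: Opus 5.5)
The plan is to check that $\sigma_k$ restricts to a bijection on the vertex sets and that it preserves edges in both directions, using the edge description of \Cref{lem:E(Gn)}. I will work under \Cref{assm:main}, so that $V(G_n)=[n]$ and $V(G_{n+1})=[n+1]$ because $n\ge r$. Without that assumption, the same argument applies after noting that $V(G_m)=\Inc_{r,m}(V(G_r))$.

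\emph{Vertices.} Since $k\in[\alpha,\beta]$, $\sigma_k$ fixes every $u<\alpha$ and sends every $u>\beta$ to $u+1$. Hence it maps $[n]\setminus[\alpha,\beta]$ bijectively onto $[n+1]\setminus[\alpha,\beta+1]$.

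\emph{Edges, forward inclusion.} We have $\sigma_k\in\Inc_{n,n+1}$, and the chain is $\Inc$-invariant, so $\sigma_k(E(G_n))\subseteq E(G_{n+1})$. This gives the inclusion of edge sets in the forward direction.

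\emph{Edges, reverse inclusion.} Let $(u,v)\in E(G_{n+1})$ with $u<v$ and $u,v\notin[\alpha,\beta+1]$. By \Cref{lem:E(Gn)}(ii), there is $(i,j)\in E(G_r)$ with
\[
0\le u-i\le v-j\le n+1-r .
\]
It suffices to show that $\sigma_k^{-1}(u,v)$ satisfies the analogous inequalities with bound $n-r$. I split into three cases according to the positions of $u$ and $v$.

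\emph{Case $v<\alpha$.} Here the preimage is $(u,v)$ itself, so I only need $v-j\le n-r$. From $\beta\le n$ and $\beta-\alpha\ge r$ we get $\alpha\le n-r$. Hence $v\le n-r-1$, and since $j\ge1$, this gives $v-j\le n-r-2$.

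\emph{Case $u>\beta+1$.} The preimage is $(u-1,v-1)$. The only condition to check is $u-1-i\ge0$. This holds because $u\ge\beta+2\ge r+2>i$.

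\emph{Case $u<\alpha$ and $v>\beta+1$.} The preimage is $(u,v-1)$. The inequalities $0\le u-i$ and $v-1-j\le n-r$ are immediate. For the middle inequality $u-i\le v-1-j$, I use $v-u\ge(\beta+2)-(\alpha-1)\ge r+3$. Combined with $j-i<r$, this gives $v-1-u\ge j-i$. (This is the same argument as in \Cref{lem:gap}.)

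In each case, \Cref{lem:E(Gn)}(ii) then yields $\sigma_k^{-1}(u,v)\in E(G_n)$. The main obstacle is the mixed case, and this is where the hypothesis $\beta-\alpha\ge r$ is needed: without a gap of length at least $r$, shifting only the larger endpoint could violate $u-i\le v-j$. The other two cases only need mild bounds, namely $\alpha\le n-r$ and $\beta\ge r$, which also follow from $\beta-\alpha\ge r$.
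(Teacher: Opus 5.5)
Your proof is correct, but it takes a different route from the paper's.

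\textbf{The paper's route.} The paper does not check the reverse inclusion coordinate by coordinate. Instead it invokes \Cref{lem_decomposition} (a special case of \cite[Proposition~3.1]{HHLNN}): any $r+1$ of the shifts $\sigma_0,\dots,\sigma_n$ already produce all of $E(G_{n+1})$ from $E(G_n)$. Taking $\Lambda\subseteq[\alpha,\beta]$, which is possible exactly because $\beta-\alpha\ge r$, the paper gets $E(G_{n+1}\setminus[\alpha,\beta+1])=\bigcup_{k\in[\alpha,\beta]}\sigma_k\bigl(E(G_n\setminus[\alpha,\beta])\bigr)$. This union then collapses to a single term, because every $\sigma_k$ with $k\in[\alpha,\beta]$ acts identically on edges avoiding $[\alpha,\beta]$. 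That step uses the same three positional cases as yours: both endpoints below $\alpha$, straddling the interval, or both above $\beta$.

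\textbf{Comparison.} The paper's argument is shorter given the imported decomposition lemma, and the gap hypothesis enters only as a cardinality condition on $\Lambda$. Your argument is self-contained, relying only on the triangle description in \Cref{lem:E(Gn)}. It also shows precisely where the gap is needed: only for edges straddling the deleted interval, and even there with slack, since you obtain $v-1-u\ge r+2$ while only $j-i\le r-1$ is required. Finally, as you remark, your inequalities adapt to vertex sets other than $[n]$, whereas the paper's proof uses $V(G_n)=[n]$ explicitly.
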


	\section{Independence complexes}
	\label{subsec.indep}
	
	In this section, we investigate the asymptotic behavior of independence complexes along an $\Inc$-invariant chain of graphs. Our main objective is to prove the following generalization of \cite[Proposition~7.5]{HHLNN}, conjectured by Hop D. Nguyen in a private communication.
	
	\begin{thm}
		\label{thm:f-vector-IN}
		Let $\Gc=(G_n)_{n\ge1}$ be an $\Inc$-invariant chain of graphs with $\ind(\Gc)=r$, $\spi(\Gc)=s$, and $\me(\Gc)=m$. Denote by $\IN(G_n)$ and $\alpha(G_n)$ the independence complex and the independence number of $G_n$, respectively. Let
		\[
		\bigl(f_{-1}(\IN(G_n)),f_0(\IN(G_n)),f_1(\IN(G_n)),\dots\bigr)
		\quad\text{and}\quad
		\bigl(h_0(\IN(G_n)),h_1(\IN(G_n)),\dots\bigr)
		\]
		be the $f$-vector and the $h$-vector of $\IN(G_n)$, respectively. Then, for every $n\ge2r+m$, the following statements hold:
		\begin{enumerate}
			\item $\alpha(G_{n+1})=\alpha(G_n)$.
			\item For every $i\ge0$,
			\[
			f_i(\IN(G_{n+1}))
			=f_i(\IN(G_n))+\binom{s-1}{i}.
			\]
			In particular, $f_i(\IN(G_n))$ is eventually a linear function of $n$ for every $i\ge0$.
			\item Set $d=\alpha(G_n)$. Then for every $i\ge0$,
			\[
			h_i(\IN(G_{n+1}))
			=h_i(\IN(G_n))+(-1)^{i-1}\binom{d-s}{i-1}.
			\]
			In particular, $h_i(\IN(G_n))$ is eventually a linear function of $n$ for every $i\ge0$.
			\item Let $\Ep(\IN(G_n))$ denote either the ordinary or the reduced Euler characteristic of $\IN(G_n)$. Then
			\[
			\Ep(\IN(G_{n+1}))=
			\begin{cases}
				\Ep(\IN(G_n))+1 & \text{if }s=1,\\
				\Ep(\IN(G_n))   & \text{if }s\ge2.
			\end{cases}
			\]
		\end{enumerate}
	\end{thm}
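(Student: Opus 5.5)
The core of the argument is the $f$-vector recurrence in (ii); parts (i), (iii), and (iv) then follow by formal manipulation. I would compare the independent sets of $G_{n+1}$ with those of $G_n$ through the insertion maps $\sigma_k$ and Proposition~\ref{lem:del-graph-iso}. The key structural claim is this: for $n\ge 2r+m$, every independent set of $G_{n+1}$ omits some interval $[\alpha,\beta+1]$ with $\beta-\alpha\ge r$, placed in a fixed middle region. The exceptions are the sets that meet the middle only inside a short window.

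More concretely, Lemma~\ref{lem:independent-interval}(i) says that two vertices $k<l$ in $[r,n-r]$ with $l-k\ge s$ are adjacent. Hence any independent set $F$ meets $[r,n+1-r]$ in a subset of some interval $I_t=[t,t+s-1]$. I would split the independent sets of $G_{n+1}$ into two classes. The first class consists of those whose middle part is contained in a window near the left end, in the range controlled by $r$ and $m$. These correspond bijectively, via Proposition~\ref{lem:del-graph-iso} and the shift $\sigma_k$ with $k$ beyond the window, to independent sets of $G_n$; this needs $n$ large enough that an interval of length $r$ free of $F$ exists. The second class should be exactly the new faces. The count $\binom{s-1}{i}$ suggests that there is exactly one extra $(s-1)$-dimensional simplex of new faces per step: the new faces are those containing a specific vertex (the newly created middle position) together with an arbitrary $i$-subset of the $s-1$ vertices following it in $I_t$, and using no outer vertices.

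To justify the second class, I would show that for $n\ge 2r+m$ a vertex $t$ deep in the middle is adjacent to every vertex of $[1,r]$ and of $[n-r,n]$. This should follow from the east/south moves of Lemma~\ref{lem.shortmoves}(i) and Lemma~\ref{lem:gap}, combined with Assumption~\ref{assm:main}, which gives edges $(1,i)$ and $(j,r)$. The role of $m$ enters here: the missing edges of $G_r$ bound how far from the ends a vertex can be before it becomes adjacent to all outer vertices. I expect this step, pinning down exactly which independent sets are new and showing that the threshold $2r+m$ suffices, to be the main obstacle. I would first try induction on $\me(\Gc)$ using Proposition~\ref{lem:induced-chain}, with the eventually saturated case as the base, where $G_n$ is an induced subgraph of $G_\infty$ and the structure is explicit.

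Given (ii), the remaining parts are formal:
\begin{itemize}
\item[(i)] The top nonzero $f_i$ has $i\le s-1$ for the new part. In addition, $d=\alpha(G_n)\ge s$ because $I_t$ is independent. Hence adding faces of dimension at most $s-1$ does not change the dimension, which gives $\alpha(G_{n+1})=\alpha(G_n)$.
\item[(iii)] Substitute (ii) into \eqref{eq:f-and-h-vector} with $d$ fixed. This gives $\Delta h_j=\sum_{i}(-1)^{j-i}\binom{d-i}{j-i}\binom{s-1}{i-1}$, which a Vandermonde-type identity collapses to $(-1)^{j-1}\binom{d-s}{j-1}$. Equivalently, the Hilbert series increases by $t(1-t)^{-s}$, and one rewrites this over $(1-t)^{d}$.
\item[(iv)] The reduced Euler characteristic changes by $\sum_{i\ge0}(-1)^i\binom{s-1}{i}$, which is $1$ if $s=1$ and $0$ otherwise.
\end{itemize}
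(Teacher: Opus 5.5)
Your derivations of (i), (iii) and (iv) from (ii) are correct; the paper gets them from the equivalent identity $\HS_{G_{n+1}}(t)-\HS_{G_n}(t)=t/(1-t)^s$. The gap is in (ii) itself, specifically in the range $n\ge 2r+m$.

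\textbf{The window argument cannot reach $2r+m$.} \Cref{lem:del-graph-iso} needs a deleted interval $[\alpha,\beta+1]$ with $\beta-\alpha\ge r$, so at least $r+2$ vertices. For the sets meeting it to be controllable, the window must lie well inside $[r,n+1-r]$. In fact it must lie roughly in $[2r,n+1-2r]$, so that its vertices are adjacent to all of $[r-1]\cup[n-r+2,n+1]$, as in the proof of \Cref{lem:I_t}. At $n=2r+m$ the region $[r,n+1-r]$ has only $m+2$ vertices. So whenever $m<r$ (e.g.\ $G_r=K_r$ minus one edge) no such window exists. In the paper, $m$ enters the bound only through the induction, not through any depth estimate.

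\textbf{Your bookkeeping is also off.} The sets of $G_{n+1}$ avoiding $[\alpha,\beta+1]$ correspond to the sets of $G_n$ avoiding $[\alpha,\beta]$, not to all independent sets of $G_n$. So the sets meeting the window must be counted on both sides. Done that way the idea does work. For a window $J\subseteq[2r,n-2r]$, the independent sets meeting $J$ are exactly the subsets of diameter less than $s$ meeting $J$. There are $|J|\binom{s-1}{i}+c(s,i)$ of them of size $i+1$, for a constant $c(s,i)$, and lengthening $J$ by one adds $\binom{s-1}{i}$. This proves (ii) for $n\ge 5r$, which gives eventual linearity but not the stated threshold.

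\textbf{The missing inductive step.} Your fallback, induction on $\me(\Gc)$ with eventually saturated chains as the base case, is the paper's strategy. However, you do not say what the inductive step is, and that is the missing idea. The paper deletes the last vertex: by \Cref{lem:Hilbert-series-graph},
\[
\HS_{G_{n+1}}(t)=\HS_{H_n}(t)+\frac{t}{1-t}\HS_{G_{n+1}\setminus N_{G_{n+1}}[n+1]}(t),
\]
where $H_n=G_{n+1}\cap K_n$ is the chain of \Cref{lem:induced-chain} with $\alpha=\beta=1$. This chain has the same index and sparsity index, and $\me(\Hc)<m$ when $\Gc$ is not eventually saturated. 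By \Cref{lem.GWeaklyChordal}(ii), $G_{n+1}\setminus N_{G_{n+1}}[n+1]\cong G_n\setminus N_{G_n}[n]$ for $n\ge 2r+1$. So the second term cancels in differences, giving $\HS_{G_{n+1}}-\HS_{G_n}=\HS_{H_n}-\HS_{H_{n-1}}$. Each inductive step shifts the threshold by one, which is where $2r+m$ comes from.

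In the base case, $G_{n+1}\setminus\{n+1\}=G_n$, and the non-neighbors of $n+1$ form the independent interval $[n-s+2,n]$. So the new faces are $\{n+1\}\cup S$ with $S\subseteq[n-s+2,n]$: they sit at the right end, not in the middle as your proposal describes.
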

	
	\begin{rem}
		The eventual constancy of $\alpha(G_n)$ also follows from \cite[Corollary~3.12]{LNNR1}. Indeed, the Stanley--Reisner ideal of $\IN(G_n)$ is the edge ideal $I(G_n)$, and the chain $(I(G_n))_{n\ge1}$ is $\Inc$-invariant. By \cite[Corollary~3.12]{LNNR1}, the dimension of the Stanley--Reisner ring of $\IN(G_n)$, which is precisely $\alpha(G_n)$, is eventually constant.
	\end{rem}
	
	The following example illustrates \Cref{thm:f-vector-IN}.
	
	\begin{ex}
		\label{ex:independence}
		Let $\mathcal{G}=(G_n)_{n\ge1}$ be an $\Inc$-invariant chain of graphs with $\ind(\mathcal{G})=6$ and
		\[
		E(G_6)=\{(1,6),(2,5),(3,6)\}.
		\]
		Then $\spi(\Gc)=3$. Computations with Macaulay2 \cite{GS} yield the following table.
		
		\begin{table}[htbp]
			\centering
			\begin{tabular}{|c|c|c|c|c|}
				\hline
				$n$ & $\alpha(G_n)$ & $f$-vector of $\IN(G_n)$ & $h$-vector of $\IN(G_n)$ & $\nu(\IN(G_n))$ \\
				\hline
				6  & 4 & $(1,6,12,9,2)$   & $(1,2,0,-1,0)$ & 4 \\
				\hline
				7  & 4 & $(1,7,13,9,2)$   & $(1,3,-2,0,0)$ & 4 \\
				\hline
				8  & 4 & $(1,8,15,10,2)$  & $(1,4,-3,0,0)$ & 5 \\
				\hline
				9  & 4 & $(1,9,17,11,2)$  & $(1,5,-4,0,0)$ & 6 \\
				\hline
				10 & 4 & $(1,10,19,12,2)$ & $(1,6,-5,0,0)$ & 7 \\
				\hline
			\end{tabular}
			\caption{Some invariants of independence complexes}
			\label{tab:independence}
		\end{table}
		
		The reader may verify that the conclusions of \Cref{thm:f-vector-IN} hold throughout the range $7\le n\le10$. The last column of the table records the number of facets of $\IN(G_n)$, which will be studied at the end of this section.
	\end{ex}
	
	Our proof of \Cref{thm:f-vector-IN} relies on the explicit relations connecting the $f$-vector, the $h$-vector, and the Hilbert series given in \eqref{eq:f-vector} and \eqref{eq:h-vector}. Under this algebraic framework, \Cref{thm:f-vector-IN} is equivalent to the following proposition.
	
	\begin{prop}
		\label{prop:HS-IN}
		Let $\Gc=(G_n)_{n\ge1}$ be an $\Inc$-invariant chain of graphs with $\ind(\Gc)=r$, $\spi(\Gc)=s$, and $\me(\Gc)=m$. Then
		\[
		\HS_{G_{n+1}}(t)=\HS_{G_n}(t)+\frac{t}{(1-t)^{s}} \quad \text{for all $n\ge 2r+m$}.
		\]
	\end{prop}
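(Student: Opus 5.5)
The plan is to compare independent sets of $G_{n+1}$ and $G_n$ directly. Since both sides of the identity are Hilbert series of Stanley--Reisner rings, \eqref{eq:f-vector} reduces the claim to a combinatorial statement. For $n\ge 2r+m$ it suffices to show that the independent sets of $G_{n+1}$ split into two families. The first is in dimension-preserving bijection with the independent sets of $G_n$. The second consists of the nonempty subsets of an interval $[t,t+s-1]$ that contain $t$, for a single extra value of $t$. Indeed, the Hilbert series of the simplicial complex generated by such sets is exactly $t/(1-t)\cdot 1/(1-t)^{s-1}=t/(1-t)^{s}$.

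\textbf{Step 1 (structure of independent sets).} Using \Cref{assm:main}, I would first show that $G_n$ contains edges $(1,l)$ for all $l\in[i,i+n-r]$ and $(k,n)$ for all $k\in[j,j+n-r]$; this follows from \Cref{lem:E(Gn)} applied to $(1,i),(j,r)\in E(G_r)$. Combining these edges with \Cref{lem:gap}, any two vertices $u<v$ with $v-u\ge r$ are adjacent unless $u<j$ and $v>n-r+i$, so both lie very close to the two ends. By \Cref{lem:independent-interval}(i), two vertices of the middle region $M_n=[r,n-r]$ at distance $\ge s$ are adjacent. Consequently, if an independent set $F$ meets $M_n$ and $t=\min(F\cap M_n)$, then $F\cap M_n\subseteq[t,t+s-1]$, and $F\setminus M_n$ can only contain vertices within distance $<r$ of $t$. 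For $t$ deep inside $M_n$, meaning at distance $\ge 2r$ or so from both ends, this forces $F\subseteq[t,t+s-1]$ with $t\in F$. By \Cref{lem:independent-interval}(ii) every such subset is independent.

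\textbf{Step 2 (the bijection).} Choose $k$ deep in the middle. Independent sets of $G_n$ avoiding a long middle interval $[\alpha,\beta]$ with $\beta-\alpha\ge r$ correspond bijectively to independent sets of $G_{n+1}$ avoiding $[\alpha,\beta+1]$; this is because $\sigma_k$ induces an isomorphism of the deleted graphs by \Cref{lem:del-graph-iso}. Independent sets meeting $[\alpha,\beta]$ are classified by $t=\min F\cap[\alpha,\beta]$. For $t$ deep inside, Step 1 shows each such $t$ contributes exactly $t/(1-t)^s$. Since $[\alpha,\beta+1]$ has one more deep position than $[\alpha,\beta]$, the difference is $t/(1-t)^s$. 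For the positions $t$ near $\alpha$ or $\beta$, I would match the contributions in $G_n$ and $G_{n+1}$ using $\sigma_k$: positions near $\alpha$ map to themselves, and positions near $\beta$ shift by one. I would also check that $\sigma_k$ preserves adjacency between these boundary vertices and the vertices outside the interval; this should follow from \Cref{lem:E(Gn)}(i) together with the west and north moves of \Cref{lem.shortmoves}.

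\textbf{Main obstacle.} The delicate point is the boundary analysis in Step 2. There I need the local structure of $G_n$ near the ends, that is, which pairs within distance $<r$ of the ends or of the cut are non-edges, to be the same in $G_n$ and $G_{n+1}$. This is where the missing edges enter: non-edges of $G_r$ propagate only to a bounded region. If the direct matching becomes messy, my fallback is to induct on $m=\me(\Gc)$ via \Cref{lem:induced-chain}. Deleting the first $\alpha-1$ and last $\beta$ vertices does not increase $\me$, and either decreases it or makes the chain eventually saturated. The base case, an eventually saturated chain, would be handled by the direct argument above. The inductive step would apply \Cref{lem:Hilbert-series-graph} at an end vertex $u$ (for instance $u=1$ or $u=n$) to write $\HS_{G_n}$ in terms of $\HS$ of chains with smaller $\me$, each of which satisfies the recursion by induction. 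This bookkeeping, with each step costing about one unit of $n$, is what I expect to produce the threshold $n\ge 2r+m$.
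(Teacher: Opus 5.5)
\textbf{Verdict: genuine gap (the threshold $n\ge 2r+m$ is not established).} Your direct argument (Steps 1--2) is sound, and simpler than you fear. Place $[\alpha,\beta]$ inside $[2r,n-2r]$ with $\beta-\alpha\ge r$. Then Step 1 shows that every independent set of $G_n$ meeting $[\alpha,\beta]$ lies in $[r,n-r]$ and is just a set of diameter at most $s-1$; conversely, every such set is independent. The same holds in $G_{n+1}$ for $[\alpha,\beta+1]$. The only new sets are the subsets of $[\beta+1,\beta+s]$ containing $\beta+1$, and they contribute $t/(1-t)^s$. The sets avoiding the interval are matched by \Cref{lem:del-graph-iso}. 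So no boundary analysis is needed, and the missing edges never enter.

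The problem is that this only works once such an interval fits, i.e.\ for $n\ge 5r$ or so. The statement asserts the recursion for all $n\ge 2r+m$. When $m<3r$ the range $2r+m\le n<5r$ is not covered; for $m=0$, for instance, the claim starts at $n=2r$. The threshold is part of the statement and is used verbatim in \Cref{thm:f-vector-IN}. Your main route therefore proves a different, incomparable bound: it is independent of $m$ and better when $m>3r$, but it is not the one claimed.

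Your fallback does not close this gap as sketched, for three reasons.
\begin{enumerate}
\item If eventually saturated chains are handled by Steps 1--2, the base case only starts at about $5r$, and the induction then yields roughly $5r+m$, not $2r+m$. The paper handles eventually saturated chains already from $n\ge 2r$ by deleting the vertex $n+1$. Saturation gives $G_{n+1}\setminus\{n+1\}=G_n$, and $G_{n+1}\setminus N_{G_{n+1}}[n+1]$ is edgeless on the $s-1$ vertices $[n-s+2,n]$ (\Cref{saturated}).
\item The inductive step must delete the last vertex, not vertex $1$. \Cref{lem:induced-chain}(iii) guarantees $\me(\Hc)<\me(\Gc)$ for a non-eventually-saturated $\Gc$ only when $\beta\ge1$; the remark in the Appendix shows that $\beta=0$ can fail.
\item The link term does not ``satisfy the recursion by induction''. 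If it did, it would contribute an extra $\frac{t}{1-t}\cdot\frac{t}{(1-t)^s}$. What makes the step work is that $G_n\setminus N_{G_n}[n]\cong G_{n+1}\setminus N_{G_{n+1}}[n+1]$ for $n\ge 2r+1$ (\Cref{lem.GWeaklyChordal}(ii)). Hence the link terms cancel, and $\HS_{G_{n+1}}(t)-\HS_{G_n}(t)=\HS_{H_n}(t)-\HS_{H_{n-1}}(t)$ with $H_n=G_{n+1}\cap K_n$. Induction on $\Hc$, which has the same index $r$ and $\me(\Hc)\le m-1$, then gives exactly the range $n\ge 2r+m$.
\end{enumerate}
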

	
	To establish this proposition, we require an auxiliary result from \cite[Lemma~3.10]{HHLNN}, whose second assertion is a special case of \Cref{lem:del-graph-iso}. Henceforth, for an $\Inc$-invariant chain of graphs $\Gc=(G_n)_{n\ge1}$ with $\ind(\Gc)=r$, set
	\[
	\begin{aligned}
		b(\Gc)&=\min\{i\in[r]\mid(i,r)\in E(G_r)\},\\
		B(\Gc)&=\max\{i\in[r]\mid(i,r)\in E(G_r)\}.
	\end{aligned}
	\]
	These indices are well-defined by \Cref{assm:main}.
	
	\begin{lem}
		\label{lem.GWeaklyChordal}
		Let $\Gc=(G_n)_{n\ge1}$ be an $\Inc$-invariant chain of graphs with $\ind(\Gc)=r$. Then for every $n\ge 2r+1$, the following statements hold:
		\begin{enumerate}
			\item 
			$V(G_n\setminus N_{G_n}[n])=[b(\Gc)-1] \cup [n-r+B(\Gc)+1,n-1]$.
			\item 
			The graphs $G_n\setminus N_{G_n}[n]$ and $G_{n+1}\setminus N_{G_{n+1}}[n+1]$ are isomorphic.
			%\item $G_n\setminus N_{G_n}[n]$ is weakly chordal.
		\end{enumerate}
	\end{lem}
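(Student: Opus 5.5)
The plan is to read the neighborhood of $n$ off \Cref{lem:E(Gn)} and then handle the two parts separately. Write $b=b(\Gc)$ and $B=B(\Gc)$.

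\emph{Part (i).} By \Cref{lem:E(Gn)}, $(k,n)\in E(G_n)$ if and only if there is $(i,j)\in E(G_r)$ with $0\le k-i\le n-j\le n-r$. Since $j\le r$, the condition $n-j\le n-r$ forces $j=r$. So $(k,n)\in E(G_n)$ exactly when $i\le k\le i+n-r$ for some $i$ with $(i,r)\in E(G_r)$. Hence
\[
N_{G_n}(n)=\bigcup_{(i,r)\in E(G_r)}[i,i+n-r].
\]
For $n\ge 2r+1$ we have $n-r\ge r+1$. Any two of these intervals therefore overlap, because their left endpoints lie in $[r]$ while each has length $n-r>r$. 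The union is thus the single interval $[b,B+n-r]$, intersected with $[n-1]$. Removing it and $n$ from $[n]$ leaves $[b-1]\cup[n-r+B+1,n-1]$, which is (i).

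\emph{Part (ii).} Put $W_n=V(G_n\setminus N_{G_n}[n])$. Then $W_{n+1}=[b-1]\cup[n+1-r+B+1,n]$. Consider the map $\varphi$ that is the identity on $[b-1]$ and sends $x\mapsto x+1$ on the second block. This is exactly the restriction of $\sigma_k$ for any $k\in[b-1,\,n-r+B]$, and $\varphi$ is a bijection $W_n\to W_{n+1}$.

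I would invoke \Cref{lem:del-graph-iso} with $\alpha=b$ and $\beta=n-r+B$, and take $k=\alpha$. Its hypothesis $\beta-\alpha\ge r$ becomes $n-r+B-b\ge r$, which holds because $n\ge 2r+1$ and $B-b\ge 1-r$. Indeed $n-2r+B-b\ge 1+B-b$, and we need $B\ge b-1$, which is true since $B\ge b$.

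The remaining issue is that the lemma deletes only $[\alpha,\beta]$, whereas we must also delete the vertex $n$ (respectively $n+1$). The lemma gives an isomorphism $\sigma_k\colon G_n\setminus[\alpha,\beta]\to G_{n+1}\setminus[\alpha,\beta+1]$, and this map sends $n\mapsto n+1$. Restricting it to the induced subgraph on the complement of $n$ therefore yields an isomorphism
\[
G_n\setminus N_{G_n}[n]\cong G_{n+1}\setminus N_{G_{n+1}}[n+1].
\]

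The main obstacle is the boundary bookkeeping in (i). One must check that the union of the intervals is contiguous, as argued above. One must also check the case where $B+n-r\ge n$: then $N_{G_n}(n)=[b,n-1]$ and the second block is empty. Both parts then hold trivially, consistent with the convention for empty intervals.
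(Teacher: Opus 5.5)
Your proof is correct and matches the paper's route. As the paper notes, part (ii) is the special case $[\alpha,\beta]=[b(\Gc),\,n-r+B(\Gc)]=N_{G_n}(n)$ of \Cref{lem:del-graph-iso}, restricted away from $n\mapsto n+1$; part (i) is the direct computation from \Cref{lem:E(Gn)} behind the result the paper cites from \cite{HHLNN}, and the case $B(\Gc)+n-r\ge n$ you set aside never occurs because $B(\Gc)\le r-1$.
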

	
	As a first step toward proving \Cref{prop:HS-IN}, we treat eventually saturated chains.
	
	\begin{lem}
		\label{saturated}
		Let $\Gc=(G_n)_{n\ge1}$ be an eventually saturated $\Inc$-invariant chain of graphs with $\ind(\Gc)=r$ and $\spi(\Gc)=s$. Then the following statements hold:
		\begin{enumerate}
			\item 
			$V(G_n\setminus N_{G_n}[n])=[n-s+1,n-1]$ for all $n\ge 2r+1$.
			\item 
			We have
			\[
			\HS_{G_{n+1}}(t)=\HS_{G_n}(t)+\frac{t}{(1-t)^{s}} \quad \text{for all $n\ge 2r$}.
			\]
		\end{enumerate}	
	\end{lem}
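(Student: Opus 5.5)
Part (i) is a neighborhood computation, and part (ii) follows from it by the deletion formula of \Cref{lem:Hilbert-series-graph} applied to the vertex $n+1$.

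\textbf{Part (i).} Fix $n\ge 2r+1$. Eventual saturation gives $G_n=G_\infty[[n]]$. So for $u<n$ we have $(u,n)\in E(G_n)$ if and only if $(u,n)\in E(G_\infty)$. By \Cref{cor:G_infty} this holds if and only if there is $(i,j)\in E(G_r)$ with $0\le u-i\le n-j$.
\begin{itemize}
\item If $n-u\ge s$, choose $(i,i+s)\in E(G_r)$. For $u\ge i$ the inequality $u-i\le n-i-s$ holds, so $(u,n)$ is an edge.
\item If $u<i$ for every edge of length $s$, use \Cref{assm:main}: $(1,i_0)\in E(G_r)$ for some $i_0$. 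Then $0\le u-1$, and $u-1\le n-i_0$ holds since $u\le n-s$ and $i_0\le r<n-u+1$ whenever $u\le n-r$. In the remaining range $n-r<u\le n-s$, combine this with \Cref{lem:gap}: apply it to the edge $(1,n)$, which exists by \Cref{lem.shortmoves}(i) or directly from the inequality above. This handles all $u$ with $n-u\ge r$. For $n-r<u\le n-s$, argue directly with the edge $(i,i+s)$: here $u\ge n-r+1\ge r+2>i$, so the inequality $0\le u-i\le n-i-s$ is immediate.
\item If $0<n-u<s$, then $(u,n)\notin E(G_n)$ by \Cref{lem:independent-interval}(ii).
\end{itemize}
Hence $N_{G_n}[n]=[1,n-s]\cup\{n\}$, and the deleted graph has vertex set $[n-s+1,n-1]$. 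This also follows from \Cref{lem.GWeaklyChordal}(i), provided $b(\Gc)=1$ and $B(\Gc)=r-s$ in the saturated case.

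\textbf{Part (ii).} Apply \Cref{lem:Hilbert-series-graph} to $G_{n+1}$ with $u=n+1$, where $n\ge 2r$ so that $n+1\ge 2r+1$.
\begin{itemize}
\item By saturation, $G_{n+1}\setminus\{n+1\}=G_\infty[[n]]=G_n$.
\item By part (i), $G_{n+1}\setminus N[n+1]$ is the induced graph on the interval $[n-s+2,n]$ of $s-1$ vertices. This interval is independent by \Cref{lem:independent-interval}(ii), so its Hilbert series is $1/(1-t)^{s-1}$.
\end{itemize}
Substituting gives $\HS_{G_{n+1}}=\HS_{G_n}+\frac{t}{1-t}\cdot\frac{1}{(1-t)^{s-1}}$, which is the claim.

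\textbf{Main obstacle.} The delicate step is part (i): checking that every $u\le n-s$ is adjacent to $n$ in $G_\infty$. Small $u$ may lie below the left endpoint $i$ of every shortest edge, so these need the edge $(1,i_0)$ from \Cref{assm:main}. The condition $n\ge 2r+1$ guarantees the two ranges, $u$ small and $u$ near $n$, cover everything.
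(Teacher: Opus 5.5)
Your proof is correct. Part (ii) is exactly the paper's argument. Part (i) takes a more hands-on route than the paper.

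\textbf{Your route for (i).} You compute $N_{G_n}(n)$ directly from the description of $G_\infty$ in \Cref{cor:G_infty}. The edge $(1,i_0)$ puts every $u\le n-i_0+1$, in particular every $u\le n-r$, into the neighborhood. A shortest edge $(i,i+s)$ covers $[i,n-s]$. Since $i\le r-s<n-r$, the two ranges overlap, and \Cref{lem:independent-interval}(ii) excludes $[n-s+1,n-1]$.

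\textbf{The paper's route for (i).} The paper invokes \Cref{lem.GWeaklyChordal}(i), quoted from \cite{HHLNN}, which already gives $V(G_n\setminus N_{G_n}[n])=[b(\Gc)-1]\cup[n-r+B(\Gc)+1,n-1]$. It then only checks at level $r$ that $(1,r),(r-s,r)\in E(G_r)$, via \Cref{cor:G_infty} plus saturation. Together with $B(\Gc)\le r-s$ from the definition of $s$, this gives $b(\Gc)=1$ and $B(\Gc)=r-s$.

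\textbf{Comparison.} The ingredients are the same: the edge at vertex $1$, a shortest edge, and saturation to pull edges of $G_\infty$ back into $G_n$. Your version is self-contained and does not rely on the cited lemma. The paper's is shorter once that lemma is available.

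Your second bullet is tangled and should be rewritten. The condition ``$u<i$ for every edge of length $s$'' and the detour through \Cref{lem:gap} with the edge $(1,n)$ are both unnecessary. The two-range covering statement above is all that is needed.
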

	
	\begin{proof}
		(i) By \Cref{lem.GWeaklyChordal}(i), it suffices to show that $b(\Gc)=1$ and $B(\Gc)=r-s$. By \Cref{assm:main}, there exists $i\in[r]$ such that $(1,i)\in E(G_r)$. It follows from \Cref{cor:G_infty} that $(1,r)\in E(G_\infty)$. Since $\Gc$ is eventually saturated, $(1,r)\in E(G_r)$, and hence $b(\Gc)=1$.
		
		Since $\spi(\Gc)=s$, there exists an edge $(j-s,j)\in E(G_r)$. Again by \Cref{cor:G_infty}, we have $(r-s,r)\in E(G_\infty)$. Eventual saturation yields $(r-s,r)\in E(G_r)$, so $B(\Gc)\ge r-s$. On the other hand, $(B(\Gc),r)\in E(G_r)$ and $\spi(\Gc)=s$, whence $B(\Gc)\le r-s$. Therefore, $B(\Gc)=r-s$.
		
		(ii) Let $n\ge2r$. By part (i),
		\[
		V\bigl(G_{n+1}\setminus N_{G_{n+1}}[n+1]\bigr)=[n-s+2,n].
		\]
		The interval $[n-s+2,n]$ is independent in $G_{n+1}$ by \Cref{lem:independent-interval}(ii). Hence, $G_{n+1}\setminus N_{G_{n+1}}[n+1]$ is an edgeless graph on $s-1$ vertices, and therefore
		\[
		\HS_{G_{n+1}\setminus N_{G_{n+1}}[n+1]}(t)
		=\frac{1}{(1-t)^{s-1}}.
		\]
		Since $\Gc$ is eventually saturated, \Cref{lem:saturation-characterization} gives
		\[
		G_{n+1}\setminus\{n+1\}=G_{n+1}\cap K_n=G_n.
		\]
		It now follows from \Cref{lem:Hilbert-series-graph} that
		\begin{align*}
			\HS_{G_{n+1}}(t)
			&=\HS_{G_{n+1}\setminus\{n+1\}}(t)
			+\frac{t}{1-t}\HS_{G_{n+1}\setminus N_{G_{n+1}}[n+1]}(t)\\
			&=\HS_{G_n}(t)+\frac{t}{(1-t)^s},
		\end{align*}
		as desired.
	\end{proof}

	We are now ready to prove \Cref{prop:HS-IN}.
	
	\begin{proof}[Proof of \Cref{prop:HS-IN}]
		We proceed by induction on $m=\me(\Gc)$. If $m=0$, then $G_r=K_r$, and hence $G_n=K_n$ for all $n\ge r$. In particular, $\Gc$ is eventually saturated, so the assertion follows from \Cref{saturated}.
		
		Assume that $m>0$ and that the assertion holds for every chain $\Hc$ with $\me(\Hc)<m$. By \Cref{saturated}, we may further assume that $\Gc$ is not eventually saturated. Let $\Hc=(H_n)_{n\ge1}$ be the chain constructed in \Cref{lem:induced-chain} with $\alpha=\beta=1$; thus,
		\[
		H_n=
		\begin{cases}
			\emptyset       & \text{if }1\le n\le r-1,\\
			G_{n+1}\cap K_n & \text{if }n\ge r.
		\end{cases}
		\]
		By \Cref{lem:induced-chain}, we have  $\ind(\Hc)=r$, $\spi(\Hc)=s$, and $\me(\Hc)<m$.
		Let $n\ge 2r+1$. Since $H_n=G_{n+1}\setminus\{n+1\}$, \Cref{lem:Hilbert-series-graph} gives
		\[
		\HS_{G_{n+1}}(t)-\HS_{H_n}(t)
		=\frac{t}{1-t}\HS_{G_{n+1}\setminus N_{G_{n+1}}[n+1]}(t).
		\]
		By \Cref{lem.GWeaklyChordal}(ii), the graphs $G_n\setminus N_{G_n}[n]$ and $G_{n+1}\setminus N_{G_{n+1}}[n+1]$ are isomorphic. Therefore,
		\[
		\HS_{G_{n+1}}(t)-\HS_{H_n}(t)
		=\HS_{G_n}(t)-\HS_{H_{n-1}}(t)
		\quad\text{for all }n\ge2r+1,
		\]
		or equivalently,
		\[
		\HS_{G_{n+1}}(t)-\HS_{G_n}(t)
		=\HS_{H_n}(t)-\HS_{H_{n-1}}(t)
		\quad\text{for all }n\ge2r+1.
		\]
		Applying the induction hypothesis to $\Hc$, we obtain
		\[
		\HS_{H_n}(t)-\HS_{H_{n-1}}(t)
		=\frac{t}{(1-t)^s}
		\quad\text{for all }n\ge2r+\me(\Hc)+1.
		\]
		Since $\me(\Hc)<m$, the last two identities yield
		\[
		\HS_{G_{n+1}}(t)-\HS_{G_n}(t)
		=\frac{t}{(1-t)^s}
		\quad\text{for all }n\ge2r+m.
		\]
		This proves the proposition.
	\end{proof}
	
	Next, we deduce \Cref{thm:f-vector-IN} from \Cref{prop:HS-IN}.
	
	\begin{proof}[Proof of \Cref{thm:f-vector-IN}]
		Fix $n\ge 2r+m$.
		Set $f_i(\IN(G_n))=0$ for all $i\ge\alpha(G_n)$. Then \eqref{eq:f-vector} becomes
		\[
		\HS_{G_n}(t)
		=\sum_{i\ge-1}
		\frac{f_i(\IN(G_n))t^{i+1}}{(1-t)^{i+1}}.
		\]
		By \Cref{prop:HS-IN},
		\begin{equation}
			\label{eq:HS-difference}
			\HS_{G_{n+1}}(t)-\HS_{G_n}(t)
			=\frac{t}{(1-t)^s}.
		\end{equation}
		Consequently,
		\[
		\sum_{i\ge0}
		\bigl(f_i(\IN(G_{n+1}))-f_i(\IN(G_n))\bigr)
		\frac{t^{i+1}}{(1-t)^{i+1}}
		=\frac{t}{(1-t)^s}.
		\]
		Using the identity
		\[
		\frac{t}{(1-t)^s}
		=\sum_{i=0}^{s-1}\binom{s-1}{i}
		\frac{t^{i+1}}{(1-t)^{i+1}}
		\]
		and comparing the coefficients of the powers of $t/(1-t)$, we obtain
		\[
		f_i(\IN(G_{n+1}))
		=f_i(\IN(G_n))+\binom{s-1}{i}
		\quad\text{for every }i\ge0.
		\]
		This proves (ii).
		
		To establish (i), note that $\alpha(G_n)\ge s$. This holds because $[s]$ is an independent set of $G_n$ by \Cref{lem:independent-interval}(ii). Moreover, it follows from (ii) that
		\[
		f_i(\IN(G_{n+1}))=f_i(\IN(G_n))
		\quad\text{for every }i\ge s.
		\]
		This implies $\alpha(G_{n+1})=\alpha(G_n)$, as desired. 
		
		Set $d=\alpha(G_n)$. Writing the Hilbert series in terms of the $h$-vector as in \eqref{eq:h-vector}, we have
		\[
		\HS_{G_n}(t)
		=\frac{\sum_{i=0}^d h_i(\IN(G_n))t^i}{(1-t)^d}.
		\]
		Substituting this expression into \eqref{eq:HS-difference} gives
		\[
		\sum_{i=0}^d
		\bigl(h_i(\IN(G_{n+1}))-h_i(\IN(G_n))\bigr)t^i
		=t(1-t)^{d-s}.
		\]
		Expanding $t(1-t)^{d-s}$ and comparing coefficients, we get
		\[
		h_i(\IN(G_{n+1})) - h_i(\IN(G_n))=	(-1)^{i-1}\binom{d-s}{i-1} \quad \text{for all } i\ge 0.
		\]
		This proves (iii).
		
		Finally, (iv) follows immediately from (iii) and the fact that 
		\[
		\chi(\IN(G_n))-1
		= \widetilde{\chi}(\IN(G_n))
		=(-1)^{d-1}h_d(\IN(G_n)).
		\qedhere
		\]
	\end{proof}
	
	We conclude the section by studying the asymptotic behavior of the number of facets of $\IN(G_n)$. The data in \Cref{tab:independence} suggest that this number is eventually linear in $n$. The next result confirms this observation.
	
	\begin{thm}
		\label{conj:IN-facet}
		Let $\Gc=(G_n)_{n\ge1}$ be an $\Inc$-invariant chain of graphs with $\ind(\Gc)=r$ and $\spi(\Gc)=s$. Denote by $\nu(\IN(G_n))$ the number of facets of $\IN(G_n)$. Then there exists a constant $C\in\Z$ such that
		\[
		\nu(\IN(G_n))=n+C
		\quad\text{for all }n\ge 5r+2s.
		\]
	\end{thm}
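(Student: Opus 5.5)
Facets of $\IN(G_n)$ are the maximal independent sets of $G_n$, so we need $\nu(\IN(G_{n+1}))=\nu(\IN(G_n))+1$ for $n\ge 5r+2s$. The plan is to sort maximal independent sets by whether they contain a vertex in a long middle interval, and to use \Cref{lem:independent-interval} to pin down that part.

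\textbf{Structure of maximal independent sets.} Let $n\ge 5r+2s$ and let $F$ be a maximal independent set of $G_n$. Consider $F\cap[r,n-r]$. By \Cref{lem:independent-interval}(i), any two vertices of $[r,n-r]$ at distance at least $s$ are adjacent. Hence $F\cap[r,n-r]$ lies inside a window $[t,t+s-1]$. I expect to show, using maximality, the west and north moves of \Cref{lem.shortmoves}, and \Cref{lem:gap}, that:
\begin{itemize}
\item either $F\cap[r,n-r]=\emptyset$,
\item or $F\cap[r,n-r]$ is an entire interval $I_t=[t,t+s-1]$.
\end{itemize}
In the second case, a vertex $v\in[r,n-r]$ near $I_t$ but outside it is adjacent to $t$ or to $t+s-1$, since the distance is at least $s$. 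This suggests that any independent set meeting the middle can be enlarged to a full interval.

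\textbf{Classes with middle intervals far from the ends.} Suppose $F\cap[r,n-r]=I_t$ with $t$ far from both ends, say $t\in[2r+s,\,n-2r-2s]$. By \Cref{lem:gap}, every vertex $u<r$ that has some edge $(u,l)$ with $l\ge t+s$ is adjacent to all of $I_t$, and similarly from the right. I would then argue that $F\cap[1,r-1]$ and $F\cap[n-r+1,n]$ range over a fixed finite family, independent of $t$ and $n$. The vertices they may use depend only on the pattern of edges into far vertices, and this pattern stabilises via \Cref{lem:E(Gn)}. So each such $t$ contributes the same number $c$ of facets.

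\textbf{Remaining classes and the count.} The facets with middle part empty, or with $t$ within $2r+s$ of either end, should correspond bijectively between $G_n$ and $G_{n+1}$. For this I would use \Cref{lem:del-graph-iso} with a deleted interval $[\alpha,\beta]$ of length $\ge r$ lying in the middle, disjoint from these facets, together with the fact that $\sigma_k$ shifts the right end. This gives $\nu(\IN(G_{n+1}))-\nu(\IN(G_n))=c$. Comparing with \Cref{thm:f-vector-IN}, where the $f$-vector growth reflects one new interval per step, I expect $c=1$. To confirm it, I would show that for middle $t$ the only possible extension is exactly $F=I_t$ plus forced end-vertices. This holds for saturated chains, where by \Cref{saturated}(i) neighbourhoods cover all but an $(s-1)$-window. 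For general chains I would try induction on $\me(\Gc)$ via \Cref{lem:induced-chain}, as in the proof of \Cref{prop:HS-IN}.

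\textbf{Main obstacle.} The main obstacle is proving that each far-middle $t$ yields exactly one facet. A priori the end-vertices might not be forced uniquely, and the maximality condition near the ends interacts with the missing edges of $G_r$. I would first try to show directly that every vertex $u<r$ is either adjacent to all of $I_t$ or lies in $[b(\Gc)-1]$-type sets compatible with a unique choice. If that fails, I would fall back on the $\me$-induction, tracking how facets restrict to $G_{n+1}\setminus\{n+1\}$.
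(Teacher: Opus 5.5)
Your architecture matches the paper's: count the central intervals $I_t$, and match the remaining facets through \Cref{lem:del-graph-iso}. However, the decisive step is left open. You never prove that a far-middle $t$ contributes exactly one facet. The comparison with \Cref{thm:f-vector-IN} cannot supply this, because the $f$-vector recurrence counts all faces, not maximal ones. With an undetermined $c$ you would only get $\nu(\IN(G_n))=cn+C$. The missing idea is that \Cref{assm:main} makes every end-vertex adjacent to $t$, so there is nothing to choose at the ends:
\begin{itemize}
\item If $I_t\subseteq[2r,n-2r]$ and $u<r$, take $(1,i)\in E(G_r)$. Then $0\le u-1\le t-i\le n-r$, so $(u,t)\in E(G_n)$ by \Cref{lem:E(Gn)}.
\item If $u>n-r$, take $(b,r)\in E(G_r)$. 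Then $0<t-b<u-r\le n-r$, so $(t,u)\in E(G_n)$.
\end{itemize}
Your own \Cref{lem:gap} remark gets there too, once you notice that \Cref{assm:main} gives every $u<r$ an edge $(u,l)$ with $l\ge t+s$. Combine this with \Cref{lem:independent-interval}(i) for $u\in[t+s,n-r]$, and a west move of the edge $(t-1,t+s-1)$ for $u\in[r,t-1]$. This shows $I_t$ is already a facet (\Cref{lem:I_t}), hence the unique facet containing it, so $c=1$.

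Two further steps need repair.

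\emph{The dichotomy is false near the ends.} You claim $F\cap[r,n-r]$ is either empty or a full $I_t$. In \Cref{ex:independence} ($r=6$, $s=3$), for every $n\ge12$ the set $\{4,5,6\}$ is a facet of $\IN(G_n)$: vertices $1,2,3$ are blocked by the edges $(1,6),(2,5),(3,6)$, and every $w\ge7$ is blocked by $(4,w)$. Yet it meets $[6,n-6]$ only in $\{6\}$, so it falls into none of your listed classes. What is true, and suffices, is weaker: a facet meeting the window $U_n=[2r+s-1,n-2r-s+1]$ equals some $I_t\subseteq[2r,n-2r]$. To prove that its middle part is a \emph{full} interval, you need one more fact: a vertex outside $[r,n-r]$ adjacent to one vertex of $[2r,n-2r]$ is adjacent to all of them. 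Otherwise an end-vertex could block the completion.

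\emph{The bijection via \Cref{lem:del-graph-iso} needs a maximality argument.} That lemma matches facets of $\IN(G_n\setminus U_n)$ with those of $\IN(G_{n+1}\setminus U_{n+1})$. But the facets of $\IN(G_n)$ avoiding $U_n$ are a priori only a subset of the former. The paper closes this gap in \Cref{lem:IN-facet-dec}. Suppose a facet $F$ of the deleted graph were properly contained in a facet $F'$ of $G_n$. Then $F'$ meets $U_n$, so $F'=I_t$. Hence $F=I_t\setminus U_n$ is a short interval, which can be lengthened to an independent $s$-interval inside $[n]\setminus U_n$, contradicting maximality.

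With these pieces, $\nu(\IN(G_n))=(n-4r-s+2)+|\FF(\IN(G_n\setminus U_n))|$, and the second term is constant for $n\ge5r+2s$. The $\me$-induction fallback is unnecessary.
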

	
	The proof relies on the observation that every facet of $\IN(G_n)$ is either a central interval or is contained in a boundary region of bounded size. More precisely, we show that a facet is either an interval contained in $[2r,n-2r]$ or a facet of the subgraph induced by
	\[
	[1,2r+s-2]\cup[n-2r-s+2,n].
	\]
	For brevity, set
	\[
	M_1=[r,n-r]
	\quad\text{and}\quad
	M_2=[2r,n-2r].
	\]
	Recall that the set of facets of a simplicial complex $\Delta$ is denoted by $\FF(\Delta)$.
	
	\begin{lem}
		\label{lem:I_t}
		Keep the assumptions of \Cref{conj:IN-facet}. For $t\ge1$, let $I_t=[t,t+s-1]$. If $I_t\subseteq M_2$, then $I_t\in\FF(\IN(G_n))$.
	\end{lem}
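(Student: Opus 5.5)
We must show that $I_t$ is independent and maximal. Independence holds by \Cref{lem:independent-interval}(ii), since any two vertices of $I_t$ are at distance at most $s-1$. It remains to show that every vertex $v\in[n]\setminus I_t$ is adjacent to some vertex of $I_t$.

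We split into three cases according to the position of $v$.

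\emph{Middle range.} Suppose $v\in M_1\setminus I_t=[r,n-r]\setminus I_t$. If $v<t$, then $v\le t-1$. We choose $u=v+s$ when $v+s\le t+s-1$, i.e.\ $u=v+s\in I_t$ (since $v\ge t-s$ would be needed). More carefully, pick the vertex $u\in I_t$ with $u-v\ge s$; for example, $u=t+s-1$ works as soon as $t+s-1-v\ge s$, which holds because $v\le t-1$. Both $u$ and $v$ lie in $[r,n-r]$, since $I_t\subseteq M_2\subseteq M_1$. Hence \Cref{lem:independent-interval}(i) gives $(v,u)\in E(G_n)$. Symmetrically, if $v>t+s-1$, we take $u=t$; then $v-t\ge s$, and again $(t,v)\in E(G_n)$.

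\emph{Left boundary.} Suppose $v<r$. We use that $(1,i)\in E(G_r)$ for some $i$ by \Cref{assm:main}. By \Cref{lem:E(Gn)}, $(1,l)\in E(G_n)$ for all $l\in[i,i+n-r]\supseteq[r,n-r]$. To reach a general $v<r$, we invoke \Cref{lem:gap}: from the edge $(1,l)$ with $l$ large (say $l=n-r$, legitimate since $n-r\ge i$), any pair $u'<v'$ with $1\le u'$, $v'\le n-r$ and $v'-u'\ge r$ is an edge. Taking $u'=v$ and $v'=t$, we have $t\ge 2r$, so $t-v\ge r+1$, and $t\le n-2r\le n-r$. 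Thus $(v,t)\in E(G_n)$.

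\emph{Right boundary.} Suppose $v>n-r$. The argument is symmetric, using the edge $(j,r)\in E(G_r)$, which yields $(r\text{-ish},n)$-type edges $(j+n-r,\,n)$ and more generally edges $(k,n)$ for $k\in[j,j+n-r]$. Take $k=r\ge j$, so $(r,n)\in E(G_n)$. Then \Cref{lem:gap} with $u'=t+s-1$ and $v'=v$ applies: $t+s-1\ge r$, $v\le n$, and $v-(t+s-1)\ge n-r+1-(n-2r)=r+1$. Hence $(t+s-1,v)\in E(G_n)$.

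So every vertex outside $I_t$ has a neighbour in $I_t$, which gives maximality. The only delicate point is checking the ranges in the boundary cases. This is exactly where the margin $2r$ in $M_2$ is used: it guarantees a gap of at least $r$, so that \Cref{lem:gap} applies.
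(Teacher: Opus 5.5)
Your proof is correct and follows the same case analysis as the paper. Interior vertices are handled by \Cref{lem:independent-interval}(i), and boundary vertices by the edges $(1,i),(j,r)\in E(G_r)$ from \Cref{assm:main}. The differences are cosmetic: you apply \Cref{lem:independent-interval}(i) directly to $(v,t+s-1)$ instead of a west move from $(t-1,t+s-1)$, and you route the boundary cases through \Cref{lem:gap} instead of checking the inequalities of \Cref{lem:E(Gn)} by hand. You should remove the false start about $u=v+s$ in the middle-range case, since $u=t+s-1$ is the choice that actually works.
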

	
	\begin{proof}
		By \Cref{lem:independent-interval}(ii), $I_t$ is a face of $\IN(G_n)$. It remains to show that $I_t\cup\{u\}$ is not independent for every $u\in[n]\setminus I_t$. We distinguish two cases.
		
		\smallskip
		\emph{Case 1: $u<t$.}
		Then $u\le t-1<t+s-1$. Since $I_t\subseteq M_2$, both $t-1$ and $t+s-1$ belong to $M_1$. Hence,
		$(t-1,\,t+s-1)\in E(G_n)$
		by \Cref{lem:independent-interval}(i). If $u\ge r$, a west move (\Cref{lem.shortmoves}(ii)) applied to this edge gives $(u,t+s-1)\in E(G_n)$.
		Now suppose that $u<r$. By \Cref{assm:main}, $(1,i)\in E(G_r)$ for some $i\in[r]$. Since $t\in M_2$, we have
		\[
		0\le u-1\le t-i\le n-r.
		\]
		Thus, $(u,t)\in E(G_n)$ by \Cref{lem:E(Gn)}.
		
		\smallskip
		\emph{Case 2: $u>t+s-1$.}
		Then $u-t\ge s$. If $u\le n-r$, \Cref{lem:independent-interval}(i) gives $(t,u)\in E(G_n)$. Suppose that $u>n-r$. By \Cref{assm:main}, there exists $(b,r)\in E(G_r)$ for some $b\in[r]$. Since $t\in M_2$, we have $t-b<n-2r<u-r$. Hence,
		\[
		0<t-b<u-r\le n-r,
		\]
		and thus $(t,u)\in E(G_n)$ by \Cref{lem:E(Gn)}.
		
		In both cases, $I_t\cup\{u\}$ is not independent. Therefore, $I_t$ is a facet of $\IN(G_n)$.
	\end{proof}
	
	\begin{lem}
		\label{lem:IN-facet-dec}
		Keep the assumptions and notation of \Cref{lem:I_t}. For $n\ge4r+3s$, set
		\[
		U_n=[2r+s-1,n-2r-s+1].
		\]
		Then $\FF(\IN(G_n))$ admits the following disjoint decomposition:
		\begin{equation}
			\label{eq:facet-dec}
			\FF(\IN(G_n))
			=\{I_t\mid I_t\subseteq M_2\}
			\sqcup\FF(\IN(G_n\setminus U_n)).
		\end{equation}
	\end{lem}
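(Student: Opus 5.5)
The plan is to prove both inclusions in \eqref{eq:facet-dec} directly from one neighborhood description of vertices deep inside $M_2$. Disjointness comes for free. The set $M_2\setminus U_n$ consists of two intervals, $[2r,2r+s-2]$ and $[n-2r-s+2,n-2r]$, each with only $s-1$ elements, so every $I_t\subseteq M_2$ (which has $s$ elements) meets $U_n$. Hence no $I_t$ is a face of $\IN(G_n\setminus U_n)$.

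\emph{Step 0 (neighborhoods).} I first record that every $w\in M_2$ satisfies
\[
N_{G_n}(w)\supseteq[n]\setminus[w-s+1,w+s-1].
\]
This is exactly what the two cases in the proof of \Cref{lem:I_t} establish, with $t$ replaced by $w$:
\begin{itemize}
\item[(a)] For $u<r$ or $u>n-r$, \Cref{assm:main} and \Cref{lem:E(Gn)} give $(u,w)\in E(G_n)$ or $(w,u)\in E(G_n)$.
\item[(b)] For $u\in M_1$ with $|u-w|\ge s$, \Cref{lem:independent-interval}(i) gives the edge.
\end{itemize}
Separately, \Cref{lem:independent-interval}(ii) says that any two vertices at distance less than $s$ are never adjacent.

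\emph{Step 1 (facets meeting $U_n$ are intervals).} Let $F\in\FF(\IN(G_n))$ and pick $w\in F\cap U_n$. Because $U_n$ sits at distance $s-1$ inside $M_2$, we have $[w-s+1,w+s-1]\subseteq M_2$. By Step 0, $F\subseteq[w-s+1,w+s-1]\subseteq M_1$. Any two elements of $F$ are then in $M_1$ and nonadjacent, so by \Cref{lem:independent-interval}(i) they are at distance at most $s-1$. Setting $t=\min F$, this gives $F\subseteq I_t\subseteq M_2$. Since $I_t$ is independent and $F$ is maximal, $F=I_t$. Together with \Cref{lem:I_t}, this shows that the facets meeting $U_n$ are exactly the $I_t\subseteq M_2$.

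\emph{Step 2 (facets missing $U_n$).} If $F\in\FF(\IN(G_n))$ and $F\cap U_n=\emptyset$, then $F$ is clearly a facet of $\IN(G_n\setminus U_n)$.

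The main obstacle is the converse: a facet $F$ of $\IN(G_n\setminus U_n)$ must be maximal in all of $G_n$. That is, every $u\in U_n$ must have a neighbor in $F$.
\begin{itemize}
\item By Step 0, this holds unless $F\subseteq[u-s+1,u+s-1]$. The bound $n\ge4r+3s$ separates the two boundary blocks, so that case forces $F\subseteq[2r,2r+s-2]$ or, symmetrically, $F\subseteq[n-2r-s+2,n-2r]$.
\item In the left case, set $v=\max F-s+1$. Then $v\le 2r-1<\min F$, and $v\ge 2r-s+1\ge1$ because $s<r$ (edges of $G_r$ have length at most $r-1$). So $v$ is a vertex of $G_n\setminus U_n$ not in $F$.
\item Every element of $F$ lies within distance $s-1$ of $v$, so $F\cup\{v\}$ is independent by \Cref{lem:independent-interval}(ii). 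This contradicts the maximality of $F$ in $G_n\setminus U_n$.
\item The right case is symmetric, using $v=\min F+s-1$.
\end{itemize}
Hence every $u\in U_n$ has a neighbor in $F$, so $F$ is a facet of $\IN(G_n)$. This completes the decomposition \eqref{eq:facet-dec}.
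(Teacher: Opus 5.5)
Your proof is correct, and it takes a more direct route than the paper's.

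\textbf{The paper's route.} It proves the forward inclusion through two structural claims valid for every facet $F$ of $\IN(G_n)$:
\begin{enumerate}
\item a vertex outside $M_1$ that is adjacent to one vertex of $M_2$ is adjacent to all of $M_2$;
\item $F\cap M_1$ is an interval, proved with the north and west moves of \Cref{lem.shortmoves}.
\end{enumerate}
From these it deduces that a facet meeting $U_n$ satisfies $F\cap M_1=I_t$, hence $F=I_t$. The reverse inclusion is argued by contradiction. A facet of $\IN(G_n\setminus U_n)$ that is not a facet of $\IN(G_n)$ extends to a facet $F'$ of $\IN(G_n)$. By the forward direction, $F'$ is some $I_t$. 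The short interval $F=I_t\setminus U_n$ can then be enlarged inside $[n]\setminus U_n$.

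\textbf{Your route.} You isolate one local fact: $N_{G_n}(w)=[n]\setminus[w-s+1,w+s-1]$ for every $w\in M_2$. This is your Step 0 combined with \Cref{lem:independent-interval}(ii), obtained by rerunning the computations in the proof of \Cref{lem:I_t} with an arbitrary $w$ in place of $t$. Step 0(a) in fact shows that every vertex outside $M_1$ is adjacent to all of $M_2$ unconditionally, which subsumes the paper's first claim. This makes Step 1 nearly immediate. It also lets you check the reverse inclusion directly, one vertex $u\in U_n$ at a time. You need neither the moves lemma, nor the interval structure of $F\cap M_1$, nor the forward characterization.

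\textbf{Comparison.} Your argument is shorter and more self-contained. The paper's yields additional structural information about all facets, for instance that $F\cap M_1$ is an interval even when $F$ avoids $U_n$. That information is not needed for this lemma.

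\textbf{One step to tighten.} In the reverse direction, you conclude that $F$ lies in a single boundary block because $n\ge 4r+3s$ ``separates'' the blocks. This should not be read as saying the window $[u-s+1,u+s-1]$ is too short to reach both blocks. For suitable $u\in U_n$ the window does meet both blocks when $s\ge 6$ and $n$ is close to $4r+3s$. The correct reason is this: two vertices from different blocks both lie in $M_1$ and are at distance at least
\[
(n-2r-s+2)-(2r+s-2)=n-4r-2s+4\ge s+4.
\]
Hence they are adjacent by \Cref{lem:independent-interval}(i), which is impossible for two vertices of the independent set $F$. With this sentence made explicit, the argument is complete.
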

	
	\begin{proof}
		Let $n\ge4r+3s$. We first establish several claims.
		
		\begin{claim}
			\label{cl:M1-M2}
			Suppose that $u\in[n]\setminus M_1$ and $v\in M_2$. If $\{u,v\}\in E(G_n)$, then $\{u,w\}\in E(G_n)$ for every $w\in M_2$.
		\end{claim}
		
		\begin{proof}[Proof of \Cref{cl:M1-M2}]
			Since $u\notin M_1$, either $u<r$ or $u>n-r$. We consider only the case $u<r$, as the other case is similar. Since $(u,v)\in E(G_n)$, \Cref{lem:E(Gn)} yields an edge $(i,j)\in E(G_r)$ such that
			\[
			0\le u-i\le v-j\le n-r.
			\]
			For every $w\in M_2$, the above inequalities are evidently still valid if $v$ is replaced by $w$. Hence, $(u,w)\in E(G_n)$ by \Cref{lem:E(Gn)}.
		\end{proof}
		
		\begin{claim}
			\label{cl:interval}
			For every $F\in\FF(\IN(G_n))$, the set $F\cap M_1$ is an interval.
		\end{claim}
		
		\begin{proof}[Proof of \Cref{cl:interval}]
			Set $A=F\cap M_1$, and let $x,z\in A$ with $x<z$. We show that every $y$ with $x<y<z$ belongs to $A$. Suppose otherwise. Since $y\in M_1$ and $y\notin A$, we have $y\notin F$. The maximality of $F$ yields a vertex $w\in F$ such that $\{w,y\}\in E(G_n)$.
			
			If $w<y$, then $w<y<z\le n-r$, and a north move applied to $(w,y)$ gives $(w,z)\in E(G_n)$ by \Cref{lem.shortmoves}(iii), contradicting the independence of $F$. If $y<w$, then $r\le x<y<w$, and a west move applied to $(y,w)$ gives $(x,w)\in E(G_n)$ by \Cref{lem.shortmoves}(ii), again a contradiction. Thus, $A$ is an interval.
		\end{proof}
		
		\begin{claim}
			\label{cl:Un}
			Let $F\in\FF(\IN(G_n))$. If $F\cap U_n\ne\emptyset$, then $F=I_t$ for some interval $I_t\subseteq M_2$.
		\end{claim}
		
		\begin{proof}[Proof of \Cref{cl:Un}]
			Set $A=F\cap M_1$. By \Cref{cl:interval}, $A$ is an interval. Moreover, since $A$ is an independent set in $M_1$, \Cref{lem:independent-interval}(i) implies that $|A|\le s$. As $U_n\subseteq M_1$, we have
			\[
			A\cap U_n=F\cap U_n\ne\emptyset.
			\]
			Choose $u\in A\cap U_n$ and set $t=\min A$. Since $|A|\le s$ and $A$ is an interval,
			\[
			t\ge u-s+1\ge2r.
			\]
			Furthermore, $t\le u\le n-2r-s+1$, and hence $t+s-1\le n-2r$. Therefore,
			\[
			A\subseteq I_t\subseteq M_2.
			\]
			Suppose that $A\ne I_t$, and choose $v\in I_t\setminus A$. Since $I_t\subseteq M_2\subseteq M_1$, we have $v\notin F$. Thus, $F\cup\{v\}$ is not independent, and we can find $x\in F$ such that $\{x,v\}\in E(G_n)$. Because $I_t$ is independent,
			\[
			x\in F\setminus I_t
			\subseteq F\setminus A
			=F\setminus M_1
			\subseteq[n]\setminus M_1.
			\]
			Since $v\in I_t\subseteq M_2$, \Cref{cl:M1-M2} implies that $x$ is adjacent to every vertex of $M_2$, and hence to every vertex of $A$. This contradicts the independence of $F$. Therefore, $A=I_t$. By \Cref{lem:I_t}, $I_t$ is a facet of $\IN(G_n)$, so $F=I_t$.
		\end{proof}
		
		\begin{claim}
			\label{cl:intersection}
			One has
			\[
			\{I_t\mid I_t\subseteq M_2\}
			\cap\FF(\IN(G_n\setminus U_n))=\emptyset.
			\]
		\end{claim}
		
		\begin{proof}[Proof of \Cref{cl:intersection}]
			Suppose that $I_t$ belongs to the intersection. Since $I_t$ is disjoint from $U_n$,
			\[
			I_t\subseteq M_2\cap([n]\setminus U_n)
			=[2r,2r+s-2]\sqcup[n-2r-s+2,n-2r].
			\]
			As $I_t$ is an interval, it must be contained in one of the two intervals on the right. This is impossible because each has $s-1$ vertices, whereas $|I_t|=s$.
		\end{proof}
		
		We now prove \eqref{eq:facet-dec}. By \Cref{cl:intersection}, the union on the right-hand side is disjoint. The inclusion ``$\subseteq$'' follows from \Cref{cl:Un}. For the reverse inclusion, \Cref{lem:I_t} shows that it remains to prove
		\[
		\FF(\IN(G_n\setminus U_n))\subseteq\FF(\IN(G_n)).
		\]
		Let $F\in\FF(\IN(G_n\setminus U_n))$. Then $F$ is an independent set of $G_n$. Suppose that $F$ is not a facet of $\IN(G_n)$, and choose $F'\in\FF(\IN(G_n))$ with $F\subsetneq F'$. Since $F$ is maximal among the independent sets contained in $[n]\setminus U_n$, we must have $F'\cap U_n\ne\emptyset$. By \Cref{cl:Un}, $F'=I_t$ for some $I_t\subseteq M_2$. The maximality of $F$ in $G_n\setminus U_n$ now gives
		\[
		F=F'\cap([n]\setminus U_n)
		=I_t\cap\bigl([1,2r+s-2]\sqcup[n-2r-s+2,n]\bigr)
		=F_1\sqcup F_2,
		\]
		where
		\[
		F_1=I_t\cap[1,2r+s-2]
		\quad\text{and}\quad
		F_2=I_t\cap[n-2r-s+2,n].
		\]
		The distance between the two boundary intervals is greater than $s$, since
		\[
		n-2r-s+2-(2r+s-2)=n-4r-2s+4>s.
		\]
		As $|I_t|=s$, at least one of $F_1$ and $F_2$ is empty. Thus, $F$ is an interval. Since $|F|<|F'|=s$, we may enlarge $F$ to an interval $I_{t'}\subseteq[n]\setminus U_n$ of cardinality $s$. By \Cref{lem:independent-interval}(ii), $I_{t'}$ is independent in $G_n\setminus U_n$, contradicting the maximality of $F$. This proves the reverse inclusion and hence the decomposition.
	\end{proof}
	
	We can now prove \Cref{conj:IN-facet}.
	
	\begin{proof}[Proof of \Cref{conj:IN-facet}]
		Let $n\ge 5r+2s$. Then $n\ge 4r+3s$ since $r\ge s$. By \Cref{lem:IN-facet-dec},
		\begin{align*}
			\nu(\IN(G_n))
			&=|\FF(\IN(G_n))|
			=|\{I_t\mid I_t\subseteq M_2\}|
			+|\FF(\IN(G_n\setminus U_n))|\\
			&=|\{I_t\mid t\in[2r,n-2r-s+1]\}|
			+|\FF(\IN(G_n\setminus U_n))|\\
			&=n-4r-s+2+|\FF(\IN(G_n\setminus U_n))|.
		\end{align*}
		Set
		$\alpha=2r+s-1$
		and
		$\beta=n-2r-s+1$.
		Then $U_n=[\alpha,\beta]$ and $U_{n+1}=[\alpha,\beta+1]$.
		Moreover, $\beta-\alpha> r$ since $n\ge5r+2s$.
		Hence,
		$
		G_n\setminus U_n\cong G_{n+1}\setminus U_{n+1}
		$
		by \Cref{lem:del-graph-iso}. It follows that 
		\[
		|\FF(\IN(G_n\setminus U_n))|
		=|\FF(\IN(G_{n+1}\setminus U_{n+1}))|.
		\]
		Consequently,
		\[
		\nu(\IN(G_{n+1}))=\nu(\IN(G_n))+1
		\quad\text{for all }n\ge 5r+2s.
		\]
		This immediately yields the desired assertion.
	\end{proof}

	%%%%%%%%%%%%%%%%%%%%%%%%%%%%%%%%%%%%%%%%%%%%%%%%%%%%%%%%%%%%
	
	\section{Clique complexes}\label{subsec.cliq}
	
	This section is devoted to the study of the asymptotic behavior of clique complexes along an $\Inc$-invariant chain of graphs. Our main result establishes recurrence relations for the $f$-vectors and $h$-vectors of these complexes, from which we determine their asymptotic growth. We also show that the clique number is eventually quasi-linear.
	
	We begin by recalling the notion of a quasi-polynomial. A function $\psi:\N\to\Q$ is called \emph{quasi-polynomial} if there exist an integer $N\in\N$ and polynomials $P_0,P_1,\dots,P_{N-1}\in\Q[t]$ such that
	\[
	\psi(n)=P_k(n)
	\quad\text{if } n\equiv k \pmod N.
	\]
	The integer $N$ is called a \emph{quasiperiod} of $\psi$, and the smallest quasiperiod is called its \emph{period}. Thus, $\psi$ is a polynomial function precisely when it has period $1$. The \emph{degree} of $\psi$ is
	\[
	\deg\psi=\max\{\deg P_k\mid0\le k\le N-1\}.
	\]
	A quasi-polynomial of degree $1$ is called \emph{quasi-linear}.
	
	\begin{thm}
		\label{thm:clique}
		Let $\Gc=(G_n)_{n\ge1}$ be an $\Inc$-invariant chain of graphs with $\ind(\Gc)=r$ and $\spi(\Gc)=s$. Denote by $\Cl(G_n)$ and $\omega(G_n)$ the clique complex and the clique number of $G_n$, respectively. Let
		\[
		(f_{-1}(\Cl(G_n)),f_0(\Cl(G_n)),f_1(\Cl(G_n)),\dots)
		\quad\text{and}\quad
		(h_0(\Cl(G_n)),h_1(\Cl(G_n)),\dots)
		\] 
		be the $f$-vector and the $h$-vector of $\Cl(G_n)$, respectively. Then there exists $n_0\ge r+s$ such that, for every $n\ge n_0$, the following statements hold:
		\begin{enumerate}
			\item The function $\omega(G_n)$ is eventually quasi-linear of period $s$. More precisely,
			\[
			\omega(G_n)
			=\omega(G_{n_0})+\left\lfloor\frac{n-n_0}{s}\right\rfloor.
			\]
			\item For every $i\ge0$,
			\[
			f_i(\Cl(G_n))
			=f_i(\Cl(G_{n-1}))+f_{i-1}(\Cl(G_{n-s})).
			\]
			\item With the convention $h_{-1}(\Cl(G_k))=0$, one has, for every $i\ge0$,
			\[
			h_i(\Cl(G_n))=
			\begin{cases}
				h_i(\Cl(G_{n-1}))+h_{i-1}(\Cl(G_{n-s}))&\text{if } s\nmid(n-n_0),\\
				h_i(\Cl(G_{n-1}))-h_{i-1}(\Cl(G_{n-1}))+h_{i-1}(\Cl(G_{n-s}))&\text{if } s\mid(n-n_0).
			\end{cases}
			\]
			\item The ordinary and reduced Euler characteristics satisfy
			\begin{align*}
				\chi(\Cl(G_n))
				&=\chi(\Cl(G_{n-1}))-\chi(\Cl(G_{n-s}))+1,\\
				\widetilde{\chi}(\Cl(G_n))
				&=\widetilde{\chi}(\Cl(G_{n-1}))-\widetilde{\chi}(\Cl(G_{n-s})).
			\end{align*}
		\end{enumerate}
	\end{thm}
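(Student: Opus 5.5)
The approach is to work with Hilbert series of Stanley--Reisner rings of clique complexes, mirroring the proof of \Cref{prop:HS-IN}. Write $F_n(t)=\HS_{\kk[\Cl(G_n)]}(t)=\sum_{i\ge-1}f_i(\Cl(G_n))\bigl(t/(1-t)\bigr)^{i+1}$. By comparing coefficients of powers of $t/(1-t)$, statement (ii) is equivalent to
\[
F_n(t)=F_{n-1}(t)+\frac{t}{1-t}F_{n-s}(t)\qquad(n\ge n_0).
\]
The basic tool is the clique analogue of \Cref{lem:Hilbert-series-graph}. Apply that lemma to $G^c$, and note that $G^c\setminus N_{G^c}[u]=(G[N_G(u)])^c$. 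For the vertex $u=n$ this gives
\[
F(G_n)=F(G_n\setminus\{n\})+\frac{t}{1-t}F\bigl(G_n[N_{G_n}(n)]\bigr).
\]
Here $F(\cdot)$ denotes the Hilbert series of the clique complex. Combinatorially, this just splits the cliques of $G_n$ according to whether they contain $n$.

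\textbf{Base case: eventually saturated chains.} By \Cref{lem:saturation-characterization}, $G_n\setminus\{n\}=G_{n-1}$. The proof of \Cref{saturated}(i) gives $b(\Gc)=1$ and $B(\Gc)=r-s$, so by \Cref{lem.GWeaklyChordal}(i) we get $N_{G_n}(n)=[1,n-s]$. Saturation again gives $G_n[[1,n-s]]=G_{n-s}$. Hence the recurrence holds exactly for $n\ge 2r+1$, with no further work.

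\textbf{General case: induction on $m=\me(\Gc)$.} Set $L F(n):=F(n)-F(n-1)-\frac{t}{1-t}F(n-s)$; this is a linear operator with constant coefficients. First, $G_n\setminus\{n\}=H_{n-1}$, where $\Hc$ is the chain of \Cref{lem:induced-chain} with $\alpha=\beta=1$. Second, by \Cref{lem.GWeaklyChordal}(i) the closed neighbourhood of $n$ is $[b(\Gc),\,n-r+B(\Gc)]\cup\{n\}$. Thus $G_n[N_{G_n}(n)]$ is the $n$-th member, up to an index shift, of the chain $\mathcal K$ of \Cref{lem:induced-chain} with $\alpha=b(\Gc)$ and $\beta=r-B(\Gc)$.

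Both $\Hc$ and $\mathcal K$ have sparsity index $s$ and at most $m$ missing edges. Applying $L$ to the displayed identity gives $LF_{G}(n)=LF_{\Hc}(n-1)+\frac{t}{1-t}LF_{\mathcal K}(\cdot)$. The right-hand side vanishes eventually by the induction hypothesis, once the missing-edge counts drop strictly.

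\textbf{Main obstacle.} This is the equality case $\me(\mathcal K)=\me(\Gc)$, or $\me(\Hc)=\me(\Gc)$. When $\beta\ge1$, \Cref{lem:induced-chain}(iii) says equality forces $\Gc$ to be eventually saturated, which is the base case. The remaining case is $\beta=0$, i.e.\ $B(\Gc)=r$, which is impossible since $B(\Gc)<r$. So $\beta\ge1$ always holds, and the induction should close. I expect the index bookkeeping (stability indices $r+\alpha-1$) only to enlarge $n_0$.

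\textbf{Clique number, (i).} Since $\omega(G_n)=1+\max\{i: f_i\neq0\}$ and all $f_i\ge0$, the recurrence (ii) gives $\omega(G_n)=\max\{\omega(G_{n-1}),\,\omega(G_{n-s})+1\}$. Moreover $\omega$ is nondecreasing. By \Cref{lem:independent-interval}(ii) a clique meets each interval $I_t$ at most once, so $\omega(G_n)\le\omega(G_{n-s})+1$. Together these force $\omega(G_n)=\omega(G_{n-s})+1$ eventually. They also force $\omega$ to jump by exactly $1$ exactly once in each window of length $s$. After possibly enlarging $n_0$ to a jump point, this gives $\omega(G_n)=\omega(G_{n_0})+\lfloor (n-n_0)/s\rfloor$.

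\textbf{$h$-vectors and Euler characteristics, (iii)--(iv).} Write $F_n=h_n(t)/(1-t)^{d_n}$ with $d_n=\omega(G_n)$. Then $d_{n-s}=d_n-1$, and $d_{n-1}=d_n$ unless $s\mid(n-n_0)$, in which case $d_{n-1}=d_n-1$. Multiplying the recurrence by $(1-t)^{d_n}$ gives $h_n(t)=h_{n-1}(t)+t\,h_{n-s}(t)$ in the first case and $h_n(t)=(1-t)h_{n-1}(t)+t\,h_{n-s}(t)$ in the second. Reading off coefficients yields (iii). For (iv), take alternating sums of (ii): since $\widetilde\chi=\sum_{i\ge-1}(-1)^i f_i$, the shift $f_{i-1}$ contributes $-\widetilde\chi(\Cl(G_{n-s}))$. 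Using $\chi=\widetilde\chi+1$ gives the unreduced version.
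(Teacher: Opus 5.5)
Your derivation of the Hilbert-series recurrence and of (ii)--(iv) matches the paper's proof. The ingredients are the same: deletion at the vertex $n$ in $G_n^c$, the eventually saturated base case, and induction on $\me(\Gc)$ through $\Hc$ and the neighbourhood chain with $\beta=r-B(\Gc)\ge1$. You then multiply by $(1-t)^{d_n}$ and take alternating sums, as the paper does.

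The gap is in (i), at the step ``a clique meets each interval $I_t$ at most once, so $\omega(G_n)\le\omega(G_{n-s})+1$.'' Removing an interval of length $s$ from $G_n$ leaves a graph on $n-s$ vertices. Nothing relates that graph to $G_{n-s}$ unless $\Gc$ is eventually saturated, in which case $G_n\setminus[n-s+1,n]=G_{n-s}$. \Cref{lem:del-graph-iso} does not help, since it only compares deletions of intervals with at least $r+1$ elements.

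The step genuinely fails. Let $\ind(\Gc)=4$ and $E(G_4)=\{(1,3),(2,4)\}$. Then $s=2$, and by \Cref{lem:E(Gn)} the edges of $G_n$ are all pairs at distance at least $2$ except $(1,n)$. Hence $\omega(G_m)=\lfloor m/2\rfloor$. For odd $n$, however, $G_n\setminus[n-1,n]$ contains the clique of odd numbers in $[n-2]$, and $G_n\setminus[1,2]$ contains the clique of odd numbers in $[3,n]$. Both cliques have size $(n-1)/2=\omega(G_{n-2})+1$. So bounding the remainder by $\omega(G_{n-s})$ is false, and your argument only yields $\omega(G_n)\le\omega(G_{n-2})+2$. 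The inequality you want does hold for $n\gg0$, but only as a consequence of the recurrence, not of the interval observation.

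To repair this, drop the upper bound. The recurrence $\psi(n)=\max\{\psi(n-1),\psi(n-s)+1\}$ with $\psi(n)=\omega(G_n)$, which you derived correctly from (ii), already forces $\psi(n)=\psi(n-s)+1$ for $n\gg0$. This is the implication (iii)$\Rightarrow$(ii) of \Cref{lem:quasi-linear}, which the paper uses, and it needs its own argument. Here is a short one.

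Put $\Phi(n)=\psi(n)-\psi(n-s+1)$, which is $\ge0$ by monotonicity.
\begin{enumerate}
\item If $\psi(n+1)=\psi(n)$, then $\Phi(n+1)\le\Phi(n)$.
\item If $\psi(n+1)>\psi(n)$, then $\psi(n+1)=\psi(n-s+1)+1$, so $\Phi(n+1)\le1$.
\item If $\Phi(n)\ge2$, then $\psi(n-s+1)+1<\psi(n)$, so $\psi$ cannot jump at $n+1$. Thus $\psi$ stays constant, and $\Phi$ drops to at most $1$ within $s-1$ steps.
\end{enumerate}
Hence $\Phi\le1$ from some point on. After that, $\psi(n)\ge\psi(n-s)+2$ is impossible: it would force $\psi(n)=\psi(n-1)$ and hence $\Phi(n-1)\ge2$. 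With $\psi(n)=\psi(n-s)+1$ established, the rest of your argument for (i), and therefore (iii), goes through.
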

	
	We illustrate \Cref{thm:clique} with the following example.
	
	\begin{ex}
		\label{ex:clique}
		Let $\mathcal{G}=(G_n)_{n\ge1}$ be an $\Inc$-invariant chain of graphs with $\ind(\mathcal{G})=5$ and
		\[
		E(G_5)=\{(1,4),(2,4),(3,5)\}.
		\]
		Then $\spi(\Gc)=2$. Computations with Macaulay2 \cite{GS} yield the following table.
		
		\begin{table}[htbp]
			\centering
			{\small
				\setlength{\tabcolsep}{4pt}
				\begin{tabular}{|c|c|c|c|c|c|}
					\hline
					$n$ & $\omega(G_n)$& $\gamma(G_n)$ & $f$-vector of $\Cl(G_n)$ & $h$-vector of $\Cl(G_n)$  & $\widetilde{\chi}(\Cl(G_n))$ \\
					\hline
					5  & 2 & 2& $(1,5,3)$          & $(1,3,-1)$       & $1$  \\
					\hline
					6  & 2 & 3 & $(1,6,7)$        & $(1,4,2)$    & $-2$ \\
					\hline
					7  & 3 & 3 & $(1,7,12,3)$       & $(1,4,1,-3)$     & $-3$ \\
					\hline
					8  & 3 & 4 & $(1,8,18,10)$    & $(1,5,5,-1)$   & $-1$  \\
					\hline
					9  & 4 & 4 & $(1,9,25,22,3)$    & $(1,5,4,-5,-2)$  & $2$  \\
					\hline
					10 & 4 & 5 & $(1,10,33,40,13)$ & $(1,6,9,0,-3)$  & $3$  \\
					\hline
			\end{tabular}}
			\caption{Some invariants of clique complexes}
			\label{tab:clique}
		\end{table}
		
		The reader may verify that the conclusions of \Cref{thm:clique} hold throughout the range $7\le n\le10$, with $n_0=7$. For example,
		\begin{align*}
			f_2(\Cl(G_{10}))
			&=40=22+18
			=f_2(\Cl(G_9))+f_1(\Cl(G_8)),\\
			h_3(\Cl(G_7))
			&=-3=0-2+(-1)
			=h_3(\Cl(G_6))-h_2(\Cl(G_6))+h_2(\Cl(G_5)),\\
			h_1(\Cl(G_8))
			&=5=4+1
			=h_1(\Cl(G_7))+h_0(\Cl(G_6)).
		\end{align*}
		
		The third column of the table records the chromatic number of $G_n$, which will be studied in the next section.
	\end{ex}
	
	The following consequence of \Cref{thm:clique} describes the asymptotic growth of the $f$-vector and the $h$-vector of the clique complex. 
	
	\begin{cor}
		\label{cor:f-h-clique}
		Under the assumptions of \Cref{thm:clique}, the following statements hold:
		\begin{enumerate}
			\item For each $i\ge-1$, $f_i(\Cl(G_n))$ is eventually a polynomial in $n$ of degree $i+1$ with leading coefficient $1/(i+1)!$. More precisely,
			\[
			f_i(\Cl(G_n))
			=\frac{1}{(i+1)!}n^{i+1}+O(n^i)
			\quad\text{as }n\longrightarrow\infty.
			\]
			\item For each $i\ge0$, $h_i(\Cl(G_n))$ is eventually a quasi-polynomial in $n$ of degree at most $i$ and quasiperiod $s$. More precisely,
			\[
			h_i(\Cl(G_n))
			=\frac{1}{i!}\left(1-\frac{1}{s}\right)^i n^i
			+O(n^{i-1})
			\quad\text{as }n\longrightarrow\infty.
			\]
			In particular, if $s>1$, then $h_i(\Cl(G_n))$ has degree exactly $i$.
		\end{enumerate}
	\end{cor}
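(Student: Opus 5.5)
We derive both parts from the recurrences in \Cref{thm:clique}, arguing by induction on $i$.

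\emph{Part (i).} Write $f_i(n)=f_i(\Cl(G_n))$. The case $i=-1$ is trivial, since $f_{-1}(n)=1$. Assume that $f_{i-1}(n)$ is eventually a polynomial of degree $i$ with leading coefficient $1/i!$. Then $f_{i-1}(n-s)$ is also eventually a polynomial of degree $i$ with the same leading coefficient. By \Cref{thm:clique}(ii),
\[
f_i(n)-f_i(n-1)=f_{i-1}(n-s)\quad\text{for }n\ge n_0 .
\]
Summing this telescoping identity from $n_0$ to $n$ gives
\[
f_i(n)=f_i(n_0-1)+\sum_{k=n_0}^{n}f_{i-1}(k-s).
\]
The right-hand side is a polynomial in $n$ of degree $i+1$, with leading coefficient $\frac{1}{i!}\cdot\frac{1}{i+1}=\frac{1}{(i+1)!}$. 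This uses the standard fact that $\sum_{k\le n}k^i$ is a polynomial in $n$ of degree $i+1$ with leading coefficient $1/(i+1)$. Note that only the eventual polynomiality of $f_{i-1}$ for $n\ge n_0+s$ is needed, so the threshold merely shifts.

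\emph{Part (ii).} We could invert \eqref{eq:f-and-h-vector}, but $d=\omega(G_n)$ varies quasi-linearly, so it is cleaner to argue directly from the recurrence in \Cref{thm:clique}(iii). Again induct on $i$, with $h_0=1$ and $h_{-1}=0$. Set $\Delta h_i(n)=h_i(n)-h_i(n-1)$. Then
\[
\Delta h_i(n)=h_{i-1}(n-s)-\epsilon(n)\,h_{i-1}(n-1),
\]
where $\epsilon(n)=1$ if $s\mid(n-n_0)$ and $\epsilon(n)=0$ otherwise. Assume that $h_{i-1}$ is eventually a quasi-polynomial with quasiperiod $s$, of degree at most $i-1$, and leading term $c_{i-1}n^{i-1}$, where $c_{i-1}=\frac{1}{(i-1)!}(1-\frac1s)^{i-1}$.

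Each of $h_{i-1}(n-s)$ and $\epsilon(n)h_{i-1}(n-1)$ is a quasi-polynomial with quasiperiod $s$, because $\epsilon$ is periodic of period $s$. So $\Delta h_i$ is a quasi-polynomial of quasiperiod $s$ and degree at most $i-1$. Now sum over $n$ separately in each residue class mod $s$. Summing over one full period block, say $n=qs+n_0,\dots,qs+n_0+s-1$, $\Delta h_i$ contributes
\[
s\,c_{i-1}(qs)^{i-1}-c_{i-1}(qs)^{i-1}+O(q^{i-2}).
\]
Summing this over blocks shows that $h_i(n)$ is a quasi-polynomial of quasiperiod $s$. Its leading term is governed by an effective average increment of $(1-\frac1s)c_{i-1}n^{i-1}$. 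This yields the leading coefficient
\[
\tfrac{1}{i}\bigl(1-\tfrac1s\bigr)c_{i-1}=\tfrac{1}{i!}\bigl(1-\tfrac1s\bigr)^i .
\]
When $s>1$ this coefficient is nonzero, so the degree is exactly $i$.

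\emph{Main obstacle.} The delicate step is Part (ii). One has to show that a sum of a quasi-polynomial increment over $n$ is again a quasi-polynomial with the same quasiperiod $s$. This requires checking that the partial sums within a block produce only a bounded, periodic correction, so the quasiperiod does not grow. The cleanest way is to fix a residue $k$ mod $s$ and compare $h_i(qs+k)$ with $h_i((q-1)s+k)$. Their difference is a sum of $s$ consecutive increments, which is a genuine polynomial in $q$ of degree at most $i-1$. Summing over $q$ then gives a polynomial in $q$, and hence in $n$, on each residue class. The leading-coefficient computation above is the same on every class, which gives the asymptotic formula with error $O(n^{i-1})$.
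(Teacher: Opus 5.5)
Your proof is correct. Part (i) is exactly the paper's argument: telescope the $f$-recurrence and sum a degree-$i$ polynomial.

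For part (ii) you take a genuinely different route. The paper does precisely the inversion you set aside. It writes $h_i(\Cl(G_n))=\sum_{k=0}^{i}(-1)^{i-k}\binom{d_n-k}{i-k}f_{k-1}(\Cl(G_n))$ with $d_n=\omega(G_n)$. Since $d_n$ is quasi-linear with slope $1/s$ by \Cref{thm:clique}(i), each $\binom{d_n-k}{i-k}$ is a quasi-polynomial of quasiperiod $s$ and degree $i-k$, with leading coefficient $\frac{1}{(i-k)!\,s^{i-k}}$. Multiplying by the polynomials from part (i) and applying the binomial theorem gives the leading coefficient $\frac{1}{i!}\sum_{k}\binom{i}{k}\bigl(-\frac{1}{s}\bigr)^{i-k}=\frac{1}{i!}\bigl(1-\frac{1}{s}\bigr)^{i}$. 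So the varying $d_n$ is no obstacle there, and that proof is short and non-inductive, but it depends on both part (i) and the quasi-linearity of $\omega$.

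Your route uses only the $h$-recurrence of \Cref{thm:clique}(iii). In exchange it needs the closure fact that partial sums of a quasiperiod-$s$ quasi-polynomial are again quasiperiod-$s$ quasi-polynomials. Your block-difference argument, comparing $h_i(qs+k)$ with $h_i((q-1)s+k)$, establishes this correctly: the block difference is a polynomial in $q$ whose $q^{i-1}$-coefficient is $(s-1)s^{i-1}c_{i-1}$, and summing over $q$ gives the stated leading term. What your approach buys is independence from part (i) and the recursion $c_i=\frac{1}{i}\bigl(1-\frac{1}{s}\bigr)c_{i-1}$. That recursion shows the factor $1-\frac{1}{s}$ is the density of indices at which the correction term $-h_{i-1}(n-1)$ fires. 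What the paper's approach buys is brevity and an explicit closed formula for $h_i$ in terms of the $f$-numbers.
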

	
	\begin{proof}
		Set $\Delta_n=\Cl(G_n)$.
		
		(i) We proceed by induction on $i$. The case $i=-1$ is immediate, since $f_{-1}(\Delta_n)=1$ for every $n\ge1$. Let $i\ge0$, and assume that $f_{i-1}(\Delta_n)$ is eventually a polynomial in $n$ of degree $i$ with leading coefficient $1/i!$. Thus, there exist a polynomial $P\in\Q[t]$ of degree $i$ with leading coefficient $1/i!$ and an integer $n_0$ such that
		\[
		f_{i-1}(\Delta_n)=P(n)
		\quad\text{for all }n\ge n_0.
		\]
		By \Cref{thm:clique}, there exists $n_1\ge n_0+s$ such that
		\[
		f_i(\Delta_n)
		=f_i(\Delta_{n-1})+f_{i-1}(\Delta_{n-s})
		\quad\text{for all }n\ge n_1.
		\]
		Therefore,
		\[
		f_i(\Delta_n)
		=f_i(\Delta_{n_1-1})
		+\sum_{k=n_1}^{n}f_{i-1}(\Delta_{k-s})
		=f_i(\Delta_{n_1-1})
		+\sum_{k=n_1}^{n}P(k-s)
		\]
		for all $n\ge n_1$. Since the last sum is a polynomial in $n$ of degree $i+1$ with leading coefficient $1/(i+1)!$ (see, e.g., \cite[Section~2.6]{GKP}), the assertion follows.
		
		(ii) Set $d_n=\omega(G_n)$. By \Cref{thm:clique}(i), $d_n$ is eventually quasi-linear of period $s$ and
		\[
		d_n=\frac{n}{s}+O(1)
		\quad\text{as }n\longrightarrow\infty.
		\]
		For every $k\in\{0,\dots,i\}$, it follows that
		\[
		\binom{d_n-k}{i-k}
		=\frac{1}{(i-k)!}\left(\frac{n}{s}\right)^{i-k}
		+O(n^{i-k-1})
		\quad\text{as }n\longrightarrow\infty,
		\]
		and this is eventually a quasi-polynomial of degree $i-k$ and quasiperiod $s$. By part (i),
		\[
		f_{k-1}(\Delta_n)
		=\frac{1}{k!}n^k+O(n^{k-1})
		\quad\text{as }n\longrightarrow\infty.
		\]
		Using \eqref{eq:f-and-h-vector}, we obtain
		\[
		h_i(\Delta_n)
		=\sum_{k=0}^i(-1)^{i-k}
		\binom{d_n-k}{i-k}f_{k-1}(\Delta_n).
		\]
		Thus, $h_i(\Delta_n)$ is eventually a quasi-polynomial of quasiperiod $s$ and degree at most $i$. Its coefficient of $n^i$ is
		\begin{align*}
			\sum_{k=0}^i
			(-1)^{i-k}\frac{1}{(i-k)!k!s^{i-k}}
			&=\frac{1}{i!}\sum_{k=0}^i
			\binom{i}{k}\left(-\frac{1}{s}\right)^{i-k}
			=\frac{1}{i!}\left(1-\frac{1}{s}\right)^i.
		\end{align*}
		This concludes the proof.
	\end{proof}
	
	We now turn to the proof of \Cref{thm:clique}. As in the proof of \Cref{thm:f-vector-IN}, we exploit the relations among the $f$-vector, the $h$-vector, and the Hilbert series given in \eqref{eq:f-vector} and \eqref{eq:h-vector}. Since $\Cl(G_n)=\IN(G_n^c)$, parts (ii)--(iv) of \Cref{thm:clique} will follow from the next proposition.
	
	\begin{prop}
		\label{pro:Hilbert-Cl}
		Let $\Gc=(G_n)_{n\ge1}$ be an $\Inc$-invariant chain of graphs with $\ind(\Gc)=r$ and $\spi(\Gc)=s$. Then
		\[
		\HS_{G_n^c}(t)
		=\HS_{G_{n-1}^c}(t)
		+\frac{t}{1-t}\HS_{G_{n-s}^c}(t)
		\quad\text{for all }n\gg0.
		\]
	\end{prop}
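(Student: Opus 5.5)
The plan is to run the same induction on $m=\me(\Gc)$ as in the proof of \Cref{prop:HS-IN}, but now to apply \Cref{lem:Hilbert-series-graph} to the complement $G_n^c$ at the vertex $n$. Set $F_n(t)=\HS_{G_n^c}(t)$.

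\textbf{The basic splitting.} Since $(G\setminus U)^c=G^c\setminus U$, we have
\[
G_n^c\setminus\{n\}=(G_n\cap K_{n-1})^c .
\]
Also $N_{G_n^c}[n]$ consists of $n$ together with the non-neighbours of $n$ in $G_n$. Hence
\[
G_n^c\setminus N_{G_n^c}[n]=\bigl(G_n[N_{G_n}(n)]\bigr)^c .
\]
By \Cref{lem.GWeaklyChordal}(i), for $n\ge 2r+1$ we have $N_{G_n}(n)=[b,\,n-r+B]$ with $b=b(\Gc)$ and $B=B(\Gc)$. Therefore
\[
F_n=\HS_{H_{n-1}^c}+\frac{t}{1-t}\HS_{(H'_{n-(r-B)})^c}.
\]
Here $\Hc$ is the chain of \Cref{lem:induced-chain} with $\alpha=\beta=1$, and $\Hc'$ is the chain of \Cref{lem:induced-chain} with $\alpha=b$ and $\beta=r-B$. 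Note that $r-B\ge s\ge 1$, and $H'_k=G_{k+r-B}\cap K_{[b,k]}$.

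\textbf{Base case: $\Gc$ eventually saturated.} This includes $m=0$. The proof of \Cref{saturated}(i) gives $b=1$ and $B=r-s$. Saturation (\Cref{lem:saturation-characterization}, iterated) gives $G_n\cap K_{n-1}=G_{n-1}$ and $G_n\cap K_{[1,n-s]}=G_{n-s}$. So the splitting reads
\[
F_n=F_{n-1}+\frac{t}{1-t}F_{n-s}\quad\text{for } n\ge 2r+1,
\]
which is exactly the claimed recurrence.

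\textbf{Induction step.} Assume $\Gc$ is not eventually saturated. By \Cref{lem:induced-chain}(ii),(iii), both $\Hc$ and $\Hc'$ have sparsity index $s$ and missing-edge number $\le m$. Since $\beta\ge1$ in both cases, equality $\me=m$ would force $\Gc$ to be eventually saturated. Hence both have missing-edge number strictly less than $m$, and the induction hypothesis applies to both.

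The recurrence is linear, so it passes through the splitting. Define the defect
\[
D_n=F_n-F_{n-1}-\frac{t}{1-t}F_{n-s},
\]
and define $D^{\Hc}_k$ and $D^{\Hc'}_k$ analogously for the two auxiliary chains. Write the splitting at indices $n$, $n-1$ and $n-s$, all large, and take the corresponding linear combination. This gives
\[
D_n=D^{\Hc}_{n-1}+\frac{t}{1-t}D^{\Hc'}_{n-r+B}.
\]
The right-hand side vanishes for $n\gg0$ by the induction hypothesis, so $D_n=0$ for $n\gg0$.

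\textbf{Technical point: normalising the auxiliary chains.} The chain $\Hc'$ lives on $[b,k]$ and need not satisfy \Cref{assm:main}, while the splitting above uses \Cref{lem.GWeaklyChordal}, which relies on it. I would therefore state the induction for chains normalised as in \Cref{assm:main}, and reduce to that case first.

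\begin{enumerate}
\item Relabelling the vertex set $[b,k]$ by a shift does not change Hilbert series.
\item Deleting vertices that are eventually isolated in $H'_k$ removes cone points of the complement. This multiplies each $\HS_{(H'_k)^c}$ by a fixed power of $1/(1-t)$.
\item Isolated vertices occur only as a bounded initial segment and a bounded final segment. The final segment shifts the index $k$ by a constant, which does not affect a homogeneous recurrence in $k$.
\item Consequently the recurrence, being homogeneous and linear, holds for the original chain if and only if it holds for the normalised one.
\item $\Hc'$ is nonempty, because its middle vertices contain edges of length $s$ (\Cref{lem:independent-interval}(i)).
\end{enumerate}

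I expect this bookkeeping to be the main obstacle: checking that the reduction to \Cref{assm:main} preserves $\spi$, does not increase $\me$, and keeps the index ranges consistent with $n\gg0$. It is also the reason the statement only asserts the recurrence for $n\gg0$, without an explicit bound.
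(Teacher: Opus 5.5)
\textbf{Comparison with the paper.} Your core argument is the paper's own proof. It uses induction on $\me(\Gc)$, with the eventually saturated case as base (\Cref{lem:clique-saturarated}). It splits $G_n^c$ at the vertex $n$ via \Cref{lem:Hilbert-series-graph} and \Cref{lem.GWeaklyChordal}(i). It uses the same two auxiliary chains of \Cref{lem:induced-chain}, with $(\alpha,\beta)=(1,1)$ and $(b,r-B)$, and gets the strict drop of $\me$ from \Cref{lem:induced-chain}(iii) because $\beta\ge1$. Your identity $D_n=D^{\Hc}_{n-1}+\frac{t}{1-t}D^{\Hc'}_{n-r+B}$ is a compact form of the paper's regrouping computation. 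You are also right that the paper applies the induction hypothesis to $\Hc'$ (its $\Fc$) without commenting on the fact that its vertex set is $[b,k]$ rather than $[k]$.

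\textbf{The normalisation step is wrong as stated.} An isolated vertex $v$ of $H'_k$ is adjacent to every other vertex of $(H'_k)^c$. However, $\HS_{(H'_k)^c}$ is the Hilbert series of $\kk[\IN((H'_k)^c)]=\kk[\Cl(H'_k)]$, and in that complex $v$ is an isolated point, not a cone point. By \Cref{lem:Hilbert-series-graph} applied at $v$, deleting $v$ changes the series by the additive term $\frac{t}{1-t}$, not by a factor $\frac{1}{1-t}$. A factor $\frac{1}{1-t}$ arises only for a vertex adjacent to all others in $H'_k$.

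An additive term does not pass through the recurrence. If $X_k=Y_k+\frac{t}{1-t}$ and $Y_k$ satisfies the recurrence, then $X_k-X_{k-1}-\frac{t}{1-t}X_{k-s}=-\frac{t^2}{(1-t)^2}$. So your step (4) is false. In fact the proposition itself fails without \Cref{assm:main}: for $E(G_3)=\{(2,3)\}$, vertex $1$ stays isolated, $s=1$, and $\HS_{G_n^c}=(1-t)^{-(n-1)}+\frac{t}{1-t}$ violates the recurrence for every $n$.

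\textbf{How to fix it.} No deletion is actually needed. As in the proof of \Cref{lem:induced-chain}(iii), the map $\tau=\sigma_0^{b-1}$ sends $G_r$ into $H'_{r+b-1}$. Hence the edges $(1,i)$ and $(j,r)$ of $G_r$ give edges $(b,i+b-1)$ and $(j+b-1,r+b-1)$ of $H'_{r+b-1}$, so neither end vertex is isolated. Since \eqref{eq:inequalities} is translation invariant, shifting labels down by $b-1$ turns $\Hc'$ into a chain of index $r$ that satisfies \Cref{assm:main}. This chain has the same $\spi$, the same $\me$, and the same Hilbert series. The chain $\Hc$ satisfies \Cref{assm:main} already, since $G_r\subseteq H_r$. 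With only your step (1), relabelling by a shift, plus this observation, the argument is complete.
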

	
	As before, we first prove the assertion for eventually saturated chains. Recall that $N_G(v)$ denotes the open neighborhood of a vertex $v$ in a graph $G$.
	
	\begin{lem}
		\label{lem:clique-saturarated}
		Let $\Gc=(G_n)_{n\ge1}$ be an eventually saturated $\Inc$-invariant chain of graphs with $\ind(\Gc)=r$ and $\spi(\Gc)=s$. Then
		\[
		\HS_{G_n^c}(t) = \HS_{G_{n-1}^c}(t)+\frac{t}{1-t}\HS_{G_{n-s}^c}(t)
		\quad\text{for all }n\ge 2r+1.
		\]
	\end{lem}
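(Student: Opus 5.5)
Apply \Cref{lem:Hilbert-series-graph} to the complement graph $G_n^c$ at the vertex $n$:
\[
\HS_{G_n^c}(t)=\HS_{G_n^c\setminus n}(t)+\frac{t}{1-t}\HS_{G_n^c\setminus N_{G_n^c}[n]}(t).
\]
It then suffices to identify the two deleted graphs. First, because $\Gc$ is eventually saturated, \Cref{lem:saturation-characterization} gives $G_n\setminus\{n\}=G_n\cap K_{n-1}=G_{n-1}$ for $n-1\ge r$. Since $(G\setminus U)^c=G^c\setminus U$, we get $G_n^c\setminus n=G_{n-1}^c$, and the first term becomes $\HS_{G_{n-1}^c}(t)$.

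For the second term, note that $G_n^c\setminus N_{G_n^c}[n]$ is the graph $G_n^c$ restricted to $N_{G_n}(n)$, i.e.\ the complement of $G_n[N_{G_n}(n)]$. By \Cref{saturated}(i), for $n\ge 2r+1$ the non-neighbours of $n$ in $G_n$, other than $n$ itself, are exactly $[n-s+1,n-1]$. Hence $N_{G_n}(n)=[n-s]$, and
\[
G_n^c\setminus N_{G_n^c}[n]=\bigl(G_n[[n-s]]\bigr)^c .
\]
Iterating the saturation identity $G_{k}\cap K_{k-1}=G_{k-1}$ for $k\ge r+1$, exactly as in the proof of \Cref{lem:saturation-characterization}, gives $G_n\cap K_{[n-s]}=G_{n-s}$ provided $n-s\ge r$. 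This holds because $n\ge 2r+1$ and $s\le r$. Therefore the second deleted graph is $G_{n-s}^c$, and substituting both identifications into the deletion formula gives the claimed recursion.

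The main point needing care is the vertex-set bookkeeping. One must check that $V(G_n)=[n]$ (guaranteed by \Cref{assm:main} for $n\ge r$). One must also check that the saturation identity is applied only at indices $\ge\ind(\Gc)$, which is why the bound $n-s\ge r$ is needed; this follows from $s\le r$, since $s=j-i\le r-1$ for every edge $(i,j)$ of $G_r$. Once these are in place, the rest is the direct substitution above.
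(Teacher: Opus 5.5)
Your proposal is correct and follows essentially the same route as the paper. Both apply \Cref{lem:Hilbert-series-graph} to $G_n^c$ at the vertex $n$, use eventual saturation to get $G_n^c\setminus n=G_{n-1}^c$, and combine \Cref{saturated}(i) with iterated saturation to identify $G_n^c\setminus N_{G_n^c}[n]=G_n^c\setminus[n-s+1,n]$ with $G_{n-s}^c$; your explicit check that $n-s\ge r$ (via $s\le r-1$) is a detail the paper leaves implicit.
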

	
	\begin{proof}
		By \Cref{saturated}(i),
		\[
		V\bigl(G_n\setminus N_{G_n}[n]\bigr)=[n-s+1,n-1]
		\quad\text{for all }n\ge2r+1.
		\]
		Hence, $N_{G_n}(n)=[n-s]$, and therefore
		\[
		N_{G_n^c}[n]=[n-s+1,n]
		\quad\text{for all }n\ge 2r+1.
		\]
		Since $\Gc$ is eventually saturated, \Cref{lem:saturation-characterization} gives $G_n\setminus n=G_{n-1}$, whence
		\[
		G_n^c\setminus n=G_{n-1}^c
		\quad \text{for all } n\ge r+1.
		\]
		Applying this equality successively yields
		\[
		G_n^c\setminus N_{G_n^c}[n]
		=G_n^c\setminus[n-s+1,n]
		=G_{n-s}^c
		\quad\text{for all }n\ge2r+1.
		\]
		The assertion now follows from \Cref{lem:Hilbert-series-graph} applied to $G_n^c$ and the vertex $n$.
	\end{proof}
	
	Let us now prove \Cref{pro:Hilbert-Cl}.
	
	\begin{proof}[Proof of \Cref{pro:Hilbert-Cl}]
		We proceed by induction on $\me(\Gc)$. If $\Gc$ is eventually saturated, the assertion follows from \Cref{lem:clique-saturarated}. We may therefore assume that $\Gc$ is not eventually saturated and, by induction, that the assertion holds for every chain $\Hc$ with $\me(\Hc)<\me(\Gc)$.
		For $n\ge2$,
		\[
		G_n\setminus\{n\}=G_n\cap K_{n-1}
		\quad\text{and}\quad
		G_n\setminus N_{G_n^c}[n]
		=G_n\cap K_{N_{G_n}(n)},
		\]
		where the second equality follows from the partition
		$V(G_n)=N_{G_n}(n)\sqcup N_{G_n^c}[n].$
		This partition together with \Cref{lem.GWeaklyChordal}(i) yields
		\[
		N_{G_n}(n)=[b(\Gc),n-r+B(\Gc)]
		\quad\text{for all }n\ge2r+1.
		\]
		Set
		$\alpha=b(\Gc)$, $\beta=r-B(\Gc)$, and $U_n=[\alpha,n-\beta]$. Then $U_n=N_{G_n}(n)$ for $n\ge2r+1.$ 
		Define the chains $\Hc=(H_n)_{n\ge1}$ and $\Fc=(F_n)_{n\ge1}$ by
		\[
		H_n=
		\begin{cases}
			\emptyset      & \text{if }1\le n\le r-1,\\
			G_{n+1}\cap K_n & \text{if }n\ge r
		\end{cases}
		\]
		and
		\[
		F_n=
		\begin{cases}
			\emptyset & \text{if }1\le n\le r+\alpha-2,\\
			G_{n+\beta}\cap K_{U_{n+\beta}} & \text{if }n\ge r+\alpha-1.
		\end{cases}
		\]
		By \Cref{lem:induced-chain}, both $\Hc$ and $\Fc$ are $\Inc$-invariant chains satisfying
		$\spi(\Hc)=\spi(\Fc)=s.$
		Moreover, $\beta=r-B(\Gc)\ge1$, and since $\Gc$ is not eventually saturated, \Cref{lem:induced-chain}(iii) gives
		\[
		\me(\Hc)<\me(\Gc)
		\quad\text{and}\quad
		\me(\Fc)<\me(\Gc).
		\]
		The induction hypothesis applied to $\Hc$ and $\Fc$ yields
		\begin{align*}
			\HS_{H_n^c}(t)
			&=\HS_{H_{n-1}^c}(t)
			+\frac{t}{1-t}\HS_{H_{n-s}^c}(t),\\
			\HS_{F_n^c}(t)
			&=\HS_{F_{n-1}^c}(t)
			+\frac{t}{1-t}\HS_{F_{n-s}^c}(t)
		\end{align*}
		for all $n\gg0$. 
		Note that
		\[
		F_{n-\beta}
		=G_n\cap K_{U_n}
		=G_n\cap K_{N_{G_n}(n)}
		=G_n\setminus N_{G_n^c}[n]
		\]
		for all $n\ge2r+1$. Thus, applying \Cref{lem:Hilbert-series-graph} to $G_n^c$ at the vertex $n$ gives
		\begin{align*}
			\HS_{G_n^c}(t) &= \HS_{H_{n-1}^c}(t)+\frac{t}{1-t}\HS_{F_{n-\beta}^c}(t)\\
			&=\HS_{H_{n-2}^c}(t)+\frac{t}{1-t}\HS_{H_{n-s-1}^c}(t)+
			\frac{t}{1-t}\Big(\HS_{F_{n-\beta-1}^c}(t)+\frac{t}{1-t}\HS_{F_{n-\beta-s}^c}(t)\Big)\\
			&=\HS_{H_{n-2}^c}(t)+\frac{t}{1-t}\HS_{F_{n-\beta-1}^c}(t)
			+\frac{t}{1-t}\Big(\HS_{H_{n-s-1}^c}(t)+\frac{t}{1-t}\HS_{F_{n-\beta-s}^c}(t)\Big)\\
			&= \HS_{G_{n-1}^c}(t)+\frac{t}{1-t}\HS_{G_{n-s}^c}(t)
		\end{align*}
		for all $n\gg0$. This completes the induction.
	\end{proof}
	
	The proof of \Cref{thm:clique}(i) requires further preparations. Let $s\in\N$. A function $\psi\colon\N\to\Z$ is called \emph{eventually quasi-linear with slope $1/s$} if there exist $p,q\in\Z$ such that
	\[
	\psi(n)=p+\left\lfloor\frac{n+q}{s}\right\rfloor
	\quad\text{for all }n\gg0.
	\]
	For such a function, one may choose $n_0$ so that
	\[
	\psi(n)
	=\psi(n_0)+\left\lfloor\frac{n-n_0}{s}\right\rfloor
	\quad\text{for all }n\ge n_0.
	\]
	Indeed, choose $n_0$ sufficiently large that the preceding formula for $\psi$ holds for all $n\ge n_0-s$ and $n_0+q$ is divisible by $s$. If $n_0+q=ks$, then $\psi(n_0)=p+k$, and the desired identity follows. With this choice, $\psi$ is nondecreasing and
	\[
	\psi(n)=\psi(n-s)+1
	\quad\text{for all }n\ge n_0.
	\]
	
	The next proposition characterizes these functions.
	
	\begin{prop}
		\label{lem:quasi-linear}
		Let $\psi\colon\N\to\Z$ be a function, and let $s\in\N$. The following are equivalent:
		\begin{enumerate}
			\item $\psi$ is eventually quasi-linear with slope $1/s$;
			\item $\psi(n-1)\le\psi(n)=\psi(n-s)+1$ for all $n\gg0$;
			\item $\psi(n)=\max\{\psi(n-1),\psi(n-s)+1\}$ for all $n\gg0$;
			\item $\psi(n-1)\le\psi(n)\le\psi(n-s)+1$ and $\psi(n)\ge\varphi(n)$ for all $n\gg0$, where $\varphi$ is eventually quasi-linear with slope $1/s$;
			\item $\psi(n)\ge\max\{\psi(n-1),\psi(n-s)+1\}$ and $\psi(n)\le\phi(n)$ for all $n\gg0$, where $\phi$ is eventually quasi-linear with slope $1/s$.
		\end{enumerate}
	\end{prop}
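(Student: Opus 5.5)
We prove the five conditions equivalent by establishing (i)$\Rightarrow$(ii)$\Rightarrow$(iii)$\Rightarrow$(i), and then folding in (iv) and (v) via (ii)$\Rightarrow$(iv), (ii)$\Rightarrow$(v), (iv)$\Rightarrow$(ii) and (v)$\Rightarrow$(ii). Throughout, ``for all $n\gg0$'' lets us discard finitely many initial values.

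\textbf{Easy implications.} For (i)$\Rightarrow$(ii), the discussion preceding the proposition already gives $n_0$ with $\psi(n)=\psi(n_0)+\lfloor (n-n_0)/s\rfloor$ for $n\ge n_0$. This formula makes $\psi$ nondecreasing and gives $\psi(n)=\psi(n-s)+1$.

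For (ii)$\Rightarrow$(iii), monotonicity gives $\psi(n)\ge\psi(n-1)$, and we also have $\psi(n)=\psi(n-s)+1$. Hence the maximum of the two quantities is exactly $\psi(n)$.

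For (ii)$\Rightarrow$(iv) and (ii)$\Rightarrow$(v), take $\varphi=\phi=\psi$, using (i)$\Leftrightarrow$(ii) once the cycle is closed.

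\textbf{(iii)$\Rightarrow$(i).} Suppose $\psi(n)=\max\{\psi(n-1),\psi(n-s)+1\}$ for $n\ge N$. Then $\psi$ is nondecreasing from $N-1$ on, and $\psi(n)\ge\psi(n-s)+1$. Set $\delta(n)=\psi(n)-\psi(n-s)\ge1$.

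We claim $\delta$ is eventually $1$. Summing $\delta$ over a window of length $s$ is impossible to bound directly, so we argue as follows.

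\begin{itemize}
\item Since $\psi$ is nondecreasing, $\delta(n)=\sum_{j=n-s+1}^{n}(\psi(j)-\psi(j-1))$.
\item Each jump $\psi(j)-\psi(j-1)>0$ occurs only when $\psi(j)=\psi(j-s)+1$.
\item At such a $j$ we get $\psi(j)-\psi(j-1)\le \psi(j-s)+1-\psi(j-s)=1$, using $\psi(j-1)\ge\psi(j-s)$. So all increments lie in $\{0,1\}$ for large $j$.
\item A jump at $j$ means $\psi(j)=\psi(j-s)+1$. This forces $\psi(j-1)=\psi(j-s)$, so no jump occurs in $[j-s+1,j-1]$. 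Thus jumps are at least $s$ apart.
\end{itemize}

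It follows that $\delta(n)\le1$, hence $\delta(n)=1$. The sequence of increments $\psi(j)-\psi(j-1)$ is then eventually periodic with period $s$, with exactly one $1$ per period. This gives $\psi(n)=p+\lfloor (n+q)/s\rfloor$.

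\textbf{(iv)$\Rightarrow$(ii) and (v)$\Rightarrow$(ii).} This is the main obstacle, since it requires converting inequalities into equality by comparison with a model function.

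For (iv), let $\delta(n)=\psi(n)-\psi(n-s)\in\{\ldots,0,1\}$; it is at most $1$ and at least $0$ by monotonicity. Telescoping over steps of size $s$ gives
\[
\psi(n)=\psi(n-ks)+\sum_{j=0}^{k-1}\delta(n-js).
\]
Each residue chain $n, n-s, n-2s,\dots$ grows by at most $1$ per step. On the other hand, the lower bound $\psi\ge\varphi$, with $\varphi$ growing by exactly $1$ per step, forces only finitely many steps with $\delta=0$ along each of the $s$ residue classes. Thus $\delta=1$ eventually, which gives (ii).

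For (v), the argument is symmetric. Now $\delta\ge1$, and the upper bound $\phi$ allows only finitely many steps with $\delta\ge2$ per residue class, so $\delta=1$ eventually. Monotonicity holds by hypothesis.
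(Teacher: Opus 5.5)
Your proof is correct, and its overall structure matches the paper's. Everything is routed through (ii). Your (iv)$\Rightarrow$(ii) and (v)$\Rightarrow$(ii) arguments are the paper's $\tau=\psi-\varphi$ argument: only finitely many defective steps occur along each residue class mod $s$, because $\psi-\varphi$ (resp.\ $\phi-\psi$) is a nonnegative integer that can only decrease along that class. The final two sentences of your (iii)$\Rightarrow$(i) are the paper's (ii)$\Rightarrow$(i), via $s$-periodic $0/1$ increments.

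The genuine difference is the key step (iii)$\Rightarrow$(ii).

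\emph{The paper's route.} It argues by contradiction. From a point $a$ with $\psi(a)>\psi(a-s)+1$, it extracts a block $\psi(\ell)=\cdots=\psi(\ell+s-1)$ of $s$ equal values. It then propagates the pattern $\psi(\ell+ps+q)=\psi(\ell)+p$ forward.

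\emph{Your route.} You argue locally. Once $\psi$ is nondecreasing, a positive increment at $j$ forces $\psi(j)=\psi(j-s)+1$, and hence $\psi(j-1)=\psi(j-s)$. So that increment is exactly $1$ and the preceding $s-1$ increments vanish. Therefore every window of $s$ consecutive increments sums to at most $1$, which gives $\psi(n)-\psi(n-s)\le 1$ directly.

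\emph{What each buys.} Your version is shorter and avoids the paper's ``repeating this argument'' induction. It also gives an explicit threshold: $n\ge N+2s-2$ if the recurrence holds from $N$. The paper's version instead exhibits the block of constant values, from which the base point $n_0$ of the quasi-linear formula can be read off immediately.

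A minor point of emphasis: the genuinely delicate step is (iii)$\Rightarrow$(ii). The implications from (iv) and (v) that you flagged as the main obstacle are the routine comparison arguments.
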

	
	\begin{proof}
		The implications (i)$\Rightarrow$(ii)$\Rightarrow$(iii) are immediate.
		
		(iii)$\Rightarrow$(ii): Assume that the recurrence in (iii) holds for every $n\ge k$. It follows immediately that $\psi$ is nondecreasing on $[k,\infty)$. Since $\psi(n-s)\le\psi(n-1)$, the recurrence becomes
		\begin{equation}
			\label{eq:psi(n)}
			\psi(n)=
			\begin{cases}
				\psi(n-1)   & \text{if }\psi(n-s)<\psi(n-1),\\
				\psi(n-s)+1 & \text{if }\psi(n-s)=\psi(n-1).
			\end{cases}
		\end{equation}
		We claim that $\psi(n)=\psi(n-s)+1$ for all sufficiently large $n$. Indeed, suppose that
		\[
		\psi(a)>\psi(a-s)+1
		\quad\text{for some } a\ge k+s.
		\]
		Then \eqref{eq:psi(n)} gives $\psi(a)=\psi(a-1)$. Let $t\in\{1,\dots,s-1\}$ be maximal such that $\psi(a)=\psi(a-t)$. For $1\le j\le s-t-1$, monotonicity gives
		\[
		\psi(a+j-1)\ge\psi(a)>\psi(a-t-1)\ge\psi(a+j-s),
		\]
		and hence \eqref{eq:psi(n)} yields $\psi(a+j)=\psi(a+j-1)$. Setting $\ell=a-t$, we obtain
		\begin{equation}
			\label{eq:psi-constant}
			\psi(\ell)=\psi(\ell+1)=\cdots=\psi(\ell+s-1).
		\end{equation}
		By \eqref{eq:psi(n)}, this implies $\psi(\ell+s)=\psi(\ell)+1$. Since $\psi(\ell+s)>\psi(\ell)=\psi(\ell+1)$, it follows again from  \eqref{eq:psi(n)} that $\psi(\ell+s+1)=\psi(\ell+s)=\psi(\ell)+1$.
		Repeating this argument in combination with \eqref{eq:psi-constant}, we obtain
		\[
		\psi(\ell+ps+q)=\psi(\ell)+p
		\quad\text{for all }p\ge0\text{ and }0\le q\le s-1.
		\]
		Consequently, $\psi(n)=\psi(n-s)+1$ for all $n\ge\ell+s$, proving (ii).
		
		(ii)$\Rightarrow$(i): Choose $k$ such that
		\[
		\psi(n-1)\le\psi(n)=\psi(n-s)+1
		\quad\text{for all }n\ge k.
		\]
		Define
		\[
		\delta(n)=\psi(n+1)-\psi(n).
		\]
		For every $n\ge k$, we have
		\[
		0\le\delta(n)\le\psi(n+s)-\psi(n)=1
		\]
		and
		\begin{equation}
			\label{eq:periodicity}
			\delta(n+s)=\delta(n).
		\end{equation}
		Moreover,
		\[
		1=\psi(k+s)-\psi(k)=\sum_{i=k}^{k+s-1}\delta(i).
		\]
		Thus, there is a unique $j\in[k,k+s-1]$ such that $\delta(j)=1$ and $\delta(i)=0$ for $i\in[k,k+s-1]\setminus\{j\}$. By \eqref{eq:periodicity},
		\[
		\delta(n)=
		\begin{cases}
			1 & \text{if }n\equiv j\pmod s,\\
			0 & \text{otherwise}
		\end{cases}
		\]
		for all $n\ge k$. Set $n_0=j+1$.
		Then
		\[
		\psi(n)
		=\psi(n_0)+\sum_{i=n_0}^{n-1}\delta(i)
		=\psi(n_0)+\left\lfloor\frac{n-n_0}{s}\right\rfloor
		\quad\text{for all }n\ge n_0.
		\]
		Hence, $\psi$ is eventually quasi-linear with slope $1/s$.
		
		(i)$\Rightarrow$(iv), (v): Take $\varphi=\psi$ and $\phi=\psi$.
		
		(iv)$\Rightarrow$(ii): Set $\tau=\psi-\varphi$. Since $\varphi(n)=\varphi(n-s)+1$ for $n\gg0$, it follows from the assumptions on $\psi$ that $\tau(n)\in\Z_{\ge0}$ and
		$\tau(n)\le\tau(n-s)$
		for all $n\gg0$. This implies $\tau(n)=\tau(n-s)$ for all $n\gg0$. Hence,
		\[
		\psi(n)-\psi(n-s)
		=\varphi(n)-\varphi(n-s)=1
		\quad\text{for all }n\gg0.
		\]
		Together with the assumed inequality $\psi(n-1)\le\psi(n)$, this proves (ii).
		
		(v)$\Rightarrow$(ii): Setting $\theta=\phi-\psi$, the proof proceeds analogously to that of the implication (iv)$\Rightarrow$(ii). 
		The detail is left to the reader.
	\end{proof}
	
	We are now in a position to prove \Cref{thm:clique}.
	
	\begin{proof}[Proof of \Cref{thm:clique}]
		Set $\Delta_n=\Cl(G_n)$. Since $\Delta_n=\IN(G_n^c)$, \eqref{eq:f-vector} gives
		\[
		\HS_{G_n^c}(t)
		=\sum_{i\ge-1}
		f_i(\Delta_n)\frac{t^{i+1}}{(1-t)^{i+1}}.
		\]
		Combining this identity with \Cref{pro:Hilbert-Cl} and comparing the coefficients of the powers of $t/(1-t)$, we obtain
		\[
		f_i(\Delta_n)
		=f_i(\Delta_{n-1})+f_{i-1}(\Delta_{n-s})
		\quad\text{for every }i\ge0\text{ and all }n\gg0.
		\]
		This proves (ii).
		
		Set $d_n=\omega(G_n)$. The preceding recurrence yields
		\[
		d_n=\max\{d_{n-1},d_{n-s}+1\}
		\quad\text{for all }n\gg0.
		\]
		Applying \Cref{lem:quasi-linear}, we obtain (i). In particular, we can choose $n_0$ sufficiently large such that 
		\[
		d_n=d_{n_0}+\left\lfloor\frac{n-n_0}{s}\right\rfloor
		\quad\text{for all }n\ge n_0.
		\]
		This implies
		\[
		d_n-d_{n-1}=
		\begin{cases}
			0 & \text{if }s\nmid(n-n_0),\\
			1 & \text{if }s\mid(n-n_0),
		\end{cases}
		\]
		and moreover, $d_n=d_{n-s}+1$ for all $n\ge n_0$.
		
		Let $h(\Delta;t)$ denote the $h$-polynomial of a simplicial complex $\Delta$. Multiplying the Hilbert series recurrence in \Cref{pro:Hilbert-Cl} by $(1-t)^{d_n}$ gives
		\[
		h(\Delta_n;t)
		=(1-t)^{d_n-d_{n-1}}h(\Delta_{n-1};t)
		+t h(\Delta_{n-s};t).
		\]
		Therefore,
		\begin{equation}
			\label{eq:clique-h}
			h(\Delta_n;t)=
			\begin{cases}
				h(\Delta_{n-1};t)+t h(\Delta_{n-s};t)
				& \text{if }s\nmid(n-n_0),\\
				(1-t)h(\Delta_{n-1};t)+t h(\Delta_{n-s};t)
				& \text{if }s\mid(n-n_0).
			\end{cases}
		\end{equation}
		Comparing coefficients proves (iii).
		
		Finally, recall that
		\[
		\widetilde{\chi}(\Delta)
		=\sum_{i\ge-1}(-1)^i f_i(\Delta).
		\]
		Using (ii), we obtain
		\begin{align*}
			\widetilde{\chi}(\Delta_n)
			&=\sum_{i\ge-1}(-1)^i
			\bigl(f_i(\Delta_{n-1})+f_{i-1}(\Delta_{n-s})\bigr)
			=\widetilde{\chi}(\Delta_{n-1})
			-\widetilde{\chi}(\Delta_{n-s}).
		\end{align*}
		Since $\chi(\Delta)=\widetilde{\chi}(\Delta)+1$, the corresponding recurrence for the ordinary Euler characteristic follows. This proves (iv).
	\end{proof}
	
	Specializing \Cref{thm:clique} to chains of sparsity index $1$ gives the following consequence.
	
	\begin{cor}
		\label{cor:clique-f-h-sp=1}
		Let $\Gc=(G_n)_{n\ge1}$ be an $\Inc$-invariant chain of graphs with $\spi(\Gc)=1$. Then, for all $n\gg0$, the following statements hold:
		\begin{enumerate}
			\item $\omega(G_n)=\omega(G_{n-1})+1$.
			\item The last entry of the $f$-vector of $\Cl(G_n)$ is constant.
			\item The $h$-polynomial of $\Cl(G_n)$ is independent of $n$.
			\item $\chi(\Cl(G_n))=1$ and $\widetilde{\chi}(\Cl(G_n))=0$.
		\end{enumerate}
	\end{cor}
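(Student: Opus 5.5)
This corollary follows by setting $s=1$ in \Cref{thm:clique} and its proof; we work with $n\ge n_0$, where $n_0$ is as in that theorem. With $s=1$ we have $s\mid(n-n_0)$ for every $n$. Part (i) is then immediate from \Cref{thm:clique}(i): $\lfloor (n-n_0)/1\rfloor=n-n_0$, so $\omega(G_n)=\omega(G_{n_0})+n-n_0$, and hence $\omega(G_n)=\omega(G_{n-1})+1$ for $n>n_0$.

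For (ii), set $\Delta_n=\Cl(G_n)$ and $d_n=\omega(G_n)$, so that $\dim\Delta_n=d_n-1$. We apply \Cref{thm:clique}(ii) with $i=d_n-1$ and $s=1$:
\[
f_{d_n-1}(\Delta_n)=f_{d_n-1}(\Delta_{n-1})+f_{d_n-2}(\Delta_{n-1}).
\]
Since $d_{n-1}=d_n-1$, the first term is a face count in dimension $d_{n-1}=\dim\Delta_{n-1}+1$, so it vanishes. The second term is the top entry $f_{d_{n-1}-1}(\Delta_{n-1})$. Thus the last entry of the $f$-vector is constant for $n>n_0$.

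For (iii), we use \eqref{eq:clique-h} with $s=1$; we are always in the case $s\mid(n-n_0)$. This gives
\[
h(\Delta_n;t)=(1-t)h(\Delta_{n-1};t)+t\,h(\Delta_{n-1};t)=h(\Delta_{n-1};t),
\]
so the $h$-polynomial is independent of $n$ for $n>n_0$.

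For (iv), we take \Cref{thm:clique}(iv) with $s=1$, which gives $\widetilde{\chi}(\Delta_n)=\widetilde{\chi}(\Delta_{n-1})-\widetilde{\chi}(\Delta_{n-1})=0$. It follows that $\chi(\Delta_n)=\widetilde{\chi}(\Delta_n)+1=1$ for $n\ge n_0$.

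Since the theorem does all the work, the only care needed is with the boundary conventions in (ii): one must check that the face count $f_{d_n-1}(\Delta_{n-1})$ is zero and that the index shift matches $d_{n-1}=d_n-1$ from (i).
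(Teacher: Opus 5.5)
Your proof is correct, and for parts (i)--(iii) it is exactly the paper's argument. Part (i) is read off from \Cref{thm:clique}(i). Part (ii) uses the $f$-recurrence at $i=d_n-1$ together with $f_{d_{n-1}}(\Delta_{n-1})=0$. Part (iii) specializes \eqref{eq:clique-h} to $s=1$.

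Part (iv) is where you take a different route. The paper derives it from (iii). The stable $h$-polynomial has some fixed degree $d$, and $d_n$ grows without bound, so one can pick $k$ with $d_n>d$ for all $n\ge k$. Then $\widetilde{\chi}(\Delta_n)=(-1)^{d_n-1}h_{d_n}(\Delta_n)=0$ by \eqref{eq:top-h-euler}.

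You instead set $s=1$ in the Euler-characteristic recurrence of \Cref{thm:clique}(iv) and get $\widetilde{\chi}(\Delta_n)=\widetilde{\chi}(\Delta_{n-1})-\widetilde{\chi}(\Delta_{n-1})=0$ at once. This is shorter and gives the vanishing for every $n\ge n_0$ without comparing $d_n$ with the degree of the stable $h$-polynomial. It also shows the underlying reason: for $s=1$ the recurrence $f_i(\Delta_n)=f_i(\Delta_{n-1})+f_{i-1}(\Delta_{n-1})$ is precisely the $f$-vector of a cone over $\Delta_{n-1}$.

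The paper's route instead makes explicit that the vanishing amounts to the top entry $h_{d_n}(\Delta_n)$ dropping out once the dimension exceeds the degree of the stable $h$-polynomial. Your argument recovers this too, via \eqref{eq:top-h-euler}.
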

	
	\begin{proof}
		Part (i) follows immediately from \Cref{thm:clique}(i).
		
		To prove (ii), set $\Delta_n=\Cl(G_n)$ and $d_n=\omega(G_n)$. The last entry of the $f$-vector of $\Delta_n$ is therefore $f_{d_n-1}(\Delta_n)$. Since $d_n=d_{n-1}+1$, we have $f_{d_n-1}(\Delta_{n-1})=f_{d_{n-1}}(\Delta_{n-1})=0$. By \Cref{thm:clique}(ii),
		\begin{align*}
			f_{d_n-1}(\Delta_n)
			&=f_{d_n-1}(\Delta_{n-1})+f_{d_n-2}(\Delta_{n-1})
			=f_{d_n-2}(\Delta_{n-1})
			=f_{d_{n-1}-1}(\Delta_{n-1}).
		\end{align*}
		Thus, the last entry of the $f$-vector of $\Delta_n$ is eventually constant, proving (ii).
		
		Since $s=\spi(\Gc)=1$, \eqref{eq:clique-h} becomes
		\[
		h(\Delta_n;t)
		=(1-t)h(\Delta_{n-1};t)+t h(\Delta_{n-1};t)
		=h(\Delta_{n-1};t),
		\]
		which proves (iii).
		
		Finally, let $d$ denote the eventual degree of $h(\Delta_n;t)$. Since $d_n$ is increasing for $n\gg0$, we may choose $k$ large enough such that $d_k>d$. Then $d_n\ge d_k>d$ for all $n\ge k$. It follows that
		\[
		\widetilde{\chi}(\Delta_n)=(-1)^{d_n-1}h_{d_n}(\Delta_n)=0
		\quad\text{for all } n\ge k.
		\]
		Consequently, ${\chi}(\Delta_n)=\widetilde{\chi}(\Delta_n)+1=1$ for all $n\ge k$. The proof is complete.
	\end{proof}
	
	The next result gives further properties of chains with sparsity index $1$.
	
	\begin{prop}
		\label{prop:clique-sp=1}
		Let $\Gc=(G_n)_{n\ge1}$ be an $\Inc$-invariant chain of graphs with $\ind(\Gc)=r$ and $\spi(\Gc)=1$. Then the following statements hold for all $n\ge 5r$:
		\begin{enumerate}
			\item The number of facets of $\Cl(G_n)$ is constant.
			\item There is a bijection between the minimal nonfaces of $\Cl(G_n)$ and those of $\Cl(G_{n+1})$.
		\end{enumerate}
	\end{prop}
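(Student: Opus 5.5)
The plan is to show that, when $\spi(\Gc)=1$, the whole middle block of $G_n$ is a set of universal vertices. Both statements then reduce to \Cref{lem:del-graph-iso}.

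Set $W_n=[2r,n-2r]=M_2$. For $n\ge5r$, $W_n$ is an interval with $(n-2r)-2r\ge r$, so \Cref{lem:del-graph-iso}, applied with $\alpha=2r$ and $\beta=n-2r$, shows that $\sigma_k$ induces an isomorphism
\[
G_n\setminus W_n\cong G_{n+1}\setminus W_{n+1}.
\]

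\textbf{Key claim.} Every $w\in W_n$ is adjacent in $G_n$ to every other vertex $u\in[n]$. I will split into three cases, reusing the arguments of \Cref{lem:I_t}.
\begin{enumerate}
\item If $u\in M_1=[r,n-r]$, then $|u-w|\ge1=s$, so $\{u,w\}\in E(G_n)$ by \Cref{lem:independent-interval}(i).
\item If $u<r$, then by \Cref{assm:main} there is $(1,i)\in E(G_r)$. Since $w\ge2r$, we get $0\le u-1\le w-i\le n-r$, so $(u,w)\in E(G_n)$ by \Cref{lem:E(Gn)}.
\item If $u>n-r$, then by \Cref{assm:main} there is $(b,r)\in E(G_r)$. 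Since $w\le n-2r$, we get $0\le w-b\le u-r\le n-r$, so $(w,u)\in E(G_n)$ by \Cref{lem:E(Gn)}.
\end{enumerate}
Consequently $G_n$ is the join of the complete graph $K_{W_n}$ with $G_n\setminus W_n$. Equivalently, $G_n^c$ is $G_n^c\setminus W_n=(G_n\setminus W_n)^c$ together with the isolated vertices of $W_n$.

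For (ii), recall that $\Cl(G_n)$ is a flag complex, so its minimal nonfaces are exactly the edges of $G_n^c$. By the key claim, none of these edges meets $W_n$, so they are precisely the edges of $(G_n\setminus W_n)^c$. The isomorphism $\sigma_k$ then carries them bijectively onto the edges of $(G_{n+1}\setminus W_{n+1})^c$, which are the minimal nonfaces of $\Cl(G_{n+1})$.

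For (i), in a join $K_W * H$ the maximal cliques are exactly the sets $W\cup C$ with $C$ a maximal clique of $H$. Hence $|\FF(\Cl(G_n))|=|\FF(\Cl(G_n\setminus W_n))|$, and this number is the same for $n$ and $n+1$ by the isomorphism above.

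The only delicate step is the key claim, specifically checking the inequalities of \Cref{lem:E(Gn)} in the two boundary cases $u<r$ and $u>n-r$. These checks are where the choice $W_n=[2r,n-2r]$ is needed: $w\ge2r$ gives $u-1\le w-i$, and $w\le n-2r$ gives $w-b\le u-r$. The range $n\ge5r$ is what guarantees $\beta-\alpha\ge r$ for \Cref{lem:del-graph-iso}.
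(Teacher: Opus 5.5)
Your proof is correct and follows the paper's route: you show that every vertex of $[2r,n-2r]$ is universal in $G_n$, and then transfer facets and minimal nonfaces through the isomorphism of \Cref{lem:del-graph-iso}. The only difference is that you verify the universality yourself, which is exactly \Cref{lem:I_t} specialized to $s=1$, whereas the paper cites \cite[Lemma~7.4]{HHLNN} for it.
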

	
	The proof uses the following lemma, which was essentially proved in \cite{HHLNN}.
	
	\begin{lem}
		\label{lem:sp(I)=1}
		Keep the assumptions of \Cref{prop:clique-sp=1}. For $n\ge4r$, set
		$U_n = [2r, n-2r].$
		Then the following statements hold:
		\begin{enumerate}
			\item Every vertex of $U_n$ is isolated in $G_n^c$ for all $n\ge4r$.
			\item $G_n\setminus U_n\cong G_{n+1}\setminus U_{n+1}$ for all $n\ge5r$.
			\item $G_n^c\setminus U_n\cong G_{n+1}^c\setminus U_{n+1}$ for all $n\ge5r$.
		\end{enumerate}
	\end{lem}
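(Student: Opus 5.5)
Throughout, $s=\spi(\Gc)=1$ and $\ind(\Gc)=r$.

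For (i), we show that every $u\in U_n=[2r,n-2r]$ is adjacent in $G_n$ to every other vertex $w\in[n]$, by cases on the position of $w$.

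\emph{Case $w\in M_1=[r,n-r]$.} Then $(u,w)$ or $(w,u)$ is an edge by \Cref{lem:independent-interval}(i), since $|u-w|\ge1=s$.

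\emph{Case $w<r$.} By \Cref{assm:main} there is an edge $(1,i)\in E(G_r)$. Since $u\ge 2r>i$,
\[
0\le w-1\le u-i\le n-r,
\]
so $(w,u)\in E(G_n)$ by \Cref{lem:E(Gn)}.

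\emph{Case $w>n-r$.} Symmetrically, use an edge $(b,r)\in E(G_r)$. Here $u-b\ge 0$ and $u-b\le n-2r<w-r\le n-r$, so $(u,w)\in E(G_n)$.

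Hence $u$ is isolated in $G_n^c$. This is the same computation as in the proof of \Cref{lem:I_t}.

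For (ii), take $\alpha=2r$ and $\beta=n-2r$. For $n\ge 5r$ we have $\beta-\alpha=n-4r\ge r$. Then $U_{n+1}=[\alpha,\beta+1]$, and \Cref{lem:del-graph-iso} gives the isomorphism $G_n\setminus U_n\cong G_{n+1}\setminus U_{n+1}$ via $\sigma_k$. Part (iii) follows from (ii) by taking complements, because
\[
(G\setminus U)^c=G^c\setminus U
\]
and isomorphisms of graphs induce isomorphisms of their complements.

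The main point to check is the boundary case analysis in (i), namely that the inequalities of \Cref{lem:E(Gn)} hold for all $u\in U_n$. This needs $u\ge 2r\ge i,b$ and $u\le n-2r$, both of which are immediate. Everything else is a direct citation of earlier results.
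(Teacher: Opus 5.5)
Your proposal is correct. For (ii) and (iii) you argue exactly as the paper does. You apply \Cref{lem:del-graph-iso} with $\alpha=2r$ and $\beta=n-2r$, so that $\beta-\alpha=n-4r\ge r$ and $U_{n+1}=[\alpha,\beta+1]$, and then pass to complements using $(G\setminus U)^c=G^c\setminus U$.

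The only difference is in (i). The paper does not prove it but cites \cite[Lemma~7.4]{HHLNN}, whereas you verify it directly from \Cref{lem:E(Gn)} and \Cref{lem:independent-interval}(i). Your case check is sound. In the case $w<r$, the needed inequality $w-1\le u-i$ follows from $u-i\ge 2r-r=r>w-1$, not merely from $u>i$, but this is immediate.

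As you observe, your argument is precisely \Cref{lem:I_t} specialized to $s=1$. Since $M_2=[2r,n-2r]=U_n$, each singleton $I_u=\{u\}$ with $u\in U_n$ is a facet of $\IN(G_n)$. This says exactly that $u$ is adjacent to every other vertex of $G_n$, i.e., isolated in $G_n^c$. So (i) could be obtained in one line from \Cref{lem:I_t}.

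Your route makes the lemma self-contained within the paper at the cost of a few lines. The citation is shorter but relies on an external source.
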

	
	\begin{proof}
		Part (i) is \cite[Lemma~7.4]{HHLNN}. Part (ii) follows from \Cref{lem:del-graph-iso}, while part (iii) follows from part (ii) by taking complements. A slightly weaker version of part (iii) was also proved in \cite[Lemma~7.5]{HHLNN}.
	\end{proof}
	
	We can now prove \Cref{prop:clique-sp=1}.
	
	\begin{proof}[Proof of \Cref{prop:clique-sp=1}]
		Fix $n\ge5r$ and set $U_n=[2r,n-2r]$.
		
		(i) By \Cref{lem:sp(I)=1}(i), each vertex of $U_n$ is adjacent to every other vertex of $G_n$. Hence, every maximal clique of $G_n$ contains $U_n$. Moreover, a set $W\subseteq[n]\setminus U_n$ is a maximal clique of $G_n\setminus U_n$ if and only if $W\cup U_n$ is a maximal clique of $G_n$. Therefore,
		\[
		\nu(\Cl(G_n))=\nu(\Cl(G_n\setminus U_n)),
		\]
		where $\nu(\cdot)$ denotes the number of facets. By \Cref{lem:sp(I)=1}(ii),
		\[
		\nu(\Cl(G_{n+1}\setminus U_{n+1}))
		=\nu(\Cl(G_n\setminus U_n)),
		\]
		and the assertion follows.
		
		(ii) Since $\Cl(G_n)=\IN(G_n^c)$, the minimal nonfaces of $\Cl(G_n)$ are precisely the edges of $G_n^c$. By \Cref{lem:sp(I)=1}(i), $U_n$ consists of isolated vertices of $G_n^c$, and hence
		$E(G_n^c)=E(G_n^c\setminus U_n).$
		The desired bijection now follows from \Cref{lem:sp(I)=1}(iii).
	\end{proof}
	
	\begin{rem}
		Keep the assumptions of \Cref{prop:clique-sp=1}.
		\begin{enumerate}
			\item \Cref{prop:clique-sp=1}(ii) is a refinement of \Cref{cor:clique-f-h-sp=1}(iii). Indeed, by \Cref{lem:del-graph-iso}, the bijection in \Cref{prop:clique-sp=1}(ii) is induced by $\sigma_k$ for any $k\in [2r, n-2r].$ Consequently, up to relabeling, the Stanley--Reisner ring of $\Cl(G_{n+1})$ is obtained from that of $\Cl(G_{n})$ by adjoining one variable. Hence, the $h$-polynomials of $\Cl(G_n)$ and $\Cl(G_{n+1})$ coincide.
			
			\item In general, $\Cl(G_n)$ need not be pure. Therefore, \Cref{cor:clique-f-h-sp=1}(ii) and \Cref{prop:clique-sp=1}(i) are distinct statements. For example, let $\mathcal{G}=(G_n)_{n\ge1}$ be the $\Inc$-invariant chain with $\ind(\mathcal{G})=5$ and
			\[
			E(G_5)=\{(1,3),(2,4),(4,5)\}.
			\]
			Then $\spi(\Gc)=1$. Computations with Macaulay2 \cite{GS} yield the following table.
			
			\begin{table}[htbp]
				\centering
				\begin{tabular}{|c|c|c|}
					\hline
					$n$ & $f$-vector of $\Cl(G_n)$ & $\nu(\Cl(G_n))$ \\
					\hline
					5  & $(1,5,3)$              & 3 \\
					\hline
					6  & $(1,6,8,2)$            & 5 \\
					\hline
					7  & $(1,7,14,10,2)$        & 5 \\
					\hline
					8  & $(1,8,21,24,12,2)$     & 5 \\
					\hline
					9  & $(1,9,29,45,36,14,2)$  & 5 \\
					\hline
					10 & $(1,10,38,74,81,50,16,2)$ & 5 \\
					\hline
				\end{tabular}
				\caption{A chain with sparsity index $1$}
				\label{tab:clique-sp=1}
			\end{table}
			
			Thus, for $6\le n\le10$, the last entry of the $f$-vector of $\Cl(G_n)$ is $2$, while $\nu(\Cl(G_n))=5$.
		\end{enumerate}
	\end{rem}
	
	Motivated by \Cref{conj:IN-facet} and \Cref{prop:clique-sp=1}, we conclude this section with the following problem.
	
	\begin{prob}
		Let $\Gc=(G_n)_{n\ge1}$ be an $\Inc$-invariant chain of graphs. Determine the asymptotic behavior of the number of facets of $\Cl(G_n)$.
	\end{prob}

	%%%%%%%%%%%%%%%%%%%%%%%%%%%%%%%%%%%%%%%%%%%%%%%%%%%%%%%%
	
	\section{Chromatic numbers}\label{subsec.chro}
	
	In this section, we study the asymptotic behavior of the chromatic numbers of graphs in an $\Inc$-invariant chain. Our first result shows that, like the clique number, the chromatic number is eventually quasi-linear.
	
	\begin{thm}
		\label{thm:chromatic}
		Suppose $\Gc=(G_n)_{n\ge1}$ is an $\Inc$-invariant chain of graphs with $\ind(\Gc)=r$ and $\spi(\Gc)=s$. Then $\gamma(G_n)$ is eventually quasi-linear with period $s$. More precisely, there exists $n_0\in\N$ such that
		\[
		\gamma(G_n)
		=\gamma(G_{n_0})
		+\left\lfloor\frac{n-n_0}{s}\right\rfloor
		\quad\text{for all }n\ge n_0.
		\]
	\end{thm}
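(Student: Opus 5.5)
The plan is to use \Cref{lem:quasi-linear}, via criterion (iv) or (v), with $\omega(G_n)$ from \Cref{thm:clique}(i) as the comparison function. By \Cref{thm:clique}(i), $\omega(G_n)$ is eventually quasi-linear with slope $1/s$. By \eqref{eq:clique-chromatic-bound}, $\gamma(G_n)\ge\omega(G_n)$. Moreover $G_{n-1}\subseteq G_n$, so $\gamma(G_{n-1})\le\gamma(G_n)$. By criterion (iv), it then suffices to prove the upper recurrence
\[
\gamma(G_n)\le\gamma(G_{n-s})+1\quad\text{for all }n\gg0 .
\]

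For the upper recurrence I would first treat eventually saturated chains. There \Cref{lem:saturation-characterization} gives $G_n\setminus[n-s+1,n]=G_{n-s}$. Also $[n-s+1,n]$ is independent by \Cref{lem:independent-interval}(ii). So we colour $G_{n-s}$ optimally and give the interval $[n-s+1,n]$ one new colour. This yields $\gamma(G_n)\le\gamma(G_{n-s})+1$ for $n\ge 2r+s$.

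For a general chain, deleting the top interval produces a chain that is not $G_{n-s}$, so I would work through \Cref{lem:del-graph-iso}. Take $k$ in the middle range, for instance $k\in[2r,n-2r]$, and the interval $I=[k,k+s-1]$, which is independent. Iterating \Cref{lem:del-graph-iso} $s$ times, I would show that $G_n\setminus I$ is isomorphic to an induced subgraph of $G_{n-s}$. More precisely, deleting a block of length at least $r$ around the middle of $G_n$ and of $G_{n-s}$ leaves isomorphic graphs, and the two long middle segments carry compatible structure. The cleanest route may be the following. Write $J=[2r,n-2r]$. The graph $G_n\setminus I$ and the graph $G_{n-s}$ agree outside the middle, by \Cref{lem:del-graph-iso}. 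Inside the middle, the edges are governed by \Cref{lem:independent-interval} (difference $\ge s$ means adjacent, difference $<s$ means not adjacent) together with the moves of \Cref{lem.shortmoves}. Edges between a boundary vertex and a middle vertex are uniform across the middle by \Cref{cl:M1-M2}-type arguments. Together these let us define a map $\phi\colon[n]\setminus I\to[n-s]$ that is the identity below $k$ and a shift by $-s$ above $k+s-1$. The task is then to verify that $\phi$ sends edges of $G_n\setminus I$ to edges of $G_{n-s}$, i.e.\ that it is a graph homomorphism.

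With such a homomorphism in hand, we pull back an optimal colouring of $G_{n-s}$ and use one new colour on $I$, giving $\gamma(G_n)\le\gamma(G_{n-s})+1$. \Cref{lem:quasi-linear}(iv) then yields eventual quasi-linearity with slope $1/s$, and the normalization discussed before \Cref{lem:quasi-linear} produces $n_0$ with
\[
\gamma(G_n)=\gamma(G_{n_0})+\left\lfloor\frac{n-n_0}{s}\right\rfloor .
\]

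The main obstacle is the homomorphism check. Consider a pair $(u,v)$ with $u<k\le k+s-1<v$ that is an edge of $G_n$; its image is $(u,v-s)$, whose difference has shrunk by $s$. When both endpoints lie in $M_1$ and $v-s-u\ge s$, \Cref{lem:independent-interval}(i) handles it. When an endpoint is in the boundary region, I would use \Cref{lem:E(Gn)} directly: from $0\le u-i\le v-j\le n-r$ we get $0\le u-i\le v-s-j\le n-s-r$, which holds provided $v-s\ge u-i+j$. This can fail when $v$ is close to $k$. To avoid that, choose $k$ with $k-2r$ and $n-2r-k$ both large, and show that close pairs crossing $I$ have both endpoints in $M_1$. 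If this direct map fails, the fallback is an induction on $\me(\Gc)$, mirroring the proof of \Cref{pro:Hilbert-Cl}, with the saturated case as the base.
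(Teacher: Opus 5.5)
Your reduction is the paper's. Since $\omega(G_n)\le\gamma(G_n)$, $\omega(G_n)$ is eventually quasi-linear of slope $1/s$ by \Cref{thm:clique}, and $\gamma(G_{n-1})\le\gamma(G_n)$, criterion (iv) of \Cref{lem:quasi-linear} leaves only the bound $\gamma(G_n)\le\gamma(G_{n-s})+1$. Your argument for eventually saturated chains is fine. The general case, however, has a genuine gap.

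For $s\ge2$ your map $\phi$ is never a homomorphism $G_n\setminus I\to G_{n-s}$, however deep in the middle $k$ lies. The vertices $k-1$ and $k+s$ are in $[r,n-r]$ and differ by $s+1$, so they are adjacent in $G_n$ by \Cref{lem:independent-interval}(i). Their images $k-1$ and $k$ differ by $1<s$, so they are non-adjacent in $G_{n-s}$ by \Cref{lem:independent-interval}(ii). Every crossing edge with $s<v-u<2s$ collapses in this way. These are exactly the pairs omitted by your case ``both endpoints in $M_1$ and $v-s-u\ge s$'', so this is not a boundary effect.

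Worse, for some middle intervals no homomorphism $G_n\setminus I\to G_{n-s}$ exists at all. Take $r=3$ and $E(G_3)=\{(1,3)\}$. Then $s=2$ and $G_n$ is the complement of the path $(1,2,\dots,n)$, so its independent sets are singletons and pairs $\{i,i+1\}$, and $\gamma(G_n)=\lceil n/2\rceil$. If $n$ and $k$ are even, $G_n\setminus[k,k+1]$ is the complement of the disjoint union of the paths on $[1,k-1]$ and $[k+2,n]$, both of odd order. Hence $\gamma(G_n\setminus[k,k+1])=k/2+(n-k)/2=\gamma(G_{n-2})+1$, and your scheme only gives $\gamma(G_n)\le\gamma(G_{n-2})+2$.

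The fallback induction on $\me(\Gc)$ does not close the gap either. The proof of \Cref{pro:Hilbert-Cl} recombines Hilbert series through the additive identity of \Cref{lem:Hilbert-series-graph}, and $\gamma$ satisfies no such linear deletion formula.

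Two ideas are missing. The new colour class must be positioned according to the chosen colouring. And the only shift you may apply is across a deleted block of more than $r$ vertices. The paper does this as follows.
\begin{enumerate}
\item Set $m=n-s$, take an optimal cover of $[m]$ by independent sets, and enlarge each to a facet of $\IN(G_m)$.
\item By \Cref{lem:IN-facet-dec}, each facet is either a central interval $I_t$ with $t\in[2r,\beta]$, or a facet of $\IN(G_m\setminus[\alpha,\beta])$, where $\alpha=2r+s-1$, $\beta=m-2r-s+1$ and $\beta-\alpha\ge r$.
\item The central intervals stay in place. The other facets are moved by $\sigma_k^s$ with $k\in[\alpha,\beta]$. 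By \Cref{lem:del-graph-iso} this is an isomorphism $G_m\setminus[\alpha,\beta]\cong G_n\setminus[\alpha,\beta+s]$, precisely because the deleted block is long.
\item If the central intervals cover $[\beta+1,\beta+q]$ with $0\le q\le s-1$, then the vertices of $G_n$ left uncovered lie in $I_{\beta+q+1}$. This interval is independent and takes the single new colour.
\end{enumerate}
The dependence of this interval on the colouring, through $q$, is exactly what your fixed choice of $I$ lacks. In the example above it places the new pair at the odd vertex $\beta+2$, avoiding the failure you would meet with an even $k$.
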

	
	This theorem is illustrated by \Cref{tab:clique} in \Cref{ex:clique}. Its proof relies on \Cref{lem:quasi-linear}. We first establish the inequalities needed to apply that result.

	\begin{lem}
		\label{lem:chromatic-upper}
		Let $\Gc=(G_n)_{n\ge1}$ be an $\Inc$-invariant chain of graphs with $\ind(\Gc)=r$ and $\spi(\Gc)=s$. Then
		\[
		\gamma(G_{n-1})\le\gamma(G_n)\le\gamma(G_{n-s})+1
		\quad \text{for all } n\ge 5r+3s.
		\]
	\end{lem}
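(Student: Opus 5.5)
The lower bound $\gamma(G_{n-1})\le\gamma(G_n)$ is immediate. Since $\Gc$ is $\Inc$-invariant, the identity lies in $\Inc_{n-1,n}$, so $G_{n-1}\subseteq G_n$, and the chromatic number is monotone under taking subgraphs. All the work is therefore in the upper bound $\gamma(G_n)\le\gamma(G_{n-s})+1$.

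\emph{Strategy.} Remove a central block of $s$ consecutive vertices from $G_n$, identify what remains with $G_{n-s}$, and give the removed block one new colour. Take the block to be
\[
I_t=[t,t+s-1]
\]
with $t$ chosen deep in the middle, for instance $t=\lfloor n/2\rfloor$. For $n\ge 5r+3s$ this places $I_t$ well inside $[2r,n-2r]$. By \Cref{lem:independent-interval}(ii), $I_t$ is an independent set of $G_n$, so it can be coloured with a single new colour. It then suffices to show
\[
\gamma(G_n\setminus I_t)\le\gamma(G_{n-s}).
\]

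\emph{Identifying $G_n\setminus I_t$.} A single application of \Cref{lem:del-graph-iso} only shifts by one vertex, and it requires the deleted interval to have length at least $r$. I would therefore enlarge the block to an interval $[\alpha,\beta]\supseteq I_t$ with $\beta-\alpha\ge r$, still inside the middle region. Iterating \Cref{lem:del-graph-iso} $s$ times, using the maps $\sigma_k$, gives
\[
G_{n-s}\setminus[\alpha,\beta]\cong G_n\setminus[\alpha,\beta+s].
\]
This alone is not enough, because the identification must hold on the whole of $G_n\setminus I_t$, not just outside a larger block. The cleanest route is to exhibit the map directly. Define $\tau\colon[n]\setminus I_t\to[n-s]$ by $\tau(i)=i$ for $i<t$ and $\tau(i)=i-s$ for $i>t+s-1$. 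This $\tau$ is a bijection, and I want to show it is a graph isomorphism from $G_n\setminus I_t$ onto $G_{n-s}$. Since $\tau$ is order preserving, its inverse extends to an element of $\Inc_{n-s,n}$, which gives $G_{n-s}\subseteq G_n\setminus I_t$ via $\tau^{-1}$ immediately. The reverse inclusion is the real content.

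\emph{The main obstacle: $\tau$ maps edges to edges.} Let $(k,l)\in E(G_n)$ with $k,l\notin I_t$; I must show $(\tau(k),\tau(l))\in E(G_{n-s})$. There are three cases.
\begin{enumerate}
\item Both endpoints lie on the same side of $I_t$. Here I use \Cref{lem:E(Gn)}: an edge $(i,j)\in E(G_r)$ with $0\le k-i\le l-j\le n-r$ must be replaced by one satisfying the analogous inequalities with bound $n-s-r$.
\item The endpoints lie on opposite sides of $I_t$. Then $\tau$ shrinks the gap by $s$, but the gap was at least $s+1$, so the image still spans at least one step. Since $t$ is central, the image pair either has both endpoints in $[r,n-s-r]$ with difference at least $s$, and \Cref{lem:independent-interval}(i) applies, or has difference at least $r$ and is covered by a suitable edge, and \Cref{lem:gap} applies.
\end{enumerate}
The delicate part is case 1 when an endpoint is near $1$ or near $n$. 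For this I would avoid reproving everything and instead rely on the isomorphism of \Cref{lem:del-graph-iso}, applied to an interval $[\alpha,\beta]$ of length $\ge r$ that contains $I_t$ and lies in $[2r,n-2r]$. Outside this interval the isomorphism matches the boundary edges exactly. Pairs with at least one endpoint in $[\alpha,\beta]\setminus I_t$ are handled by \Cref{lem:independent-interval}(i) together with the east, south, west and north moves of \Cref{lem.shortmoves}; boundary vertices adjacent to the middle are treated as in \Cref{cl:M1-M2}, which says they are adjacent to all of $M_2$ uniformly. The bound $n\ge 5r+3s$ is exactly what guarantees enough room for these moves.

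With this isomorphism in hand, any proper colouring of $G_{n-s}$ pulls back along $\tau$ to a proper colouring of $G_n\setminus I_t$. Adding one fresh colour on $I_t$ then gives $\gamma(G_n)\le\gamma(G_{n-s})+1$.
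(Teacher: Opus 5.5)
\textbf{There is a genuine gap.} Your lower bound is fine, and so is the inclusion $\tau^{-1}(G_{n-s})\subseteq G_n\setminus I_t$. The reverse inclusion, and with it the claimed isomorphism $G_n\setminus I_t\cong G_{n-s}$, is false as soon as $s\ge2$.

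The error is in your case 2. After closing the gap, a crossing pair has image difference only $\ge1$, not $\ge s$. Concretely, $t-1$ and $t+s$ both lie in $[r,n-r]$ and differ by $s+1$, so $(t-1,t+s)\in E(G_n)$ by \Cref{lem:independent-interval}(i). But $\tau$ sends this edge to $(t-1,t)$, which is not an edge of $G_{n-s}$ by \Cref{lem:independent-interval}(ii). More generally, for central vertices $x<t\le y$ of $G_{n-s}$, the pair $\{x,y\}$ always corresponds to an edge of $G_n\setminus I_t$. In $G_{n-s}$ it is an edge only when $y-x\ge s$. So $G_n\setminus I_t$ is a copy of $G_{n-s}$ with extra edges across the cut, and no combination of moves from \Cref{lem.shortmoves} can produce edges that $G_{n-s}$ does not have.

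The weaker inequality you actually need, $\gamma(G_n\setminus I_t)\le\gamma(G_{n-s})$, also fails for a central $t$ fixed in advance. Take $\ind(\Gc)=3$ and $E(G_3)=\{(1,3)\}$. Then $s=2$, and for $n\ge3$ two vertices $k<l$ of $G_n$ are adjacent iff $l-k\ge2$. Let $n=24\ge 5r+3s$ and $t=\lfloor n/2\rfloor=12$, so $I_t=\{12,13\}$.
\begin{itemize}
\item The set $\{1,3,5,7,9,11,14,16,18,20,22,24\}$ is a clique of size $12$ in $G_{24}\setminus\{12,13\}$.
\item The pairs $\{2i-1,2i\}$ are independent and cover $[22]$, so $\gamma(G_{22})\le 11$.
\end{itemize}
Hence $\gamma(G_{24}\setminus I_{12})\ge12>11\ge\gamma(G_{22})$. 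Your argument only works for $s=1$, where $I_t$ is a single vertex and all central vertices are pairwise adjacent.

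The missing idea is that the position of the new colour class must depend on a given colouring of $G_{n-s}$. It cannot be fixed beforehand. The paper does this as follows:
\begin{itemize}
\item It enlarges the colour classes of $G_m$, $m=n-s$, to facets of $\IN(G_m)$.
\item By \Cref{lem:IN-facet-dec}, each such facet is either a central interval $I_t\subseteq[2r,\beta]$ or a facet of $\IN(G_m\setminus[\alpha,\beta])$.
\item It keeps the central intervals unchanged and transports the other facets by $\varphi=\sigma_k^s$, using \Cref{lem:del-graph-iso}.
\item It then checks that the uncovered vertices lie in the single interval $I_{\beta+q+1}$. Here $q$ is determined by $T\cap[\beta+1,m]=[\beta+1,\beta+q]$, so it depends on the colouring. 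This interval is independent and receives the one new colour.
\end{itemize}
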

	
	\begin{proof}
		Since $G_{n-1}$ is a subgraph of $G_n$, we have
		$\gamma(G_{n-1})\le\gamma(G_n).$
		It remains to prove the upper bound. Set $m=n-s$ and $c=\gamma(G_m)$. Choose independent sets $A_1,\dots,A_c$ of $G_m$ such that
		\[
		[m]=A_1\cup\cdots\cup A_c.
		\]
		Enlarging each $A_i$ if necessary, we may assume that every $A_i$ is a facet of $\IN(G_m)$.
		Write
		\[
		\alpha=2r+s-1
		\quad\text{and}\quad
		\beta=m-2r-s+1=n-2r-2s+1.
		\]
		Since $s\le r-1$ and $n\ge5r+3s$, we have $m\ge4r+3s$. Therefore, \Cref{lem:IN-facet-dec} shows that, for each $i\in[c]$, either
		\[
		A_i=I_t
		\quad\text{for some }t\in[2r,\beta],
		\]
		or
		\[
		A_i\in\FF\bigl(\IN(G_m\setminus[\alpha,\beta])\bigr).
		\]
		Moreover,
		\[
		\beta-\alpha=n-4r-3s+2\ge r.
		\]
		Fix $k\in [\alpha,\beta]$. Then	for each $j\in\{0,\dots,s-1\}$, \Cref{lem:del-graph-iso} gives an isomorphism
		\[
		\sigma_k\colon
		G_{m+j}\setminus[\alpha,\beta+j]
		\xrightarrow{\ \cong\ }
		G_{m+j+1}\setminus[\alpha,\beta+j+1].
		\]
		Iterating these isomorphisms yields
		\[
		\varphi:=\sigma_k^s\colon
		G_m\setminus[\alpha,\beta]
		\xrightarrow{\ \cong\ }
		G_n\setminus[\alpha,\beta+s],
		\]
		where
		\[
		\varphi(v)=
		\begin{cases}
			v   & \text{if }v<\alpha,\\
			v+s & \text{if }v>\beta.
		\end{cases}
		\]
		For each $i\in[c]$, define
		\[
		B_i=
		\begin{cases}
			A_i         & \text{if }A_i=I_t\text{ for some }t\in[2r,\beta],\\
			\varphi(A_i) & \text{if }A_i\in\FF\bigl(\IN(G_m\setminus[\alpha,\beta])\bigr).
		\end{cases}
		\]
		In the first case, $B_i=I_t$ is a facet of $\IN(G_n)$ by \Cref{lem:IN-facet-dec}. In the second case, $\varphi$ maps $A_i$ to a facet of
		$\IN(G_n\setminus[\alpha,\beta+s]),$
		which is again a facet of $\IN(G_n)$ by \Cref{lem:IN-facet-dec}. Thus, every $B_i$ is independent in $G_n$.
		
		It remains to determine which vertices are covered by the sets $B_i$. After relabeling, we may assume that
		\[
		A_i=I_t\text{ for some }t\in[2r,\beta]
		\quad\Longleftrightarrow\quad i\in[d]
		\]
		for some $d\in[c]$. Set
		\[
		T=\bigcup_{i\in[d]}A_i.
		\]
		Since $A_1,\dots,A_c$ cover $[m]$ and $A_j\cap[\alpha,\beta]=\emptyset$ for every $j\in[d+1,c]$, we obtain
		$$[\alpha,\beta]\subseteq T\subseteq\bigcup_{i\in[c]}B_i.$$
		The map $\varphi$ fixes every vertex of $[\alpha-1]$, so
		\[
		[\beta]=[\alpha-1]\cup [\alpha,\beta]\subseteq \bigcup_{i\in[c]}B_i.
		\]
		Because each interval occurring in $T$ has the form $I_t$ with $t\le\beta$, there exists $q\in\{0,\dots,s-1\}$ such that
		\[
		T\cap[\beta+1,m]=[\beta+1,\beta+q],
		\]
		where the interval on the right is understood to be empty when $q=0$. Since the sets $A_i$ cover $[m]$, it follows that 
		\[
		[\beta+q+1,m]\subseteq\bigcup_{j=d+1}^c A_j.
		\]
		Applying $\varphi$, we obtain
		\[		
		[\beta+q+s+1,n]=\varphi\bigl([\beta+q+1,m]\bigr)
		\subseteq\varphi\left(\bigcup_{j=d+1}^c A_j\right)
		\subseteq \bigcup_{i\in[c]}B_i.
		\]
		Consequently,
		\[
		[n]\setminus\bigcup_{i\in[c]}B_i
		\subseteq[\beta+q+1,\beta+q+s]
		=I_{\beta+q+1}.
		\]
		By \Cref{lem:independent-interval}, the interval
		$I_{\beta+q+1}$ is independent in $G_n$. Thus, $[n]$ is covered by
		$c+1$ independent sets. Therefore,
		\[
		\gamma(G_n)\le c+1=\gamma(G_{n-s})+1,
		\]
		which completes the proof.
	\end{proof}

	Let us now prove \Cref{thm:chromatic}.
	
	\begin{proof}[Proof of \Cref{thm:chromatic}]
		For every $n\ge1$, we have
		$\gamma(G_n)\ge \omega(G_n).$
		By \Cref{thm:clique}, $\omega(G_n)$ is eventually quasi-linear with slope $1/s$. On the other hand, \Cref{lem:chromatic-upper} gives
		\[
		\gamma(G_{n-1})\le\gamma(G_n)\le\gamma(G_{n-s})+1
		\]
		for all sufficiently large $n$. The conclusion now follows from \Cref{lem:quasi-linear}(iv).
	\end{proof}
	
	For every integer $k\ge2$, Mycielski \cite{My55} constructed a graph $G$ with $\omega(G)=2$ and $\gamma(G)=k$; see also \cite[Theorem~14.11]{BM}. Thus, both the difference $\gamma(G)-\omega(G)$ and the ratio $\gamma(G)/\omega(G)$ can be arbitrarily large. For graphs in an $\Inc$-invariant chain, however, these quantities are eventually small.
	
	\begin{thm}
		\label{prop:gamma-omega}
		Let $\Gc=(G_n)_{n\ge1}$ be an $\Inc$-invariant chain of graphs with $\ind(\Gc)=r$. Then
		\[
		\gamma(G_n)\le\omega(G_n)+1
		\quad\text{for all }n\ge r(4r-3).
		\]
	\end{thm}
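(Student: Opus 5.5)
Since $\omega(G_n)\le\gamma(G_n)$ by \eqref{eq:clique-chromatic-bound}, the task is to exhibit a proper coloring of $G_n$ with $\omega(G_n)+1$ colors. We will build this coloring from the interval structure of the independent sets. By \Cref{lem:independent-interval}(ii), every interval $I_t=[t,t+s-1]$ is independent, where $s=\spi(\Gc)$. On the middle region $M_1=[r,n-r]$, \Cref{lem:independent-interval}(i) says that two vertices at distance at least $s$ are adjacent. Hence $G_n[M_1]$ is essentially the ``$s$-th power complement'' of a path. Its clique number is $\lceil |M_1|/s\rceil$, and it is colored optimally by consecutive intervals $I_t$. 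A first step is to make this precise: $\omega(G_n[M_1])=\gamma(G_n[M_1])=\lceil|M_1|/s\rceil$, with the coloring given by a partition of $M_1$ into consecutive blocks of length $s$ and the clique given by picking one vertex in each block at spacing exactly $s$.

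Next we handle the two boundary regions $L=[1,r-1]$ and $R=[n-r+1,n]$. Vertices there have neighborhoods in the middle that are governed by \Cref{lem:E(Gn)}. A vertex $u<r$ is adjacent to all $v\in M_1$ with $v\ge u+c_u$ for some threshold $c_u$ (north moves, \Cref{lem.shortmoves}(iii)), and symmetrically for $R$. The plan is a ``greedy left-to-right'' coloring. Order $V(G_n)$ as $1,2,\dots,n$ and assign colors in blocks, extending the interval coloring of the middle outward. Consider a maximum clique $Q$ of $G_n$. Its restriction to $L$, $M_1$, $R$ gives $|Q|\ge \omega(G_n[L])+\lceil (\text{part of }M_1\text{ beyond the neighborhood thresholds})/s\rceil+\omega(G_n[R])$ up to bounded corrections. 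The goal is to show that a coloring using $\gamma(G_n[L])$ colors on $L$, interval colors on the middle, and $\gamma(G_n[R])$ colors on $R$ can be merged, reusing colors across regions, to use at most $|Q|+1$ colors. The reuse comes from the fact that colors of $L$ may be reused on middle intervals lying before the adjacency thresholds. The threshold $n\ge r(4r-3)$ suggests a pigeonhole argument: the middle length must be large enough that the boundary clique contributions ``align'' with the residue of $n$ modulo $s$.

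A cleaner alternative, which we would try first, is to reduce to perfection. Set $W=G_n\setminus (L\cup R)$ and show that $G_n$ restricted to $L\cup M_1$ (respectively $M_1\cup R$) is weakly chordal, hence perfect by the Strong Perfect Graph Theorem. The one-sided structure suggests this: edges $(k,l)$ with $k<r$ move north and west freely, so long induced cycles or anticycles would need vertices on both sides. We would then color $[1,\lfloor n/2\rfloor]$ and $[\lfloor n/2\rfloor+1,n]$ separately with $\omega$ of each half. Gluing the two colorings at the junction in the middle, where the graph is the rigid interval structure, should cost at most one extra color. The interval coloring near the cut can be shifted by a residue mod $s$, and a single extra color absorbs the mismatched partial interval, as in the end of the proof of \Cref{lem:chromatic-upper}.

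The main obstacle is this gluing step. We must show that the two half-colorings can be chosen to agree on a middle window, for instance both equal to the block coloring by $I_t$'s there, without losing optimality on each half. We also need the resulting color count to be at most $\omega(G_n)+1$ rather than $\omega(\text{left})+\omega(\text{right})$ minus overlap. We would prove that optimal colorings of each half may be normalized to interval colorings on $M_2=[2r,n-2r]$ via an exchange argument using \Cref{cl:M1-M2}: boundary vertices see all of $M_2$ or none of a prescribed tail. The bound $n\ge r(4r-3)$ would then enter when guaranteeing enough room for these normalizations.
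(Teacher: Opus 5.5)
There is a genuine gap, and it is the step you flag yourself: the gluing is not a technicality but the whole content of the theorem, and nothing in your plan controls it. Write $A=[1,\lfloor n/2\rfloor]$, $B=[\lfloor n/2\rfloor+1,n]$, $L=[r-1]$, $R=[n-r+1,n]$. Color classes of $G_n[A]$ and $G_n[B]$ can be merged only if there are no edges between them. Vertices of $[r,n-r]$ at distance at least $s$ are adjacent. A vertex of $L$ (resp.\ $R$) is adjacent to all of $[2r-1,n-r]$ (resp.\ $[r,n-2r+1]$). So, apart from a window of width $s$ at the cut, the only possible reuse of colors across the halves is between $L$ and $R$. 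Their mutual adjacency is determined by $E(G_r)$ via \Cref{lem:E(Gn)} and is in general far from complete. The same $L$--$R$ adjacency decides which maximum cliques of the two halves can be combined, so $\omega(G_n)$ can be smaller than $\omega(G_n[A])+\omega(G_n[B])$ by an amount of order $r$. Two independently chosen optimal half-colorings, even normalized to interval colorings on $[2r,n-2r]$, say nothing about how the colors on $L$ and $R$ should be matched. Likewise, ``bounded corrections'' in your greedy approach are incompatible with an additive error of exactly $1$.

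The missing idea is to avoid splitting and instead remove a single color class. Fix $I_t=[t,t+s-1]\subseteq[2r,n-2r]$, which is independent by \Cref{lem:independent-interval}(ii), and prove that $G_n\setminus I_t$ is weakly chordal, hence perfect. Then $\gamma(G_n)\le\gamma(G_n\setminus I_t)+1=\omega(G_n\setminus I_t)+1\le\omega(G_n)+1$, and no gluing is needed. Your remark that long induced anticycles need vertices on both sides is exactly what makes this work.

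By \Cref{lem:no-induced-cycle}(i), $G_n$ has no induced $C_k$ with $k\ge6$ for $n\ge3r$, and $C_5^c\cong C_5$. So it suffices to exclude induced $C_k$, $k\ge5$, in $G_n^c\setminus I_t$. Inside $M=[r,n-r]$, an induced path of $G_n^c$ is monotone with steps smaller than $s$. Consecutive vertices are nonadjacent in $G_n$, hence at distance $<s$ by \Cref{lem:independent-interval}(i). Vertices two apart are adjacent in $G_n$, hence at distance $\ge s$ by \Cref{lem:independent-interval}(ii), which forbids a change of direction. In particular, no induced cycle lies entirely in $M$.

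The edges $(1,i)$ and $(j,r)$ of $G_r$ from the standing assumption show that a vertex of $L$ has $G_n^c$-neighbors in $M$ only in $[r,2r-2]$. A vertex of $R$ has them only in $[n-2r+2,n-r]$. Hence each maximal stretch of an induced anticycle inside $M$ either stays in one of these two windows, or runs monotonically from the first to the second in steps $<s$ and so meets $I_t$. Therefore every induced cycle of $G_n^c\setminus I_t$ lies in $[2r-2]\cup[n-2r+2,n]$ and has length at most $4r-3$. Cycles of length $5\le k\le4r-3$ are excluded for $n\ge kr$ by \Cref{lem:no-induced-cycle}(ii). That, not room for normalizations or residues mod $s$, is where $n\ge r(4r-3)$ enters.
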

	
	The main step in the proof of this result is to show that deleting a suitable independent set from $G_n$ yields a perfect graph. In fact, we prove the following stronger statement.
	
	\begin{prop}
		\label{lem:facet-del}
		Let $\Gc=(G_n)_{n\ge1}$ be an $\Inc$-invariant chain of graphs with $\ind(\Gc)=r$ and $\spi(\Gc)=s$. Let $I_t=[t,t+s-1]$ be an interval contained in $[2r,n-2r]$. Then $G_n\setminus I_t$ is weakly chordal for every $n\ge r(4r-3)$.
	\end{prop}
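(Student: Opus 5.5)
Write $H=G_n\setminus I_t$ with $I_t=[t,t+s-1]\subseteq M_2=[2r,n-2r]$. We must show that neither $H$ nor $H^c$ has an induced cycle of length at least $5$. The vertices of $H$ fall into three blocks: the left boundary $L=[1,r-1]$, the right boundary $R=[n-r+1,n]$, and the middle $M_1\setminus I_t=[r,t-1]\sqcup[t+s,n-r]$.

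The key structural fact comes from \Cref{lem:independent-interval}(i). Inside $M_1$, two vertices are adjacent exactly when their distance is at least $s$. Since every vertex of $[r,t-1]$ lies at distance $\ge s$ from every vertex of $[t+s,n-r]$, the two middle pieces are completely joined in $H$. Within each piece, the graph is the ``distance $\ge s$'' graph on an interval. Its complement is a unit-interval (indifference) graph, hence chordal, and complements of chordal graphs contain no long induced cycles. The boundary vertices are controlled by \Cref{cl:M1-M2}: a vertex $u\in L\cup R$ adjacent to some vertex of $M_2$ is adjacent to all of $M_2$. Moreover, the east/south, west and north moves of \Cref{lem.shortmoves} show that the neighbourhood of $u$ inside $M_1$ is an up-set or a down-set of $M_1$, i.e.\ a final or initial segment. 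Finally, \Cref{lem:E(Gn)} shows that no edge joins $L$ to $R$ only when the distance is small. Since $n\ge r(4r-3)$, every pair in $L\times R$ is at distance $\ge n-2r$, so $L$ and $R$ are completely joined when $G_r$ has an edge $(i,j)$ with $i\le$ the relevant index. At the very least, adjacency between $u\in L$ and $v\in R$ depends only on $u$ and $n-v$.

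The plan is as follows.

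\textbf{Step 1.} Handle induced cycles $C$ of length $\ge5$ in $H$. In an induced cycle, each vertex has exactly two neighbours on the cycle. Suppose $C$ meets both middle pieces. Then the complete join between them forces $C$ to have at most $2$ vertices in each piece, so at most four middle vertices, and these lie on a $4$-cycle pattern. Otherwise $C$ meets at most one middle piece. Two vertices of $M_1$ at distance $\ge s$ in the same piece, together with the segment property of boundary neighbourhoods, let us apply a ``nesting'' argument: take $x\in C\cap M_1$ extremal, say maximal. Then any cycle neighbour of a vertex $y<x$ in $M_1$ that lies in $L\cup R$ is, by the segment property, also adjacent to $x$ or to $y$ in a way that creates a chord. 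This reduces the problem to cycles living almost entirely in $L\cup R$, which have bounded size $2r-2$. Induced cycles inside $G_n[L\cup R]$ must then be excluded directly. Here the hypothesis $n\ge r(4r-3)$ enters: it should guarantee, via \Cref{lem:E(Gn)}, that the bipartite adjacency between $L$ and $R$ is a threshold-type (nested) relation, since the constraint $l-j\le n-r$ is automatically satisfied. Nested bipartite neighbourhoods plus the interval structure inside $L$ and inside $R$ kill long induced cycles.

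\textbf{Step 2.} Handle $H^c$ in the same way. In $H^c$ the two middle pieces are disjoint and anticomplete. Each piece is a unit-interval graph, and the boundary vertices attach to initial or final segments. An induced cycle of length $\ge5$ in $H^c$ therefore cannot pass through both pieces except via boundary vertices. Using the segment property, a long cycle through a middle piece acquires a chord, or else it reduces again to a bounded configuration in $L\cup R$, where the nested structure rules it out.

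I expect the main obstacle to be the bounded boundary analysis: showing that $G_n[L\cup R]$ together with its attachments to the segment neighbourhoods contains no long induced cycle or anticycle. This requires a careful case analysis of \Cref{lem:E(Gn)} for pairs in $L\times R$, $L\times L$, and $R\times R$. The precise threshold $r(4r-3)$ presumably comes from bounding the length of such a cycle by $O(r)$ and demanding enough room, roughly $r$ per cycle vertex, for the moves of \Cref{lem.shortmoves} to apply. I would first try to prove that every induced $C_{\ge5}$ in $H$ or $H^c$ uses at most $4r-4$ vertices of $L\cup R$ plus at most two vertices of $M_1$, and then derive a chord by moving an edge along the cycle.
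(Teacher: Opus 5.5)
There is a genuine gap, and it is the step you yourself flag as the main obstacle. You never show that the bounded boundary region carries no induced $C_{\ge5}$ in $H$ or in $H^c$, and the mechanism you propose for this does not work as stated.

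Nested (chain-graph) adjacency between $L$ and $R$, together with interval or co-interval structure inside $L$ and inside $R$, does not exclude long induced cycles. Take an induced path $(a_1,a_2,a_3,a_4)$ on one side and a single vertex $b$ on the other side adjacent to $a_1$ and $a_4$. This configuration has all of these properties: $P_4$ is both interval and co-interval, and the bipartite neighbourhoods $\emptyset\subseteq\{b\}$ are nested. Yet it is an induced $C_5$. Ruling such configurations out needs the finer compatibility between the vertex order and adjacency dictated by \eqref{eq:inequalities}. That is precisely the content of the results from the literature recorded in \Cref{lem:no-induced-cycle}, which your proposal neither invokes nor replaces by an actual argument.

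Several smaller slips point the same way. For $u\in L$, $v\in R$ and $n$ large, the inequality in \eqref{eq:inequalities} that becomes automatic is $u-i\le v-j$, not $v-j\le n-r$. Since $|L\cup R|=2r-1$, a bound of $4r-4$ vertices of $L\cup R$ is vacuous. And the ``nesting'' reduction in your Step~1 is not an argument.

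Your observations about the middle range are correct and are what the paper uses: adjacency there means distance $\ge s$, and boundary vertices have initial or final segment neighbourhoods. The paper closes the gap by citing \Cref{lem:no-induced-cycle} rather than reproving it. Part (i) already excludes induced $C_k$ with $k\ge6$ in $H$, and since $C_5^c\cong C_5$ it suffices to treat $H^c$, so your Step~1 is unnecessary.

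For $H^c$ the only new work is a length bound. Let $M=[r,n-r]$.
\begin{enumerate}
\item Each component of $C\cap M$ is an induced path of $G_n^c[M]$, hence monotone: consecutive gaps are $<s$, while vertices two apart are at distance $\ge s$.
\item By \Cref{assm:main}, a vertex of $L$ has $G_n^c$-neighbours in $M$ only in $[r,2r-2]$, and a vertex of $R$ only in $[n-2r+2,n-r]$.
\item A component attached to $L$ at one end and to $R$ at the other would have to step over $I_t$ with steps shorter than $|I_t|=s$. This is impossible precisely because $I_t$ was deleted.
\end{enumerate}
Hence every induced cycle of $H^c$ lies in $[2r-2]\cup[n-2r+2,n]$ and has length at most $4r-3$. \Cref{lem:no-induced-cycle}(ii), valid for $n\ge kr$, then excludes all lengths $5\le k\le 4r-3$. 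This is the exact source of the threshold $n\ge r(4r-3)$ you were guessing at.
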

	
	The proof of this proposition relies on the following auxiliary result from \cite[Lemma~3.9]{HHLNN} and \cite[Proposition~4.1]{HNT2024}.
	
	\begin{lem}
		\label{lem:no-induced-cycle}
		Let $\Gc=(G_n)_{n\ge1}$ be an $\Inc$-invariant chain of graphs with $\ind(\Gc)=r$. Then the following statements hold:
		\begin{enumerate}
			\item $G_n$ contains no induced cycle $C_k$ with $k\ge6$
			whenever $n\ge3r$.
			\item $G_n^c$ contains no induced cycle $C_k$ with $k\ge5$ whenever $n\ge kr$.
		\end{enumerate}
	\end{lem}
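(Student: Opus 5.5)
The plan is to handle the two parts separately. Both rest on one principle: by \Cref{lem:E(Gn)}, \Cref{lem.shortmoves} and \Cref{lem:gap}, membership of a pair $(u,v)$ in $E(G_n)$ is monotone under moving the endpoints away from each other, as long as they stay in the ``bulk'' $[r,n-r]$ or keep a gap of at least $r$.

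For (i), let $C=(u_1,\dots,u_k,u_1)$ with $k\ge6$ be an induced cycle of $G_n$. Among its $k$ edges, pick an edge $(a,b)$ with $a<b$ and $b-a$ maximal. By \Cref{lem:gap}, any two vertices $u<v$ of $C$ lying in $[a,b]$ with $v-u\ge r$ are adjacent in $G_n$. Since $C$ is induced, the chords forced in this way must not exist. Hence the cycle vertices inside $[a,b]$ cluster into windows of length $<r$ around $a$ and $b$.

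Next I treat the vertices outside $[a,b]$. Here I use the west and north moves of \Cref{lem.shortmoves} (and the east/south moves when $n\ge3r$) to transport the edges incident to such vertices. If $w<a$ is adjacent to some $x\in[a,b]$, a north move gives $(w,y)\in E(G_n)$ for every $y\in[x,n-r]$; a symmetric statement holds for vertices $w>b$. So each outside vertex sees an ``interval'' of cycle vertices. A vertex of an induced cycle has exactly two neighbours on it, and the interval property forces these two neighbours to be consecutive in the natural order among the cycle vertices on that side. Combining this with the clustering inside $[a,b]$, I expect to show that $C$ has at most $5$ vertices. The cases are organised by how many cycle vertices lie left of $a$, inside $[a,b]$, and right of $b$.

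The main obstacle is the boundary region. The moves only apply in $[r,n-r]$, and vertices near $1$ or $n$ behave irregularly. The hypothesis $n\ge3r$ ensures that the boundary windows $[1,r]$ and $[n-r,n]$ are separated by a bulk of length at least $r$. I would first try to show that $C$ can have at most two vertices in each boundary window, using east/south moves to produce chords otherwise.

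For (ii), an induced $C_k$ in $G_n^c$ with $k\ge5$ is a set $W$ of $k$ vertices such that $G_n[W]$ is the complement of a $k$-cycle. The key step is a pigeonhole reduction. Sort $W=\{w_1<\dots<w_k\}$. The $k+1$ gaps $w_1-1,\ w_2-w_1,\dots,\ n-w_k$ sum to $n-k$. If some internal gap $[w_j+1,w_{j+1}-1]$ has length $>r$, then \Cref{lem:del-graph-iso}, applied to a subinterval of length $r+1$ of that gap, shows that $G_n[W]$ is isomorphic to an induced subgraph of $G_{n-1}$. So by induction on $n$ it suffices to rule out configurations with all internal gaps $\le r$. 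Then $W$ spans an interval of length less than about $kr$. Since $n\ge kr$, this leaves slack at one end that can be used to shift $W$ into the bulk $[r,n-r]$ via \Cref{lem:del-graph-iso} again.

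Once $W$ sits in the bulk, \Cref{lem:independent-interval}(i) says non-edges of $G_n$ among vertices of $W$ occur only at distance $<s$. Because the complement of $C_k$ for $k\ge5$ is connected in the appropriate sense, every $w\in W$ has exactly two non-neighbours, both within distance $<s$, and these non-adjacencies form a single $k$-cycle. Non-adjacency of vertices at distance $<s$ is also transitive along the order: by the west/north moves, if $u<v<w$ with $(u,v),(v,w)\notin E(G_n)$, one can derive a contradiction with $(u,w)\notin E(G_n)$ failing. Hence the non-adjacency graph is an interval-type structure and cannot be a cycle of length $\ge5$. The delicate point is making the shifting into the bulk rigorous within the bound $n\ge kr$.
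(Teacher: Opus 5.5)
Your reduction in (ii) fails at the step ``shift $W$ into the bulk via \Cref{lem:del-graph-iso}''. That lemma can only stretch or shrink a gap of $W$ that already contains at least $r+1$ integers. A vertex of $W$ within distance about $r$ of $1$ or of $n$ therefore never moves, and its adjacencies do not follow the bulk rule of \Cref{lem:independent-interval}.

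These boundary configurations are in fact the only ones that matter. For $u<v\le n-r$, \Cref{lem:E(Gn)} says that $(u,v)\in E(G_n)$ if and only if $i\le u$ and $j-i\le v-u$ for some $(i,j)\in E(G_r)$. Hence a non-edge $(u,w)$ forces $(u,u')\notin E(G_n)$ for every $u<u'<w$. So the smaller $G_n^c$-neighbours of each vertex form a clique, and the subgraph of $G_n^c$ induced by $[1,n-r]$ is chordal; symmetrically, the one induced by $[r,n]$ is chordal. Consequently every induced cycle of length at least $4$ in $G_n^c$ meets both $[1,r-1]$ and $[n-r+1,n]$. Both of its end gaps are then shorter than $r$, so it can never be moved into $[r,n-r]$.

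Such cycles do occur. In \Cref{ex:clique}, $G_n^c$ consists of the edges $\{j,j+1\}$ together with $\{1,3\}$, $\{1,n\}$, $\{2,n\}$. Thus $\{1,3,4,\dots,n\}$ induces a cycle of length $n-1$ in $G_n^c$ for every $n\ge6$, and this is compatible with (ii) only because $n<(n-1)r$. So the whole content of (ii) is that a cycle touching both boundary windows has more than $n/r$ vertices, and this is exactly where $n\ge kr$ must be used. Your proposal never uses it in a working way:
\begin{enumerate}
\item Internal gaps of at most $r$ integers bound the span only by $(k-1)(r+1)$, which exceeds $kr$ once $k\ge r+2$.
\item Compressing gaps by induction on $n$ can leave the range $n\ge kr$, and you do not treat the base case $n=kr$.
\end{enumerate}
Your ``transitivity'' claim is also false. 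For $s\ge2$ and $u,u+2s-2\in[r,n-r]$, the pairs $(u,u+s-1)$ and $(u+s-1,u+2s-2)$ are non-edges, while $(u,u+2s-2)$ is an edge. The correct property is the reverse one: a non-edge $(u,w)$ forces non-edges inside $[u,w]$. That is what makes the bulk part of $G_n^c$ a unit interval graph, and the bulk is the easy case.

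What is missing is an analysis of how an induced cycle of $G_n^c$ travels between $[1,r-1]$ and $[n-r+1,n]$. The paper does not reprove the lemma (it quotes \cite[Lemma~3.9]{HHLNN} and \cite[Proposition~4.1]{HNT2024}), but a natural starting point is \Cref{lem:gap}. Apply it to the edges $(1,n-r+i)$ and $(b,n)$ of $G_n$, where $(1,i),(b,r)\in E(G_r)$ are the edges provided by \Cref{assm:main}. It shows that every non-edge $(u,v)$ with $v-u\ge r$ has $u<b$ and $v>n-r+i$. So each of the two arcs of the cycle between the windows either uses such a long non-edge or crosses the middle in steps shorter than $r$; compare the Claims in the proof of \Cref{lem:facet-del}. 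One must then bound the second kind of arc from below using $n\ge kr$, and rule out cycles of length at least $5$ that are assembled from long non-edges within the boundary regions.

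Part (i) of your proposal has the same defect in a stronger form: it is a plan whose decisive step is deferred (``I expect to show'', ``I would first try''). Since $C_k^c$ is not chordal for $k\ge5$, the locality argument above shows that any induced $C_k$ of $G_n$ with $k\ge5$ meets both boundary windows. That is exactly where your tools stop working:
\begin{enumerate}
\item West moves need the new left endpoint to be at least $r$.
\item North moves need the new right endpoint to be at most $n-r$.
\item The clustering around a longest edge $(a,b)$ is vacuous when $b-a<2r$.
\end{enumerate}
As it stands, neither part contains the argument that excludes the boundary configurations.
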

	
	\setcounter{thm}{4} 
	
	\begin{proof}[Proof of \Cref{lem:facet-del}]
		Let $n\ge r(4r-3)$ and set
		$H_n=G_n\setminus I_t.$
		By \Cref{lem:no-induced-cycle}(i), the graph $G_n$, and hence its induced subgraph $H_n$, contains no induced cycle of length at least $6$.
		Since $C_5^c\cong C_5$, it remains to show that $H_n^c=G_n^c\setminus I_t$ contains no induced $C_k$ for every $k\ge 5$. 
		Set
		\[
		L=[r-1],
		\quad R=[n-r+1,n],
		\quad M=[r,n-r],
		\]
		and
		\[
		M_L=[r,2r-2],
		\qquad
		M_R=[n-2r+2,n-r].
		\]
		
		\begin{claim}
			\label{cl:boundary-neighbor}
			The following statements hold:
			\begin{enumerate}
				\item If $u\in L$, then $N_{G_n^c}(u)\cap M\subseteq M_L$.
				\item If $u\in R$, then $N_{G_n^c}(u)\cap M\subseteq M_R$.
			\end{enumerate}
		\end{claim}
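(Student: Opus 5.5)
The plan is to prove both parts directly from \Cref{lem:E(Gn)}, using the two edges guaranteed by \Cref{assm:main}: $(1,i)\in E(G_r)$ for some $i\in[r]$, and $(j,r)\in E(G_r)$ for some $j\in[r]$. Part (i) is equivalent to showing that every vertex of $M\setminus M_L=[2r-1,n-r]$ is adjacent to $u$ in $G_n$. Part (ii) is equivalent to showing that every vertex of $M\setminus M_R=[r,n-2r+1]$ is adjacent to $u$ in $G_n$. So it suffices to exhibit, in each case, an edge of $G_r$ for which the inequalities \eqref{eq:inequalities} hold.

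For (i), let $u\in L$ and $v\in[2r-1,n-r]$; note that $u\le r-1<v$. I would use the edge $(1,i)$ and check
\[
0\le u-1\le v-i\le n-r.
\]
The first inequality holds since $u\ge1$. For the second, $u-1\le r-2$, while $v-i\ge(2r-1)-r=r-1$. For the third, $v-i\le v-1\le n-r-1$. Hence $(u,v)\in E(G_n)$ by \Cref{lem:E(Gn)}, so $v\notin N_{G_n^c}(u)$.

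For (ii), let $u\in R$ and $v\in[r,n-2r+1]$, so that $v<u$. I would use the edge $(j,r)$ and check
\[
0\le v-j\le u-r\le n-r.
\]
The first inequality holds since $v\ge r\ge j$. For the second, $v-j\le n-2r$ because $j\ge1$, while $u-r\ge n-2r+1$. The third holds since $u\le n$. Therefore $(v,u)\in E(G_n)$ by \Cref{lem:E(Gn)}.

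There is no real obstacle here; the only point requiring care is the bookkeeping of endpoints, namely that the bounds $2r-1$ and $n-2r+1$ defining $M_L$ and $M_R$ are exactly what make the middle inequality work in each case. The hypothesis $n\ge r(4r-3)$ is used only to ensure that these intervals are nonempty and well placed.
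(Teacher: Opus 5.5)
Your proof is correct and is essentially the paper's argument: both parts apply \Cref{lem:E(Gn)} to the edges $(1,i)$ and $(j,r)$ from \Cref{assm:main}, and your endpoint checks are exactly what the paper compresses into $v-u\ge r>i-1$ and $u-w\ge r>r-b$. (For this claim alone, nothing beyond $n\ge r$ is needed, since the statement is vacuous when the relevant intervals are empty.)
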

		
		\begin{proof}[Proof of \Cref{cl:boundary-neighbor}]
			(i) It suffices to show that $(u,v)\in E(G_n)$ for every
			\[
			v\in M\setminus M_L=[2r-1,n-r].
			\]
			By \Cref{assm:main}, there exists $i\in[r]$ such that $(1,i)\in E(G_r)$. Since $v-u\ge r>i-1$, we have
			\[
			0\le u-1\le v-i\le n-r.
			\]
			Hence, $(u,v)\in E(G_n)$ by \Cref{lem:E(Gn)}.
			
			(ii) Similarly, it suffices to show that $(w,u)\in E(G_n)$ for every
			\[
			w\in M\setminus M_R=[r,n-2r+1].
			\]
			By \Cref{assm:main}, there exists $b\in[r]$ such that $(b,r)\in E(G_r)$. Since $u-w\ge r>r-b$, we have
			\[
			0\le w-b\le u-r\le n-r.
			\]
			Hence, $(w,u)\in E(G_n)$ by \Cref{lem:E(Gn)}.
		\end{proof}
		
		\begin{claim}
			\label{cl:monotone}
			Every induced path $P=(v_1,\dots,v_\ell)$ in $G_n^c[M]$ is monotone.
		\end{claim}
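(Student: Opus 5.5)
The plan is to determine the edge set of $G_n^c[M]$ completely, and then see that a non-monotone induced path would be forced to have a chord.

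\emph{Step 1: edges of $G_n^c[M]$.} Here $M=[r,n-r]$. Let $u<v$ be vertices of $M$.
\begin{itemize}
\item[] If $v-u\ge s$, then $(u,v)\in E(G_n)$ by \Cref{lem:independent-interval}(i).
\item[] If $0<v-u<s$, then $(u,v)\notin E(G_n)$ by \Cref{lem:independent-interval}(ii).
\end{itemize}
Hence, for distinct $u,v\in M$,
\[
\{u,v\}\in E(G_n^c)\iff 0<|u-v|<s .
\]
So $G_n^c[M]$ is the $(s-1)$-st power of the path on $M$, which is a unit interval graph.

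\emph{Step 2: monotonicity.} Let $P=(v_1,\dots,v_\ell)$ be an induced path in $G_n^c[M]$. If $\ell\le2$ there is nothing to prove, so assume $\ell\ge3$; the vertices of an induced path are distinct. Suppose $P$ is not monotone. Then there is an index $i\in[2,\ell-1]$ at which the direction changes, that is, $v_i$ is either larger than both $v_{i-1},v_{i+1}$ or smaller than both.

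Consider the first case; the second is symmetric. Since $\{v_{i-1},v_i\}$ and $\{v_i,v_{i+1}\}$ are edges of $G_n^c$, Step 1 gives
\[
v_{i-1},\,v_{i+1}\in[v_i-s+1,\,v_i-1].
\]
Therefore $0<|v_{i-1}-v_{i+1}|\le s-2<s$, where $v_{i-1}\ne v_{i+1}$ by distinctness. By Step 1 again, $\{v_{i-1},v_{i+1}\}\in E(G_n^c)$. This is a chord of $P$, because $v_{i-1}$ and $v_{i+1}$ are not consecutive on $P$. This contradicts the assumption that $P$ is induced, so $P$ must be monotone.

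\emph{Main obstacle.} There is no real difficulty here. The only points to check are that \Cref{lem:independent-interval}(i) applies, which holds because both endpoints lie in $[r,n-r]$, and that the range $n\ge r(4r-3)$ guarantees $M$ is nonempty.
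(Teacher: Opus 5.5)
Your proof is correct and takes essentially the same route as the paper. Both arguments use \Cref{lem:independent-interval}(i) to get $|v_{i+1}-v_i|<s$ for consecutive vertices, and part (ii) to show that a turning point $v_i$ would force $\{v_{i-1},v_{i+1}\}$ to be a nonedge of $G_n$; the paper phrases this as contradicting the inequality $|v_{i+2}-v_i|\ge s$ forced by inducedness, while you phrase it as a chord (your final remark about $n\ge r(4r-3)$ is unnecessary, since the lemma holds for all $n\ge r$ and the claim is vacuous when $M$ is empty).
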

		
		\begin{proof}[Proof of \Cref{cl:monotone}]
			We may assume that $\ell\ge3$. Since $P$ is a path in $G_n^c[M]$, the pair $\{v_i,v_{i+1}\}$ is a nonedge of $G_n$ for every $i\in[\ell-1]$. By \Cref{lem:independent-interval}(i),
			$|v_{i+1}-v_i|<s.$
			Since $P$ is induced, $\{v_i,v_{i+2}\}\in E(G_n)$ for every $i\in[\ell-2]$. Thus, \Cref{lem:independent-interval}(ii) gives
			$|v_{i+2}-v_i|\ge s.$
			Consequently, the differences $v_{i+1}-v_i$ and $v_{i+2}-v_{i+1}$ have the same sign, since otherwise,
			\[
			|v_{i+2}-v_i|
			<\max\{|v_{i+2}-v_{i+1}|,|v_{i+1}-v_i|\}
			<s,
			\]
			a contradiction. Therefore, $P$ must be monotone. 
		\end{proof}
		
		\begin{claim}
			\label{cl:induced-cycle}
			If $H_n^c$ contains an induced cycle $C_k$ of length $k$, then $k\le4r-3$.
		\end{claim}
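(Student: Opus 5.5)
The plan is to show that any induced cycle $C=(v_1,\dots,v_k,v_1)$ of $H_n^c=G_n^c\setminus I_t$ with $k\ge 4$ is confined to $L\cup M_L=[2r-2]$ or to $M_R\cup R=[n-2r+2,n]$. Each of these sets has $2r-1$ or fewer vertices, so $k\le 2r-1\le 4r-3$. (Cycles with $k=3$ satisfy $k\le 4r-3$ trivially, since $r\ge 2$ under \Cref{assm:main}.) The tools are \Cref{cl:boundary-neighbor}, \Cref{cl:monotone}, and \Cref{lem:independent-interval}.

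\emph{Step 1: $C$ meets $L\cup R$.} Suppose instead that all vertices of $C$ lie in $M$. Since $k\ge4$, for each index $i$ the triple $(v_{i-1},v_i,v_{i+1})$ is an induced path of $G_n^c[M]$. By \Cref{cl:monotone} every such triple is monotone. Then the consecutive differences $v_{i+1}-v_i$ all have the same sign around the whole cycle, so going once around $C$ strictly increases or strictly decreases the label. This is impossible for a closed cycle.

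\emph{Step 2: the edges of $G_n^c$ split into a left side and a right side.}
\begin{itemize}
\item For $u\in L$ and $w\in R$, the pair $(u,w)$ is an edge of $G_n$. This follows from \Cref{lem:E(Gn)} applied to the edge $(1,i)\in E(G_r)$, because $0\le u-1\le w-i\le n-r$ when $n\ge r(4r-3)\ge 3r$. Hence $L$ and $R$ are nonadjacent in $G_n^c$.
\item If $x,y\in M\setminus I_t$ with $x<t$ and $y>t+s-1$, then $y-x\ge s+1$. By \Cref{lem:independent-interval}(i), $(x,y)\in E(G_n)$.
\item By \Cref{cl:boundary-neighbor}, the $G_n^c$-neighbours of $L$ inside $M$ lie in $M_L$, and those of $R$ inside $M$ lie in $M_R$. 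Because $I_t\subseteq[2r,n-2r]$, the set $M_L$ lies to the left of $I_t$ and $M_R$ lies to the right of $I_t$.
\end{itemize}
Put $\mathcal L=L\cup(M\cap[1,t-1])$ and $\mathcal R=R\cup(M\cap[t+s,n])$. The three facts above say that $H_n^c$ has no edges between $\mathcal L$ and $\mathcal R$. Since $C$ is connected and avoids $I_t$, it lies entirely in $\mathcal L$ or entirely in $\mathcal R$. By symmetry, assume $C\subseteq\mathcal L$. Step 1 then shows that $C$ contains a vertex of $L$.

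\emph{Step 3: the $M$-part of $C$ lies in $M_L$.} Consider a maximal run of consecutive vertices of $C$ that lie in $M$; it is an induced path in $G_n^c[M]$. Both ends of this run are adjacent on $C$ to vertices of $L$, since $C\subseteq\mathcal L$ and $C$ meets $L$. Hence, by \Cref{cl:boundary-neighbor}, both endpoints of the run lie in $M_L$. By \Cref{cl:monotone} the run is monotone, so all of its vertices lie between its two endpoints, and therefore in the interval $M_L$. Consequently $C\subseteq L\cup M_L=[2r-2]$, and so $k\le 2r-2\le 4r-3$.

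I expect the main obstacle to be Step 2, namely checking carefully, with the precise index ranges, that no edge of $G_n^c$ crosses from the left block to the right block once $I_t$ is deleted. Here the deleted independent interval $I_t$ acts as a barrier, because monotone paths have steps shorter than $s$ and so cannot jump over it.
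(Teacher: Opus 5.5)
Your Step 2 has a genuine gap: its first bullet is false, and the rest of the argument depends on it. You claim that every $u\in L$ and $w\in R$ are adjacent in $G_n$, using the edge $(1,i)\in E(G_r)$. But the inequality $w-i\le n-r$ fails whenever $w>n-r+i$, for instance $w=n$ with $i<r$.

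More precisely, write $w=n-r+w'$ with $w'\in[r]$. Then the description of $E(G_n)$ in terms of $E(G_r)$ shows that $(u,w)\in E(G_n)$ if and only if some $(i,j)\in E(G_r)$ has $i\le u$ and $j\ge w'$, and this need not hold.

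Here is a concrete counterexample. Take $r=4$ and $E(G_4)=\{(1,2),(3,4)\}$, so $s=1$. Then $E(G_n)$ contains every pair except $(1,n-1)$, $(1,n)$, $(2,n-1)$, $(2,n)$. Since $(1,2),(n-1,n)\in E(G_n)$, the cycle $(1,n-1,2,n,1)$ is an \emph{induced} $4$-cycle of $G_n^c$. It avoids $I_t\subseteq[8,n-8]$, so it lies in $H_n^c$, and it uses the vertices $1,2\in L$ and $n-1,n\in R$.

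Consequently, $H_n^c$ can have edges between your sets $\mathcal L$ and $\mathcal R$, and an induced cycle need not lie on one side. Your reduction ``by symmetry, assume $C\subseteq\mathcal L$'' therefore fails. Step 3 then loses its hypothesis that both neighbours of each maximal $M$-run lie in $L$. The one-sided confinement $C\subseteq[2r-2]$ or $C\subseteq[n-2r+2,n]$ is simply not true in general.

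The fix is to drop the global left/right separation and run your Step 3 argument on each maximal $M$-run separately; this is exactly the paper's proof. The two neighbours of a run on $C$ lie in $L\cup R$.
\begin{itemize}
\item If both lie in $L$, your argument puts the run inside $M_L$.
\item If both lie in $R$, the same argument puts the run inside $M_R$.
\item If one lies in $L$ and the other in $R$, the monotone run goes from $M_L$ to $M_R$. It must then pass over $I_t$ using steps of length less than $s$, which is impossible for a path avoiding $I_t$. Your second bullet is precisely the right tool here.
\end{itemize}
This gives $V(C)\subseteq[2r-2]\cup[n-2r+2,n]$, hence $k\le(2r-2)+(2r-1)=4r-3$. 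This is why the claim is stated with $4r-3$ rather than $2r-1$: cycles may genuinely use both boundary regions.
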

		
		\begin{proof}[Proof of \Cref{cl:induced-cycle}]
			Let $P$ be a connected component of the subgraph of $C_k$ induced by the vertex set $V(C_k)\cap M$. Then $P$ is an induced path in $G_n^c[M]$ and hence is monotone by \Cref{cl:monotone}. In particular, $P\ne C_k$. Let $x,y$ with $x<y$ be the two neighbors of $P$ on $C_k$. Then both $x$ and $y$ lie in $L\cup R$. We distinguish three cases.
			
			\smallskip
			\emph{Case 1}: $x,y\in L$. Then both endpoints of $P$ lie in $M_L$ by \Cref{cl:boundary-neighbor}. Since $P$ is monotone, it follows that $P\subseteq M_L$.
			
			\smallskip
			\emph{Case 2}: $x,y\in R$. Similarly to Case 1, we obtain $P\subseteq M_R$.
			
			\smallskip
			\emph{Case 3}: $x\in L$ and $y\in R$. Reversing $P$ if necessary, we may assume that $P=(v_1,\dots,v_\ell)$ with $v_1<\cdots<v_\ell$. By \Cref{cl:boundary-neighbor}, 
			\[
			v_1\le 2r-2\quad\text{and}\quad v_\ell\ge n-2r+2.	
			\]
			This implies $I_t\subseteq [2r,n-2r]\subseteq [v_1,v_\ell].$ As shown in the proof of \Cref{cl:monotone}, the distance between any two consecutive vertices of $P$ is less than $s$. Since $I_t$ has length $s$, there must be a vertex of $P$ lying in $I_t$. This contradicts the assumption that $C_k$ is a cycle in $H_n^c=G_n^c\setminus I_t$. Therefore, this case cannot occur.
			
			Applying the preceding argument to every connected component of $C_k$ induced by $V(C_k)\cap M$, we conclude that
			\[
			V(C_k)\subseteq L\cup M_L\cup R\cup M_R=[2r-2]\cup[n-2r+2,\,n].
			\]
			Consequently, $k\le (2r-2)+(2r-1)=4r-3$, as claimed.
		\end{proof}
		
		Let us now resume the proof of the proposition. Since $n\ge r(4r-3)$, it follows from \Cref{lem:no-induced-cycle} that  $G_n^c$, and hence $H_n^c$, contains no induced $C_k$ with $5\le k\le 4r-3$. Combined with \Cref{cl:induced-cycle}, we deduce that $H_n^c$ has no induced $C_k$ for every $k\ge5$. This completes the proof.	
	\end{proof}
	
	We now present the proof of \Cref{prop:gamma-omega}.
	
	\begin{proof}[Proof of \Cref{prop:gamma-omega}]
		Let $n\ge r(4r-3)$ and choose an interval $I_t\subseteq [2r,n-2r]$. By \Cref{lem:facet-del}, the graph $G_n\setminus I_{t}$ is weakly chordal and therefore perfect. Consequently,
		\[
		\gamma(G_n\setminus I_t)=\omega(G_{n}\setminus I_t).
		\]
		Since $I_t$ is independent in $G_n$, we obtain
		\[
		\gamma(G_n)\le \gamma(G_n\setminus I_t)+1=\omega(G_{n}\setminus I_t)+1\le\omega(G_{n})+1.	
		\]	
		The proof is complete. 
	\end{proof}
	
	\setcounter{thm}{5} 
	
	Given a graph $G$, let $P_G$ denote its \emph{chromatic polynomial}. Thus, for each $k\in\N$, the value $P_G(k)$ is the number of proper colorings of $G$ using $k$ colors. In particular,
	\[
	\gamma(G)=\min\{k\in\N\mid P_G(k)>0\}.
	\]
	We conclude this section with the following problem.
	
	\begin{prob}
		Let $\Gc=(G_n)_{n\ge1}$ be an $\Inc$-invariant chain of graphs. Study the asymptotic behavior of the chromatic polynomials $P_{G_n}$.
	\end{prob}

	%-----------------------------------------------
	\section{Matching numbers} \label{subsec.mat} 
	
	Recall that the matching number of a graph on $n$ vertices is at most $\lfloor n/2\rfloor$. The main result of this section shows that every $\Inc$-invariant chain of graphs eventually attains this upper bound.
	
	\begin{thm}
		\label{thm:matching-number}
		Let $\Gc=(G_n)_{n\ge1}$ be an $\Inc$-invariant chain of graphs with $\ind(\Gc)=r$. Then
		\[
		\mu(G_n)=\left\lfloor\frac{n}{2}\right\rfloor
		\quad\text{for all }n\ge8r.
		\]
	\end{thm}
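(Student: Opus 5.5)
Under \Cref{assm:main}, the plan is to build an explicit matching that covers all vertices of $[n]$, or all but one when $n$ is odd. The upper bound $\mu(G_n)\le\lfloor n/2\rfloor$ is \eqref{eq:matching-upper-bound}, so only the lower bound needs work. The idea is to pair vertices lying far apart: short-distance pairs may be nonedges, since \Cref{lem:independent-interval}(ii) forbids edges of length less than $s$. Long pairs are edges by \Cref{lem:gap}, once they sit inside the span of a suitable edge.

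The first step is to find one long edge $(k,l)$ with $k$ small and $l$ close to $n$. By \Cref{assm:main}, $(1,i)\in E(G_r)$ and $(j,r)\in E(G_r)$ for some $i,j\in[r]$. Applying \Cref{lem:E(Gn)} to $(1,i)$ with $k-1=0$ and $l-i=n-r$ gives $(1,n-r+i)\in E(G_n)$, so $l\ge n-r+1$. \Cref{lem:gap} then shows that every pair $(u,v)$ with $1\le u<v\le n-r+1$ and $v-u\ge r$ is an edge. Symmetrically, from $(j,r)$ we get $(j,n)\in E(G_n)$ with $j\le r$. Hence every pair $(u,v)$ with $r\le u<v\le n$ and $v-u\ge r$ is an edge as well.

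Next comes the matching itself. For $n=2p$, take the pairs $(u,u+p)$ for $u\in[p]$. Each has length $p\ge 4r\ge r$. If $u\ge r$, the pair lies in $[r,n]$ and is an edge. If $u<r$, then $u+p<r+p\le n-r+1$, since $p\ge 4r$, so the pair lies in $[1,n-r+1]$ and is again an edge. These $p$ pairwise disjoint edges form a perfect matching. For $n=2p+1$, drop vertex $n$: the pairs $(u,u+p)$ with $u\in[p]$ lie in $[1,2p]\subseteq[n]$, and the same two cases apply with $u+p\le 2p\le n$. This gives $p=\lfloor n/2\rfloor$ edges.

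The main point needing care is the bookkeeping in \Cref{lem:gap}. One must check that the containing edges $(1,n-r+i)$ and $(j,n)$ really exist in $G_n$ for $n\ge r$. One must also check that the two cases cover every $u$; here the slack $n\ge 8r$ is more than enough, since $p\ge 4r$ ensures $r+p\le n-r+1$. If the reduction to \Cref{assm:main} shifts indices, I would redo it for the general chain by deleting isolated boundary vertices. That shifting only changes $r$ by a bounded amount, and the threshold $8r$ leaves room for it.
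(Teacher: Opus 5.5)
Your proof is correct, and it takes a different and shorter route than the paper.

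\textbf{The paper's route.} The paper proves an augmentation claim: for $n\ge 4r$, if $2\mu(G_n)<n$ then $\mu(G_{n+1})>\mu(G_n)$.
\begin{enumerate}
\item \Cref{lem.GWeaklyChordal}(i) identifies $N_{G_{n+1}}(n+1)$ as the long interval $[b(\Gc),\,n-r+B(\Gc)+1]$. If some vertex of this interval is unmatched, it is simply joined to $n+1$.
\item Otherwise, the interval's complement has fewer than $r$ vertices. A counting argument then finds a matched edge $\{v,w\}$ inside the neighborhood, with $v$ at least $r$ away from both of its ends.
\item An uncovered vertex $u$ must lie outside the neighborhood. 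It is adjacent to $v$ by \Cref{lem:gap}, applied to the same two long edges $(1,n-r+i)$ and $(b(\Gc),n)$ that you use.
\item One swaps $\{v,w\}$ for $\{u,v\}$ and $\{w,n+1\}$.
\end{enumerate}
An induction with the auxiliary bound $\mu(G_n)\ge\min\{n-4r,\lfloor n/2\rfloor\}$ finishes once $n-4r\ge\lfloor n/2\rfloor$. That is where the threshold $8r$ comes from.

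\textbf{Your route.} You bypass the neighborhood computation, the augmentation and the induction. You write down the ``shift-by-half'' matching $\{u,u+\lfloor n/2\rfloor\}$, $u\in[\lfloor n/2\rfloor]$, and check every pair directly with \Cref{lem:gap}. This is constructive. The only constraints you actually use are $\lfloor n/2\rfloor\ge r$ and $\lfloor n/2\rfloor\ge 2r-2$. So your argument already gives $\mu(G_n)=\lfloor n/2\rfloor$ for all $n\ge 4r$, which improves the stated bound.

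\textbf{Comparison.} The paper's incremental argument only uses how a maximum matching extends to the new vertex $n+1$. That is closer in spirit to the step-by-step stabilization arguments elsewhere in the paper, but here it costs roughly a factor of two in the threshold.

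\textbf{One correction.} Drop your final paragraph rather than keeping it as a fallback. The theorem is stated under the standing Assumption~\ref{assm:main}, and it genuinely needs that assumption. In the paper's example with $V(G_7)=[7]\setminus\{3\}$ and $E(G_7)=\{(2,5),(4,6)\}$, the vertices $1$ and $n$ are isolated in $G_n$ for all $n\ge 8$. Hence $\mu(G_n)\le\lfloor n/2\rfloor-1$ there. Deleting isolated boundary vertices changes the number of vertices, and hence the target value $\lfloor n/2\rfloor$, not merely the index $r$.
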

	
	\begin{proof}   
		We first establish the following augmentation property.
		
		\begin{claim}
			\label{cl:increasing-matching-number}
			If $n\ge4r$ and $2\mu(G_n)<n$, then $\mu(G_{n+1})>\mu(G_n)$.
		\end{claim}  
		
		\begin{proof}[Proof of \Cref{cl:increasing-matching-number}]
			Write $b=b(\Gc)$ and $B=B(\Gc)$, and set
			\[
			B'=n-r+B+1.
			\]
			By \Cref{lem.GWeaklyChordal}(i), the neighborhood of $n+1$ in $G_{n+1}$ is
			\[
			N\defas N_{G_{n+1}}(n+1)=[b,B'].
			\]
			Let $M$ be a maximum matching of $G_n$, and let $U$ be the set of vertices covered by $M$. If some vertex $x\in N$ is not covered by $M$, then
			\[
			M\cup\{(x,n+1)\}
			\]
			is a matching of $G_{n+1}$, and the claim follows immediately. We may therefore assume that $N\subseteq U$.
			
			Set $V=[n]\setminus N$. Since
			\[
			|N|=B'-b+1=n-r+B-b+2>n-r\ge3r,
			\]
			we have
			\[
			|V|=n-|N|=r-B+b-2<r.
			\]
			Partition $N$ into the sets
			\begin{align*}
				V_1
				&=\bigl\{v\in N\mid \{v,w\}\in M
				\text{ for some }w\in V\bigr\},\\
				V_2
				&=\bigl\{v\in N\mid \{v,w\}\in M
				\text{ for some }w\in N\bigr\}.
			\end{align*}
			Because every vertex of $N$ is covered by $M$, we have $N=V_1\sqcup V_2$. Moreover, since $M$ is a matching,
			\[
			|V_1|\le|V|<r
			\quad\text{and}\quad
			|V_2|=|N|-|V_1|>2r.
			\]
			Consider the interval
			\[
			V_3=[b+r,B'-r]\subseteq N.
			\]
			Since $|N\setminus V_3|=2r$ and $|V_2|>2r$, there exists a vertex $v\in V_2\cap V_3$. By definition of $V_2$, we have $\{v,w\}\in M$ for some $w\in N$. 
			The hypothesis of the claim gives $|U|=2\mu(G_n)<n$. Hence, there exists an uncovered vertex $u\in [n]\setminus U$. Since $N\subseteq U$, we have $u\in [n]\setminus N =V$. 
			Thus, either $u\le b-1$ or $u\ge B'+1$. We consider these two cases.
			
			\smallskip
			\emph{Case 1}: $u\le b-1$. Since $v \in V_3$, we have $v \ge b+r > u+r$. By \Cref{assm:main}, there exists $i\in[r]$ with $(1,i)\in E(G_r)$. Thus, $(1,n-r+i)\in E(G_n)$ by \Cref{lem:E(Gn)}. Since
			\[
			1\le u<v\le B'-r=n-2r+B+1\le n-r+i,
			\]
			it follows from \Cref{lem:gap} that $(u,v)\in E(G_n)$. 
			
			\smallskip
			\emph{Case 2}: $u\ge B'+1$. As $v \in V_3$, we have $v\le B'-r$, hence $u>v+r$. Since $(b,n)\in E(G_n)$ and
			\[
			b<v<u\le n,
			\]
			it follows again from \Cref{lem:gap} that $(v,u)\in E(G_n)$.
			
			In either case, $\{u,v\}\in E(G_n)$. Since $w\in N$, we also have $\{w,n+1\}\in E(G_{n+1})$. Consequently,
			\[
			M'
			=\bigl(M\setminus\{\{v,w\}\}\bigr)
			\cup\bigl\{\{u,v\},\{w,n+1\}\bigr\}
			\]
			is a matching of $G_{n+1}$ with $|M'|=|M|+1$. Therefore,
			$\mu(G_{n+1})>\mu(G_n),$
			proving the claim.
		\end{proof}
		
		We now complete the proof of the theorem. For $n\ge4r$, set
		\[
		a_n
		=\min\left\{n-4r,
		\left\lfloor\frac{n}{2}\right\rfloor\right\}.
		\]
		We prove by induction on $n$ that
		\[
		\mu(G_n)\ge a_n
		\quad\text{for all }n\ge4r.
		\]
		The assertion is clear for $n=4r$. Suppose that it holds for some $n\ge4r$. If $2\mu(G_n)<n$, then the claim gives
		\[
		\mu(G_{n+1})\ge\mu(G_n)+1\ge a_n+1\ge a_{n+1},
		\]
		where the last inequality follows directly from the definition of $a_n$. Otherwise, since $2\mu(G_n)\le n$, we have $2\mu(G_n)=n$. Thus, $n$ is even and
		\[
		\mu(G_n)=\frac{n}{2}
		=\left\lfloor\frac{n+1}{2}\right\rfloor.
		\]
		Since $G_n$ is a subgraph of $G_{n+1}$, we obtain
		\[
		\mu(G_{n+1})\ge\mu(G_n)
		=\left\lfloor\frac{n+1}{2}\right\rfloor
		\ge a_{n+1}.
		\]
		This proves the induction claim.
		
		Finally, if $n\ge8r$, then $n-4r\ge\lfloor n/2\rfloor$.
		It follows that
		\[
		\mu(G_n)\ge a_n=\left\lfloor\frac{n}{2}\right\rfloor.
		\]
		Since the reverse inequality holds for every graph on $n$ vertices, we conclude that
		\[
		\mu(G_n)=\left\lfloor\frac{n}{2}\right\rfloor
		\quad\text{for all }n\ge8r.
		\qedhere
		\]
	\end{proof}
	
	\begin{rem}
		An \emph{induced matching} of a graph $G$ is a matching $M$ such that the subgraph induced by the vertices covered by $M$ has edge set precisely $M$. The \emph{induced matching number} of $G$ is the maximum cardinality of an induced matching. While \Cref{thm:matching-number} shows that the matching number along an $\Inc$-invariant chain  $\Gc=(G_n)_{n\ge1}$ can grow arbitrarily large, \cite[Theorem~3.1]{HNT2024} establishes a striking contrast: the induced matching number of $G_n$ is constant and belongs to $\{1,2\}$ for all $n\ge3r$.
	\end{rem}
	
	The \emph{matching complex} of a graph $G$, denoted by $M(G)$, is the simplicial complex with vertex set $E(G)$ whose faces are the matchings of $G$. Its dimension is
	\[
	\dim M(G)=\mu(G)-1.
	\]
	Thus, \Cref{thm:matching-number} determines the eventual dimension of $M(G_n)$:
	\[
	\dim M(G_n)=\left\lfloor\frac{n}{2}\right\rfloor-1
	\quad\text{for all }n\ge8r.
	\]
	In light of the results in \Cref{subsec.indep,subsec.cliq}, it is natural to investigate finer asymptotic properties of these complexes.
	
	\begin{prob}
		Let $\Gc=(G_n)_{n\ge1}$ be an $\Inc$-invariant chain of graphs. Determine the asymptotic behavior of the matching complexes $M(G_n)$.
	\end{prob}

	%-----------------------------------------------
	\section{Minimal and admissible paths}\label{subsec.path}
	
	In this section, we investigate the asymptotic behavior of the maximal lengths of admissible and minimal paths in an $\Inc$-invariant chain of graphs. These paths play a central role in the Gr\"obner basis theory of binomial edge ideals \cite{CDG,HHHKR,O11}.
	
	\begin{defn}
		Let $G$ be a simple graph on $[n]$, and let $u<v$ be vertices of $G$. A path
		\[
		P=(u_0,u_1,\dots,u_q)
		\]
		from $u_0=u$ to $u_q=v$ is called \emph{admissible} if the following conditions hold:
		\begin{enumerate}
			\item $u_k\ne u_l$ whenever $k\ne l$;
			\item for every proper subset
			\[
			\{v_1,\dots,v_t\}\subsetneq\{u_1,\dots,u_{q-1}\},
			\]
			the sequence $u,v_1,\dots,v_t,v$ is not a path in $G$;
			\item every internal vertex $u_k$, $1\le k\le q-1$, satisfies either $u_k<u$ or $u_k>v$.
		\end{enumerate}
		A path satisfying only conditions \textup{(i)} and \textup{(ii)} is called \emph{minimal}.
	\end{defn}
	
	Minimal paths are called \emph{weakly admissible} in \cite{BBS}. We illustrate these notions with the following example.
	
	\begin{ex}
		Let $G$ be the graph depicted in \Cref{fig:cycle}. Then:
		\begin{enumerate}
			\item The path $P_1=(1,4,3,5)$ is not minimal because of the chord $(4,5)$.
			\item The path $P_2=(1,2,5,3)$ is minimal but not admissible, since the internal vertex $2$ lies between the endpoints $1$ and $3$.
			\item The path $P_3=(2,1,4,3)$ is admissible.
		\end{enumerate}
	\end{ex}
	
	\begin{figure}[H]
		\centering
		\begin{tikzpicture}[
			vertex/.style={circle,draw,fill=white,minimum size=8mm,inner sep=0pt},
			every edge/.style={draw,thick}
			]
			\node[vertex] (1) at (2,3) {$1$};
			\node[vertex] (2) at (4,2) {$2$};
			\node[vertex] (5) at (3,0) {$5$};
			\node[vertex] (4) at (1,0) {$3$};
			\node[vertex] (3) at (0,2) {$4$};
			
			\draw (1)--(2);
			\draw (2)--(5);
			\draw (5)--(4);
			\draw (4)--(3);
			\draw (3)--(1);
			\draw (3)--(5);
		\end{tikzpicture}
		\caption{A graph on five vertices}
		\label{fig:cycle}
	\end{figure}
	
	The main result of this section provides optimal uniform bounds on the lengths of admissible and minimal paths in an $\Inc$-invariant chain. It also shows that their maximal lengths eventually stabilize.
	
	\begin{thm}
		\label{prop:ad-path-length}
		Let $\Gc=(G_n)_{n\ge1}$ be an $\Inc$-invariant chain of graphs with $\ind(\Gc)=r$. Let $\ell(G_n)$ and $\ell'(G_n)$ denote the maximal lengths of an admissible path and a minimal path in $G_n$, respectively. Then the following statements hold:
		\begin{enumerate}
			\item For every $n\ge5r$,
			\[
			\ell(G_n)\le3
			\quad\text{and}\quad
			\ell'(G_n)\le5.
			\]
			\item For every $n\ge5r+9$,
			\[
			\ell(G_n)=\ell(G_{n+1})
			\quad\text{and}\quad
			\ell'(G_n)=\ell'(G_{n+1}).
			\]
		\end{enumerate}
	\end{thm}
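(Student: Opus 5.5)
The plan is to analyze a minimal path $P=(u_0,\dots,u_q)$ of $G_n$ through the position of its vertices in $L=[r-1]$, $M=[r,n-r]$ and $R=[n-r+1,n]$. Minimality means that $P$ is an induced path of $G_n$ (no chord $u_iu_j$ with $j\ge i+2$). Otherwise we could shortcut and obtain a path using a proper subset of the internal vertices. So part (i) reduces to bounding induced paths with the structural lemmas already available.

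\emph{Step 1 (internal vertices in $M$).} By \Cref{lem:independent-interval}, two vertices of $M$ at distance $\ge s$ are adjacent, and vertices at distance $<s$ are never adjacent. Suppose $u_i,u_j\in M$ with $j\ge i+2$. Then they are nonadjacent, so $|u_i-u_j|<s$. But each consecutive pair on $P$ is an edge, hence at distance $\ge s$. I will combine this with the west and north moves of \Cref{lem.shortmoves} and with \Cref{lem:gap}. The aim is to show that an induced path contains at most two or three vertices of $M$, and that these are consecutive.

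\emph{Step 2 (boundary vertices).} Next, I will control the vertices in $L\cup R$. By \Cref{cl:boundary-neighbor}-type arguments (via \Cref{assm:main} and \Cref{lem:E(Gn)}), a vertex of $L$ is adjacent to all of $[2r-1,n-r]$ and a vertex of $R$ to all of $[r,n-2r+1]$. Since $n\ge5r$, the middle region $[2r-1,n-2r+1]$ is large, and any vertex of $P$ there is adjacent to every boundary vertex. An induced path can therefore contain only boundedly many boundary vertices together with such a middle vertex. Moreover, \Cref{lem:no-induced-cycle}(i) (no induced $C_k$, $k\ge6$) forbids long induced paths that close up through the many common neighbours in the middle. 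Concretely, given an induced path of length $\ge6$, I will pick a middle vertex $w$ adjacent to both ends but not to internal vertices, producing an induced cycle of length $\ge7$, a contradiction. The case where $w$ sees internal vertices is handled by splitting the path at neighbours of $w$. I expect this case analysis to be the main obstacle: producing the bound exactly $5$ rather than some larger constant. For admissible paths, condition (iii) additionally forces the internal vertices to lie outside $[u,v]$. When $u$ or $v$ is in the middle, the internal vertices below $u$ or above $v$ are then forced adjacent to more vertices by the west/north moves, and this cuts the bound to $3$.

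\emph{Step 3 (stabilization).} Part (ii) will use the shift isomorphism of \Cref{lem:del-graph-iso}. By part (i), any extremal path has at most $6$ vertices. Hence, for $n\ge5r+9$, there is an interval $[\alpha,\beta]$ of length $\ge r$ inside the middle region that avoids all vertices of the path. I plan to choose it so that the path lives in $G_n\setminus[\alpha,\beta]$, and to transport the path by $\sigma_k$ to $G_{n+1}\setminus[\alpha,\beta+1]$. The subtle point is that minimality and admissibility are conditions inside the whole graph, not just the deleted graph. Minimality only involves the induced subgraph on the path vertices, so it is preserved. Admissibility involves only the ordering of the vertices, and $\sigma_k$ preserves it. This gives $\ell(G_{n+1})\ge\ell(G_n)$. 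For the converse, I will take an extremal path in $G_{n+1}$, find a gap interval of length $\ge r+1$ missing it (again using $n\ge5r+9$), and apply the inverse isomorphism. The pigeonhole count is: at most $6$ vertices of the path, and at most $4$ of them in the middle region of size about $n-4r$. This count is what fixes the threshold $5r+9$.
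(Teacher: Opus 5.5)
\textbf{The gap is in part (i).} Your Step 2 mechanism cannot bound how many path vertices lie in $L\cup R$.

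\emph{Why the middle-vertex argument fails.} Any $w\in[2r-1,n-2r+1]$ is adjacent to \emph{every} vertex of $L\cup R$. So once $P$ has three or more vertices there, $w$ sees internal vertices of $P$. Splitting $P$ at the neighbours of $w$ then only tells you, via \Cref{lem:no-induced-cycle}(i), that consecutive $w$-neighbours on $P$ are at most $3$ steps apart. It says nothing about how many vertices of $P$ sit in $L\cup R$.

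\emph{This is exactly where long minimal paths live.} In the sharpness example with $E(G_8)=\{(1,3),(1,4),(4,5),(5,8),(6,8)\}$, the minimal path $(3,1,4,n-3,n,n-2)$ has all six vertices in $L\cup R$. Every middle vertex is adjacent to all of them, so your argument cannot see the configurations in which length $6$ must be excluded.

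\emph{What is missing} is the paper's direct analysis with north/west moves and \Cref{lem:gap}:
\begin{enumerate}
\item \Cref{lem:min-max}: an edge of $P$ lying entirely below the larger endpoint $v\le n-r$ must be $\{u_{q-2},u_{q-1}\}$. This gives $q\le3$ whenever $v\le n-r$ (\Cref{lem:r<j<n-r}).
\item For $v>n-r$ and $q\ge4$, one has $u_0,\dots,u_{q-3}\in[r-1]$ (\Cref{cl:u<r}). Applying \Cref{lem:r<j<n-r} to $(u_0,\dots,u_{q-3})$ and one more use of \Cref{lem:gap} then excludes $q\ge6$.
\item For admissible paths the hard case is $v>n-r$, not an endpoint in the middle. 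There one needs \Cref{lem:single-edge} to get $u>v-r>n-2r$ before the west moves apply.
\end{enumerate}

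\emph{Your Step 1 target is also false.} For $s\ge2$ and $[x-1,x+s]\subseteq M$, the path $(x,x+s,x-1,x+s-1)$ is induced in $G_n$ and has all four vertices in $M$.

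\textbf{Step 3 follows the paper's route, but the count does not give $5r+9$.} Transport by $\sigma_k$ through \Cref{lem:del-graph-iso} is right. So are your observations that minimality depends only on the induced subgraph on $V(P)$ and admissibility only on the vertex order.
\begin{itemize}
\item Four path vertices in a region of size about $n-4r$, with a free window of $r+2$ consecutive integers, requires roughly $n\ge9r+10$.
\item Using only ``at most six vertices'' on all of $[n+1]$ would need $n+1\ge7(r+2)$.
\item The claim ``at most $4$ in the middle region'' is not justified anywhere in your proposal.
\end{itemize}
The deleted interval need not lie in the middle at all. The paper splits into two cases:
\begin{itemize}
\item $q\le3$: at most $4$ vertices in $[n+1]$, so $n+1\ge5(r+2)$ suffices.
\item $q\ge4$: by \Cref{cl:u<r}, at most $3$ vertices in $[r,n+1]$, so $n-r+2\ge4(r+2)$ suffices.
\end{itemize}
Both cases again rest on the structural lemmas missing from your part (i).
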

	
	The next example shows that the bounds in \Cref{prop:ad-path-length}(i) are sharp.
	
	\begin{ex}
		Let $\Gc=(G_n)_{n\ge1}$ be an $\Inc$-invariant chain of graphs.
		\begin{enumerate}
			\item Suppose that $\ind(\Gc)=4$ and
			\[
			E(G_4)=\{(1,3),(1,4),(2,4)\}.
			\]
			Using \Cref{lem:E(Gn)}, one checks that
			\[
			P=(2,4,1,3)
			\]
			is an admissible path in $G_n$ for every $n\ge4$.
			
			\item Suppose that $\ind(\Gc)=8$ and
			\[
			E(G_8)=\{(1,3),(1,4),(4,5),(5,8),(6,8)\}.
			\]
			Again using \Cref{lem:E(Gn)}, one checks that
			\[
			P=(3,1,4,n-3,n,n-2)
			\]
			is a minimal path in $G_n$ for every $n\ge8$.
		\end{enumerate}
	\end{ex}
	
	The proof of \Cref{prop:ad-path-length} requires several auxiliary results. We begin with a simple property of admissible paths.
	
	\begin{lem}
		\label{lem:single-edge}
		Let $\Gc=(G_n)_{n\ge1}$ be an $\Inc$-invariant chain of graphs with $\ind(\Gc)=r$. Suppose that $n\ge r$, and let $u,v\in[n]$ satisfy $v\ge u+r$. Then every admissible path from $u$ to $v$, if one exists, consists of the single edge $(u,v)$.
	\end{lem}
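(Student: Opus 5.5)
The plan is to show that an admissible path $P=(u,u_1,\dots,u_{q-1},v)$ from $u$ to $v$ with $q\ge2$ forces $(u,v)\in E(G_n)$. That contradicts condition~(ii) applied to the empty proper subset of internal vertices, since the sequence $(u,v)$ would then be a path. Hence $q=1$, i.e., $P$ is the single edge $(u,v)$.

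\emph{Step 1: split by the first internal vertex.} Let $w=u_1$. By condition~(iii) of admissibility, either $w<u$ or $w>v$.

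\emph{Step 2: case $w<u$.} The first edge of $P$ is $\{w,u\}$, written as the ordered pair $(w,u)$ since $w<u$. It alone does not straddle $[u,v]$. So we instead look along $P$ for an edge $\{x,y\}$ with $x\le u<v\le y$.
\begin{itemize}
\item Since $P$ ends at $v$ and every internal vertex lies outside $[u,v]$, the last edge is $\{u_{q-1},v\}$ with $u_{q-1}<u$ or $u_{q-1}>v$.
\item If $u_{q-1}>v$, then $(v,u_{q-1})\in E(G_n)$. Together with the edge $(w,u)$ this does not immediately straddle, so we instead track the position of the vertices along $P$.
\item $P$ starts at $u$ and ends at $v$, and all internal vertices lie in $[1,u-1]\cup[v+1,n]$. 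If every internal vertex is less than $u$, the last edge is $(u_{q-1},v)$ with $u_{q-1}<u$. This edge straddles: $u_{q-1}\le u<v\le v$.
\item Similarly, if every internal vertex is greater than $v$, the first edge is $(u,u_1)$ with $u_1>v$, which straddles.
\item Otherwise there are consecutive internal vertices $u_k<u$ and $u_{k+1}>v$ (or the reverse). Then $\{u_k,u_{k+1}\}$ is an edge with $u_k<u<v<u_{k+1}$, which also straddles.
\end{itemize}
In every subcase we obtain an edge $(k,l)\in E(G_n)$ with $k\le u<v\le l$.

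\emph{Step 3: conclude via \Cref{lem:gap}.} Since $v\ge u+r$, \Cref{lem:gap} applied to such a straddling edge gives $(u,v)\in E(G_n)$. This contradicts condition~(ii) with the empty subset, so $q=1$.

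The main thing to get right is the case analysis in Step 2, which is the only real content. Note that Step 2 never actually needs the split by $w=u_1$ from Step 1; its subcases depend only on where the internal vertices lie. We must also check that the empty set is a proper subset of the internal vertices when $q\ge2$, so that condition~(ii) genuinely applies. This holds because $q\ge2$ means there is at least one internal vertex.
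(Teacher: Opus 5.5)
Your proposal is correct and is essentially the paper's argument. Both find an edge of $P$ straddling $[u,v]$ and then apply \Cref{lem:gap} together with condition~(ii); the paper gets the edge in one stroke by taking $t$ maximal with $u_t\le u$, so that $u_t\le u<v\le u_{t+1}$, where your three-way case split on the internal vertices reaches the same kind of edge, and your Step~1 and the exploratory bullet about $u_{q-1}>v$ can simply be deleted.
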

	
	\begin{proof}
		Let $P=(u_0,u_1,\dots,u_q)$ be an admissible path from $u=u_0$ to $v=u_q$. Set
		\[
		t=\max\{0\le k\le q-1\mid u_k\le u\}.
		\]
		Since no internal vertex of $P$ lies in $[u,v]$, we have
		$u_t\le u<v\le u_{t+1}.$
		As $(u_t,u_{t+1})\in E(G_n)$, \Cref{lem:gap} yields $(u,v)\in E(G_n)$. Condition \textup{(ii)} in the definition of admissibility now forces $P=(u,v)$.
	\end{proof}
	
	For the following auxiliary results, we retain the next assumptions.
	
	\begin{assm}
		\label{assm:path}
		Let $\Gc=(G_n)_{n\ge1}$ be an $\Inc$-invariant chain of graphs with $\ind(\Gc)=r$. Fix $n\ge5r$, and let
		$P=(u_0, u_1, \ldots, u_q)$
		be a minimal path in $G_n$ from $u=u_0$ to $v=u_q$, where $u<v$.
	\end{assm}
	
	To bound the length of $P$, we distinguish two cases according to whether $v\le n-r$ or $v>n-r$. We first record a useful consequence of minimality.
	
	\begin{lem}
		\label{lem:min-max}
		Under \Cref{assm:path}, suppose that
		\[
		\max\{u_{t-1},u_t\}<v\le n-r
		\]
		for some $t\in[q]$. Then $t=q-1$ and $u_{q-1}<u_{q-2}$.
	\end{lem}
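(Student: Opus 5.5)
Write $a=\min\{u_{t-1},u_t\}$ and $b=\max\{u_{t-1},u_t\}$. The plan is to use a north move to produce the edge $(a,v)$, and then let minimality of $P$ pin down both the position $t$ and which endpoint of the edge $\{u_{t-1},u_t\}$ is the smaller one.

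\emph{Preliminary observations.} Since $u_q=v$ and $u_t<v$, we have $t\ne q$, so $t\le q-1$; in particular $q\ge2$. The vertices of $P$ are distinct by condition (i), so $a<b$. The pair $\{u_{t-1},u_t\}$ is an edge of $G_n$, i.e.\ $(a,b)\in E(G_n)$.

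\emph{Step 1: the edge $(a,v)$.} We have $a<b<v\le n-r$. A north move (\Cref{lem.shortmoves}(iii)) applied to $(a,b)$ with $l''=v$ gives $(a,v)\in E(G_n)$. This is the only input from the chain structure.

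\emph{Step 2: $a=u_t$.} Suppose instead that $a=u_{t-1}$. Then $(u_0,u_1,\dots,u_{t-1},v)$ is a path in $G_n$. Its internal vertices form the set $\{u_1,\dots,u_{t-1}\}$, which is empty when $t=1$. This set is a proper subset of $\{u_1,\dots,u_{q-1}\}$, because it omits $u_{q-1}$: indeed $t-1\le q-2<q-1$. This contradicts condition (ii) of minimality. Hence $a=u_t$, and so $u_t<u_{t-1}$.

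\emph{Step 3: $t=q-1$.} By Step 2, $(u_0,\dots,u_t,v)$ is a path in $G_n$ with internal vertex set $\{u_1,\dots,u_t\}$. Minimality forces this set not to be a proper subset of $\{u_1,\dots,u_{q-1}\}$, so $t=q-1$. Combining with Step 2 gives $u_{q-1}<u_{q-2}$, as claimed.

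The argument is short; the only points needing care are the degenerate cases. When $t=1$ the shortened path has no internal vertices, which is still a proper subset. When $q=2$ the conclusion reads $u_1<u_0$. One must also confirm that the hypothesis $v\le n-r$ is exactly what the north move requires. I do not expect a real obstacle beyond this bookkeeping.
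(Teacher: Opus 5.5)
Your proof is correct and follows essentially the same route as the paper. A north move applied to the edge $(\min\{u_{t-1},u_t\},\max\{u_{t-1},u_t\})$ yields an edge to $v$, and minimality then forces this smaller endpoint to be $u_t=u_{q-1}$. The only cosmetic difference is that you exclude $t=q$ at the outset from $u_t<v$, whereas the paper first identifies the smaller endpoint as $u_{q-1}$ and then rules out $u_{t-1}$ by observing $\max\{u_{q-1},v\}\ge v$.
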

	
	\begin{proof}
		Set
		\[
		x_t=\min\{u_{t-1},u_t\}
		\quad\text{and}\quad
		y_t=\max\{u_{t-1},u_t\}.
		\]
		Then $(x_t,y_t)\in E(G_n)$.
		By assumption, we have $x_t<y_t<v\le n-r$. A north move (\Cref{lem.shortmoves}(iii)) applied to $(x_t,y_t)$ gives $(x_t,v)\in E(G_n)$. By minimality, $x_t$ must be the immediate predecessor of $v$, so $x_t=u_{q-1}$. If $x_t=u_{t-1}$, then $t=q$, which would imply $y_t=\max\{u_{q-1},v\}\ge v$, a contradiction. Thus, $x_t=u_t$ and $t=q-1$. It follows that $u_{q-1}=\min\{u_{q-2},u_{q-1}\}<u_{q-2}.$
	\end{proof}

	\begin{lem}
		\label{lem:r<j<n-r}
		Under \Cref{assm:path}, if $v\le n-r$, then $q\le 3$.
	\end{lem}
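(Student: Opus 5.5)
The plan is to combine \Cref{lem:min-max} with chord-producing moves (north moves, \Cref{lem:gap}, and the inequalities of \Cref{lem:E(Gn)}). Together these will force a short minimal path. Assume, for contradiction, that $q\ge4$.

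\emph{Step 1: where the vertices sit relative to $v$.} By \Cref{lem:min-max}, every edge $\{u_{t-1},u_t\}$ with $t\le q-2$ has an endpoint larger than $v$. Endpoints cannot equal $v$, since the vertices of $P$ are distinct. Applying this to $t=1$ gives $u_1>v$, because $u_0=u<v$. More generally, no two consecutive vertices among $u_0,\dots,u_{q-2}$ are both smaller than $v$. So the path alternates between blocks above and below $v$, and the only possible edge with both endpoints below $v$ is $\{u_{q-2},u_{q-1}\}$, in which case $u_{q-1}<u_{q-2}$.

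\emph{Step 2: chords from the last edge.} Next I would show that the last edge $\{u_{q-1},v\}$ creates chords to every internal vertex $y$ with $v<y$. I split according to the position of $u_{q-1}$.

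First suppose $u_{q-1}<v$. Take $(i,j)\in E(G_r)$ realizing $(u_{q-1},v)$ via \eqref{eq:inequalities}, so that $0\le u_{q-1}-i\le v-j\le n-r$. For any $y$ with $v<y\le n-r+j$, the same $(i,j)$ gives $(u_{q-1},y)\in E(G_n)$. Minimality then forbids any internal vertex $u_k$ with $k\le q-3$ in this range. To handle internal vertices above $n-r+j$, I would use \Cref{lem:gap} applied to the edge $(u,u_1)$. The point is that $u<v$ and $u_1>v$, and $v\le n-r$ forces large gaps, so either $(u,v)$ becomes an edge or $u$ is adjacent to a later vertex. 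Both outcomes contradict minimality unless $q$ is small.

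Now suppose $u_{q-1}>v$. Then a west move (\Cref{lem.shortmoves}(ii)) or \Cref{lem:gap} applied to $(u,u_1)$, with $u<v<u_1$, should produce $(u,u_{q-1})$ or $(u,v)$ as a chord whenever the relevant gaps are at least $r$. The case of small gaps is handled by the same argument with the roles of $u_1$ and $u_{q-1}$ exchanged.

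\emph{Step 3: conclude.} Combining Steps 1 and 2, the only surviving patterns are $u<v<u_1$ followed by at most one vertex below $v$ before reaching $v$. That is, $P=(u,u_1,v)$ or $P=(u,u_1,u_2,v)$ with $u_2<v$, which gives $q\le3$.

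\emph{Main obstacle.} The hardest step is Step 2 when internal vertices lie close to $v$ (within distance $r$) or exceed $n-r$. In those cases neither the north move nor \Cref{lem:gap} applies directly. I would first try to exploit the explicit triangles from \Cref{lem:E(Gn)} for both the first edge $(u,u_1)$ and the last edge, comparing the representing pairs $(i,j)$, to produce the needed chord.
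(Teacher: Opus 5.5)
There is a genuine gap. Step 2, where the contradiction has to be produced, is only a list of hoped-for outcomes, and the mechanisms it names do not work.

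\textbf{What is right.} Step 1 is fine: $u<v<u_1$ and the block structure follow from \Cref{lem:min-max}. Your triangle observation for $u_{q-1}<v$ is also correct. Applied to $y=u_1$, it gives $u_1>n-r+j>n-r$ at once, which is quicker than the paper's detour through $u_{q-2}$.

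\textbf{What fails.} You say that $v\le n-r$ ``forces large gaps''. In fact \Cref{lem:gap}, applied to the first edge $(u,u_1)$ with the non-edges $\{u,v\}$ and $\{v,u_1\}$, forces the opposite: $v-u<r$ and $u_1-v<r$. This places $u,v,u_1$ in a window of width less than $2r$. That localization is the key input, and you never extract it. In the case $u_{q-1}>v$, a west move on $(u,u_1)$ only lowers the endpoint $u$ and keeps $u_1$ fixed, so it can never yield $(u,u_{q-1})$ or $(u,v)$. Nor does ``the same argument with $u_1$ and $u_{q-1}$ exchanged'' handle the small-gap configurations, which are exactly those where \Cref{lem:gap} says nothing. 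Your ``Main obstacle'' paragraph concedes that these configurations, and those with vertices above $n-r$, are unresolved; they are the whole content of the lemma. Step 3's list of surviving patterns is also inaccurate (it omits $q=1$, for instance), though only $q\le 3$ is needed.

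\textbf{What is missing} is a case analysis on the \emph{position} of that window.

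\emph{Case $u_{q-1}>v$.} A west move on the \emph{last} edge $(v,u_{q-1})$ gives the chord $(u,u_{q-1})$ unless $u<r$; your triangle trick on this edge gives the same conclusion. Hence $v<2r$ and $u_1<3r<n-r$. One of three moves then produces a chord:
\begin{enumerate}
\item if $u_{q-1}<u_1$, a north move on $(v,u_{q-1})$ gives $(v,u_1)$;
\item if $u_1<u_{q-1}\le n-r$, a north move on $(u,u_1)$ gives $(u,u_{q-1})$;
\item if $u_{q-1}>n-r\ge 4r>u_1+r$, \Cref{lem:gap} on $(v,u_{q-1})$ gives $(u_1,u_{q-1})$.
\end{enumerate}

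\emph{Case $u_{q-1}<v$.} From $u_1>n-r$ you get $v>u_1-r>3r$ and $u>v-r>2r$. A west move on $(u_{q-1},v)$ rules out $u<u_{q-1}$. Then \Cref{lem:gap} on $(u_{q-1},v)$ forces $u-r<u_{q-1}<u$, so $u_{q-1}>r$. A west move on $(u,u_1)$ now produces the chord $\{u_{q-1},u_1\}$. This chord is not among the outcomes your sketch anticipates (``$(u,v)$ is an edge or $u$ is adjacent to a later vertex'').

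With these steps supplied, your plan would go through, and in the second case it would be slightly shorter than the paper's argument.
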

	
	\begin{proof}
		Suppose that $q\ge4$. Since $P$ is minimal, no two nonconsecutive vertices of $P$ are adjacent. In particular,
		\begin{equation}
			\label{eq:non-adjacent}
			\{u,u_{q-2}\},\ \{u,u_{q-1}\},\ \{u,v\},\
			\{u_1,u_{q-1}\},\ \{u_1,v\}\notin E(G_n).
		\end{equation}
		Since $v\le n-r$ and $q-1>1$, \Cref{lem:min-max} implies that
		$\max\{u_0,u_1\}>v.$
		As $u_0=u<v$, it follows that $u<v<u_1$. Now $(u,u_1)\in E(G_n)$, whereas $(u,v),(v,u_1)\notin E(G_n)$. Hence, \Cref{lem:gap} gives
		\begin{equation}
			\label{eq:u-v-u1}
			v<u+r
			\quad\text{and}\quad
			u_1<v+r.
		\end{equation}
		We distinguish two cases.
		
		\smallskip
		\emph{Case 1: $u_{q-1}>v$.}
		If $u\ge r$, then $r\le u<v<u_{q-1}$, so a west move applied to $(v,u_{q-1})$ yields $(u,u_{q-1})\in E(G_n)$, contradicting \eqref{eq:non-adjacent}. Thus, $u<r$. Together with
		\eqref{eq:u-v-u1}, this gives
		\[
		v<u+r<2r
		\quad\text{and}\quad
		u_1<v+r<3r.
		\]
		If $u_{q-1}<u_1$, then
		\[
		v<u_{q-1}<u_1<3r<n-r.
		\]
		A north move applied to $(v,u_{q-1})$ gives $(v,u_1)\in E(G_n)$, contradicting \eqref{eq:non-adjacent}.
		Therefore, $u_1<u_{q-1}$.
		If $u_{q-1}\le n-r$, then a north move applied to $(u,u_1)$ gives
		$(u,u_{q-1})\in E(G_n)$, again contradicting
		\eqref{eq:non-adjacent}. Hence, $u_{q-1}>n-r$, and consequently	
		$u_{q-1}>4r>u_1+r.$
		Since $v<u_1<u_{q-1}$ and $(v,u_{q-1})\in E(G_n)$,
		\Cref{lem:gap} yields $(u_1,u_{q-1})\in E(G_n)$, another contradiction.
		
		\smallskip
		\emph{Case 2: $u_{q-1}<v$.}
		We first show that $u_{q-2}>u_{q-1}$. This is immediate if $u_{q-2}>v$. If $u_{q-2}<v$, then $\max\{u_{q-2},u_{q-1}\}<v\le n-r$, and \Cref{lem:min-max} gives $u_{q-2}>u_{q-1}$.
		Comparing $u_1$ and $u_{q-2}$, we therefore have either
		\[
		u<u_1<u_{q-2}
		\]
		or
		\[
		u_{q-1}<u_{q-2}<u_1.
		\]
		If $\max\{u_1,u_{q-2}\}\le n-r$, then a north move applied to $(u,u_1)$ in the first case, or to $(u_{q-1},u_{q-2})$ in the second, produces the chord $(u,u_{q-2})$ or $(u_{q-1},u_1)$, respectively. This contradicts \eqref{eq:non-adjacent}. Thus,
		\begin{equation}
			\label{eq:u1-uq-2}
			\max\{u_1,u_{q-2}\}>n-r\ge4r.
		\end{equation}
		By \eqref{eq:u-v-u1}, we have $v>u_1-r$. Moreover,
		$v>u_{q-2}-r.$
		Indeed, this is immediate if $u_{q-2}<v$. If $v<u_{q-2}$, it follows from
		\Cref{lem:gap}, since
		\[
		u_{q-1}<v<u_{q-2},\qquad
		(u_{q-1},u_{q-2})\in E(G_n),
		\quad\text{and}\quad
		(v,u_{q-2})\notin E(G_n).
		\]
		Combining these inequalities with \eqref{eq:u1-uq-2} and \eqref{eq:u-v-u1}, we obtain
		\[
		v>\max\{u_1,u_{q-2}\}-r>3r
		\quad\text{and}\quad
		u>v-r>2r.
		\]
		If $u<u_{q-1}$, then
		$r<u<u_{q-1}<v,$
		so a west move applied to $(u_{q-1},v)$ yields
		$(u,v)\in E(G_n)$, contradicting \eqref{eq:non-adjacent}.
		Therefore,
		\[
		u_{q-1}<u<v.
		\]
		Since $(u_{q-1},u)\notin E(G_n)$, \Cref{lem:gap} gives
		$u_{q-1}>u-r>r.$
		Thus,
		\[
		r<u_{q-1}<u<u_1.
		\]
		A west move applied to $(u,u_1)$ now yields
		$(u_{q-1},u_1)\in E(G_n)$, again contradicting
		\eqref{eq:non-adjacent}.
		
		Both cases lead to contradictions. Therefore, $q\le3$.
	\end{proof}

	\begin{lem}
		\label{cl:u<r}
		Under \Cref{assm:path}, suppose that $v>n-r$ and $q\ge4$. Then
		\[
		w\defas\max\{u_k\mid0\le k\le q-3\}<r.
		\]
	\end{lem}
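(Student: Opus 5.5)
The plan is to argue by contradiction. Suppose some $u_k$ with $0\le k\le q-3$ satisfies $u_k\ge r$. We produce a chord of $P$ joining two nonconsecutive vertices, which minimality forbids. Throughout we use that $n\ge5r$ gives $v>n-r\ge4r$, and that every $u_j$ with $j\le q-2$ is nonadjacent to $v$. We split according to the position of $u_k$ relative to $v$; note $u_k\neq v$ by distinctness.

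\emph{Step 1: $u_k>v$ is impossible.} We show more generally that no vertex $u_j$ with $j\le q-3$ lies above $v$. Since $u_0=u<v$, walking along $P$ from $u_0$ to $u_j$ we find consecutive indices $i,i+1\le j$ such that one of $u_i,u_{i+1}$ is $<v$ and the other is $>v$. Call the smaller one $x$ and the larger one $y$, so $x<v<y$ and $\{x,y\}\in E(G_n)$. Since $n-r<v<y$ and $n\ge3r$, the south move of \Cref{lem.shortmoves}(i) applied to $(x,y)$ gives $(x,v)\in E(G_n)$. The vertex $x$ has index at most $j\le q-3$, so it is not consecutive to $v=u_q$. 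This is a chord, contradicting minimality.

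\emph{Step 2: $r\le u_k<v$ is impossible.} We look at the last edge $\{u_{q-1},v\}$ and compare $u_{q-1}$ with $v$.
\begin{enumerate}
\item If $u_{q-1}>v$, then $r\le u_k<v<u_{q-1}$. The west move of \Cref{lem.shortmoves}(ii) applied to $(v,u_{q-1})$ gives $(u_k,u_{q-1})\in E(G_n)$. Since $k\le q-3$, this is a chord.
\item If $u_{q-1}<v$ and $u_{q-1}\le u_k$, then $u_{q-1}\le u_k<v$. If $v\ge u_k+r$, \Cref{lem:gap} applied to the edge $(u_{q-1},v)$ gives $(u_k,v)\in E(G_n)$. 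If $v<u_k+r$, then $u_k>n-2r\ge r$, and the north and west moves do not reach $v$. In that range I would instead apply \Cref{lem:E(Gn)} directly to a generating edge $(i,j)\in E(G_r)$ of $(u_{q-1},v)$. One can probably shift its left coordinate up to $u_k$ while keeping $u_k-i\le v-j$, since $u_k-u_{q-1}\le$ (what is needed). This subcase is the one I expect to require care.
\item If $u_{q-1}<v$ and $u_k<u_{q-1}$, then $r\le u_k<u_{q-1}<v$. The west move applied to $(u_{q-1},v)$ gives $(u_k,v)\in E(G_n)$. Since $k\le q-3$, this is a chord.
\end{enumerate}

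The main obstacle is the case $u_{q-1}\le u_k<v$ with $v-u_k<r$, where both the gap lemma and the north move fail. If the direct triangle argument from \Cref{lem:E(Gn)} does not work, my fallback is to use the neighbours of $u_k$ along $P$. Step~1 shows $u_{k\pm1}<v$. Then the edge between $u_k$ and whichever neighbour makes a larger span can be moved by north, west, or south moves onto $u_{q-1}$ or $v$, again producing a chord.
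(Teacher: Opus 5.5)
Your proposal has two genuine gaps.

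\textbf{Step 1 misapplies the south move.} \Cref{lem.shortmoves}(i) assumes $k\le k'\le r$, so it only applies to edges whose left endpoint is at most $r$. Nothing bounds your $x$. In fact the move fails exactly where you need it: $x$ and $v$ are nonadjacent and $x<v\le y$, so \Cref{lem:gap} forces $x>v-r>n-2r$. For a concrete failure, take $r=3$ and $E(G_3)=\{(1,3)\}$. Then $G_n$ consists of all pairs at distance at least $2$, so $(n-2,n)\in E(G_n)$ while $(n-2,n-1)\notin E(G_n)$, although $n-1>n-r$. Hence Step~1 is unproved, and both your case split and your fallback depend on it.

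\textbf{Step 2(2) is left open.} In the subcase $u_{q-1}<u_k<v$ with $v-u_k<r$, the direct use of \Cref{lem:E(Gn)} on the edge $(u_{q-1},v)$ cannot work by itself. The pair $(u_k,v)$ need not be an edge of $G_n$ at all, for example when $v-u_k<\spi(\Gc)$. So the contradiction must come from other vertices of $P$, and your sketch of the fallback is not yet an argument.

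\textbf{The missing idea (as in the paper).} Take $w$ to be the \emph{maximum} of $u_0,\dots,u_{q-3}$, and set $x=\min\{u_{q-1},v\}$ and $y=\max\{u_{q-1},v\}$.
\begin{enumerate}
\item A west move on $(x,y)$ rules out $w<x$.
\item \Cref{lem:gap} then gives $w>y-r>n-2r$, whether $w>y$ or $x<w<y$. So the case of a vertex above $v$ needs no separate treatment.
\item Pick a path-neighbour $u_l$ of $w$ with $l\le q-3$; it exists because $q\ge4$. Maximality gives $u_l<w$.
\item In each position $u_l>y$, $x<u_l<y$, or $u_l<x$, \Cref{lem:gap} pushes the relevant endpoint above $r$. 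A west move then gives a chord joining $\{w,u_l\}$ to $\{x,y\}$.
\end{enumerate}

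\textbf{Repairing your own route.} Your Step~1 can be rescued in the same spirit. From $x>n-2r\ge r$, compare $x$ with $u_{q-1}$:
\begin{enumerate}
\item If $u_{q-1}>v$, a west move on $(v,u_{q-1})$ gives the chord $(x,u_{q-1})$.
\item If $x<u_{q-1}<v$, a west move on $(u_{q-1},v)$ gives the chord $(x,v)$.
\item If $u_{q-1}<x$, \Cref{lem:gap} gives $u_{q-1}>x-r>r$. A west move on $(x,y)$ then gives the chord $(u_{q-1},y)$.
\end{enumerate}
Your open subcase can be finished similarly, using a neighbour $u_l$ of $u_k$ with $l\le q-3$ and comparing $u_l$ with $u_{q-1}$.
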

	
	\begin{proof}
		Suppose that $w\ge r$. Let
		\[
		x=\min\{u_{q-1},v\}
		\quad\text{and}\quad
		y=\max\{u_{q-1},v\}.
		\]
		Then $(x,y)\in E(G_n)$, while path minimality implies that $w$ is adjacent to neither $x$ nor $y$. If $w<x$, a west move applied to $(x,y)$ gives $(w,y)\in E(G_n)$, a contradiction. Thus, $w>x$.
		This implies either $w > y$ or $x < w < y$; in the latter case, \Cref{lem:gap} requires $w > y-r$. Hence, we always have $w > y-r$.
		Consequently,
		\[
		w>y-r\ge v-r>n-2r.
		\]
		Since $q\ge4$, the vertex $w$ has a neighbor $u_l$ along the subpath $(u_0,\dots,u_{q-3})$. Minimality implies that $u_l$ is adjacent to neither $x$ nor $y$. We consider three possibilities for $u_l$.
		
		If $u_l>y$, then
		\[
		r<n-r<v\le y<u_l<w,
		\]
		so a west move applied to $(u_l,w)$ gives $(y,w)\in E(G_n)$, a contradiction.
		
		If $x<u_l<y$, then since $(x,u_l), (u_l,y) \notin E(G_n)$, \Cref{lem:gap} forces
		\[
		u_l>y-r>n-2r
		\quad\text{and}\quad
		x>u_l-r>n-3r>r.
		\]
		A west move applied to $(u_l,w)$ now gives $(x,w)\in E(G_n)$, a contradiction.
		
		Finally, if $u_l < x$, then $u_l < x < w$. Since $(u_l,x), (x,w) \notin E(G_n)$, \Cref{lem:gap} gives
		\[
		x>w-r>n-3r
		\quad\text{and}\quad
		u_l>x-r>n-4r\ge r.
		\]
		A west move applied to $(x,y)$ then yields $(u_l,y)\in E(G_n)$, again a contradiction. 
	\end{proof}

	\begin{lem}
		\label{lem:j>n-r}
		Under \Cref{assm:path}, if $v>n-r$, then $q\le 5$.
	\end{lem}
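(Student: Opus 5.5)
We argue by contradiction: assume $v>n-r$ and $q\ge6$, and derive a chord of $P$. Everything rests on \Cref{cl:u<r}: since $q\ge4$, every vertex $u_0,\dots,u_{q-3}$ is smaller than $r$. With $q\ge6$ this puts at least four vertices, $u_0,u_1,u_2,u_3$, inside $[r-1]$. Minimality forbids any chord between non-consecutive vertices of $P$. The plan is to show that this left block $[1,r-1]$ cannot carry a long induced piece of $P$ that also connects to the far vertex $v$.

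The key tool will be the edges leaving the left region towards the right. First consider the path edges $(u_{q-3},u_{q-2})$ and $(u_{q-2},u_{q-1})$, together with the final edge to $v$. Since $u_{q-3}<r$ and $v>n-r$, we have $v-u_{q-3}>n-2r\ge r$. So \Cref{lem:gap} applies: any edge $(k,l)$ with $k\le u_{q-3}$ and $l\ge v$ forces the chord $(u_{q-3},v)$.

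Next I will locate $u_{q-2}$ and $u_{q-1}$. Suppose one of them lies in the middle range $[r,n-r]$. Then a west move (\Cref{lem.shortmoves}(ii)) or a north move (\Cref{lem.shortmoves}(iii)), applied to the appropriate edge, should give a chord joining an early vertex $u_k$ with $k\le q-4$ to a late vertex. Here I would reuse the case analysis from \Cref{cl:u<r}, whose comparisons of $x=\min\{u_{q-1},v\}$ and $y=\max\{u_{q-1},v\}$ with \Cref{lem:gap} are exactly of this type.

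The real content is then the following. Once $u_{q-2}$ is pinned down (it must be adjacent to $u_{q-3}<r$ and eventually connect to $v>n-r$), look at the edge $(u_{q-4},u_{q-3})$ inside $[r-1]$ together with the edge from $u_{q-3}$ to the right. The \Cref{lem:E(Gn)} triangle description gives a generating edge $(i,j)\in E(G_r)$ with $0\le k-i\le l-j\le n-r$. Because $n\ge5r$ leaves slack, I will shift the pair: the edge $(u_{q-3},u_{q-2})$ with $u_{q-2}$ far right should imply $(u',u_{q-2})\in E(G_n)$ or $(u',v)\in E(G_n)$ for an earlier vertex $u'$. Specifically, I would take $u'$ to be whichever of $u_{q-4},u_{q-5}$ is $\le u_{q-3}$.

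Such a $u'$ exists because among three vertices $u_{q-5},u_{q-4},u_{q-3}<r$ some earlier one is smaller than or comparable to $u_{q-3}$. If both exceed $u_{q-3}$, I will instead apply the east/south moves of \Cref{lem.shortmoves}(i), valid since $n\ge3r$, to an edge starting at $u_{q-3}$ and get a chord from $u_{q-3}$ itself. Writing the condition $0\le u'-i\le u_{q-2}-j$ via the triangle inequalities is where the counting $n\ge5r$ enters. I expect this bookkeeping to be the main obstacle: there are several sign configurations of $u_{q-2},u_{q-1}$ relative to $v$, and each needs the right move. I would organize them by whether $u_{q-1}<v$ or $u_{q-1}>v$, mirroring \Cref{lem:r<j<n-r}.
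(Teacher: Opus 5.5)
Your proposal has a genuine gap. The only concrete mechanism it offers runs in the wrong direction, and the two lemmas that actually drive the argument are never invoked.

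Inside the left block $[r-1]$ the west move of \Cref{lem.shortmoves}(ii) is unavailable, since it needs $k''\ge r$. So \Cref{lem:gap} and the east move of \Cref{lem.shortmoves}(i) only let you slide the left endpoint of an edge to the \emph{right}. From $(u_{q-3},X)\in E(G_n)$ with $X$ far right you get $(u',X)\in E(G_n)$ for $u_{q-3}\le u'\le X-r$, never for $u'<u_{q-3}$. The triangle description of \Cref{lem:E(Gn)} does not rescue your choice of $u'\in\{u_{q-4},u_{q-5}\}$ with $u'\le u_{q-3}$. For $r=4$ and $E(G_4)=\{(1,2),(3,4)\}$, the graph $G_n$ contains every pair except the $(k,l)$ with $k\in\{1,2\}$ and $l\in\{n-1,n\}$; so $3$ is adjacent to $n$ while $1$ and $2$ are not. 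What is needed is an earlier vertex \emph{larger} than $u_{q-3}$. Your branch where both earlier vertices exceed $u_{q-3}$ is the one that can work, but you describe it as producing a chord from $u_{q-3}$ itself, which is not what the east move gives. The location of $u_{q-2}$ is also only asserted (a move \emph{should} give a chord), not proved.

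The missing ideas are as follows.
\begin{enumerate}
\item Since $P_1=(u_0,\dots,u_{q-3})$ is a minimal path inside $[r-1]\subseteq[1,n-r]$, \Cref{lem:r<j<n-r} (after reversing $P_1$ if necessary) gives $q-3\le3$, hence $q=6$.
\item Applying the same lemma to $(u_0,\dots,u_4)$ shows $u_4>n-r$. This pins down the far vertex without any case analysis.
\item Some earlier vertex exceeds $u_3$. This is clear if $u_0>u_3$. If $u_0<u_3$, then \Cref{lem:min-max} applied to the minimal path $(u_0,u_1,u_2,u_3)$, whose endpoint satisfies $u_3<r\le n-r$, forces $\max\{u_0,u_1\}>u_3$.
\item Take $w=\max\{u_0,u_1,u_2\}<r$. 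Then $u_3<w<u_4$ and $u_4-w>n-2r\ge r$, so \Cref{lem:gap} applied to the edge $(u_3,u_4)$ yields the chord $(w,u_4)$, contradicting minimality.
\end{enumerate}
Without \Cref{lem:min-max} or an equivalent argument, nothing in your plan excludes the configuration in which the earlier vertices lie below $u_{q-3}$. That is precisely the configuration your proposed shift cannot handle.
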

	
	\begin{proof}
		Suppose that $q\ge6$. By \Cref{cl:u<r}, all vertices of the subpath
		$P_1=(u_0,\dots,u_{q-3})$
		lie in $[r-1]$. After reversing $P_1$ if necessary, \Cref{lem:r<j<n-r} gives $q-3\le3$. Since $q\ge6$, it follows that $q=6$.
		
		If $u_4 \le n-r$, then the subpath $P_2 = (u_0, \dots, u_4)$ has length $4$ and satisfies the hypothesis of \Cref{lem:r<j<n-r} (after being reversed if necessary). This is impossible. Hence, $u_4>n-r$.
		
		Set 
		\[
		w=\max\{u_0,u_1,u_2\}.
		\]
		Then $w<r$ by \Cref{cl:u<r}, and minimality implies that $(w,u_4)\notin E(G_n)$. We claim that $w>u_3$. This is clear if $u_0>u_3$. Suppose that $u_0<u_3$. Since $u_3<r<n-r$, \Cref{lem:min-max} applied to the minimal path $(u_0,\dots,u_{3})$ implies that
		\[
		\max\{u_0,u_1\}>u_3.
		\]
		Hence, $w>u_3$, as claimed.
		
		Now since $u_4>n-r>2r>w+r$ and $(u_3,u_4)\in E(G_n)$, \Cref{lem:gap} gives $(w,u_4)\in E(G_n)$. This contradiction concludes the proof.
	\end{proof}

	\begin{lem}
		\label{lem:j>n-r-admissible}
		Under \Cref{assm:path}, if $v>n-r$ and $P$ is admissible, then $q\le 3$.
	\end{lem}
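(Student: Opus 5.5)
We argue by contradiction and assume $q\ge4$, which lets us invoke \Cref{cl:u<r}: every vertex $u_0,\dots,u_{q-3}$ lies in $[r-1]$. In particular $u=u_0<r$. By condition (iii) of admissibility, every internal vertex lies outside $[u,v]$. Since $v>n-r$, an internal vertex exceeding $v$ would lie in $[n-r+1,n]$; every other internal vertex is smaller than $u<r$. The plan is to trap $u_{q-2}$ and $u_{q-1}$ between these constraints and then produce a chord that violates minimality.

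First, $u_1$ is internal and (as $q\ge4$, so $1\le q-3$) satisfies $u_1<r$. Hence $u_1<u$. Next consider $u_{q-1}$, which is adjacent to $v$.

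\emph{Case $u_{q-1}<u$.} Then $u_{q-1}<u<r$, so $v\ge n-r+1>u+r$. Also $(u_{q-1},v)\in E(G_n)$ with $u_{q-1}\le u<v\le v$. \Cref{lem:gap} then gives $(u,v)\in E(G_n)$. This is a chord between nonconsecutive vertices (because $q\ge4$), contradicting minimality.

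\emph{Case $u_{q-1}>v$.} Then $u_{q-1}\in[n-r+2,n]$. We look at the edge joining $u_{q-2}$ and $u_{q-1}$. If $u_{q-2}<u$, then $u_{q-2}<u<v<u_{q-1}$ and $v-u>n-2r\ge r$, so \Cref{lem:gap} yields $(u,v)\in E(G_n)$, again a contradiction. If $u_{q-2}>v$, then both $u_{q-2}$ and $u_{q-1}$ are internal vertices in $[n-r+1,n]$. We then move to $u_{q-3}<r$, which is adjacent to $u_{q-2}$. From $u_{q-3}<u<v<u_{q-2}$, \Cref{lem:gap} again produces $(u,v)$, unless $u_{q-3}=u$. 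That equality happens only when $q=3$, which is excluded.

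So every case yields the chord $(u,v)$, and $q\le3$ follows. The main point requiring care is making sure that \Cref{lem:gap} applies with the distance condition $v\ge u+r$. This holds because $u<r$ and $v>n-r\ge4r$ whenever $n\ge5r$. The other point to check is that the neighbor used really lies below $u$ and the other really lies above $v$; this is exactly what admissibility condition (iii) together with \Cref{cl:u<r} guarantees.
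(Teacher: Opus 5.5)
Your proof is correct, but it takes a different route from the paper's.

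\textbf{Your route.} You invoke \Cref{cl:u<r} to get $u=u_0<r$. Hence $v>n-r\ge 4r>u+r$. Your case analysis on $u_{q-1}$, $u_{q-2}$, $u_{q-3}$ then locates an edge of $P$ straddling $[u,v]$ and applies \Cref{lem:gap} to obtain the chord $(u,v)$. This is exactly the argument of \Cref{lem:single-edge}, so you could have cited it directly: once $v\ge u+r$, an admissible path from $u$ to $v$ is the single edge $(u,v)$, which contradicts $q\ge4$ in one line. Two of your remarks are unnecessary. The observation $u_1<u$ is never used. The caveat ``unless $u_{q-3}=u$'' is moot, since \Cref{lem:gap} allows $k\le u$.

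\textbf{The paper's route.} The paper never uses \Cref{cl:u<r}. It applies \Cref{lem:single-edge} in the opposite direction: $q\ge4$ forces $u>v-r>n-2r$. It then proceeds in three steps.
\begin{enumerate}
\item A west move rules out $u_{q-1}>v$.
\item Admissibility together with \Cref{lem:gap} gives $n-3r<u_{q-1}<u$.
\item Further west moves and \Cref{lem:gap} force $r<u_1<u_{q-1}$, and a last west move on $(u_{q-1},v)$ yields the forbidden chord $(u_1,v)$.
\end{enumerate}

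\textbf{Comparison.} Your approach shows the lemma is an immediate corollary of two earlier results: \Cref{cl:u<r} forces $u<r$, while \Cref{lem:single-edge} forces $u>n-2r$. The cost is that it depends on the more intricate case analysis in \Cref{cl:u<r}. The paper's argument is longer but independent of that lemma, using only \Cref{lem:single-edge}, \Cref{lem:gap}, and the short moves.
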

	
	\begin{proof}
		Suppose that $q\ge4$. Then $(u,v)\notin E(G_n)$. By \Cref{lem:single-edge},
		\[
		u>v-r>n-2r>r.
		\]
		If $u_{q-1}>v$, a west move applied to $(v,u_{q-1})$ gives $(u,u_{q-1})\in E(G_n)$, contradicting minimality. Thus, $u_{q-1}<v$, and admissibility gives $u_{q-1}<u$. Since $(u_{q-1},v)\in E(G_n)$ but $(u_{q-1},u)\notin E(G_n)$, \Cref{lem:gap} yields
		\[
		u_{q-1}>u-r>n-3r\ge2r.
		\]
		If $u_1>u_{q-1}$, set
		\[
		x=\min\{u,u_1\}
		\quad\text{and}\quad
		y=\max\{u,u_1\}.
		\]
		Then $r<u_{q-1}<x<y$, and a west move applied to $(x,y)$ gives $(u_{q-1},y)\in E(G_n)$, contradicting minimality. Hence,
		\[
		u_1<u_{q-1}<u.
		\]
		Since $(u_1,u_{q-1})\notin E(G_n)$, \Cref{lem:gap} gives
		\[
		u_1>u_{q-1}-r>r.
		\]
		Thus, $r<u_1<u_{q-1}<v$. A west move applied to $(u_{q-1},v)$ now gives $(u_1,v)\in E(G_n)$, again contradicting minimality.
	\end{proof}
	
	For the stabilization argument, we use the following elementary observation.
	
	\begin{lem}
		\label{lem:empty-path-interval}
		Let $a\in\Z_{\ge0}$ and $M,N,L\in\N$. Suppose that
		\[
		N-M+1\ge(a+1)L.
		\]
		Then every subset $X\subseteq[M,N]$ with $|X|\le a$ is disjoint from an interval of $L$ consecutive integers contained in $[M,N]$.
	\end{lem}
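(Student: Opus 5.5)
This is a pigeonhole argument: I will partition $[M,N]$ into $a+1$ pairwise disjoint blocks of $L$ consecutive integers and show that $X$ misses at least one of them.

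First, I build the blocks. For $k=0,1,\dots,a$ set
\[
J_k=[M+kL,\;M+(k+1)L-1].
\]
Each $J_k$ consists of exactly $L$ consecutive integers, and the blocks are pairwise disjoint. The largest element of $J_a$ is $M+(a+1)L-1$. The hypothesis $N-M+1\ge(a+1)L$ gives $M+(a+1)L-1\le N$, so every $J_k$ is contained in $[M,N]$.

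Second, I apply pigeonhole. The blocks are disjoint, so each element of $X$ lies in at most one of them. Hence $X$ meets at most $|X|\le a$ of the $a+1$ blocks, and some $J_k$ satisfies $J_k\cap X=\emptyset$. This $J_k$ is the required interval.

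The case $a=0$ needs no separate treatment: then $X=\emptyset$, and $J_0=[M,M+L-1]\subseteq[M,N]$ because $N-M+1\ge L$. The only step that needs care is the containment $J_a\subseteq[M,N]$, and it follows directly from the hypothesis. Nothing else should cause difficulty.
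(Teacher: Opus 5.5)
Your proof is correct and is the same pigeonhole argument as the paper's. The paper simply asserts that $[M,N]$ contains $a+1$ pairwise disjoint intervals of $L$ consecutive integers, one of which must miss $X$; you make these blocks $J_k=[M+kL,M+(k+1)L-1]$ explicit and check their containment in $[M,N]$ (the $J_k$ need not cover $[M,N]$, but you only use that they are disjoint and contained in it).
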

	
	\begin{proof}
		By assumption, the interval $[M,N]$ contains $a+1$ pairwise disjoint intervals of $L$ consecutive integers. Since $X$ has at most $a$ elements, at least one of these intervals is disjoint from $X$.
	\end{proof}
	
	We are now ready to verify \Cref{prop:ad-path-length}.
	
	\begin{proof}[Proof of \Cref{prop:ad-path-length}]
		The bound for minimal paths in part \textup{(i)} follows from
		\Cref{lem:r<j<n-r,lem:j>n-r}.
		The bound for admissible paths follows from \Cref{lem:r<j<n-r,lem:j>n-r-admissible}.
		
		We prove part \textup{(ii)}. Fix $n\ge5r+9$. We first show that
		\[
		\ell(G_n)\le\ell(G_{n+1})
		\quad\text{and}\quad
		\ell'(G_n)\le\ell'(G_{n+1}).
		\]
		Let
		$P=(u_0,\dots,u_q)$
		be a minimal path in $G_n$ with $u_0<u_q$. We claim that there exists an interval $[\alpha,\beta]\subseteq[n]$ such that
		\begin{equation}
			\label{eq:empty-source-interval}
			\beta-\alpha\ge r
			\quad\text{and}\quad
			V(P)\cap[\alpha,\beta]=\emptyset.
		\end{equation}
		If $q\le3$, then $|V(P)|\le4$. Since
		\[
		n\ge5r+9\ge5(r+1),
		\]
		\Cref{lem:empty-path-interval}, applied with $M=1$, $N=n$, $L=r+1$, and $a=4$, gives the required interval.
		
		Suppose that $q\ge4$. By \Cref{lem:r<j<n-r}, we must have $u_q>n-r$. Hence, \Cref{cl:u<r} shows that at most three vertices of $P$ lie in $[r,n]$. Since
		\[
		n-r+1\ge4(r+1),
		\]
		applying \Cref{lem:empty-path-interval} to $X=V(P)\cap[r,n]$ with $M=r$, $N=n$, $L=r+1$, and $a=3$ again gives an interval satisfying \eqref{eq:empty-source-interval}.
		
		Choose such an interval $[\alpha,\beta]$. By \Cref{lem:del-graph-iso}, for every $k\in[\alpha,\beta]$, the map $\sigma_k$ induces an isomorphism
		\[
		G_n\setminus[\alpha,\beta]
		\xrightarrow{\ \cong\ }
		G_{n+1}\setminus[\alpha,\beta+1].
		\]
		Since $P$ is minimal in $G_n$ and avoids $[\alpha,\beta]$, it is minimal in the induced subgraph $G_n\setminus[\alpha,\beta]$. Therefore, $\sigma_k(P)$ is a minimal path in $G_{n+1}\setminus[\alpha,\beta+1]$, and hence in $G_{n+1}$. Moreover, since $\sigma_k$ is strictly increasing, it preserves the order of the vertices. Thus, admissibility is preserved as well. It follows that
		\begin{equation}
			\label{eq:forward-path-length}
			\ell(G_n)\le\ell(G_{n+1})
			\quad\text{and}\quad
			\ell'(G_n)\le\ell'(G_{n+1}).
		\end{equation}
		
		We now prove the reverse inequalities. Let
		$Q=(v_0,\dots,v_q)$
		be a minimal path in $G_{n+1}$ with $v_0<v_q$. We claim that there exist $\alpha,\beta\in[n]$ such that
		\begin{equation}
			\label{eq:empty-target-interval}
			\beta-\alpha\ge r
			\quad\text{and}\quad
			V(Q)\cap[\alpha,\beta+1]=\emptyset.
		\end{equation}
		If $q\le3$, then $|V(Q)|\le4$. Since
		\[
		n+1\ge5r+10=5(r+2),
		\]
		\Cref{lem:empty-path-interval}, applied with $M=1$, $N=n+1$, $L=r+2$, and $a=4$, gives an interval $[\alpha,\beta']$ of $r+2$ consecutive integers disjoint from $Q$. Setting $\beta=\beta'-1$, we obtain \eqref{eq:empty-target-interval}.
		
		Suppose that $q\ge4$. Applied to $G_{n+1}$, \Cref{lem:r<j<n-r} yields $v_q>(n+1)-r$. Hence, \Cref{cl:u<r} shows that at most three vertices of $Q$ lie in $[r,n+1]$. Furthermore,
		\[
		(n+1)-r+1=n-r+2\ge4(r+2).
		\]
		Applying \Cref{lem:empty-path-interval} to $X=V(Q)\cap[r,n+1]$ with $M=r$, $N=n+1$, $L=r+2$, and $a=3$, we again obtain an interval $[\alpha,\beta']$ of $r+2$ consecutive integers disjoint from $Q$. Set $\beta=\beta'-1$. Then $\beta\le n$, $\beta-\alpha=r$, and \eqref{eq:empty-target-interval} holds.
		
		For every $k\in[\alpha,\beta]$, \Cref{lem:del-graph-iso} gives an isomorphism
		\[
		\sigma_k\colon
		G_n\setminus[\alpha,\beta]
		\xrightarrow{\ \cong\ }
		G_{n+1}\setminus[\alpha,\beta+1].
		\]
		Since $Q$ avoids $[\alpha,\beta+1]$, it is minimal in the induced subgraph $G_{n+1}\setminus[\alpha,\beta+1]$. Thus, $\sigma_k^{-1}(Q)$ is a minimal path in $G_n\setminus[\alpha,\beta]$, and hence in $G_n$. Since the inverse of an increasing map is increasing on its image, admissibility is preserved as well. Consequently,
		\begin{equation}
			\label{eq:reverse-path-length}
			\ell(G_n)\ge\ell(G_{n+1})
			\quad\text{and}\quad
			\ell'(G_n)\ge\ell'(G_{n+1}).
		\end{equation}
		Combining \eqref{eq:forward-path-length} and \eqref{eq:reverse-path-length} proves part \textup{(ii)}.
	\end{proof}
	
	\begin{rem}
		\Cref{prop:ad-path-length} has consequences for binomial edge ideals associated with graphs in an $\Inc$-invariant chain. Recall that, for a graph $G$ on $[n]$, its \emph{binomial edge ideal}, introduced independently in \cite{HHHKR} and \cite{O11}, is
		\[
		J_G
		=\langle x_i y_j-x_j y_i\mid(i,j)\in E(G)\rangle
		\subseteq
		S\defas\kk[x_1,\dots,x_n,y_1,\dots,y_n].
		\]
		A Gr\"obner basis of $J_G$ indexed by admissible paths is constructed in \cite[Theorem~2.1]{HHHKR}, while the $\Z^n$-graded generic initial ideal of $J_G$ is described in terms of minimal paths in \cite[Theorem~2.1]{CDG}. In both descriptions, a path of length $q$ gives rise to a monomial of degree $q+1$. Therefore, if $\Gc=(G_n)_{n\ge1}$ is an $\Inc$-invariant chain with $\ind(\Gc)=r$, then, for a suitable term order $\le$ and every $n\ge5r$, the ideal $\ini_{\le}(J_{G_n})$ is generated in degrees at most $4$, whereas $\gin_{\le}(J_{G_n})$ is generated in degrees at most $6$. Moreover, for every $n\ge5r+9$, the maximal degrees of the minimal generators of these two ideals are independent of $n$.
	\end{rem}

	\section{Appendix}
	
	This section is devoted to the proofs of the technical results stated in \Cref{subsec.InvChain}.
	
	We first prove \Cref{lem:induced-chain}. Recall that for each $k\ge0$, the map $\sigma_k\in\Inc$ is defined in \eqref{eq.sigma}. In particular, we have $\sigma_0(i)=i+1$ for all $i\in\N$.
	
	\begin{proof}[Proof of \Cref{lem:induced-chain}]
		(i) Set
		$\tilde{r}=r+\alpha-1.$
		Since $U_{n+\beta}=[\alpha,n]$, we have
		\[
		H_n=G_{n+\beta}[\alpha,n]
		\qquad\text{for every }n\ge \tilde{r}.
		\]
		We first prove that
		\begin{equation}
			\label{eq:H-stability}
			E(H_n)=\Inc_{\tilde{r},n}\bigl(E(H_{\tilde{r}})\bigr)
			\qquad\text{for every }n\ge \tilde{r}.
		\end{equation}
		Let $(k,l)\in E(H_n)$. Then $(k,l)\in E(G_{n+\beta})$. By \Cref{lem:E(Gn)}, there exists
		$(i,j)\in E(G_r)$ such that
		\[
		0\le x\defas k-i\le y\defas l-j\le n+\beta-r.
		\]
		Put
		\[
		t=n-\tilde{r}=n-r-\alpha+1
		\]
		and define
		\[
		q=\max\{0,\alpha-i,y-t\},
		\qquad
		p=\max\{0,\alpha-i,x-y+q\}.
		\]
		We claim that
		\begin{equation}
			\label{eq:p-q-bounds}
			0\le p\le q\le\alpha+\beta-1.
		\end{equation}
		Indeed, since $k=i+x\ge\alpha$, we have $\alpha-i\le x\le y$.
		Moreover,
		\[
		y-t
		\le n+\beta-r-(n-r-\alpha+1)
		=\alpha+\beta-1.
		\]
		It follows that $q\le y$ and $q\le\alpha+\beta-1$. Consequently,
		$p\le q$ because $x-y+q\le q$, and $p\le x$ because
		$q\le y$.
		
		Set
		\[
		i'=i+p,\qquad j'=j+q.
		\]
		By \eqref{eq:p-q-bounds} and \Cref{lem:E(Gn)}, we have
		$(i',j')\in E(G_{\tilde{r}+\beta}),$
		since
		\[
		0\le p\le q\le\alpha+\beta-1 =(\widetilde{r}+\beta)-r.
		\]
		Moreover, $i'\ge\alpha$ and $j'\le \tilde{r}$. The first inequality is clear, while the second follows from
		\[
		j\le \tilde{r},\qquad
		j+\alpha-i\le r+\alpha-1=\tilde{r},
		\qquad
		j+y-t=l-t\le \tilde{r}.
		\]
		Thus, $(i',j')\in E(H_{\tilde{r}})$.
		
		Now the definitions of $p$ and $q$ give
		\[
		0\le x-p\le y-q\le t=n-\tilde{r}.
		\]
		Therefore, \Cref{lem:E(Gn)} shows that
		\[
		(k,l)\in\Inc_{\tilde{r},n}\bigl(E(H_{\tilde{r}})\bigr).
		\]
		This proves one inclusion in \eqref{eq:H-stability}.
		
		Conversely, suppose that
		$(k,l)\in\Inc_{\tilde{r},n}\bigl(E(H_{\tilde{r}})\bigr).$
		Then there exists $(i',j')\in E(H_{\tilde{r}})\bigr)$ such that
		\[
		0\le k-i'\le l-j'\le n-\tilde{r}.
		\]
		Since $(i',j')\in E(G_{\tilde{r}+\beta})$, there exists $(i,j)\in E(G_r)$ with
		\[
		0\le i'-i\le j'-j\le\alpha+\beta-1.
		\]
		Adding the preceding inequalities gives
		\[
		0\le k-i\le l-j
		\le (n-\tilde{r})+(\alpha+\beta-1)
		=n+\beta-r.
		\]
		Hence, $(k,l)\in E(G_{n+\beta})$. Moreover,
		$\alpha\le k<l\le n$, so $(k,l)\in E(H_n)$. This proves
		\eqref{eq:H-stability}.
		
		It follows that $\Hc$ is $\Inc$-invariant and
		$\ind(\Hc)\le \tilde{r}.$
		On the other hand, the reverse inequality $\ind(\Hc)\ge \tilde{r}$ is immediate from the definition of $\Hc$. Hence, $\ind(\Hc)= \tilde{r}=r+\alpha-1$.
		
		(ii) Let $s=\spi(\Gc)$. Since $E(H_{\tilde{r}})\subseteq E(G_{\tilde{r}+\beta})\subseteq E(G_{\infty})$,
		it follows from \Cref{cor:G_infty} that
		$\spi(\Hc)\ge s.$
		Conversely, choose $(i,j)\in E(G_r)$ with $j-i=s$. Then
		\[
		(i+\alpha-1,j+\alpha-1)\in E(H_{\tilde{r}}),
		\]
		which yields $\spi(\Hc)\le s.$
		Therefore,
		$\spi(\Hc)=s.$

		(iii) 
		Consider the map $\tau=\sigma_0^{\alpha-1}$. Since
		\[
		\tau(i)=i+\alpha-1
		\quad\text{for all } i\in\N,
		\]
		we have $\tau\in\Inc_{n,n+\alpha-1}$ for every $n\in\N$. It follows that
		\begin{align*}
			\tau(G_m)&\subseteq \Inc_{m,m+\alpha-1}(G_m)\subseteq G_{m+\alpha-1},\\
			\tau(K_n)&=K_{[\alpha,n+\alpha-1]}
		\end{align*}
		for all $m,n\in \N$. Thus,
		\begin{equation}
			\label{eq:tau}
			\tau(G_m\cap K_n)
			\subseteq G_{m+\alpha-1}\cap K_{[\alpha,n+\alpha-1]}
			\quad\text{for all } m,n\in \N.
		\end{equation}
		In particular, when $m=n=r$, we obtain
		\begin{align*}
			\tau(G_r)&=\tau(G_r\cap K_r)\subseteq G_{r+\alpha-1}\cap K_{[\alpha,r+\alpha-1]}\\
			&\subseteq G_{\tilde{r}+\beta}\cap K_{[\alpha,\tilde{r}]}
			=H_{\tilde{r}}.
		\end{align*}
		Since $\tau$ is injective, it induces an injection
		\[
		\tau\colon E(G_r)\longrightarrow E(H_{\tilde{r}}).
		\]
		As $V(H_{\tilde{r}})=[\alpha,{\tilde{r}}]$, we have $|V(H_{\tilde{r}})|=r$.
		It follows that
		\[
		\me(\Hc)
		=\binom{r}{2}-|E(H_{\tilde{r}})|
		\le\binom{r}{2}-|E(G_r)|
		=\me(\Gc).
		\]
		
		Suppose now that $\beta\ge1$ and $\me(\Hc)=\me(\Gc)$. Set
		\[
		F_r=G_{r+1}\cap K_r.
		\]
		Then $G_r\subseteq F_r$. Because $\beta\ge1$, applying \eqref{eq:tau} with $m=r+1$ and $n=r$ yields
		\begin{align*}
			\tau(F_r)
			&\subseteq G_{r+\alpha}\cap K_{[\alpha,r+\alpha-1]}
			\subseteq G_{\tilde{r}+\beta}\cap K_{[\alpha,\tilde{r}]}=H_{\tilde{r}}.
		\end{align*}
		Consequently,
		\[
		\tau\bigl(E(G_r)\bigr)
		\subseteq\tau\bigl(E(F_r)\bigr)
		\subseteq E(H_{\tilde{r}}).
		\]
		The equality $\me(\Hc)=\me(\Gc)$ gives
		$|E(H_{\tilde{r}})|=|E(G_r)|.$
		Hence, all the preceding inclusions are equalities, and thus
		$F_r=G_r.$
		Consider the chain $\Fc=(F_n)_{n\ge1}$ defined by
		\[
		F_n=
		\begin{cases}
			\emptyset,&n<r,\\
			G_{n+1}\cap K_n,&n\ge r.
		\end{cases}
		\]
		Applying part (i) with $\alpha=\beta=1$, we obtain
		\[
		F_n=\Inc_{r,n}(F_r)
		\quad\text{for every }n\ge r.
		\]
		Since $F_r=G_r$ and $\ind(\Gc)=r$, it follows that
		\[
		G_{n+1}\cap K_n=F_n=G_n
		\qquad\text{for every }n\ge r.
		\]
		By \Cref{lem:saturation-characterization}, $\Gc$ is eventually
		saturated. 
	\end{proof}

	\begin{rem}
		In \Cref{lem:induced-chain}, if $\beta=0$, the equality $\me(\Hc)=\me(\Gc)$ does not necessarily imply that $\Gc$ is eventually saturated. For example, consider a chain $\Gc$ with $\ind(\Gc)=3$ and 
		$$E(G_3)=\{(1,2),(1,3)\}.$$ 
		Since $(2,3)\in E(G_4\cap K_3)\setminus E(G_3)$, this chain is not eventually saturated. Let $U_n=[2,n]$ for $n\ge 3$ and define the chain $\Hc$ as in \Cref{lem:induced-chain}. Then 
		$$E(H_4)=\{(2,3),(2,4)\}=\sigma_0(E(G_3)),$$ 
		and therefore $\me(\Hc)=\me(\Gc)$.
	\end{rem}
	
	%------------------------------------------------
	
	Finally, we prove \Cref{lem:del-graph-iso}. To this end, we require the following special case of \cite[Proposition~3.1]{HHLNN}.
	
	\begin{lem}
		\label{lem_decomposition}
		Let $\Gc=(G_n)_{n\ge1}$ be an $\Inc$-invariant chain of graphs with $\ind(\Gc)=r$. Then for any subset $\Lambda\subseteq\{0,1,\dots,n\}$ with $|\Lambda|=r+1$, it holds that
		\[
		E(G_{n+1})=\bigcup_{k\in \Lambda}\sigma_k(E(G_n))
		\quad
		\text{for all } n\ge r.
		\]
	\end{lem}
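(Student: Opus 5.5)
The inclusion ``$\supseteq$'' is immediate: each $\sigma_k$ with $0\le k\le n$ lies in $\Inc_{n,n+1}$, so $\sigma_k(E(G_n))\subseteq E(G_{n+1})$ by $\Inc$-invariance. For ``$\subseteq$'' the plan is a counting argument. Fix $(k,l)\in E(G_{n+1})$. I will show that at most $r$ values $c\in\{0,\dots,n\}$ are ``bad'', meaning $(k,l)\notin\sigma_c(E(G_n))$. Since $|\Lambda|=r+1$, some $c\in\Lambda$ is then good.

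By \Cref{lem:E(Gn)}(ii), applied with $n+1\ge r$, there is $(i,j)\in E(G_r)$ such that
\[
0\le x\defas k-i\le y\defas l-j\le n+1-r .
\]
Recall that $\sigma_c(a)=a$ for $a\le c$ and $\sigma_c(a)=a+1$ for $a>c$, and that the value $c+1$ is not in the image of $\sigma_c$. So $(k,l)\in\sigma_c(E(G_n))$ exactly when $c\notin\{k-1,l-1\}$ and the preimage pair is an edge of $G_n$. By \Cref{lem:E(Gn)}(ii) again, the latter holds as soon as the preimage satisfies the triangle inequalities with the same $(i,j)$ and with bound $n-r$. There are three ranges of $c$ to check.
\begin{enumerate}
\item If $c\ge l$, the preimage is $(k,l)$. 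This works provided $y\le n-r$.
\item If $c\le k-2$, the preimage is $(k-1,l-1)$. This works provided $x\ge1$.
\item If $k\le c\le l-2$, the preimage is $(k,l-1)$. This works provided $x\le y-1$.
\end{enumerate}

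The bad values of $c$ are therefore the two forbidden values $k-1$ and $l-1$, together with all of range (i) when $y=n+1-r$, all of range (ii) when $x=0$, and all of range (iii) when $x=y$.

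The counting is as follows. Range (i) has $n-l+1$ elements, which equals $r-j$ when $y=n+1-r$. Range (ii) has $k-1$ elements, which equals $i-1$ when $x=0$. Range (iii) has $l-k-1$ elements, which equals $j-i-1$ when $x=y$. All three degenerate conditions cannot hold simultaneously, since that would force $n+1-r=0$, contradicting $n\ge r$. Hence the number of bad $c$ is at most
\[
2+\max\{(r-j)+(i-1),\ (r-j)+(j-i-1),\ (i-1)+(j-i-1)\}\le r .
\]
Here each of the three sums is at most $r-2$, using $1\le i<j\le r$. In fact, even the sum of all three, $r-2$, together with the two forbidden values gives at most $r$. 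So some $c\in\Lambda$ is good, and $(k,l)\in\sigma_c(E(G_n))$.

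The main obstacle I expect is the bookkeeping. I must make sure that the forbidden values $k-1$ and $l-1$ are the only points outside the three ranges, and that the interval lengths are computed correctly in the degenerate cases. For example, when $x=0$ one has $k=i$, so range (ii) has exactly $i-1$ elements. I will also double-check edge cases such as $l=n+1$, where range (i) is empty and the count $r-j$ is still valid because then $y=n+1-r$ forces $j=r$.
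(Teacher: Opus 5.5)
Your proof is correct. The five pieces $[0,k-2]$, $\{k-1\}$, $[k,l-2]$, $\{l-1\}$, $[l,n]$ partition $\{0,\dots,n\}$. In each of the three ranges, the preimage you name satisfies the inequalities of \Cref{lem:E(Gn)} for the same $(i,j)$ with bound $n-r$, under the condition you state. Since $2+(r-j)+(i-1)+(j-i-1)=r$, your remark that the three degenerate conditions cannot hold simultaneously is not even needed.

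The paper does not prove this lemma. It quotes it as a special case of \cite[Proposition~3.1]{HHLNN}. For comparison, here is a shorter route that bypasses the triangle description.

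\begin{enumerate}
\item By \Cref{lem:ind}, any $(k,l)\in E(G_{n+1})$ equals $\pi((i,j))$ for some $(i,j)\in E(G_r)$ and $\pi\in\Inc_{r,n+1}$.
\item The set $\pi([r])\subseteq[n+1]$ has $r$ elements, while $\{c+1\mid c\in\Lambda\}\subseteq[n+1]$ has $r+1$. So some $c\in\Lambda$ satisfies $c+1\notin\pi([r])$.
\item Lower by one every value of $\pi|_{[r]}$ that exceeds $c+1$. This gives an increasing map $[r]\to[n]$, which extends to some $\pi'\in\Inc_{r,n}$ with $\sigma_c\circ\pi'=\pi$ on $[r]$.
\item Hence $(k,l)=\sigma_c(\pi'((i,j)))\in\sigma_c(E(G_n))$.
\end{enumerate}

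This pigeonhole argument makes the threshold $r+1$ transparent. For a fixed $\pi$, the obstructions are exactly the $r$ values $c$ with $c+1\in\pi([r])$. It also works verbatim for edges with any number of vertices, so it adapts directly to hypergraphs and to chains of ideals.

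Your argument is tied to $2$-element edges and needs the case analysis. Its advantage is that it is fully explicit: for a given edge it identifies which $\sigma_c$ realize it, using only \Cref{lem:E(Gn)}.
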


	\begin{proof}[Proof of \Cref{lem:del-graph-iso}]
		Fix $n\ge r.$ For any $k\in [\alpha,\beta]$, we have
		\[
		\sigma_k(V(G_{n}\setminus [\alpha,\beta]))=\sigma_k([n]\setminus [\alpha,\beta])
		=[n+1]\setminus [\alpha,\beta+1] =V(G_{n+1}\setminus [\alpha,\beta+1]).
		\]
		Since $\sigma_k(E(G_{n}))\subseteq E(G_{n+1})$, this implies
		\[
		\sigma_k(E(G_{n}\setminus [\alpha,\beta] ))\subseteq E(G_{n+1}\setminus [\alpha,\beta+1]).
		\]	
		Because $\beta-\alpha\ge r$, it follows from \Cref{lem_decomposition} that
		\[
		E(G_{n+1})=\bigcup_{k\in [\alpha,\beta]}\sigma_k(E(G_n)).
		\]
		Consequently,
		\begin{equation}
			\label{eq:deletion-union}
			E(G_{n+1}\setminus[\alpha,\beta+1])
			=\bigcup_{k\in [\alpha,\beta]}
			\sigma_k\bigl(E(G_n\setminus[\alpha,\beta])\bigr).
		\end{equation}
		To conclude, we show that the sets $\sigma_k(E(G_n\setminus [\alpha,\beta]))$ are independent of $k\in [\alpha,\beta]$. Indeed, let $(i,j)\in E(G_n\setminus [\alpha,\beta])$. Then one of the following cases occurs:
		
		\smallskip
		\emph{Case 1:} $i<j<\alpha$.
		In this case, $\sigma_k(i,j)=(i,j)$ for every $k\in [\alpha,\beta]$.
		
		\smallskip
		\emph{Case 2:} $i<\alpha<\beta <j$.
		Here, $\sigma_k(i,j)=(i,j+1)$ for every $k\in [\alpha,\beta]$.
		
		\smallskip
		\emph{Case 3:} $\beta <i<j$.
		Here, $\sigma_k(i,j)=(i+1,j+1)$ for every $k\in [\alpha,\beta]$.
		
		\smallskip
		In each case, $\sigma_k(i,j)$ does not depend on $k$. Consequently, \eqref{eq:deletion-union} yields
		\[
		E(G_{n+1}\setminus[\alpha,\beta+1])
		=\sigma_k\bigl(E(G_n\setminus[\alpha,\beta])\bigr).
		\] 
		Hence, $\sigma_{k}$ induces the desired isomorphism for every $k\in [\alpha,\beta]$.
	\end{proof}
	
	\section*{Acknowledgments}
	
	We are grateful to Hop D. Nguyen for allowing us to include his formulation of \Cref{thm:f-vector-IN}. The first and third authors were supported by the Vietnam National Foundation for Science and Technology Development (NAFOSTED) under grant number 101.04-2025.49. Part of this work was carried out while the second and third authors were visiting the Vietnam Institute for Advanced Study in Mathematics (VIASM). We thank VIASM for its hospitality and financial support.

\end{document}